\pgfplotsset{compat=1.14}
\pgfplotsset{
	colormap={my basis colormap}{
		rgb255=(0, 114, 189);
		rgb255=(54, 106, 148);
		rgb255=(108, 98, 107);
		rgb255=(163, 91, 66);
		rgb255=(217, 83, 25);
	}
}
\pgfplotsset{
	colormap={my parula}{
		rgb255=(53.0655, 42.406, 134.9460);
		rgb255=(20.2336, 132.6061, 211.9511);
		rgb255=(55.5463, 184.8859, 157.9118);
		rgb255=(208.7187, 186.8470,  89.1966);
		rgb255=(248.9565, 250.6905, 13.7190);
	}
}
\newcommand{\breakcaption}{\@dblarg\emit@breakcaption}
\long\def\emit@breakcaption[#1]#2{%
	\expandafter\caption\expandafter[\expandafter\emit@removeafter#1\\\@nil]{%
		\begin{varwidth}[t]{\textwidth-\widthof{\figurename\space\thefigure:\space}}
			#2
		\end{varwidth}%
	}%
}
\def\emit@removeafter#1\\#2\@nil{#1}
\numberwithin{equation}{section}
\renewcommand{\arraystretch}{1.5}
\newcommandx{\unsure}[2][1=]{\todo[linecolor=red,backgroundcolor=red!25,bordercolor=red,#1]{#2}}
\newcommandx{\change}[2][1=]{\todo[linecolor=blue,backgroundcolor=blue!25,bordercolor=blue,#1]{#2}}
\newcommandx{\info}[2][1=]{\todo[linecolor=OliveGreen,backgroundcolor=OliveGreen!25,bordercolor=OliveGreen,#1]{#2}}
\newcommandx{\improvement}[2][1=]{\todo[linecolor=Plum,backgroundcolor=Plum!25,bordercolor=Plum,#1]{#2}}
\newcommandx{\thiswillnotshow}[2][1=]{\todo[disable,#1]{#2}}
\def\BState{\State\hskip-\ALG@thistlm}
\newtheorem{theorem}{Theorem}[section]
\newtheorem{proposition}[theorem]{Proposition}
\newtheorem{lemma}[theorem]{Lemma}
\newtheorem{corollary}[theorem]{Corollary}
\newtheorem{remark}[theorem]{Remark}
\newtheorem{example}[theorem]{Example}
\newtheorem{definition}[theorem]{Definition}
\newtheorem{configuration}[theorem]{Configuration}
\newtheorem{assumption}[theorem]{Assumption}
\DeclareMathAlphabet{\mathpzc}{OT1}{pzc}{m}{it}
\newcommand{\mbf}[1]{{\boldsymbol{#1}}}
\newcommand{\RR}{{\mathbb{R}}}
\newcommand{\NN}{{\mathbb{N}}}
\newcommand{\PP}{{\mathcal{P}}}
\newcommand{\ZZ}{{\mathbb{Z}}}
\newcommand{\ZZP}{\mathbb{Z}_{\geq0}}
\DeclareMathOperator{\rank}{rank}
\newcommand{\rankwp}[1]{\rank{\left(#1\right)}}
\newcommand{\dimwp}[1]{\dim{\left(#1\right)}}
\DeclareMathOperator{\im}{im}
\newcommand{\imwp}[1]{\im{\left(#1\right)}}
\newcommand{\kerwp}[1]{\ker{\left(#1\right)}}
\newcommand{\moplus}{\mathop{\oplus}}
\newcommand{\isomorphic}{\cong}
\newcommand{\face}{\sigma}
\newcommand{\edge}{\tau}
\newcommand{\vertex}{\gamma}
\newcommand{\ms}{\rho}
\newcommand{\sms}{\lambda}
\DeclareMathOperator{\MS}{MS}
\newcommand{\MSA}[2]{\MS_{#1}^{#2}}
\newcommand{\IMSA}[2]{\interior{\MS}_{#1}^{#2}}
\newcommand{\MSH}[2]{\tensor*[^h]{\MS}{^{#2}_{#1}}}
\newcommand{\MSV}[2]{\tensor*[^v]{\MS}{^{#2}_{#1}}}
\newcommand{\IMSH}[2]{\tensor*[^h]{\interior{\MS}}{^{#2}_{#1}}}
\newcommand{\IMSV}[2]{\tensor*[^v]{\interior{\MS}}{^{#2}_{#1}}}
\newcommand{\interior}[1]{\accentset{\circ}{#1}}
\newcommand{\tmesh}{\mathcal{T}}
\newcommand{\tmeshF}{\tmesh_2}
\newcommand{\tmeshE}{\tmesh_1}
\newcommand{\tmeshV}{\tmesh_0}
\newcommand{\tmeshEH}{\tensor*[^h]{\tmesh}{_1}}
\newcommand{\tmeshEV}{\tensor*[^v]{\tmesh}{_1}}
\newcommand{\tmeshInterior}{\interior{\tmesh}}
\newcommand{\tmeshInteriorE}{\tmeshInterior_1}
\newcommand{\tmeshInteriorV}{\tmeshInterior_0}
\newcommand{\tmeshInteriorEH}{\tensor*[^h]{\tmeshInterior}{_1}}
\newcommand{\tmeshInteriorEV}{\tensor*[^v]{\tmeshInterior}{_1}}
\newcommand{\tmeshComponent}[2]{{\tmesh}_{#1}^{#2}}
\newcommand{\tmeshComponentF}[2]{{\tmesh}_{2#1}^{#2}}
\newcommand{\tmeshComponentE}[2]{{\tmesh}_{1#1}^{#2}}
\newcommand{\tmeshComponentV}[2]{{\tmesh}_{0#1}^{#2}}
\newcommand{\tmeshComponentEH}[2]{\tensor*[^h]{\tmesh}{^{#2}_{1#1}}}
\newcommand{\tmeshComponentEV}[2]{\tensor*[^v]{\tmesh}{^{#2}_{1#1}}}
\newcommand{\tmeshInteriorComponentE}[2]{\interior{\tmesh}_{1#1}^{#2}}
\newcommand{\tmeshInteriorComponentV}[2]{\interior{\tmesh}_{0#1}^{#2}}
\newcommand{\domain}{\Omega}
\newcommand{\domainInterior}{\interior{\domain}}
\newcommand{\domainBnd}{\partial\domain}
\newcommand{\domainComponent}[2]{\domain_{#1}^{#2}}
\newcommand{\nComponent}[1]{N_{#1}}
\newcommand{\ncells}[1]{\mathfrak{t}_{#1}}
\newcommand{\incells}[1]{\interior{\mathfrak{t}}_{#1}}
\newcommand{\nlevels}{\mathfrak{l}}
\newcommand{\DEG}{\mathcal{D}}
\newcommand{\adegree}{m}
\newcommand{\abdegree}{\mbf{\adegree}}
\newcommand{\degree}{\Delta m}
\newcommand{\bdegree}{{\mbf{\degree}}}
\newcommand{\degreev}{\Delta n}
\newcommand{\bdegreev}{{\mbf{\degreev}}}
\newcommand{\polyTotal}[1]{\PP_{#1}}
\newcommand{\pRing}{R}
\newcommand{\pRingH}{S}
\newcommand{\ideal}[1][I]{\mathfrak{#1}}
\newcommand{\bideg}[1]{\mbf{\delta}(#1)}
\newcommand{\shiftideal}[3]{\ideal[#1]{}_{#2}(#3)}
\newcommand{\weight}[3]{\omega_{\pB{#1},#3}^{#2}}
\newcommand{\pA}[1]{\|#1\|} 
\newcommand{\pB}[1]{[#1]} 
\newcommand{\degA}[3]{#1_{#2, \pA{#3}}} 
\newcommand{\degAa}[2]{#1_{\pA{#2}}}
\newcommand{\degB}[3]{#1_{#2, \pB{#3}}} 
\newcommand{\degBa}[2]{#1_{\pB{#2}}}
\DeclareMathOperator{\In}{In}
\newcommand{\initial}[1]{{\In #1}}
\newcommand{\smooth}{r}
\newcommand{\bsmooth}{\mbf{r}}
\newcommand{\splSpace}{\mathcal{R}}
\newcommand{\splSpaceH}{\mathcal{S}}
\newcommand{\elem}[1]{[#1]}
\newcommand{\faceE}[1]{\elem{\face_{#1}}}
\newcommand{\edgeE}[1]{\elem{\edge_{#1}}}
\newcommand{\vertexE}[1]{\elem{\vertex_{#1}}}
\newcommand{\msE}[1]{\elem{\ms_{#1}}}
\newcommand{\halfEdge}[2]{\elem{#1 | #2}}
\newcommand{\krond}[2]{\varepsilon_{#1, #2}}
\newcommand{\kronFE}[2]{\krond{\face_{#1}}{\edge_{#2}}}
\newcommand{\kronEV}[2]{\krond{\edge_{#1}}{\vertex_{#2}}}
\newcommand{\boundary}{\partial}
\newcommand{\idealComplex}{\mathcal{I}}
\newcommand{\constantComplex}{\mathcal{C}}
\newcommand{\quotientComplex}{\mathcal{Q}}
\newcommand{\homoDim}{h}
\newcommand{\holes}[2]{\pi_{#1}^{#2}}
\DeclareMathOperator{\ornt}{\varepsilon}
\newcommand{\euler}[1]{\chi \left( #1 \right)}
\definecolor{lightgray}{gray}{0.80}
\def\ps@pprintTitle{%
	\let\@oddhead\@empty
	\let\@evenhead\@empty
	\def\@oddfoot{}%
	\let\@evenfoot\@oddfoot}
\begin{document}
	
\newcommand{%
	\tikzsetnextfilename{./tikz/images/}%
	\input{}%
}[1]{%
	\tikzsetnextfilename{./tikz/images/#1}%
	\input{#1}%
}

\newcommand\mybox[2][]{\tikz[overlay]\node[fill=blue!20,inner sep=2pt, anchor=text, rectangle, rounded corners=1mm,#1] {#2};\phantom{#2}}

\tikzset{->-/.style={decoration={
			markings,
			mark=at position #1 with {\arrow{>}}},postaction={decorate}}}

\definecolor{myBlue}{rgb} {0,0.4470,0.7410}
\definecolor{myRed}{rgb} {0.8500,0.3250,0.0980}
\definecolor{myGray1}{rgb} {0,0,0}
\definecolor{myGray2}{rgb} {0.6,0.6,0.6}
\definecolor{myGray3}{rgb} {0.45,0.45,0.45}
\definecolor{myGray4}{rgb} {0.8,0.8,0.8}

\tikzset{
	bThickness/.style={line width=#1\pgflinewidth},
	bThickness/.default={2},
}

\tikzset{
	eThickness/.style={line width=#1\pgflinewidth},
	eThickness/.default={0.5},
}

\title{Polynomial spline spaces of non-uniform bi-degree on T-meshes: \\Combinatorial bounds on the dimension}
\corref{cor1}
\author[ices]{Deepesh Toshniwal}
\ead{deepesh@ices.utexas.edu}
\author[inria]{Bernard Mourrain}
\author[ices]{Thomas J. R. Hughes}
\cortext[cor1]{Corresponding author}
\address[ices]{Institute for Computational Engineering and Sciences, University of Texas at Austin, USA}
\address[inria]{Universit\'e C\^ote d'Azur, Inria Sophia Antipolis M\'editerran\'ee, Sophia Antipolis, France}

\begin{abstract}
	Polynomial splines are ubiquitous in the fields of computer aided geometric design and computational analysis.
	Splines on T-meshes, especially, have the potential to be incredibly versatile since local mesh adaptivity enables efficient modeling and approximation of local features.
	Meaningful use of such splines for modeling and approximation requires the construction of a suitable spanning set of linearly independent splines, and a theoretical understanding of the spline space dimension can be a useful tool when assessing possible approaches for building such splines.
	Here, we provide such a tool.
	Focusing on T-meshes, we study the dimension of the space of bivariate polynomial splines, and we discuss the general setting where local mesh adaptivity is combined with local polynomial degree adaptivity.
	The latter allows for the flexibility of choosing non-uniform bi-degrees for the splines, i.e., different bi-degrees on different faces of the T-mesh.
	In particular, approaching the problem using tools from homological algebra, we generalize the framework and the discourse presented by Mourrain (2014) for uniform bi-degree splines.
	We derive combinatorial lower and upper bounds on the spline space dimension and subsequently outline sufficient conditions for the bounds to coincide.
	
\end{abstract}

\begin{keyword}
	Smooth splines \sep T-meshes \sep Non-uniform degrees \sep Dimension formula \sep Homological algebra
\end{keyword}

\maketitle

\section{Introduction}

Standard B-spline parameterizations of surfaces in Computer Aided
Geometric Design are defined on a grid of nodes over a rectangular
domain.
These representations are also the basis of Isogeometric
Analysis which provides high order finite element methods in
numerical simulations
\cite{CottrellIsogeometricAnalysisIntegration2009}.
However, grid structures do not allow
complex shapes to be easily resolved.
They also preclude the flexibility of performing local refinements 
for improving the error in numerical simulations.
To address these issues, meshes with T-junctions  --- also called {\em T-meshes} --- and polynomial and rational splines on such meshes have been investigated for performing both geometric
modeling and isogeometric analysis. 
Classically, uniform degree splines, i.e., piecewise polynomial functions of uniform degree on the faces, have been studied and developed on T-meshes with the intent of using T-junctions for locally increasing the resolution offered by the spline space.
An alternate strategy to improve the approximation power of splines is to increase the degree in a localized manner.
In this paper, motivated by applications for isogeometric finite element methods, we study the space of piecewise polynomials functions on a T-mesh with different bi-degrees on its faces and different regularities across its edges.
In particular, we analyze the dimension of these functional spaces, thus providing a tool that can help identify when a given set of linearly independent splines spans the full space.

Over the last decades, several works focused on the construction
of spline functions and the analysis of spline function spaces on
T-meshes have appeared, mainly motivated by applications in isogeometric analysis.
So called T-splines, which are B-spline functions defined on domains
with a T-mesh structure,
have been investigated for their flexibility of representing shapes \cite{SederbergTsplinesTNURCCs2003},
for isogeometric analysis \cite{BazilevsIsogeometricanalysisusing2010} and
for functional approximation \cite{schumaker_approximation_2012}.
However, linear dependencies of the blending functions involved in
T-spline constructions have been observed \cite{BuffaLinearindependenceTspline2010}.
To remedy this problem, a special sub-family of T-splines, called Analysis Suitable T-splines
has been developed, by imposing sufficient constraints on the T-mesh
\cite{li_linear_2012,scott_local_2012,BressanCharacterizationanalysissuitableTsplines2015}.

The construction of so-called LR-splines defined on
T-meshes and based on knot sub-grids has been proposed in
\cite{dokken_polynomial_2013}.
Their use in isogeometric analysis has been further investigated
in \cite{JohannessenIsogeometricanalysisusing2014}, including an
analysis of the linear independency of the blending functions \cite{bressan_properties_2013}.
Another type of splines, so-called hierarchical B-splines, have been
investigated in
\cite{forsey_hierarchical_1988,kraft97,deng_polynomial_2008,GiannelliTHBsplinestruncatedbasis2012}.
They are defined by recursive subdivisions of quadrangular faces,
producing nested spaces of splines functions and providing simple
schemes for performing local refinements.

In general, the dimension of the spaces of splines on T-meshes can be \emph{unstable}, i.e., it can depend on the global geometry of the T-mesh \cite{li_instability_2011,mourrain2014dimension}.
Since any efficient constructive approach must rely only on local data for building spline functions, this instability in the dimension necessitates identification of configurations where the spline space dimension is a priori guaranteed to be stable.
In this direction, a detailed study of spline spaces on general T-meshes has been
presented in \cite{mourrain2014dimension} using homological
techniques, which go back to \cite{billera1988homology}.
Results from \cite{mourrain2014dimension} were used in \cite{dokken2013} to devise a refinement strategy for LR-splines that ensures that the entire spline space is spanned by LR B-splines at each stage of refinement.
The dimension of Tchebycheffian spline spaces over planar T-meshes, which involve
non-polynomial functions, have been investigated in
\cite{BraccodimensionTchebycheffianspline2016},
\cite{BraccoTchebycheffiansplinespaces2019},
exploiting the same homological techniques as in \cite{mourrain2014dimension}.

In all the works referenced above, only uniform degree splines are considered on the T-meshes.
Here, we analyze in detail splines spaces over general T-meshes when non-uniform polynomial bi-degrees are chosen on the faces, thus accounting for local degree adaptivity in conjunction with local mesh adaptivity.
We provide combinatorial lower and upper bounds on the dimensions of such spline spaces and outline sufficient conditions for the bounds to coincide.
These sufficient conditions are equivalent to geometric conditions that need to be satisfied by the T-meshes.
The approach is based on homological techniques and generalizes the
framework presented in \cite{mourrain2014dimension} to the case of
non-uniform polynomial bi-degree distributions.
As part of the approach, we perform a degree-based decomposition of the mesh into nested regions and this allows us to untangle the contributions of different bi-degrees to the spline space dimension.
The main results on the lower and upper bounds of the dimension of
these spline spaces (Theorems \ref{thm:lower_bound_special}, \ref{thm:upper_bound_general} and \ref{thm:homo_dim_zero}) involve homological invariants of the nested regions associated to the different bi-degrees.
As mentioned previously, the theoretical results presented here can be used to identify when a given set of linearly independent splines spans the full spline space.
Conversely, given a constructive approach that aims to produce linearly independent splines over T-meshes using only local data, computation of the associated spline space dimension can help identify cases where the splines produced by the approach cannot be linearly independent.
This is crucial for devising constructive approaches that can be robustly employed for performing isogeometric analysis.

The layout of the paper is as follows.
We start by introducing preliminary concepts and notation about T-meshes and non-uniform bi-degree spline spaces on such meshes in Section \ref{sec:tmesh_splines}.
Thereafter, we introduce the topological complexes that form the main object of our analysis in Section \ref{sec:topology}; in particular, Section \ref{ss:approach_summary} provides an overview of our approach to the problem at hand.
Sections \ref{ss:constant_homology} and \ref{sec:ideals} take a closer look at the topological complexes introduced in Section \ref{sec:topology}, and the results presented therein are used in Section \ref{sec:dim_formula} to provide bounds on the spline space dimension (Theorems \ref{thm:lower_bound_special} and \ref{thm:upper_bound_general}).
Section \ref{sec:stable_dimension} contains Theorem \ref{thm:homo_dim_zero}, which outlines sufficient conditions for the bounds derived in Section \ref{sec:dim_formula} to coincide.
We also discuss the notion of \emph{maximal segment weights}, generalized from \cite{mourrain2014dimension}.
This notion helps provide a geometric criterion that is useful when computing the spline space dimension.
Finally, Section \ref{sec:examples} provides examples of the theory developed here.
We would like to mention here that computations using Macaulay2 \cite{M2} went hand-in-hand with the research presented here.

\section{Planar T-meshes and polynomials}\label{sec:tmesh_splines}
In the following we define the basic concepts associated with planar T-meshes, and thereafter present some preliminary results on polynomials. We will proceed as in \cite{mourrain2014dimension}, albeit in the setting of non-uniform degree spline spaces.

\subsection{T-meshes}
\begin{definition}[T-mesh]\label{def:tmesh}
	A T-mesh $\tmesh$ of $\RR^2$ is defined as:
	\begin{itemize}
		\item a finite set $\tmeshF$ of closed axis-aligned rectangles $\face$ of $\RR^2$, called $2$-cells or faces,
		\item a finite set $\tmeshE$ of closed axis-aligned segments $\edge$, called $1$-cells or edges, included in $\cup_{\face\in\tmeshF}\boundary\face$, and,
		\item a finite set, $\tmeshV$, of points $\vertex$, called $0$-cells or vertices, included in $\cup_{\edge\in\tmeshE}\boundary\edge$,
	\end{itemize}
	such that
	\begin{itemize}
		\item $\face \in \tmeshF \Rightarrow \boundary\face$ is a finite union of elements of $\tmeshE$,
		\item $\face, \face' \in \tmeshF \Rightarrow \face \cap \face' = \boundary\face \cap \boundary\face'$ is a finite union of elements of $\tmeshE \cup \tmeshV$, and,
		\item $\edge, \edge' \in \tmeshE \text{ with } \edge \neq \edge' \Rightarrow \edge\cap \edge' = \boundary\edge \cap \boundary\edge' \subset \tmeshV$.
	\end{itemize}
	The domain of the T-mesh is assumed to be connected and is defined as $\domain := \cup_{\face\in\tmeshF}\face \subset \RR^2$.
\end{definition}

Sets of horizontal and vertical edges will be denoted by $\tmeshEH$ and $\tmeshEV$, respectively.
Edges of the T-mesh are called interior edges if they intersect $\domainInterior$, and boundary edges otherwise. The set of interior edges will be denoted by $\tmeshInteriorE$; and the sets of interior horizontal and vertical edges will be denote by $\tmeshInteriorEH$ and $\tmeshInteriorEV$, respectively. Similarly, if a vertex is in $\domainInterior$ it will be called an interior vertex, and a boundary vertex otherwise. The set of interior vertices will be denoted by $\tmeshInteriorV$.
We will denote the number of $i$-cells with $\ncells{i} := \# \tmesh_i$; the number of interior $i$-cells with $\incells{i} := \# \tmeshInterior_i$; and so on.

\begin{example}\label{ex:tmesh}
An example T-mesh is shown in Figure \ref{fig:tmesh_example}.
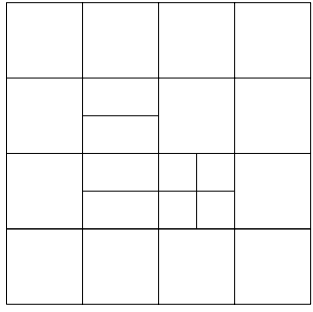
\begin{figure}[h]
	\centering
	\begin{subfigure}{\linewidth}
		\centering
	\tikzsetnextfilename{./tikz/images/tmesh_ex2_woutColors}%
	\begin{tikzpicture}
\foreach \i in {0,...,4}{
	\foreach \j in {1,...,4}{
		\node (a\i\j) at (\i,\j) {};
	}
}
\foreach \i in {1,...,5}{
	\foreach \j in {0,...,1}{
		\node (b\i\j) at (\i-1,\j) {};
	}
}

\draw[step=1,eThickness] (a01) grid (a44);
\draw[step=1,eThickness] (b10) grid (b51);

\draw[eThickness] ($(a21)!0.5!(a31)$) -- ($(a22)!0.5!(a32)$);
\draw[eThickness] ($(a12)!0.5!(a13)$) -- ($(a22)!0.5!(a23)$);
\draw[eThickness] ($(a11)!0.5!(a12)$) -- ($(a31)!0.5!(a32)$);

\end{tikzpicture}%

	\end{subfigure}
	\caption{An example of the kind of T-meshes we will consider in this document.}
	\label{fig:tmesh_example}
\end{figure}
\end{example}

\begin{assumption}\label{ass:simplyConnectedDomain}
	The domain $\domain$ is simply connected, and $\domainInterior$ is connected.
\end{assumption}

\subsection{Splines on T-meshes}
We will now define spaces of piecewise-polynomial splines on the planar T-meshes introduced above. To do so, we will first define a map that specifies relative polynomial degrees on the faces of $\tmesh$, and a second map that specifies the smoothness across its edges. Note that these maps are assumed to be known/fixed throughout this document and, when needed, we will omit mentioning them explicitly in order to simplify notation.

\begin{definition}[Degree deficit distribution]\label{def:deg_deficit}
	A degree deficit distribution on $\tmesh$ is a map
	\begin{equation*}
		\begin{split}
			\bdegree~:~ &\tmeshF \rightarrow \ZZP^2\;,\\
								&\face \mapsto \bdegree(\face)\;.
		\end{split}
	\end{equation*}
	It is assumed that $\DEG := \left\{ \bdegree(\face)~:~ \face \in \tmeshF \right\}$ can be totally ordered using the relation $\leq_\DEG$ defined as
	\begin{equation*}
		(a_1, a_2) \leq_\DEG (b_1, b_2) \Leftrightarrow a_1 \leq b_1\;\wedge\;a_2 \leq b_2\;,
	\end{equation*}
	and that $(0, 0) \in \DEG$.
\end{definition}
Given a degree-deficit distribution as defined above, we build the following sequence $\bdegree_i$, $i = 0, \dots, \nlevels$,
\begin{equation}
	 \min \DEG = (0, 0) =: \bdegree_0 < \bdegree_1 < \cdots < \bdegree_{\nlevels} := \max \DEG\;,
	 \label{eq:deg_def_seq}
\end{equation}
such that
\begin{equation}
	 \bdegree_i - \bdegree_{i-1} =: \bdegreev_i \in \{ (1, 0), (0, 1), (1, 1) \}\;.
	 \label{eq:deg_def_diff}
\end{equation}
Note that all comparisons carried out above are with respect to the ordering in Definition \ref{def:deg_deficit}.
We will denote the components of $\bdegree_i$ and $\bdegreev_i$ with $(\degree_{i1}, \degree_{i2})$ and $(\degreev_{i1}, \degreev_{i2})$, respectively.

The map $\bdegree$ will help specify the bi-degree of polynomials on a face $\face \in \tmeshF$.
Given $\abdegree \in \ZZP^2$, we define the bi-degrees $\abdegree_\square$, $\square \in \tmeshF \cup \tmeshE \cup \tmeshV$, as
\begin{equation}
	\abdegree_\square := \abdegree - \bdegree(\square)\;,
\end{equation}
where the induced degree deficits on $\edge\in \tmeshE$ and $\vertex
\in \tmeshV$ are defined as
\begin{equation}
	\bdegree(\edge) := \min_{\edge \subset \face} \bdegree(\face)\;,\qquad
	\bdegree(\vertex) := \min_{\vertex \in \face} \bdegree(\face)\;.
\end{equation}

Let $\pRing := \RR[s,t]$ be the polynomial ring with coefficients in $\RR$.
We define $\polyTotal{\adegree_1\adegree_2} \equiv \polyTotal{(\adegree_1,\adegree_2)}
\subset \pRing$ as the $\RR$-linear vector space of polynomials of bi-degree $\leq (\adegree_1, \adegree_2)$ spanned by the monomials $s^i t^j$, $0 \leq i \leq \adegree_1,\;0\leq j \leq \adegree_2$.
If any of $\adegree_1, \adegree_2$ are negative, then $\polyTotal{\adegree_1\adegree_2} := 0$.
\begin{definition}[Smoothness distribution]
	A smoothness distribution on $\tmesh$ is a map
	\begin{equation*}
		\begin{split}
			\bsmooth~:~&\tmeshInteriorE \rightarrow \ZZP\;,\\
										&\edge \mapsto \bsmooth(\edge)\;,
		\end{split}
	\end{equation*}
	such that $\edge, \edge' \in \tmeshInteriorEH$ (or both in $\tmeshInteriorEV$) and $\edge \cap \edge' \neq \emptyset \Rightarrow \bsmooth(\edge) = \bsmooth(\edge')$.
\end{definition}
The map $\bsmooth$ will help us define the smoothness across all interior edges, i.e., for $\edge \in \tmeshInteriorE$, splines will be required to be at least $C^{\bsmooth(\edge)}$ smooth across $\edge$. For $\vertex \in \tmeshInteriorV$ such that $\{\vertex \} = \edge_h \cap \edge_v$, $(\edge_h, \edge_v) \in \tmeshInteriorEH \times \tmeshInteriorEV$, the smoothness in horizontal and vertical directions is defined, respectively, as
\begin{equation}
	\bsmooth_{\vertex,h} = \bsmooth(\edge_v)\;,\qquad
	\bsmooth_{\vertex,v} = \bsmooth(\edge_h)\;.
	\label{eq:smoothnessAtVertex}
\end{equation}

\begin{definition}[Spline space]
	Given T-mesh $\tmesh$, degree deficit and smoothness distributions $\bdegree$ and $\bsmooth$, respectively, and $\abdegree \in \ZZP^2$, we define the spline space $\splSpace^\bsmooth_{\bdegree, \abdegree}(\tmesh)$ as
	\begin{equation}
		\begin{split}
			\splSpace^\bsmooth_{\bdegree, \abdegree} \equiv \splSpace^\bsmooth_{\bdegree, \abdegree}(\tmesh) := \bigg\{f~:~ &\forall \face \in \tmeshF,~~f|_\face \in \polyTotal{\abdegree_\face} = \polyTotal{\abdegree-\bdegree(\face)}\;,\\
			&\forall \edge \in \tmeshInteriorE,~~f \text{~is~} C^{\bsmooth(\edge)}\text{~across~}\edge\}\;.
		\end{split}
	\end{equation}
\end{definition}

\begin{figure}
	\centering
	\begin{subfigure}{\linewidth}
		\centering
	\tikzsetnextfilename{./tikz/images/tmesh_ex2}%
	\begin{tikzpicture}
	\foreach \i in {0,...,4}{
		\foreach \j in {1,...,4}{
			\node (a\i\j) at (\i,\j) {};
		}
	}
	\foreach \i in {1,...,5}{
		\foreach \j in {0,...,1}{
			\node (b\i\j) at (\i-1,\j) {};
		}
	}

	\fill[myBlue,fill opacity=0.6] (a03) rectangle (a44);
	\fill[myBlue,fill opacity=0.6] (a02) rectangle (a13);
	\fill[myRed,fill opacity=0.6] (a21) rectangle (a32);

	\draw[step=1,eThickness] (a01) grid (a44);
	\draw[step=1,eThickness] (b10) grid (b51);
	
	\draw[eThickness] ($(a21)!0.5!(a31)$) -- ($(a22)!0.5!(a32)$);
	\draw[eThickness] ($(a12)!0.5!(a13)$) -- ($(a22)!0.5!(a23)$);
	\draw[eThickness] ($(a11)!0.5!(a12)$) -- ($(a31)!0.5!(a32)$);
	
\end{tikzpicture}%

	\end{subfigure}
	\caption{A T-mesh $\tmesh$ where the degree deficit on white faces is $(2,2)$, on blue elements is $(1,1)$, and on red elements is $(0, 0)$.}
	\label{fig:tmesh_example_degreeDist}
\end{figure}
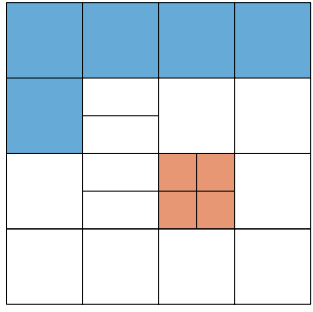

\begin{example}\label{ex:tmesh_degreeDist}
	Corresponding to the T-mesh shown in Example \ref{ex:tmesh}, an example degree deficit distribution has been shown in Figure \ref{fig:tmesh_example_degreeDist}. The set $\DEG$ is given by
	\begin{equation*}
		\DEG = \{ (0, 0), (1, 1), (2, 2) \}\;,
	\end{equation*}
	and the sequence $\bdegree_i$ can be chosen to be
	\begin{equation*}
		(0,0) = \bdegree_0 < (1, 1) = \bdegree_1 < (2, 1) = \bdegree_2 < (2, 2) = \bdegree_3\;.
	\end{equation*}
	Clearly, the above choice of the sequence is not unique. We could have alternatively chosen the shorter sequence
	\begin{equation*}
		(0,0) = \bdegree_0 < (1, 1) = \bdegree_1 < (2, 2) = \bdegree_2\;.
	\end{equation*}
\end{example}

We will employ the following algebraic characterization of smoothness of piecewise-polynomial splines \cite{billera_dimension_1991}.
\begin{lemma}[\cite{billera1988homology,mourrain2014dimension}]\label{lem:smoothness}
	For $\face, \face' \in \tmeshF$, let $\face \cap \face' = \edge \in \tmeshInteriorE$. A piecewise polynomial function equaling $p \in \pRing$ and $p'\in\pRing$ on $\face$ and $\face'$, respectively, is at least $\smooth$ times continuously differentiable across $\edge$ if and only if 
	\begin{equation*}
		p - p' \in \left(l ^{\smooth+1}\right)\;,
	\end{equation*}
	where $l \in \pRing$ is a non-zero linear polynomial vanishing on $\edge$.
\end{lemma}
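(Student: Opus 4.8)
The plan is to put the edge in a normal position by an affine change of coordinates, translate the analytic $C^{\smooth}$ condition into the vanishing of one-sided normal derivatives along $\edge$, and then recognize that vanishing as the asserted divisibility.

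First I would note that both sides of the claimed equivalence are invariant under an invertible affine change of variables of $\RR^2$: such a map sends $l$ to another nonzero linear polynomial, preserves membership in the principal ideal it generates, and preserves the property of being $C^{\smooth}$. Since $\edge$ is axis-aligned, after a translation (and, if needed, interchanging the roles of $s$ and $t$) we may assume that $\edge$ lies on the line $\{s=0\}$ and that, near the relative interior of $\edge$, the face $\face$ lies in $\{s\ge 0\}$ and $\face'$ in $\{s\le 0\}$. Every nonzero linear polynomial vanishing on $\edge$ is then a nonzero scalar multiple of $s$, so $(l^{\smooth+1})=(s^{\smooth+1})$ and it suffices to show that $f$ is $C^{\smooth}$ across $\edge$ if and only if $p-p'\in(s^{\smooth+1})$.

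Next I would establish the local characterization of smoothness. As being $C^{\smooth}$ is a local condition, $f$ is $C^{\smooth}$ across $\edge$ precisely when it is $C^{\smooth}$ on a neighborhood of every relative-interior point of $\edge$, where $f$ equals $p$ on $\{s\ge 0\}$ and $p'$ on $\{s\le 0\}$. If $f$ is $C^{\smooth}$ there, then continuity of the partial derivatives of total order at most $\smooth$ forces the one-sided limits from $\face$ and from $\face'$ to agree along $\edge$; using the symmetry of mixed partials, this is equivalent to the pure identities $\partial_s^{\,j}p(0,t)=\partial_s^{\,j}p'(0,t)$ for $0\le j\le\smooth$ and all $t$ in the relative interior of $\edge$. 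Conversely, suppose these identities hold and write $q:=p-p'$ and $q(s,t)=\sum_{k\ge 0}s^k a_k(t)$ with $a_k\in\RR[t]$; then $\partial_s^{\,j}q(0,t)=j!\,a_j(t)$, and since $\edge$ contains a nonempty open subinterval of $\{s=0\}$ each identity forces $a_j\equiv 0$, so $a_0=\dots=a_{\smooth}=0$ and $q=s^{\smooth+1}g$ for a polynomial $g$. Then $f=p'+h$, where $h$ equals $s^{\smooth+1}g$ on $\{s\ge 0\}$ and $0$ on $\{s\le 0\}$; since $h=s_+^{\smooth+1}\,g$ with $s_+:=\max(s,0)$, and $s_+^{\smooth+1}$ is $C^{\smooth}$, the function $h$ is $C^{\smooth}$, hence so is $f$. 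Therefore $f$ is $C^{\smooth}$ across $\edge$ if and only if $a_0=\dots=a_{\smooth}=0$, i.e.\ $s^{\smooth+1}\mid q$, i.e.\ $p-p'\in(s^{\smooth+1})=(l^{\smooth+1})$, which completes the argument.

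The single delicate point is the converse half of the local characterization, namely that $s_+^{\smooth+1}g$ is genuinely $C^{\smooth}$ across $\{s=0\}$; this is a classical consequence of Taylor's theorem with remainder, since every derivative of $s_+^{\smooth+1}$ of order at most $\smooth$ still carries a positive power of $s_+$ and so extends continuously by zero to $\{s=0\}$. Everything else reduces to the fact that a polynomial vanishing on a segment vanishes on the whole line containing it, together with routine bookkeeping with the expansion of $q$ in powers of $s$.
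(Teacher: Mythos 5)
Your proof is correct. Note that the paper itself offers no proof of this lemma --- it is stated with citations to Billera (1988) and Mourrain (2014) and used as a black box --- so there is no internal argument to compare against; your write-up is essentially the classical argument from those references. The two halves are handled properly: the normalization to $\edge\subset\{s=0\}$ is legitimate because the edge is axis-aligned and any linear form vanishing on a nondegenerate segment vanishes on the whole supporting line, so $(l^{\smooth+1})=(s^{\smooth+1})$; the forward direction correctly exploits that a polynomial identity holding on an open subinterval of $\{s=0\}$ holds identically, so the coefficients $a_0,\dots,a_\smooth$ of $p-p'$ in powers of $s$ vanish; and the converse correctly reduces to the $C^{\smooth}$ regularity of the truncated power $s_+^{\smooth+1}$, which is the one genuinely analytic ingredient and which you justify adequately via Taylor's theorem (each derivative of order $\le\smooth$ retains a positive power of $s_+$ and extends continuously by zero). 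The reduction of the full set of mixed-partial matching conditions to the pure normal derivatives $\partial_s^{\,j}$, $0\le j\le\smooth$, via differentiation along the edge is also sound. No gaps.
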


\subsection{Homogenized problem}
We will translate the problem of investigating the dimension of $\splSpace^\bsmooth_{\bdegree, \abdegree}$ to the homogeneous setting \cite{billera1988homology, billera_dimension_1991}. To this end, let us introduce the ring of bi-homogeneous polynomials $\pRingH := \pRing[u, v] = \RR[s, t, u, v]$ which is interpreted as the extension of $\pRing$ by the variables $u$ and $v$ that homogenize $s$ and $t$, respectively. We denote the associated vector space of bi-homogeneous polynomials of bi-degree exactly $\abdegree = (\adegree_1, \adegree_2) \in \ZZP^2$ with $\pRingH_{\abdegree} \equiv \pRingH_{\adegree_1\adegree_2} \subset \pRingH$. This vector space is spanned by the monomials $s^iu^{\adegree_1-i}t^jv^{\adegree_2-j}$, $0 \leq i \leq \adegree_1$, $0 \leq j \leq \adegree_2$.
If any of $\adegree_1, \adegree_2$ are negative then $\pRingH_{\abdegree} := 0$.
The ring $\pRingH$ is naturally graded by $\ZZ^2$,
\begin{equation}
	\pRingH = \bigoplus_{(i, j) \in \ZZ^2} \pRingH_{ij}\;,\qquad
	\pRingH_{ij}\pRingH_{kl} = \pRingH_{(i+k)(j+l)}\;,
\end{equation}
and its graded pieces are shifted in the usual manner: $\pRingH(-i, -j)_{kl} = \pRingH_{(k-i)(l-j)}$.

For convenience, let us define the following notation $\mbf{s} := (s, t)$, $\mbf{u} := (u, v)$, and for any tuple $(a, b)$,
$
	(a, b)^{(i,j)} := a^i b^j\;,$ for $i, j \in \ZZP.
$ 
Using the above, we define the vector space associated to $\square \in \tmeshF \cup \tmeshE \cup \tmeshV$ as
\begin{equation}
	\pRingH_\square := \left( \mbf{u}^{\bdegree(\square)} \right) = \mbf{u}^{\bdegree(\square)} \pRingH(-\bdegree(\square))\;.
\end{equation}
In particular, given $\abdegree \in \ZZP^2$, we denote its $\abdegree^{th}$ graded piece as $\pRingH_{\square, \abdegree} = \mbf{u}^{\bdegree(\square)}\pRingH(-\bdegree(\square))_{\abdegree}$.
An algebraic characterization of smoothness for bi-homogeneous piecewise polynomial functions follows in the vein of Lemma \ref{lem:smoothness}, and is stated below. The module of bi-homogeneous splines of interest is defined immediately thereafter.
\begin{lemma}\label{lem:smoothnessH}
	For $\face, \face' \in \tmeshF$, let $\face \cap \face' = \edge \in \tmeshInteriorE$. A bi-homogeneous piecewise polynomial function equaling $p \in \pRingH$ and $p'\in\pRingH$ on $\face$ and $\face'$, respectively, is at least $\bsmooth(\edge)$ times continuously differentiable across $\edge$ if and only if 
	\begin{equation*}
	p - p' \in \left( l_\edge^{\bsmooth(\edge)+1} \right)\;,
	\end{equation*}
	where $l_\edge$ is a non-zero $u$-homogeneous (resp. $v$-homogeneous) linear polynomial vanishing on $\edge \in \tmeshEV$ (resp. $\edge \in \tmeshEH$).
\end{lemma}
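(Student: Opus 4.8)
The plan is to deduce Lemma~\ref{lem:smoothnessH} from its affine counterpart, Lemma~\ref{lem:smoothness}, by passing through the homogenization map, so that the only genuine work is the bookkeeping of bi-degrees between $\pRing$ and $\pRingH$. Set $\smooth := \bsmooth(\edge)$. \textbf{Identifying the linear form.} Since $\edge \in \tmeshInteriorE$ is an axis-aligned segment, it lies on a line $\{s = c\}$ when $\edge \in \tmeshEV$ or on $\{t = c\}$ when $\edge \in \tmeshEH$, for some $c \in \RR$; hence the affine linear form vanishing on $\edge$ is $l = s - c$ (resp.\ $l = t - c$) up to a nonzero scalar, and its $u$-homogenization (resp.\ $v$-homogenization) is $l_\edge = s - cu$ (resp.\ $l_\edge = t - cv$), which is exactly the $u$-homogeneous (resp.\ $v$-homogeneous) linear polynomial of the statement, with dehomogenization vanishing on $\edge$. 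I will carry out the case $\edge \in \tmeshEV$; the case $\edge \in \tmeshEH$ is identical after exchanging $(s,u)$ with $(t,v)$.

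\textbf{Reducing to the affine lemma.} By convention, the smoothness of a bi-homogeneous piecewise polynomial across $\edge$ is the affine smoothness across $\edge$ of its dehomogenization obtained by setting $u = v = 1$. Denoting the dehomogenization of $q \in \pRingH$ by $\hat q := q(s,t,1,1)$, the pieces $p, p'$ dehomogenize to $\hat p, \hat p' \in \pRing$, and Lemma~\ref{lem:smoothness} says the function is $C^{\smooth}$ across $\edge$ if and only if $\hat p - \hat p' \in (l^{\smooth+1})$ in $\pRing$. It therefore suffices to prove, for $q := p - p'$ — which in the graded setting where the lemma is applied is bi-homogeneous of the ambient bi-degree $\abdegree = (\adegree_1, \adegree_2)$, as each $\pRingH_{\square,\abdegree}$ sits inside $\pRingH_\abdegree$, so I assume this —
\begin{equation*}
	q \in \bigl(l_\edge^{\,\smooth+1}\bigr) \text{ in } \pRingH \iff \hat q \in \bigl(l^{\,\smooth+1}\bigr) \text{ in } \pRing\,.
\end{equation*}

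\textbf{Transferring the ideal membership.} The forward implication is immediate: dehomogenization $\pRingH \to \pRing$, $u,v \mapsto 1$, is a ring homomorphism with $l_\edge \mapsto l$, so $q = l_\edge^{\,\smooth+1} h$ gives $\hat q = l^{\,\smooth+1}\hat h \in (l^{\,\smooth+1})$. For the converse, write $\hat q = l^{\,\smooth+1} g$ in $\pRing$. As $l^{\,\smooth+1} = (s-c)^{\smooth+1}$ has $s$-degree exactly $\smooth+1$ and $t$-degree $0$ while $\hat q$ has bi-degree $\le (\adegree_1, \adegree_2)$, the cofactor $g$ has bi-degree $\le (\adegree_1 - \smooth - 1,\, \adegree_2)$ (if $\adegree_1 < \smooth+1$ this forces $g = 0$, hence $\hat q = 0$ and, by the injectivity noted below, $q = 0$, so we are done). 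Let $\tilde g \in \pRingH_{(\adegree_1 - \smooth - 1,\, \adegree_2)}$ be the bi-homogenization of $g$ to that exact bi-degree; then $l_\edge^{\,\smooth+1}\tilde g$ is bi-homogeneous of bi-degree $(\adegree_1, \adegree_2) = \abdegree$ and dehomogenizes to $l^{\,\smooth+1} g = \hat q$. Since dehomogenization restricts to a linear isomorphism $\pRingH_\abdegree \cong \polyTotal{\abdegree}$ and $q$ also lies in $\pRingH_\abdegree$ with image $\hat q$, we conclude $q = l_\edge^{\,\smooth+1}\tilde g \in (l_\edge^{\,\smooth+1})$, as required.

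\textbf{Expected obstacle.} There is no serious difficulty here; the single point demanding care is the degree accounting in the last step — verifying that the cofactor's bi-degree is small enough that bi-homogenizing it and multiplying by $l_\edge^{\smooth+1}$ returns to $\pRingH_\abdegree$ — together with making explicit the purely definitional convention that smoothness of a bi-homogeneous spline means affine smoothness of its dehomogenization. The two edge orientations and the degenerate ranges $\adegree_1 < \smooth+1$ (resp.\ $\adegree_2 < \smooth+1$) are then handled routinely.
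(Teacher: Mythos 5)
Your argument is correct, and it is precisely the homogenization/dehomogenization argument the paper implicitly intends: the paper states this lemma without proof, remarking only that it "follows in the vein of" Lemma \ref{lem:smoothness}, and your reduction — transferring ideal membership through the graded isomorphism $\pRingH_{\abdegree}\cong\polyTotal{\abdegree}$ with the degree count $\deg_s(g)\le \adegree_1-\smooth-1$ on the cofactor — is the standard way to make that remark precise. No gaps.
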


\begin{definition}[Module of bi-homogeneous splines]
	Given T-mesh $\tmesh$, degree and smoothness distributions $\bdegree$ and $\bsmooth$, respectively, we define the module of bi-homogeneous splines $\splSpaceH^\bsmooth_{\bdegree}(\tmesh)$ as
	\begin{equation}
	\begin{split}
	\splSpaceH^\bsmooth_{\bdegree} \equiv \splSpaceH^\bsmooth_{\bdegree}(\tmesh) := \bigg\{f~:~ &\forall \face \in \tmeshF,~~f|_\face \in \pRingH_\face\;,\\
	&\forall \edge \in \tmeshInteriorE,~~f \text{~is~} C^{\bsmooth(\edge)}\text{~across~}\edge\}\;.
	\end{split}
	\end{equation}
\end{definition}
For a given $\abdegree \in \ZZP^2$, the above $\pRingH$-module of splines is of interest precisely because its $\abdegree$-th graded piece is isomorphic to $\splSpace^\bsmooth_{\bdegree, \abdegree}$.
\begin{theorem}
	\begin{equation*}
		\dimwp{\splSpaceH^\bsmooth_{\bdegree}}_{\abdegree} = \dimwp{\splSpace^\bsmooth_{\bdegree, \abdegree}}\;.
	\end{equation*}
\end{theorem}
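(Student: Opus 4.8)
The plan is to construct an explicit $\RR$-linear isomorphism between the $\abdegree$-th graded piece of $\splSpaceH^\bsmooth_\bdegree$ and $\splSpace^\bsmooth_{\bdegree,\abdegree}$, realised facewise by the two mutually inverse operations of \emph{dehomogenization} (substituting $u=v=1$) and \emph{bi-homogenization to bi-degree exactly $\abdegree$}. The two elementary facts that drive everything are: (i) the substitution $\pi\colon\pRingH\to\pRing$, $u\mapsto1$, $v\mapsto1$, is a ring homomorphism whose restriction to $\pRingH_{\abdegree}$ is an $\RR$-linear isomorphism onto $\polyTotal{\abdegree}$, the two-sided inverse being the bi-homogenization $H_{\abdegree}$ determined by $s^it^j\mapsto s^iu^{\adegree_1-i}t^jv^{\adegree_2-j}$ on the spanning monomials (with $\abdegree=(\adegree_1,\adegree_2)$); and (ii) for $\mbf{a}+\mbf{b}=\abdegree$ one has the graded multiplicativity $H_{\abdegree}(gh)=H_{\mbf{a}}(g)\,H_{\mbf{b}}(h)$ whenever $g,h$ have bi-degree $\le\mbf{a},\mbf{b}$ respectively, while each $H_{\mbf{c}}$ is $\RR$-linear on $\polyTotal{\mbf{c}}$. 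I would also record that, since the twisting factor $\mbf{u}^{\bdegree(\square)}$ is sent to $1$ by $\pi$ and conversely $H_{\abdegree}(g)=\mbf{u}^{\bdegree(\square)}H_{\abdegree_\square}(g)$ for $g\in\polyTotal{\abdegree_\square}$, the map $\pi$ restricts to an isomorphism $\pRingH_{\square,\abdegree}\xrightarrow{\ \sim\ }\polyTotal{\abdegree_\square}$ for every $\square\in\tmeshF\cup\tmeshE\cup\tmeshV$; in particular each $\pRingH_{\face,\abdegree}$ sits inside the single graded piece $\pRingH_{\abdegree}$, so one fixed dehomogenization applies uniformly to all face-polynomials.

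Next I would define $\Phi\colon(\splSpaceH^\bsmooth_\bdegree)_{\abdegree}\to\splSpace^\bsmooth_{\bdegree,\abdegree}$ facewise by $\Phi(f)|_\face:=\pi(f|_\face)$ and verify that it lands in the target. The face-degree condition is immediate: $f|_\face\in\pRingH_{\face,\abdegree}$ forces $\pi(f|_\face)\in\polyTotal{\abdegree_\face}=\polyTotal{\abdegree-\bdegree(\face)}$. For smoothness across an interior edge $\edge$ with $\face\cap\face'=\edge$, Lemma~\ref{lem:smoothnessH} gives $f|_\face-f|_{\face'}\in(l_\edge^{\bsmooth(\edge)+1})$ with $l_\edge$ a $u$-homogeneous (resp.\ $v$-homogeneous) linear form vanishing on $\edge$; applying the ring homomorphism $\pi$ yields $\pi(f|_\face)-\pi(f|_{\face'})\in(l^{\bsmooth(\edge)+1})$ where $l:=\pi(l_\edge)$ is a nonzero linear polynomial vanishing on $\edge$, and Lemma~\ref{lem:smoothness} then shows $\Phi(f)$ is $C^{\bsmooth(\edge)}$ across $\edge$. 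Linearity of $\Phi$ is clear, and $\Phi$ is injective because each $f|_\face$ lies in $\pRingH_{\abdegree}$, on which $\pi$ is injective, so $\Phi(f)=0$ forces $f|_\face=0$ for all $\face\in\tmeshF$.

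For surjectivity I would, given $g\in\splSpace^\bsmooth_{\bdegree,\abdegree}$, define a candidate preimage $f$ by $f|_\face:=H_{\abdegree}(g|_\face)$. Since $g|_\face\in\polyTotal{\abdegree_\face}$, the twisting identity from fact (i) gives $H_{\abdegree}(g|_\face)=\mbf{u}^{\bdegree(\face)}H_{\abdegree_\face}(g|_\face)\in\mbf{u}^{\bdegree(\face)}\pRingH_{\abdegree_\face}=\pRingH_{\face,\abdegree}$, so the face-degree conditions hold. For smoothness, $\RR$-linearity of $H_{\abdegree}$ gives $f|_\face-f|_{\face'}=H_{\abdegree}(g|_\face-g|_{\face'})$; writing $g|_\face-g|_{\face'}=l^{\bsmooth(\edge)+1}q$ via Lemma~\ref{lem:smoothness}, noting its bi-degree is $\le\abdegree$, and applying the graded multiplicativity (ii) with $\mbf{a}$ the bi-degree of $l^{\bsmooth(\edge)+1}$ produces $f|_\face-f|_{\face'}=l_\edge^{\bsmooth(\edge)+1}\,H_{\mbf{b}}(q)\in(l_\edge^{\bsmooth(\edge)+1})$, whence Lemma~\ref{lem:smoothnessH} gives the required smoothness of $f$. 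Finally $\Phi(f)=g$ because $\pi\circ H_{\abdegree}=\mathrm{id}$ on $\polyTotal{\abdegree}$. Hence $\Phi$ is an isomorphism of $\RR$-vector spaces, and the two dimensions agree.

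I expect the only genuine work to be bookkeeping specific to the non-uniform setting: carefully tracking the degree-deficit twists $\mbf{u}^{\bdegree(\square)}$ so as to confirm that, after twisting, every face-polynomial really does sit in the common graded piece $\pRingH_{\abdegree}$ (so a single dehomogenization/homogenization pair suffices), together with the check that $H_{(1,0)}$ (resp.\ $H_{(0,1)}$) sends a linear form $l$ vanishing on a vertical (resp.\ horizontal) edge $\edge$ to exactly the form $l_\edge$ of Lemma~\ref{lem:smoothnessH}, so that graded multiplicativity correctly transports the divisibility $l^{\bsmooth(\edge)+1}\mid(g|_\face-g|_{\face'})$ to $l_\edge^{\bsmooth(\edge)+1}\mid(f|_\face-f|_{\face'})$. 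Neither point is deep, but both deserve care because the degree caps $\abdegree_\face$ and $\abdegree_{\face'}$ on adjacent faces are in general different.
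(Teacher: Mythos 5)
Your proof is correct. Note that the paper states this theorem without any proof at all (it is the standard homogenization correspondence going back to Billera--Rose, cited earlier in Section~2.3), so there is no argument of the authors' to compare against; your explicit facewise dehomogenization $\pi\colon u,v\mapsto 1$ paired with bi-homogenization $H_{\abdegree}$ is precisely the argument one would supply, and all the steps check out: the containment $\pRingH_{\face,\abdegree}=\mbf{u}^{\bdegree(\face)}\pRingH_{\abdegree_\face}\subset\pRingH_{\abdegree}$ lets a single $\pi$ act on every face, $\pi$ being a ring homomorphism transports the ideal membership of Lemma~\ref{lem:smoothnessH} to that of Lemma~\ref{lem:smoothness} (with $\pi(l_\edge)$ a nonzero linear form vanishing on $\edge$), and the graded multiplicativity $H_{\abdegree}(gh)=H_{\mbf{a}}(g)H_{\mbf{b}}(h)$ transports it back. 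The one step I would make explicit in the surjectivity direction is the degree bound on the cofactor: writing $g|_\face-g|_{\face'}=l^{\bsmooth(\edge)+1}q$ with $l$ monic of $s$-degree one and $t$-degree zero (for a vertical edge, say), the quotient $q$ automatically lies in $\polyTotal{\abdegree-(\bsmooth(\edge)+1,0)}$, which is exactly what licenses the application of fact~(ii); this is the ``bookkeeping'' you flag in your last paragraph and it does go through.
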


\section{Topological complexes}\label{sec:topology}
In this section we will describe the tools from homology that we will use for computing the dimension of graded pieces of $\splSpaceH^\bsmooth_\bdegree$ but first let us introduce the relevant notation.

First, we define the ideal
\begin{equation}
	\shiftideal{L}{\pB{i}}{-(j, k)} \equiv \shiftideal{L}{\pB{i}}{-j, -k} :=
	\begin{dcases}
		0\;, & i = \nlevels+1\\ 
		\mbf{u}^{\bdegree_i}\pRingH(-\bdegree_i -(j, k))\;, & 0 \leq i \leq \nlevels
	\end{dcases}\;,
\end{equation}
and denote $\ideal[L]{}_{\pB{i}} \equiv \shiftideal{L}{\pB{i}}{0,0}$.
Then, from Equation \eqref{eq:deg_def_diff}, we have for $i = 1, \dots, \nlevels$,
\begin{equation}
	\shiftideal{L}{\pB{i}}{ -j, -k} = \mbf{u}^{\bdegreev_i}\shiftideal{L}{\pB{i-1}}{-(j, k) - \bdegreev_i}\;.
	\label{eq:ideal_i_im1}
      \end{equation}
We also define the (shifted) quotient
\begin{equation}
	\shiftideal{M}{\pB{i}}{ -j, -k} := \shiftideal{L}{\pB{i-1}}{ -j,
		-k} / \shiftideal{L}{\pB{i}}{ -j, -k}\;.
\end{equation}
For $\edge \in \tmeshInteriorE$, it will be convenient to set $\Delta_\edge := l_\edge^{\bsmooth(\edge)+1}$, where $l_\edge$ is the homogeneous linear polynomial from Lemma \ref{lem:smoothnessH}.
We will use $\Delta_\edge$ to define the edge and vertex associated ideals $\ideal_\edge$ and $\ideal_\vertex$,
\begin{equation}
	\ideal_\edge := \left(\Delta_\edge \mbf{u}^{\bdegree(\edge)}\right) \subset \pRingH_{\edge}\;,\qquad
	\ideal_\vertex := \sum_{\vertex \in \edge}\left(\Delta_\edge \mbf{u}^{\bdegree(\edge)}\right) \subset \pRingH_{\vertex}\;.
\end{equation}
In general, while $\ideal_\edge = \pRingH_\edge \cap \left( \Delta_\edge \right)$, we have $\ideal_\vertex = \pRingH_{\vertex} \cap \sum_{\vertex \in \edge}\left(\Delta_\edge\right)$ only when $\vertex$ is a crossing vertex.

\subsection{Definitions}
Orienting the $i$-cells in a compatible manner, $i = 0, 1, 2$, they will generate $\pRingH$-modules, and we will index the generators with the respective faces, edges and vertices. These indexed generators will be denoted with $\faceE{}, \edgeE{}$ and $\vertexE{}$.
We will assume that all oriented $2$-cells have been assigned a counter-clockwise orientation. For $\edge \in \tmeshE$ with end points $\vertex,\vertex' \in \tmeshV$, the generator associated will be represented as $\edgeE{} = [\vertex\vertex']$, with $[\vertex'\vertex] = -[\vertex\vertex']$ defining the oppositely oriented edge. In the following sections we will only be interested in homology relative to $\domainBnd$. Therefore, we will assume that $\edgeE{} = 0$ and $\vertexE{} = 0$ when $\edge \subset \domainBnd$ and $\vertex \in \domainBnd$, respectively.

Let $\krond{\theta}{\phi} \equiv \ornt([\theta],[\phi])$ denote the orientation function, i.e., the function that takes as inputs one $n$-dimensional cell $[\theta]$, and one $(n-1)$-dimensional cell, $[\phi]$, and returns
\begin{itemize}
	\item $0$ if $\theta \cap \phi = \emptyset$;
	\item $-1$ if $\theta \cap \phi = \phi$ and the orientation endowed by $[\theta]$ upon its boundary is incompatible with orientation of $[\phi]$; and,
	\item $+1$ if $\theta \cap \phi = \phi$ and the orientation endowed by $[\theta]$ upon its boundary is compatible with orientation of $[\phi]$.
\end{itemize}
We will consider the usual boundary maps, $\boundary$, defined as
\begin{equation}
	\begin{split}
		\boundary(\faceE{}) = \sum_{\edge\in\tmeshInteriorE} \kronFE{}{}\edgeE{}\;,\qquad
		\boundary(\edgeE{}) = \sum_{\vertex\in\tmeshInteriorV} \kronEV{}{}\vertexE{}\;,\qquad
		\boundary(\vertexE{}) = 0\;.
	\end{split}
\end{equation}
Then, for an element $p = \sum_\face \faceE{}p_\face$ of the $\pRingH$-module $\moplus_{\face \in \tmeshF}\faceE{} \pRingH_\face$, its image under the action of $\boundary$ will be
\begin{equation}
\begin{split}
\boundary\left( \sum_{\face \in \tmeshF} \faceE{} p_\face \right) = \sum_{\edge \in \tmeshInteriorE}\edgeE{} \left(\sum_{\face \in \tmeshF} \kronFE{}{}p_{\face}\right)\;.
\end{split}
\end{equation}
It is clear that $\sum_\face \kronFE{}{}p_{\face} \in \pRingH_\edge
\ (=\mbf{u}^{\bdegree(\edge)}\, S)$. Therefore, by Lemma \ref{lem:smoothnessH}, for $p$ to be in smoothness class $C^\bsmooth$ we require the following,
\begin{equation}
\begin{split}
\forall \edge \in \tmeshInteriorE~,~~\sum_{\face \in \tmeshF} \kronFE{}{}p_{\face} \in \ideal_\edge\;.
\end{split}
\end{equation}
Then, $\splSpaceH^\bsmooth_{\bdegree}$ contains all splines $f$ (in all bi-degrees) such that their polynomial pieces satisfy the above requirement, with $p_\face = f|_\face$. In other words, for $\abdegree \in \ZZP^2$,
\begin{equation}
\dimwp{\splSpace^\bsmooth_{\bdegree, \abdegree}} =
\dimwp{\splSpaceH^\bsmooth_\bdegree}_\abdegree =
\dimwp{\kerwp{\overline{\boundary}}}_\abdegree\;,
\label{eq:spline_space_iso_homology}
\end{equation}
where $\overline{\boundary}$, given below, is obtained by composing $\boundary$ with the natural quotient map,
\begin{equation}
\overline{\boundary}~:~\moplus_{\face \in \tmeshF} \faceE{} \pRingH_{\face} \rightarrow \moplus_{\edge \in \tmeshInteriorE} \edgeE{} \pRingH_{\edge}/\ideal_{\edge}\;.
\end{equation}

\subsection{Degree-deficit based topological complexes}
In light of the above reasoning, we consider the following chain complex of $\pRingH$-modules as the object of our analysis, with the top homology module of $\quotientComplex$ equaling $\splSpaceH^\bsmooth_\bdegree$.
\begin{equation}
\begin{tikzcd}
	\quotientComplex~:~ & \moplus\limits_{\face \in \tmeshF} \faceE{} \pRingH_{\face} \arrow[r] & \moplus\limits_{\edge \in \tmeshInteriorE} \edgeE{} \pRingH_{\edge}/\ideal_\edge \arrow[r] & \moplus\limits_{\vertex \in \tmeshInteriorV} \vertexE{} \pRingH_{\vertex}/\ideal_\vertex \arrow[r] & 0\;.
\end{tikzcd}
\label{eq:quotient_complex}
\end{equation}
We analyze $\quotientComplex$ by performing a decomposition that untangles the individual contributions from the different degree-deficits from Equation \eqref{eq:deg_def_seq} to the dimension of $\splSpaceH^\bsmooth_\bdegree$.

First, for $\square \in \tmeshF \cup \tmeshE \cup \tmeshV$, we define
\begin{align}
	&\degA{\pRingH}{\square}{i} = \pRingH_{\square}/(\pRingH_{\square}\cap \ideal[L]{}_{\pB{i}})\;, &\qquad &\degA{\ideal}{\square}{i} = \ideal_{\square} \cdot \pRingH_{\square, \pA{i}}\;,\\
	&\degB{\pRingH}{\square}{i} = \degA{\pRingH}{\square}{i} \cap \ideal[L]{}_{\pB{i-1}}\;, &\qquad&\degB{\ideal}{\square}{i} = \ideal_{\square} \cdot \degB{\pRingH}{\square}{i}\;.
\end{align}
We also define the complex $\degBa{\quotientComplex}{i}$ as 
\begin{equation}
\begin{tikzcd}
  \moplus\limits_{\face \in \tmeshF} \faceE{} \degB{\pRingH}{\face}{i} \arrow[r]& \moplus\limits_{\edge \in \tmeshInteriorE} \edgeE{} \degB{\pRingH}{\edge}{i} / \degB{\ideal}{\edge}{i} \arrow[r] & \moplus\limits_{\vertex \in \tmeshInteriorV} \vertexE{} \degB{\pRingH}{\vertex}{i} / \degB{\ideal}{\vertex}{i}
\arrow[r] & 0\;, 
\end{tikzcd}
\end{equation}
and the complex $\degAa{\quotientComplex}{i}$ as
\begin{equation}
\begin{tikzcd}
   \moplus\limits_{\face \in \tmeshF} \faceE{} \degA{\pRingH}{\face}{i} \arrow[r]& \moplus\limits_{\edge \in \tmeshInteriorE} \edgeE{} \degA{\pRingH}{\edge}{i} / \degA{\ideal}{\edge}{i} \arrow[r] & \moplus\limits_{\vertex \in \tmeshInteriorV} \vertexE{} \degA{\pRingH}{\vertex}{i} / \degA{\ideal}{\vertex}{i}
\arrow[r] & 0\;.
\end{tikzcd}
\end{equation}
Let us make a few observations about the setup so far and the motivations behind it:
\begin{itemize}
\item By construction of these complexes, we have a sequence of complexes 
\begin{equation}
	\begin{tikzcd}
	0 \arrow[r] & \degBa{\quotientComplex}{i} \arrow[r] & \degAa{\quotientComplex}{i} \arrow[r] & \degAa{\quotientComplex}{i-1} \arrow[r] & 0\;. 
	\end{tikzcd}
\label{eq:exact_stair}
\end{equation}
In other words, the maps induced on the corresponding components of $\degBa{\quotientComplex}{i},\; \degAa{\quotientComplex}{i},\; \degAa{\quotientComplex}{i-1}$ form complexes.
As we will see in Proposition \ref{prop:exact_stair}, these complexes are exact.
                      
\item With regards to the analysis of $\quotientComplex$ from Equation \eqref{eq:quotient_complex}, it is clear from the above definitions that $\degAa{\quotientComplex}{\nlevels+1} = \quotientComplex$. We will perform its analysis using the previous sequence of complexes.

	\item It can be observed that each module in $\degAa{\quotientComplex}{0}$ is identically zero, thereby implying $\degAa{\quotientComplex}{1} = \degBa{\quotientComplex}{1}$. Therefore, it is only necessary to analyze the complexes
	\begin{equation*}
	\degBa{\quotientComplex}{\nlevels+1}\;,\;
	\degBa{\quotientComplex}{\nlevels}\;,\;
	\dots\;,\;
	\degBa{\quotientComplex}{2}\;,\;
	\degBa{\quotientComplex}{1}\;.
	\end{equation*}
	\item Finally, for a given $\abdegree \in \ZZP^2$, it can be observed that the dimension of the $\abdegree$-th graded piece of the top homology module of ${\quotientComplex}_{[\nlevels+1]}$ is equal to the dimension of $\splSpace^\bsmooth_{\mbf{0}, \abdegree - \bdegree_{\nlevels}}$, which is the largest uniform-degree spline space contained in $\splSpace^\bsmooth_{\bdegree, \abdegree}$. Therefore, in a rough sense, the contributions from the complexes $\degBa{\quotientComplex}{\nlevels}\;,\; \dots\;,\;\degBa{\quotientComplex}{1}$ represent the incremental changes in the dimension of $\splSpace^\bsmooth_{\mbf{0}, \abdegree - \bdegree_{\nlevels}}$ that result from the introduction of non-uniformity in bi-degrees. The net effect of these incremental changes on the dimension of  $\splSpace^\bsmooth_{\mbf{0}, \abdegree - \bdegree_{\nlevels}}$ will intuitively bring us close to the quantity of interest, $\dimwp{\splSpace^\bsmooth_{\bdegree, \abdegree}} = \dimwp{\splSpaceH^\bsmooth_{\bdegree}}_\abdegree$.
\end{itemize}

\begin{proposition}\label{prop:exact_stair}
The following is a short exact sequence of complexes
  \begin{equation}
	\begin{tikzcd}
	0 \arrow[r] & \degBa{\quotientComplex}{i} \arrow[r] & \degAa{\quotientComplex}{i} \arrow[r] & \degAa{\quotientComplex}{i-1} \arrow[r] & 0\;.
	\end{tikzcd}
  \end{equation}
\end{proposition}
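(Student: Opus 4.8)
The plan is to verify exactness of \eqref{eq:exact_stair} one homological degree at a time. Every module appearing in $\degBa{\quotientComplex}{i}$, $\degAa{\quotientComplex}{i}$, $\degAa{\quotientComplex}{i-1}$ is a finite direct sum indexed by $\tmeshF$, $\tmeshInteriorE$ or $\tmeshInteriorV$, and a finite direct sum of short exact sequences of $\pRingH$-modules is short exact; so it suffices to check exactness of the corresponding triple of coefficient modules at each cell $\square$, together with the (immediate) fact that the vertical maps are chain maps. This last point holds because the differentials of all three complexes are induced by the single $\pRingH$-linear operator $\boundary$, whose orientation coefficients $\kronFE{}{}$, $\kronEV{}{}$ do not depend on $i$ and therefore commute with the componentwise inclusions and projections that constitute $\degBa{\quotientComplex}{i}\to\degAa{\quotientComplex}{i}\to\degAa{\quotientComplex}{i-1}$.

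First I would record the nesting $\ideal[L]{}_{\pB{i}}\subseteq\ideal[L]{}_{\pB{i-1}}$ — for $1\le i\le\nlevels$ it follows from $\bdegreev_i\ge(0,0)$ in \eqref{eq:deg_def_diff} (equivalently from \eqref{eq:ideal_i_im1}), and it is trivial for $i=\nlevels+1$ since $\ideal[L]{}_{\pB{\nlevels+1}}=0$ — so that $\pRingH_\square\cap\ideal[L]{}_{\pB{i}}\subseteq\pRingH_\square\cap\ideal[L]{}_{\pB{i-1}}\subseteq\pRingH_\square$ for every $\square$. For the face component this is all that is needed: the third isomorphism theorem applied to these three nested modules yields directly the short exact sequence $0\to\degB{\pRingH}{\face}{i}\to\degA{\pRingH}{\face}{i}\to\degA{\pRingH}{\face}{i-1}\to 0$ with the natural inclusion and projection, which are precisely the maps of \eqref{eq:exact_stair} restricted to the face summands. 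For the edge and vertex components I would put this face-type sequence as the top row of a commutative diagram with exact rows, the bottom row being the analogous sequence $0\to\degB{\ideal}{\square}{i}\to\degA{\ideal}{\square}{i}\to\degA{\ideal}{\square}{i-1}\to 0$ of derived ideals and the vertical maps being inclusions. That bottom row is exact once the definitions are unwound: $\degA{\ideal}{\square}{i}$ is the image of $\ideal_\square$ in $\degA{\pRingH}{\square}{i}$, the projection to $\degA{\pRingH}{\square}{i-1}$ carries it onto $\degA{\ideal}{\square}{i-1}$, and its kernel there is $\degA{\ideal}{\square}{i}\cap\degB{\pRingH}{\square}{i}=\degB{\ideal}{\square}{i}$. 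Since the vertical maps are injective, the snake lemma (or a one-line cokernel computation) then gives exactness of the induced sequence of quotients $0\to\degB{\pRingH}{\square}{i}/\degB{\ideal}{\square}{i}\to\degA{\pRingH}{\square}{i}/\degA{\ideal}{\square}{i}\to\degA{\pRingH}{\square}{i-1}/\degA{\ideal}{\square}{i-1}\to 0$, which is the edge/vertex component of \eqref{eq:exact_stair}. Summing over all cells assembles the short exact sequence of complexes.

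The proof is therefore essentially formal, and the only place demanding care is the bookkeeping among the four families $\degA{\pRingH}{}{}$, $\degB{\pRingH}{}{}$, $\degA{\ideal}{}{}$, $\degB{\ideal}{}{}$ and, in particular, the kernel identity $\degA{\ideal}{\square}{i}\cap\degB{\pRingH}{\square}{i}=\degB{\ideal}{\square}{i}$ at a vertex: there $\ideal_\vertex=\sum_{\vertex\in\edge}(\Delta_\edge\mbf{u}^{\bdegree(\edge)})$ is a sum of edge contributions which, as remarked after its definition, need not equal $\pRingH_\vertex\cap\sum_{\vertex\in\edge}(\Delta_\edge)$ at a T-junction, so one must confirm that forming the image of this sum in $\degA{\pRingH}{\vertex}{i}$ and intersecting with $\degB{\pRingH}{\vertex}{i}$ still returns $\degB{\ideal}{\vertex}{i}$. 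Since $\degA{\ideal}{\vertex}{i}$ and $\degB{\ideal}{\vertex}{i}$ are defined precisely as that image and that intersection, the check is routine; the geometry of the vertex enters only in the later computations of homology, not here.
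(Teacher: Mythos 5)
Your proof is correct and follows essentially the same route as the paper: reduce to a cell-by-cell check, handle the face summand by the third isomorphism theorem, and for edges and vertices reduce everything to the modular-law identity $\ideal[L]{}_{\pB{i-1}} \cap \left( \ideal_\square + \ideal[L]{}_{\pB{i}} \right) = \ideal_\square \cap \ideal[L]{}_{\pB{i-1}} + \ideal[L]{}_{\pB{i}}$, which is valid because $\ideal[L]{}_{\pB{i}} \subseteq \ideal[L]{}_{\pB{i-1}}$. The only (cosmetic) difference is that you package the edge/vertex case as a snake-lemma argument on the inclusion of the ideal sequence into the module sequence, whereas the paper uses the same identity to rewrite the quotient complexes directly; your phrasing in terms of the intersections $\pRingH_\square\cap\ideal[L]{}_{\pB{j}}$ also lets you avoid the paper's explicit split into trivial and non-trivial cases.
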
                    
\begin{proof}
	We have to prove that the following is a short exact sequence for all $\square \in \tmeshF \cup \tmeshE \cup \tmeshV$:
	\begin{equation*}
		\begin{tikzcd}[column sep=normal]
			0 \arrow[r] & \degB{\pRingH}{\square}{i} / \degB{\ideal}{\square}{i} \arrow[r] & \degA{\pRingH}{\square}{i} / \degA{\ideal}{\square}{i} \arrow[r] \arrow[r] & \degA{\pRingH}{\square}{i-1} / \degA{\ideal}{\square}{i-1} \arrow[r] & 0\;,
		\end{tikzcd}
	\end{equation*}
	where $\degB{\ideal}{\face}{i} := 0$.
	
	Given $i$, if $\pRingH_{\square} \subsetneq \ideal[L]{}_{\pB{i-1}}$, then we must have $\pRingH_{\square} \subseteq \ideal[L]{}_{\pB{i}}$, which would imply that all spaces of the above complex are identically $0$. Therefore, the only non-trivial case to consider is when $\pRingH_{\square} \supseteq \ideal[L]{}_{\pB{i-1}}$. Explicitly, the non-trivial cases to analyze yield the following complexes:
	\begin{equation*}
	\begin{tikzcd}[column sep=small]
		0 \arrow[r] & \ideal[L]{}_{\pB{i-1}} / \ideal[L]{}_{\pB{i}} \arrow[r] & \pRingH_{\face} / \ideal[L]{}_{\pB{i}} \arrow[r] & \pRingH_{\face} / \ideal[L]{}_{\pB{i-1}} \arrow[r] & 0\;, \qquad  (a)\\
		0 \arrow[r] & \ideal[L]{}_{\pB{i-1}}/\left( \ideal_\edge \cap \ideal[L]{}_{\pB{i-1}} + \ideal[L]{}_{\pB{i}} \right) \arrow[r] & \pRingH_{\edge}/\left( \ideal_\edge + \ideal[L]{}_{\pB{i}} \right) \arrow[r] & \pRingH_{\edge}/\left( \ideal_\edge + \ideal[L]{}_{\pB{i-1}} \right) \arrow[r] & 0\;,\qquad (b)\\
		0 \arrow[r] & \ideal[L]{}_{\pB{i-1}}/\left( \ideal_\vertex \cap \ideal[L]{}_{\pB{i-1}} + \ideal[L]{}_{\pB{i}} \right) \arrow[r] & \pRingH_{\vertex}/\left( \ideal_\vertex + \ideal[L]{}_{\pB{i}} \right) \arrow[r] & \pRingH_{\vertex}/\left( \ideal_\vertex + \ideal[L]{}_{\pB{i-1}} \right) \arrow[r] & 0\;. \qquad (c)
	\end{tikzcd}
	\end{equation*}
	It is easy to see that $(a)$ is a short exact sequence. The same observation follows for $(b)$ and $(c)$ since, for $\square \in \tmeshE \cup \tmeshV$, by definition we have
	\begin{equation*}
		\ideal[L]{}_{\pB{i-1}} \cap \left( \ideal_\square + \ideal[L]{}_{\pB{i}} \right) = \left( \ideal_\square \cap \ideal[L]{}_{\pB{i-1}} + \ideal[L]{}_{\pB{i}} \right)\;,
	\end{equation*}
	which transforms $(b)$ and $(c)$ into
	\begin{equation*}
	\begin{tikzcd}
		0 \arrow[r] & \left(\ideal_\square + \ideal[L]{}_{\pB{i-1}}\right)/\left( \ideal_\square + \ideal[L]{}_{\pB{i}} \right) \arrow[r] & \pRingH_{\square}/\left( \ideal_\square + \ideal[L]{}_{\pB{i}} \right) \arrow[r] & \pRingH_{\square}/\left( \ideal_\square + \ideal[L]{}_{\pB{i-1}} \right) \arrow[r] & 0\;.
	\end{tikzcd}
	\end{equation*}
\end{proof}

As stated above, $\quotientComplex$ can be studied by studying the complexes $\degBa{\quotientComplex}{i}$, $i = 1, 2, \dots, \nlevels+1$.
We do so by analyzing the following short exact sequence of chain complexes for each $i$.
\begin{equation}
	\begin{tikzcd}
		~ & ~ & 0 \arrow[d,""] & 0 \arrow[d,""] & ~ \\
		\degBa{\idealComplex}{i}~:~ & 0\arrow[r] \arrow[d]& \moplus\limits_{\edge \in \tmeshInteriorComponentE{,\pB{i}}{}} \edgeE{} \degB{\ideal}{\edge}{i} \arrow[r] \arrow[d] & \moplus\limits_{\vertex \in \tmeshInteriorComponentV{,\pB{i}}{}} \vertexE{} \degB{\ideal}{\vertex}{i} \arrow[r] \arrow[d] & 0 \\
		\degBa{\constantComplex}{i}~:~ & \moplus\limits_{\face \in \tmeshComponentF{,\pB{i}}{}} \faceE{} \degB{\pRingH}{\face}{i} \arrow[r] \arrow[d] & \moplus\limits_{\edge \in \tmeshInteriorComponentE{, \pB{i}}{}} \edgeE{} \degB{\pRingH}{\edge}{i} \arrow[r] \arrow[d] & \moplus\limits_{\vertex \in \tmeshInteriorComponentV{, \pB{i}}{}} \vertexE{} \degB{\pRingH}{\vertex}{i} \arrow[r] \arrow[d] & 0 \\
		\degBa{\quotientComplex}{i}~:~ & \moplus\limits_{\face \in \tmeshComponentF{, \pB{i}}{}} \faceE{} \degB{\pRingH}{\face}{i} \arrow[r] & \moplus\limits_{\edge \in \tmeshInteriorComponentE{, \pB{i}}{}} \edgeE{} \degB{\pRingH}{\edge}{i}/\degB{\ideal}{\edge}{i} \arrow[r] \arrow[d] & \moplus\limits_{\vertex \in \tmeshInteriorComponentV{, \pB{i}}{}} \vertexE{} \degB{\pRingH}{\vertex}{i}/\degB{\ideal}{\vertex}{i} \arrow[r] \arrow[d] & 0 \\
		~ & ~ & 0 & 0 & ~
	\end{tikzcd}
\label{eq:base_complex}
\end{equation}
Note that the morphisms above are obtained in the obvious way by composing (restrictions of) $\boundary$ with quotient maps.
In Equation \eqref{eq:base_complex}, $\tmeshComponentF{, \pB{i}}{}, \tmeshInteriorComponentE{, \pB{i}}{}$ and $\tmeshInteriorComponentV{, \pB{i}}{}$ are the active components of the mesh with respect to the index $i$; this notion was hinted at in the proof of Proposition \ref{prop:exact_stair} and is defined next.
\begin{definition}[Active T-mesh]\label{def:activeTmesh}
	For $i = 1, 2, \dots, \nlevels+1$, the active T-mesh $\tmeshComponent{\pB{i}}{}$ is composed of
	\begin{itemize}
		\item $\tmeshComponentF{,\pB{i}}{} \subseteq \tmeshF$ such that $\face \in \tmeshComponentF{,\pB{i}}{} \xLeftrightarrow{\text{def.}} \pRingH_\face \supseteq \ideal[L]{}_{\pB{i-1}}$;
		\item $\tmeshComponentE{,\pB{i}}{} \subseteq \tmeshE$ such that $\edge \in \tmeshComponentE{,\pB{i}}{} \xLeftrightarrow{\text{def.}} \pRingH_\edge \supseteq \ideal[L]{}_{\pB{i-1}}$; and,
		\item $\tmeshComponentV{,\pB{i}}{} \subseteq \tmeshV$ such that $\vertex \in \tmeshComponentV{,\pB{i}}{} \xLeftrightarrow{\text{def.}} \pRingH_\vertex \supseteq \ideal[L]{}_{\pB{i-1}}$.
	\end{itemize}
	The domain of this active T-mesh, $\domainComponent{\pB{i}}{}$, is defined to be $\cup_{\face\in\tmeshComponentF{,\pB{i}}{}}\face \subset \RR^2$.
\end{definition}
The symbols for interior edges, vertices, horizontal and vertical edges etc. are all appended with a subscript of $\pB{i}$ when talking about the active mesh $\tmeshComponent{\pB{i}}{}$; see Equation \eqref{eq:base_complex}. Note that ``interior'' will always mean interior with respect to $\domain$. It should be noted that $\tmeshComponentE{,\pB{i}}{}$ is exactly the set of edges that are contained in $\cup_{\face\in\tmeshComponentF{,\pB{i}}{}}\boundary\face$. Similarly, $\tmeshComponentV{,\pB{i}}{}$ is exactly the set of vertices that are contained in $\cup_{\edge\in\tmeshComponentE{,\pB{i}}{}}\boundary\edge$.

\begin{remark}
	Note that Equation \eqref{eq:deg_def_diff} may introduce more active meshes than strictly necessary.
	However, we choose the degree-deficit sequence in compliance with Equation \eqref{eq:deg_def_diff} because it simplifies the analysis later on.
	In particular, the results that are affected by this simplification are Lemmas \ref{lem:new_relations} and \ref{lem:new_relations_transversal} (and those that depend on these lemmas).
\end{remark}

\begin{example}\label{ex:tmesh_activeMesh}
	Consider the setup in Example \ref{ex:tmesh_degreeDist}, and let us choose the shorter sequence of degree deficits provided therein.
	Then, the associated active meshes with respect to $i = 1, 2, 3$ are shown in Figure \ref{fig:tmesh_levels}. The bottom, middle and top layers correspond to $\tmeshComponent{\pB{3}}{}$, $\tmeshComponent{\pB{2}}{}$ and $\tmeshComponent{\pB{1}}{}$, respectively. In other words, the layer corresponding to $\tmeshComponent{\pB{i}}{}$ is such that for the faces $\face$, edges $\edge$ and vertices $\vertex$ contained in it, we have the containment $\pRingH_\square \supseteq \ideal[L]{}_{\pB{i-1}}$, $\square \in \{ \face, \edge, \vertex \}$.
\end{example}

\begin{figure}
	\begin{subfigure}{\linewidth}
		\centering
		\hspace{2.2cm}
	\tikzsetnextfilename{./tikz/images/tmesh_levels}%
	\begin{tikzpicture}[remember picture]
	\begin{scope}[yshift=-80,every node/.append style={
		yslant=0.5,xslant=-1},yslant=0.5,xslant=-1]
		\foreach \i in {0,...,4}{
			\foreach \j in {1,...,4}{
				\node (a0\i\j) at (\i,\j) {};
			}
		}
		\foreach \i in {1,...,5}{
			\foreach \j in {0,...,1}{
				\node (b0\i\j) at (\i-1,\j) {};
			}
		}
		
		\draw[step=1,eThickness] (a001) grid (a044);
		\draw[step=1,eThickness] (b010) grid (b051);
		
		\draw[eThickness] ($(a021)!0.5!(a031)$) -- ($(a022)!0.5!(a032)$);
		\draw[eThickness] ($(a012)!0.5!(a013)$) -- ($(a022)!0.5!(a023)$);
		\draw[eThickness] ($(a011)!0.5!(a012)$) -- ($(a031)!0.5!(a032)$);
		
		\draw[bThickness] (a001.center) -- (b010.center) -- (b050.center) -- (b051.center) -- (a041.center) -- (a044.center) -- (a004.center) -- (a001.center);
	\end{scope}
	
	\begin{scope}[yshift=0,every node/.append style={
		yslant=0.5,xslant=-1},yslant=0.5,xslant=-1]
		\foreach \i in {0,...,4}{
			\foreach \j in {1,...,4}{
				\node (a1\i\j) at (\i,\j) {};
			}
		}
		\foreach \i in {1,...,5}{
			\foreach \j in {0,...,1}{
				\node (b1\i\j) at (\i-1,\j) {};
			}
		}
	
		\fill[white,fill opacity=0.9] (a101) rectangle (a144);
		\fill[white,fill opacity=0.9] (b110) rectangle (b151);
		\draw[dashed,bThickness] (a101.center) -- (b110.center) -- (b150.center) -- (b151.center) -- (a141.center) -- (a144.center) -- (a104.center) -- (a101.center);
		\draw[bThickness] (a102.center) -- (a104.center) -- (a144.center) -- (a143.center);
		
		\draw[step=1,eThickness] (a103) grid (a144);
		\draw[step=1,eThickness] (a102) grid (a113);
		\draw[step=1,eThickness] (a121) grid (a132);
		
		\draw[eThickness] ($(a121)!0.5!(a131)$) -- ($(a122)!0.5!(a132)$);
		\draw[eThickness] ($(a121)!0.5!(a122)$) -- ($(a131)!0.5!(a132)$);
		
		\draw[thick,fill=myGray2] (a131) circle (2pt);
	\end{scope}
	
	\begin{scope}[yshift=80,every node/.append style={
		yslant=0.5,xslant=-1},yslant=0.5,xslant=-1]
		\foreach \i in {0,...,4}{
			\foreach \j in {1,...,4}{
				\node (a2\i\j) at (\i,\j) {};
			}
		}
		\foreach \i in {1,...,5}{
			\foreach \j in {0,...,1}{
				\node (b2\i\j) at (\i-1,\j) {};
			}
		}
	
		\fill[white,fill opacity=0.9] (a201) rectangle (a244);
		\fill[white,fill opacity=0.9] (b210) rectangle (b251);
		\draw[dashed,bThickness] (a201.center) -- (b210.center) -- (b250.center) -- (b251.center) -- (a241.center) -- (a244.center) -- (a204.center) -- (a201.center);
		
		\draw[step=1,eThickness] (a221) grid (a232);
		
		\draw[eThickness] ($(a221)!0.5!(a231)$) -- ($(a222)!0.5!(a232)$);
		\draw[eThickness] ($(a221)!0.5!(a222)$) -- ($(a231)!0.5!(a232)$);
		
		\draw[thick,fill=myGray2] (a231) circle (2pt);
	\end{scope}

\end{tikzpicture}
\begin{tikzpicture}[remember picture,overlay]
	\node[above left] (h1) at (a104) {$H_1(\domain_{[i]}, \boundary\domain_{[i]}\cap\boundary\domain) = 0$};
	\draw[thin] (h1.north) to[out=90,in=-165] ($(a204.west)!0.5!(a203.west)$);
	\draw[thin] (h1.south) to[out=-90,in=200] ($(a104.west)!0.5!(a103.west)$);
	\draw[thin] (h1.south) to[out=-125,in=170] ($(a014.north)!0.5!(a024.north)$);
	\node[left = 0.8cm of a214] (h012) {$H_0(\domain_{[i]}, \boundary\domain_{[i]}\cap\boundary\domain) \isomorphic \ZZ$};
	\draw[thin] (h012.east) to[out=25,in=145] (a231.center);
	\draw[thin] (h012.east) to[out=25,in=125] (a131.center);
	\node[left = 0.8cm of a014] (h00) {$H_0(\domain_{[i]}, \boundary\domain_{[i]}\cap\boundary\domain) = 0$};
	\draw[thin] (h00.south) to[out=-90,in=-180] ($(a002.west)!0.5!(a003.west)$);
	\node[above right] (i2) at (a241) {$i = 1$};
	\node[above right] (i1) at (a141) {$i = 2$};
	\node[above right] (i0) at (a041) {$i = 3$};
\end{tikzpicture}%

	\end{subfigure}
	\caption{Active regions of the T-mesh $\tmesh$ from Figure \ref{fig:tmesh_example} for different indices $i \in \{1, 2, 3\}$; see Example \ref{ex:tmesh_activeMesh} for reference. Generators of $H_0(\domainComponent{\pB{i}}{}, \boundary\domainComponent{\pB{i}}{}\cap\boundary\domain)$ have been shown as gray disks.
		Boundaries of the meshes have been emphasized in bold and solid lines correspond to the active boundary, i.e., $\boundary\domain \cap \domainComponent{\pB{i}}{}$.
	}
	\label{fig:tmesh_levels}
\end{figure}
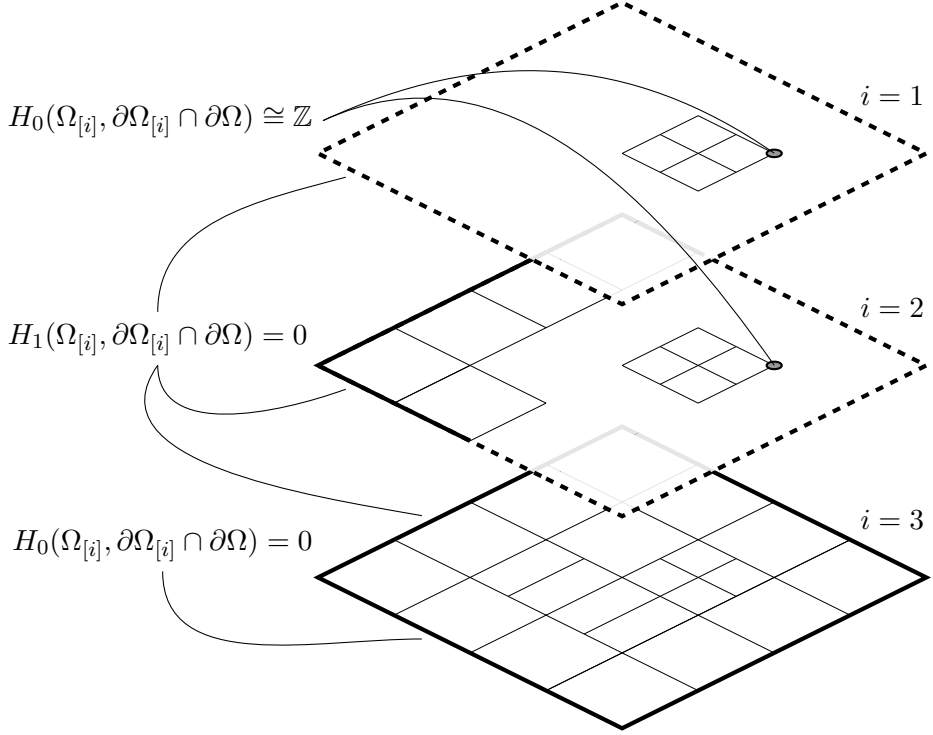

\subsection{Summary of approach}\label{ss:approach_summary}
Given $\abdegree \in \ZZP^2$, let $\euler{\mathcal{A}}_\abdegree$ be the Euler characteristic of the $\abdegree$-th graded piece of the complex $\mathcal{A}~:~0 \rightarrow A_{k} \rightarrow A_{k-1} \rightarrow \cdots \rightarrow A_0 \rightarrow 0$,
\begin{equation}
\begin{split}
	\euler{\mathcal{A}}_\abdegree = \sum_{j=0}^{k} (-1)^j \dimwp{H_j(\mathcal{A})}_\abdegree = \sum_{j=0}^{k} (-1)^j \dimwp{A_j}_\abdegree\;.
\end{split}
\label{eq:euler}
\end{equation}
Then the Euler characteristic of $\quotientComplex = \degAa{\quotientComplex}{\nlevels+1}$ helps quantify the homological contribution to the dimension of $\splSpaceH^\bsmooth_\bdegree$.
\begin{definition}[Homological contribution to dimension]\label{def:homological_contribution}
	Given $\abdegree \in \ZZP^2$, we define the homological contribution to the dimension of $\splSpaceH^\bsmooth_\bdegree$ in bi-degree $\abdegree$ as
	\begin{equation*}
	\begin{split}
		\homoDim^\bsmooth_{\bdegree, \abdegree} = 	\dimwp{\splSpaceH^\bsmooth_\bdegree}_\abdegree -\euler{\quotientComplex}_\abdegree\;,
	\end{split}
	\end{equation*}
so that we have 
$	\dimwp{\splSpaceH^\bsmooth_\bdegree}_\abdegree = \euler{\quotientComplex}_\abdegree + \homoDim^\bsmooth_{\bdegree, \abdegree}$.
\end{definition}

It will be shown in Section \ref{ss:vec_dim_formulas} that the Euler characteristic of $\quotientComplex$ is computable exactly using the rightmost expression in Equation \eqref{eq:euler}. This leaves only the computation (or estimation) of $\homoDim^\bsmooth_{\bdegree, \abdegree}$ for the purpose of determining (bounds on) the dimension of $\splSpaceH^\bsmooth_\bdegree$. We approach this task as follows. First, using the short exact sequence from Equation \eqref{eq:exact_stair}, we build the long exact sequence of homology modules
\begin{equation}
	\begin{tikzcd}[column sep=tiny]
		0 \arrow[r] & H_2\left( \degBa{\quotientComplex}{i} \right) \arrow[r] & H_2\left( \degAa{\quotientComplex}{i} \right)  \arrow[r] & H_2\left( \degAa{\quotientComplex}{i-1} \right) \arrow[r, "\hat{\boundary}_i"] &
	H_1\left( \degBa{\quotientComplex}{i} \right) \arrow[r] & \cdots \arrow[r] & H_0\left( \degAa{\quotientComplex}{i-1} \right) \arrow[r] & 0
	\end{tikzcd}
	\label{eq:long_ex_seq_homology}
\end{equation}
with $\hat{\boundary}_1 \equiv 0$. The above implies that
\begin{equation*}
	\euler{\degAa{\quotientComplex}{i}}_\abdegree = \euler{\degBa{\quotientComplex}{i}}_\abdegree + \euler{\degAa{\quotientComplex}{i-1}}_\abdegree\;.
\end{equation*}
Summing up the terms for $i=1\ldots, \nlevels+1$, we obtain
\begin{equation*}
	\begin{split}
		\euler{{\quotientComplex}}_\abdegree = \euler{\degAa{\quotientComplex}{\nlevels+1}}_\abdegree = \sum_{i = 1}^{\nlevels+1} \euler{\degBa{\quotientComplex}{i}}_\abdegree
	\end{split}
\end{equation*}
since $\degAa{\quotientComplex}{0}=0$ and $\degAa{\quotientComplex}{\nlevels+1}={\quotientComplex}$.
That is, the Euler characteristic of $\quotientComplex$ decomposes additively in the above manner.
The next results simply states a similar additive decomposition does not hold for the $2$-homologies.

\begin{lemma}\label{lem:degree_decomposition_inexactness}
	\begin{equation*}
		\dimwp{\splSpaceH^\bsmooth_\bdegree}_\abdegree = \sum_{i = 1}^{\nlevels+1} \dimwp{H_2\left(\degBa{\quotientComplex}{i}\right)}_\abdegree - \dimwp{\im {\hat{\boundary}_i}}_\abdegree\;.
	\end{equation*}
\end{lemma}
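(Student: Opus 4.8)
The plan is to obtain the identity as a formal consequence of the long exact sequence in homology \eqref{eq:long_ex_seq_homology}, one for each $i = 1, \dots, \nlevels+1$, coming from the short exact sequence of complexes in Proposition \ref{prop:exact_stair}, combined with the two endpoint facts $\degAa{\quotientComplex}{0} = 0$ and $\degAa{\quotientComplex}{\nlevels+1} = \quotientComplex$ (whose top homology module is $\splSpaceH^\bsmooth_\bdegree$).

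First I would fix $\abdegree \in \ZZP^2$ and pass to $\abdegree$-th graded pieces everywhere: since the modules involved are finitely generated over the $\ZZ^2$-graded ring $\pRingH$, each graded piece is a finite-dimensional $\RR$-vector space, and all the connecting maps are graded of degree $0$, so rank--nullity may be applied degree-by-degree. Reading off the left end of \eqref{eq:long_ex_seq_homology},
\begin{equation*}
	0 \longrightarrow H_2\left(\degBa{\quotientComplex}{i}\right) \longrightarrow H_2\left(\degAa{\quotientComplex}{i}\right) \overset{\phi_i}{\longrightarrow} H_2\left(\degAa{\quotientComplex}{i-1}\right) \overset{\hat{\boundary}_i}{\longrightarrow} H_1\left(\degBa{\quotientComplex}{i}\right)\;,
\end{equation*}
exactness tells us that $H_2(\degBa{\quotientComplex}{i}) \to H_2(\degAa{\quotientComplex}{i})$ is injective with image $\kerwp{\phi_i}$, and that $\imwp{\phi_i} = \kerwp{\hat{\boundary}_i}$. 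Applying rank--nullity to $\phi_i$ and then to $\hat{\boundary}_i$ on the $\abdegree$-th graded pieces gives the recursion
\begin{equation*}
	\dimwp{H_2\left(\degAa{\quotientComplex}{i}\right)}_\abdegree = \dimwp{H_2\left(\degBa{\quotientComplex}{i}\right)}_\abdegree + \dimwp{H_2\left(\degAa{\quotientComplex}{i-1}\right)}_\abdegree - \dimwp{\imwp{\hat{\boundary}_i}}_\abdegree\;.
\end{equation*}

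Next I would sum this over $i = 1, \dots, \nlevels+1$: the terms $\dimwp{H_2(\degAa{\quotientComplex}{i})}_\abdegree$ telescope against $\dimwp{H_2(\degAa{\quotientComplex}{i-1})}_\abdegree$, leaving $\dimwp{H_2(\degAa{\quotientComplex}{\nlevels+1})}_\abdegree - \dimwp{H_2(\degAa{\quotientComplex}{0})}_\abdegree = \dimwp{\splSpaceH^\bsmooth_\bdegree}_\abdegree - 0$, using $\degAa{\quotientComplex}{0} = 0$ and $\degAa{\quotientComplex}{\nlevels+1} = \quotientComplex$. Rearranging yields exactly the claimed formula; the $i = 1$ summand is consistent with $\hat{\boundary}_1 \equiv 0$, i.e. $\imwp{\hat{\boundary}_1} = 0$. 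There is no substantial obstacle here beyond bookkeeping: the only point that merits a line of justification is that passing to $\abdegree$-th graded pieces is exact — so that \eqref{eq:long_ex_seq_homology} stays exact after grading — and that every graded piece is finite-dimensional, which is what legitimizes the dimension count.
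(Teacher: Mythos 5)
Your proposal is correct and follows essentially the same route as the paper: extract the four-term exact sequence $0 \to H_2(\degBa{\quotientComplex}{i}) \to H_2(\degAa{\quotientComplex}{i}) \to H_2(\degAa{\quotientComplex}{i-1}) \to \im{\hat{\boundary}_i} \to 0$ from Equation \eqref{eq:long_ex_seq_homology}, take dimensions of $\abdegree$-th graded pieces to get the recursion, and telescope over $i$ using $\degAa{\quotientComplex}{0}=0$ and $\degAa{\quotientComplex}{\nlevels+1}=\quotientComplex$. The remarks on exactness of passing to graded pieces are a reasonable extra justification but do not change the argument.
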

\begin{proof}
	From Equation \eqref{eq:long_ex_seq_homology}, we have the following exact sequence for all $i \ge 1$,
	\begin{equation*}
		\begin{tikzcd}[column sep=normal]
			0 \arrow[r] & H_2\left( \degBa{\quotientComplex}{i} \right) \arrow[r] & H_2\left( \degAa{\quotientComplex}{i} \right)  \arrow[r] & H_2\left( \degAa{\quotientComplex}{i-1} \right) \arrow[r, "\hat{\boundary}_i"] &
			\im{\hat{\boundary}_i} \arrow[r] & 0\;.
		\end{tikzcd}
	\end{equation*}
	The above implies that
	\begin{equation*}
		\dimwp{H_2\left( \degAa{\quotientComplex}{i} \right)}_\abdegree = \dimwp{H_2\left( \degAa{\quotientComplex}{i-1} \right)}_\abdegree + \dimwp{H_2\left(\degBa{\quotientComplex}{i}\right)}_\abdegree - \dimwp{\im{\hat{\boundary}_i}}_\abdegree\;.
	\end{equation*}
	The claim follows upon summing up the terms for $i=1\ldots, \nlevels+1$.
\end{proof}

Using Lemma \ref{lem:degree_decomposition_inexactness}, we can simplify the expression for $\euler{{\quotientComplex}}_\abdegree$,
\begin{equation*}
	\begin{split}
          \euler{\quotientComplex}_\abdegree &= \sum_{i = 1}^{\nlevels+1} \left(\dimwp{H_2\left(  \degBa{\quotientComplex}{i} \right)}_\abdegree
          -\dimwp{H_1\left(  \degBa{\quotientComplex}{i} \right)}_\abdegree + \dimwp{H_0\left(  \degBa{\quotientComplex}{i} \right)}_\abdegree\right)\;,\\
          & = \dim (\splSpaceH^\bsmooth_{\bdegree})_{\abdegree} +\sum_{i=1}^{\nlevels+1}\left(\dimwp{\im{\hat{\boundary}_i}}_\abdegree -\dimwp{H_1\left(  \degBa{\quotientComplex}{i} \right)}_\abdegree + \dimwp{H_0\left(  \degBa{\quotientComplex}{i} \right)}_\abdegree\right)\;.\\
	\end{split}
\end{equation*}
A final simplification is afforded by the following assumption.
\begin{assumption}\label{ass:no_holes}
	 All complexes $\degBa{\constantComplex}{i}$ are without holes, i.e., $H_1\left( \degBa{\constantComplex}{i}  \right) = 0$ for all $i$.
\end{assumption}
\begin{proposition}\label{prop:homo_dim}
	\begin{equation*}
		\begin{split}
			\homoDim^\bsmooth_{\bdegree, \abdegree}&:=\dimwp{\splSpaceH^\bsmooth_{\bdegree}}_{\abdegree} - \euler{\quotientComplex}_\abdegree\;,\\
			&=  \sum_{i = 1}^{\nlevels+1}\dimwp{H_0\left(  \degBa{\idealComplex}{i} \right)}_\abdegree - \dimwp{H_0\left(  \degBa{\constantComplex}{i} \right)}_\abdegree - \dimwp{\im{\hat{\boundary}_i}}_\abdegree\;.
		\end{split}
	\end{equation*}
\end{proposition}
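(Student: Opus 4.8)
The plan is to combine the additive decomposition of the Euler characteristic already derived above with the long exact sequences from Equation \eqref{eq:long_ex_seq_homology}, and then to exploit Assumption \ref{ass:no_holes} together with the short exact sequence \eqref{eq:base_complex} to rewrite the $H_2$- and $H_0$-terms in the desired form. I would start from the expression for $\euler{\quotientComplex}_\abdegree$ obtained just before Assumption \ref{ass:no_holes}, namely
\begin{equation*}
	\euler{\quotientComplex}_\abdegree = \dim\bigl(\splSpaceH^\bsmooth_{\bdegree}\bigr)_{\abdegree} + \sum_{i=1}^{\nlevels+1}\left(\dimwp{\im{\hat{\boundary}_i}}_\abdegree - \dimwp{H_1\bigl(\degBa{\quotientComplex}{i}\bigr)}_\abdegree + \dimwp{H_0\bigl(\degBa{\quotientComplex}{i}\bigr)}_\abdegree\right)\;.
\end{equation*}
Rearranging, $\homoDim^\bsmooth_{\bdegree,\abdegree} = \dimwp{\splSpaceH^\bsmooth_\bdegree}_\abdegree - \euler{\quotientComplex}_\abdegree = \sum_{i=1}^{\nlevels+1}\left(\dimwp{H_1\bigl(\degBa{\quotientComplex}{i}\bigr)}_\abdegree - \dimwp{H_0\bigl(\degBa{\quotientComplex}{i}\bigr)}_\abdegree - \dimwp{\im{\hat{\boundary}_i}}_\abdegree\right)$, so everything reduces to identifying $\dimwp{H_1\bigl(\degBa{\quotientComplex}{i}\bigr)}_\abdegree - \dimwp{H_0\bigl(\degBa{\quotientComplex}{i}\bigr)}_\abdegree$ with $\dimwp{H_0\bigl(\degBa{\idealComplex}{i}\bigr)}_\abdegree - \dimwp{H_0\bigl(\degBa{\constantComplex}{i}\bigr)}_\abdegree$.

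To get that identification I would apply the long exact homology sequence to the vertical short exact sequence of complexes $0 \to \degBa{\idealComplex}{i} \to \degBa{\constantComplex}{i} \to \degBa{\quotientComplex}{i} \to 0$ in Equation \eqref{eq:base_complex}. Since $\degBa{\idealComplex}{i}$ is concentrated in homological degrees $0$ and $1$ (its face-term is zero), and since Assumption \ref{ass:no_holes} gives $H_1\bigl(\degBa{\constantComplex}{i}\bigr) = 0$, the relevant tail of the long exact sequence reads
\begin{equation*}
	0 \to H_1\bigl(\degBa{\quotientComplex}{i}\bigr) \to H_0\bigl(\degBa{\idealComplex}{i}\bigr) \to H_0\bigl(\degBa{\constantComplex}{i}\bigr) \to H_0\bigl(\degBa{\quotientComplex}{i}\bigr) \to 0\;,
\end{equation*}
after one checks that $H_2\bigl(\degBa{\constantComplex}{i}\bigr) \to H_2\bigl(\degBa{\quotientComplex}{i}\bigr)$ is an isomorphism (the two complexes agree in homological degree $2$, both being $\moplus_\face \faceE{}\degB{\pRingH}{\face}{i}$, and the $\idealComplex$-complex contributes nothing there). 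Taking graded pieces and alternating dimensions in this four-term exact sequence yields exactly $\dimwp{H_1\bigl(\degBa{\quotientComplex}{i}\bigr)}_\abdegree - \dimwp{H_0\bigl(\degBa{\constantComplex}{i}\bigr)}_\abdegree + \dimwp{H_0\bigl(\degBa{\idealComplex}{i}\bigr)}_\abdegree - \dimwp{H_0\bigl(\degBa{\quotientComplex}{i}\bigr)}_\abdegree = 0$, which is the needed relation; substituting it into the rearranged formula for $\homoDim^\bsmooth_{\bdegree,\abdegree}$ and summing over $i$ completes the argument.

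The main obstacle I anticipate is bookkeeping at the two boundary values $i=1$ and $i=\nlevels+1$, and making sure the vanishing claims used above are genuinely justified: that each module of $\degBa{\quotientComplex}{0}$ is zero (already noted in the excerpt, so $\degAa{\quotientComplex}{1}=\degBa{\quotientComplex}{1}$ and $\hat{\boundary}_1\equiv 0$), and that the connecting map in the long exact sequence for the vertical sequence \eqref{eq:base_complex} behaves as claimed — in particular that $\degBa{\idealComplex}{i}$ really is a subcomplex sitting in degrees $0,1$ with $H_j = 0$ for $j\ge 2$, and that the map $H_2(\degBa{\constantComplex}{i}) \to H_2(\degBa{\quotientComplex}{i})$ is injective with the right cokernel. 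None of these is deep, but they must be stated carefully so that the four-term exact sequence above is valid verbatim for every $i$; once it is, the dimension count is purely formal and the proposition follows by summation.
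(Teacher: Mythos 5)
Your argument is essentially the paper's own proof: it combines the Euler-characteristic identity derived just before Assumption \ref{ass:no_holes} (equivalently, Lemma \ref{lem:degree_decomposition_inexactness}) with the four-term exact sequence $0 \to H_1(\degBa{\quotientComplex}{i}) \to H_0(\degBa{\idealComplex}{i}) \to H_0(\degBa{\constantComplex}{i}) \to H_0(\degBa{\quotientComplex}{i}) \to 0$ coming from the vertical short exact sequence in Equation \eqref{eq:base_complex} together with $H_1(\degBa{\constantComplex}{i})=0$. One small correction: your parenthetical claim that $H_2(\degBa{\constantComplex}{i}) \to H_2(\degBa{\quotientComplex}{i})$ is an isomorphism is neither needed nor true in general (the long exact sequence only gives injectivity, with cokernel $H_1(\degBa{\idealComplex}{i})$); exactness of the four-term sequence at $H_1(\degBa{\quotientComplex}{i})$ already follows directly from the vanishing of $H_1(\degBa{\constantComplex}{i})$.
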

\begin{proof}
	The proof follows from Lemma \ref{lem:degree_decomposition_inexactness} and the diagram in Equation \eqref{eq:base_complex}. Indeed, following Assumption \ref{ass:no_holes}, we obtain the long exact sequence of homology modules
	\begin{equation*}
	\begin{tikzcd}
		0 \arrow[r] & H_1\left( \degBa{\quotientComplex}{i} \right) \arrow[r] & H_0\left( \degBa{\idealComplex}{i} \right)  \arrow[r] & H_0\left( \degBa{\constantComplex}{i} \right) \arrow[r] &
		H_0\left( \degBa{\quotientComplex}{i} \right) \arrow[r] & 0\;,
	\end{tikzcd}
	\end{equation*}
	which yields the claim when combined with Equation \eqref{eq:euler}.
\end{proof}
Therefore, the final problem is the computation (or estimation) of the dimension of ${\im{\hat{\boundary}_i}}_\abdegree$ and the difference in the dimensions of ${H_0\left(  \degBa{\idealComplex}{i} \right)}_\abdegree$ and ${H_0\left( \degBa{\constantComplex}{i} \right)}_\abdegree$  for all $i$.
The next example shows that in general $\dimwp{\im{\hat{\boundary}_i}}_\abdegree$ is not equal to zero.
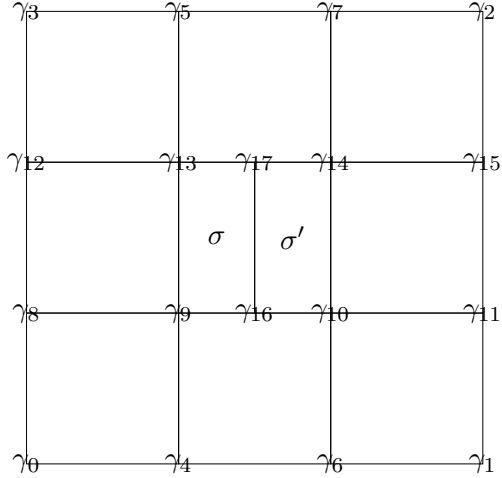
\begin{figure}
	\centering
	\tikzsetnextfilename{./tikz/images/tmesh_counterExample}%
	\begin{tikzpicture}[scale=1.5]
	 \begin{scope}
		\node (v0) at (0.0000000000000000,0.0000000000000000) {$\vertex_{0}$};
		\node (v1) at (4.0000000000000000,0.0000000000000000) {$\vertex_{1}$};
		\node (v2) at (4.0000000000000000,4.0000000000000000) {$\vertex_{2}$};
		\node (v3) at (0.0000000000000000,4.0000000000000000) {$\vertex_{3}$};
		\node (v4) at (1.3333333333333333,0.0000000000000000) {$\vertex_{4}$};
		\node (v5) at (1.3333333333333333,4.0000000000000000) {$\vertex_{5}$};
		\node (v6) at (2.6666666666666665,0.0000000000000000) {$\vertex_{6}$};
		\node (v7) at (2.6666666666666665,4.0000000000000000) {$\vertex_{7}$};
		\node (v8) at (0.0000000000000000,1.3333333333333333) {$\vertex_{8}$};
		\node (v9) at (1.3333333333333333,1.3333333333333333) {$\vertex_{9}$};
		\node (v10) at (2.6666666666666665,1.3333333333333333) {$\vertex_{10}$};
		\node (v11) at (4.0000000000000000,1.3333333333333333) {$\vertex_{11}$};
		\node (v12) at (0.0000000000000000,2.6666666666666665) {$\vertex_{12}$};
		\node (v13) at (1.3333333333333333,2.6666666666666665) {$\vertex_{13}$};
		\node (v14) at (2.6666666666666665,2.6666666666666665) {$\vertex_{14}$};
		\node (v15) at (4.0000000000000000,2.6666666666666665) {$\vertex_{15}$};
		\node (v16) at (2.0000000000000000,1.3333333333333333) {$\vertex_{16}$};
		\node (v17) at (2.0000000000000000,2.6666666666666665) {$\vertex_{17}$};
		
		\node at ($(v9.center)!0.5!(v17.center)$) {$\face$};
		\node at ($(v16.center)!0.5!(v14.center)$) {$\face'$};

		 \draw[step=1,eThickness] (v4.center) -- (v9.center) -- (v8.center) -- (v0.center) -- cycle;
		 \draw[step=1,eThickness] (v6.center) -- (v10.center) -- (v16.center) -- (v9.center) -- (v4.center) -- cycle;
		 \draw[step=1,eThickness] (v1.center) -- (v11.center) -- (v10.center) -- (v6.center) -- cycle;
		 \draw[step=1,eThickness] (v9.center) -- (v13.center) -- (v12.center) -- (v8.center) -- cycle;
		 \draw[step=1,eThickness] (v16.center) -- (v17.center) -- (v13.center) -- (v9.center) -- cycle;
		 \draw[step=1,eThickness] (v11.center) -- (v15.center) -- (v14.center) -- (v10.center) -- cycle;
		 \draw[step=1,eThickness] (v13.center) -- (v5.center) -- (v3.center) -- (v12.center) -- cycle;
		 \draw[step=1,eThickness] (v17.center) -- (v14.center) -- (v7.center) -- (v5.center) -- (v13.center) -- cycle;
		 \draw[step=1,eThickness] (v15.center) -- (v2.center) -- (v7.center) -- (v14.center) -- cycle;
		 \draw[step=1,eThickness] (v10.center) -- (v14.center) -- (v17.center) -- (v16.center) -- cycle;
	 \end{scope}
\end{tikzpicture}%

	\caption{A mesh that serves to counter the expectation that the spline space dimension can be additively decomposed in the same manner as the Euler characteristic of $\quotientComplex$; see Example \ref{ex:dim_additive_counter} for details.}
	\label{fig:tmesh_counter}
\end{figure}
\begin{example}\label{ex:dim_additive_counter}
	Consider the T-mesh in Figure \ref{fig:tmesh_counter}. Assume that the degree deficit on all faces touching the boundary is $(1, 1)$, and on the remaining faces is $(0, 0)$. 
	Choose the asssociated degree-deficit sequence to be $\bdegree_0 = (0,0) < \bdegree_1 = (1, 1)$.
	Let us ask for $C^2$ smoothness across all edges, and let $\abdegree = (5, 5)$. Then, based on the definitions, it can be checked that
	\begin{equation*}
		\dimwp{\splSpaceH^\bsmooth_{\bdegree}}_\abdegree = 81\;.
	\end{equation*}
	However, we can also compute (using Macaulay2, for instance) that
	\begin{equation*}
		\dimwp{H_2\left( \degBa{\quotientComplex}{2} \right)}_\abdegree = 81\;, \qquad
		\dimwp{H_2\left( \degBa{\quotientComplex}{1} \right)}_\abdegree = 1\;.
	\end{equation*}
	Thus, $\dimwp{\splSpaceH^\bsmooth_{\bdegree}}_\abdegree \neq \sum_{i=1}^2 \dimwp{H_2\left( \degBa{\quotientComplex}{i} \right)}_\abdegree$.
	Equivalently, we have that $\dimwp{\im{\hat{\boundary}_2}} = 1$.

	In particular, the spline generating $H_2\left( \degBa{\quotientComplex}{2} \right)$ in degree $\abdegree$ is $\elem{\face} p + \elem{\face'} q$ where
	\begin{equation*}
		p = t^5 u(3s - u)^3 (9s-5u)\;,\qquad
		q = t^5 u(3s - 2u)^3 (9s-4u)\;.
	\end{equation*}
	It can be checked that $p -q = 27t^5u^2(2s-u)^3 \in \ideal_\edge$, $\edge = \vertex_{16}\vertex_{17}$, and that
	$p, q \in \ideal_\edge + \left( uv \right)$
	where $\edge$ is the edge $\vertex_{13}\vertex_{17}$ or $\vertex_{9}\vertex_{16}$.
\end{example}

\subsection{Dimension formulas for relevant vector spaces}\label{ss:vec_dim_formulas}
Before proceeding, we present combinatorial formulas for the dimensions of the different vector spaces that have appeared so far and are relevant for our analysis.
In the following, $a_+ := \max\{a, 0\}$ for $a \in \ZZP$, and $\abdegree = (\adegree, \adegree') \in \ZZP^2$.
We will also need the bi-smoothness associated to an interior edge $\edge$,
\begin{equation}
\bideg{\edge} :=
\begin{dcases}
(0, \bsmooth(\edge) + 1)\;, & \edge \in \tmeshInteriorEH\\
(\bsmooth(\edge) + 1, 0)\;, & \edge \in \tmeshInteriorEV
\end{dcases}\;.
\end{equation}
For an interior vertex $\vertex \in \edge_h \cap \edge_v$, $(\edge_h, \edge_v) \in \tmeshInteriorEH \times \tmeshInteriorEV$, we define $\bideg{\vertex} := \bideg{\edge_h} + \bideg{\edge_v}$.
\begin{proposition}\label{prop:vec_space_dim}
	Let $\edge \in \tmeshInteriorComponentE{,\pB{i}}{}$, and let $\vertex \in \tmeshInteriorComponentV{,\pB{i}}{}$ such that $\vertex = \edge_h \cap \edge_v$, $(\edge_h, \edge_v) \in \tmeshInteriorEH \times \tmeshInteriorEV$.
	Then the following hold, where for $(c)-(e)$ we assume that $1 \leq i \leq \nlevels+1$.
	\begin{align*}
	&\dimwp{\pRingH(-j, -k)}_\abdegree = 
	\left( \adegree-j+1\right)_+\times\left( \adegree'-k+1\right)_+\;. \tag*{$(a)$}\\
	&\dimwp{\shiftideal{L}{\pB{i}}{-j,-k}}_\abdegree = 
	\begin{dcases}
	\dimwp{\pRingH(-j, -k)}_{\abdegree-\bdegree_i}, & 0 \leq i \leq \nlevels\\
	0, & i = \nlevels+1
	\end{dcases}\;. \tag*{$(b)$}\\
	&\dimwp{\shiftideal{M}{\pB{i}}{ -j, -k}}_\abdegree = \dimwp{\shiftideal{L}{\pB{i-1}}{ -j, -k}}_\abdegree - \dimwp{\shiftideal{L}{\pB{i}}{ -j, -k}}_\abdegree\;. \tag*{$(c)$}\\
	&\dimwp{\degB{\ideal}{\edge}{i}}_\abdegree = \dimwp{\pRingH(-\bideg{\edge}-\bdegree_{i-1})}_\abdegree -\dimwp{\pRingH(-\bideg{\edge}-\bdegree_{i})}_\abdegree\;. \tag*{$(d)$}\\
	&\dimwp{\degB{\ideal}{\vertex}{i}}_\abdegree = \dimwp{\pRingH(-\bideg{\edge_h}-\bdegree_{i-1})}_\abdegree + \dimwp{\pRingH(-\bideg{\edge_v}-\bdegree_{i-1})}_\abdegree\\
	&\qquad\qquad\qquad + \dimwp{\pRingH(-\bideg{\vertex}-\bdegree_{i})}_\abdegree
	- \dimwp{\pRingH(-\bideg{\vertex}-\bdegree_{i-1})}_\abdegree \\
	&\qquad\qquad\qquad - \dimwp{\pRingH(-\bideg{\edge_h}-\bdegree_{i})}_\abdegree
	- \dimwp{\pRingH(-\bideg{\edge_v}-\bdegree_{i})}_\abdegree\;.\tag*{$(e)$}
	\end{align*}
\end{proposition}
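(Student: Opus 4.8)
The plan is to reduce each identity to two elementary ingredients: a count of monomials spanning a graded piece of a shifted copy of $\pRingH$, and the fact that, $\pRingH$ being a polynomial ring, the intersections and colon ideals of the principal (and two‑generated) ideals that occur are controlled by divisibility and by the observation that $u,v$ act like a regular sequence modulo the relevant powers of linear forms. Part $(a)$ is immediate: $\pRingH(-j,-k)_\abdegree=\pRingH_{(\adegree-j)(\adegree'-k)}$ has the monomial basis $\{s^au^{\adegree-j-a}t^bv^{\adegree'-k-b}:0\le a\le\adegree-j,\ 0\le b\le\adegree'-k\}$ when $\adegree-j,\adegree'-k\ge 0$ and is $0$ otherwise, so its dimension is $(\adegree-j+1)_+(\adegree'-k+1)_+$. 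For $(b)$ with $0\le i\le\nlevels$, multiplication by $\mbf{u}^{\bdegree_i}$ is injective, so $\shiftideal{L}{\pB{i}}{-j,-k}=\mbf{u}^{\bdegree_i}\pRingH(-\bdegree_i-(j,k))\cong\pRingH(-\bdegree_i-(j,k))$ as graded modules, whence its degree‑$\abdegree$ piece has dimension $\dimwp{\pRingH(-j,-k)}_{\abdegree-\bdegree_i}$ by $(a)$; the case $i=\nlevels+1$ is the definition. For $(c)$, Equation \eqref{eq:ideal_i_im1} gives $\shiftideal{L}{\pB{i}}{-j,-k}\subseteq\shiftideal{L}{\pB{i-1}}{-j,-k}$, and additivity of graded dimension along $0\to\shiftideal{L}{\pB{i}}{-j,-k}\to\shiftideal{L}{\pB{i-1}}{-j,-k}\to\shiftideal{M}{\pB{i}}{-j,-k}\to 0$ finishes it.

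For $(d)$ and $(e)$ I would first rewrite the ideals $\degB{\ideal}{\square}{i}$. Unwinding the definitions of $\degA{\pRingH}{\square}{i},\degB{\pRingH}{\square}{i},\degA{\ideal}{\square}{i},\degB{\ideal}{\square}{i}$, and using that for $\square$ active at level $i$ one has $\pRingH_\square\supseteq\ideal[L]{}_{\pB{i-1}}\supseteq\ideal[L]{}_{\pB{i}}$ so that the intersections against $\pRingH_\square$ are inert, the same elementary manipulations (the modular law) as in the proof of Proposition \ref{prop:exact_stair} give a graded isomorphism $\degB{\ideal}{\square}{i}\cong(\ideal_\square\cap\ideal[L]{}_{\pB{i-1}})/(\ideal_\square\cap\ideal[L]{}_{\pB{i}})$, with the usual convention $\ideal[L]{}_{\pB{\nlevels+1}}=0$ (and $\ideal[L]{}_{\pB{0}}=\pRingH$). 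Hence $\dimwp{\degB{\ideal}{\square}{i}}_\abdegree=\dimwp{\ideal_\square\cap\ideal[L]{}_{\pB{i-1}}}_\abdegree-\dimwp{\ideal_\square\cap\ideal[L]{}_{\pB{i}}}_\abdegree$, and since $\ideal[L]{}_{\pB{j}}=(\mbf{u}^{\bdegree_j})$ we may use $\ideal_\square\cap(\mbf{u}^{\bdegree_j})=\mbf{u}^{\bdegree_j}\cdot(\ideal_\square:\mbf{u}^{\bdegree_j})$ to reduce everything to two colon‑ideal computations. For the edge, $\ideal_\edge=(\Delta_\edge\mbf{u}^{\bdegree(\edge)})$ with $\edge$ active (so $\bdegree(\edge)\le\bdegree_{i-1}\le\bdegree_j$): then $\mbf{u}^{\bdegree_j}\Delta_\edge\in\ideal_\edge$ yields $(\ideal_\edge:\mbf{u}^{\bdegree_j})\supseteq(\Delta_\edge)$, and the reverse inclusion holds because $u,v$ are nonzerodivisors on $\pRingH/(\Delta_\edge)$ --- after the linear substitution turning $l_\edge$ into a coordinate, $\pRingH/(\Delta_\edge)\cong(\RR[x]/(x^{\bsmooth(\edge)+1}))\otimes_\RR\RR[\text{three other variables}]$. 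So $\ideal_\edge\cap\ideal[L]{}_{\pB{j}}=(\Delta_\edge\mbf{u}^{\bdegree_j})\cong\pRingH(-\bideg{\edge}-\bdegree_j)$ (using that $\Delta_\edge$ has bidegree exactly $\bideg{\edge}$), which proves $(d)$.

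For the vertex case one first makes $\ideal_\vertex$ explicit: grouping the generators $\Delta_\edge\mbf{u}^{\bdegree(\edge)}$ over the edges $\edge\ni\vertex$ by direction (collinear edges share the same $\Delta$, and summing their generators takes the $\gcd$ of the monomial factors) gives $\ideal_\vertex=(\Delta_{\edge_h}\mbf{u}^{\pi_h})+(\Delta_{\edge_v}\mbf{u}^{\pi_v})$ for suitable exponents $\pi_h,\pi_v$; one checks that, because $\vertex$ is active and carries an active horizontal and an active vertical edge, $\pi_h,\pi_v\le\bdegree_{i-1}\le\bdegree_j$. Then, just as before --- now using that $u,v$ form a regular sequence on $\pRingH/(\Delta_{\edge_h},\Delta_{\edge_v})$, which after linearizing $l_{\edge_h}$ and $l_{\edge_v}$ is of the form $(\text{Artinian ring})\otimes_\RR\RR[u,v]$ --- one gets $(\ideal_\vertex:\mbf{u}^{\bdegree_j})=(\Delta_{\edge_h},\Delta_{\edge_v})$, hence $\ideal_\vertex\cap\ideal[L]{}_{\pB{j}}=(\Delta_{\edge_h}\mbf{u}^{\bdegree_j})+(\Delta_{\edge_v}\mbf{u}^{\bdegree_j})$. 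Applying $\dim(U+W)=\dim U+\dim W-\dim(U\cap W)$ together with $(\Delta_{\edge_h}\mbf{u}^{\bdegree_j})\cap(\Delta_{\edge_v}\mbf{u}^{\bdegree_j})=(\Delta_{\edge_h}\Delta_{\edge_v}\mbf{u}^{\bdegree_j})$ (whose generator has bidegree $\bideg{\edge_h}+\bideg{\edge_v}=\bideg{\vertex}$) gives $\dimwp{\ideal_\vertex\cap\ideal[L]{}_{\pB{j}}}_\abdegree=\dimwp{\pRingH(-\bideg{\edge_h}-\bdegree_j)}_\abdegree+\dimwp{\pRingH(-\bideg{\edge_v}-\bdegree_j)}_\abdegree-\dimwp{\pRingH(-\bideg{\vertex}-\bdegree_j)}_\abdegree$; subtracting the $j=i$ terms from the $j=i-1$ terms is exactly $(e)$.

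The step I expect to be the real obstacle is the vertex computation: one has to pin down $\ideal_\vertex$ and the exponents $\pi_h,\pi_v$ carefully, handling the distinction --- already flagged in the text, where $\ideal_\vertex=\pRingH_\vertex\cap\sum_{\vertex\in\edge}(\Delta_\edge)$ holds only at crossing vertices --- between crossing and T‑vertices, and verifying that at an active vertex every incident edge is either active ($\bdegree(\edge)\le\bdegree_{i-1}$) or has $\bdegree(\edge)\ge\bdegree_i$ (which relies on the sequence $(\bdegree_i)$ refining $\DEG$). That is precisely what forces the $\mbf{u}$‑exponents to collapse to $\bdegree_j$ in the colon computation; by contrast, once the reduction $\degB{\ideal}{\square}{i}\cong(\ideal_\square\cap\ideal[L]{}_{\pB{i-1}})/(\ideal_\square\cap\ideal[L]{}_{\pB{i}})$ is in place, parts $(a)$--$(d)$ are routine.
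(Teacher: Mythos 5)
Your proposal is correct and follows essentially the same route as the paper: parts $(a)$--$(c)$ are treated as definitional, and for $(d)$--$(e)$ the paper simply exhibits Mayer--Vietoris-type exact sequences resolving $\ideal_\square\cap\ideal[L]{}_{\pB{i-1}}+\ideal[L]{}_{\pB{i}}$ by free modules $\pRingH(-\cdot)$, which encode precisely the intersection and colon-ideal computations you carry out (coprimality of $\Delta_{\edge_h}$ and $\Delta_{\edge_v}$, and regularity of $u,v$ modulo the powers of the linear forms). Your reduction via the second isomorphism theorem to $(\ideal_\square\cap\ideal[L]{}_{\pB{i-1}})/(\ideal_\square\cap\ideal[L]{}_{\pB{i}})$ is a repackaging of the same alternating sum, and the point you flag --- that at an active cell the $\mbf{u}$-exponents of the generators collapse to $\bdegree_{i-1}$ and $\bdegree_i$ --- is exactly the step the paper leaves implicit in its two displayed sequences.
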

\begin{proof}
	\renewcommand{\arraystretch}{1.5}
	Claims made in parts $(a)$, $(b)$ and $(c)$ follow by definition.
	Parts $(d)$ and $(e)$ follow from the following exact sequences, respectively,
	\begin{equation*}
	\begin{tikzcd}[column sep=small]
	\pRingH(-\bideg{\edge}-\bdegree_{i})  \arrow[r] &
	\begin{array}{c}
	\pRingH(-\bideg{\edge}-\bdegree_{i-1}) \\[-2mm]
	\moplus\\[-2mm]
	\pRingH(-\bdegree_{i})
	\end{array}
	\arrow[r] & \ideal_\edge\cap\ideal[L]{}_{\pB{i-1}} + \ideal[L]{}_{\pB{i}}\;,
	\end{tikzcd}
	\end{equation*}
	\begin{equation*}
	\begin{tikzcd}[column sep=small]
	\pRingH(-\bideg{\vertex}-\bdegree_{i})  \arrow[r] &
	\begin{array}{c}
	\pRingH(-\bideg{\edge_h}-\bdegree_{i})\\[-2mm]
	\moplus\\[-2mm]
	\pRingH(-\bideg{\edge_v}-\bdegree_{i})\\[-2mm]
	\moplus\\[-2mm]
	\pRingH(-\bideg{\vertex}-\bdegree_{i-1})
	\end{array}
	\arrow[r] &
	\begin{array}{c}
	\pRingH(-\bideg{\edge_h}-\bdegree_{i-1}) \\[-2mm]
	\moplus\\[-2mm]
	\pRingH(-\bideg{\edge_v}-\bdegree_{i-1}) \\[-2mm]
	\moplus\\[-2mm]
	\pRingH(-\bdegree_{i})
	\end{array}
	\arrow[r] &
	\ideal_\vertex\cap\ideal[L]{}_{\pB{i-1}} + \ideal[L]{}_{\pB{i}}\;.
	\end{tikzcd}
	\end{equation*}
\end{proof}

We can simplify the expression in Proposition \ref{prop:vec_space_dim}(e) for special choices of $\bdegree, \bsmooth, \abdegree$.
\begin{corollary}\label{cor:vertex_ideal_dim}
	Let $1 \leq i \leq \nlevels$ and $\vertex \in \tmeshInteriorComponentV{,\pB{i}}{}$ such that $\vertex = \edge_h \cap \edge_v$, $(\edge_h, \edge_v) \in \tmeshInteriorEH \times \tmeshInteriorEV$.
	\begin{enumerate}[label=(\alph*)]
		\item If $\abdegree - \bdegree_{i}$ is entry-wise greater than or equal to $(\bsmooth(\edge_v), \bsmooth(\edge_h))$, then
		\begin{equation*}
		\begin{split}
			\dimwp{\degB{\ideal}{\vertex}{i}}_\abdegree &= (\adegree-\degree_{(i-1)1}+1)\degreev_{i2}+(\adegree'-\degree_{(i-1)2}+1)\degreev_{i1} - \degreev_{i1}\degreev_{i2}\\
			&= \dimwp{\ideal[M]{}_{\pB{i}}}_\abdegree = \dimwp{\ideal[L]{}_{\pB{i-1}}/\ideal[L]{}_{\pB{i}}}_\abdegree\;.
		\end{split}
		\end{equation*}
		
		\item If $\abdegree - \bdegree_{i-1}$ is entry-wise smaller than or equal to $(\bsmooth(\edge_v), \bsmooth(\edge_h))$, then
		\begin{equation*}
		\begin{split}
			\dimwp{\degB{\ideal}{\vertex}{i}}_\abdegree &= 0\;.
		\end{split}
		\end{equation*}
	\end{enumerate}
\end{corollary}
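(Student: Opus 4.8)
The plan is to start from the six-term combinatorial formula of Proposition~\ref{prop:vec_space_dim}(e), substitute the explicit bi-smoothness data, and then resolve each case by a sign analysis of the positive parts, followed in case (a) by one routine polynomial expansion.

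First I would unwind the definitions. Since $\edge_h \in \tmeshInteriorEH$ and $\edge_v \in \tmeshInteriorEV$, we have $\bideg{\edge_h} = (0, \bsmooth(\edge_h)+1)$, $\bideg{\edge_v} = (\bsmooth(\edge_v)+1, 0)$, and $\bideg{\vertex} = \bideg{\edge_h}+\bideg{\edge_v} = (\bsmooth(\edge_v)+1, \bsmooth(\edge_h)+1)$; moreover $\bdegree_i = \bdegree_{i-1}+\bdegreev_i$ with $\bdegreev_i = (\degreev_{i1},\degreev_{i2}) \in \{(1,0),(0,1),(1,1)\}$, so that $\degree_{i1} = \degree_{(i-1)1}+\degreev_{i1}$ and $\degree_{i2} = \degree_{(i-1)2}+\degreev_{i2}$. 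Writing $a := \adegree-\degree_{(i-1)1}$, $b := \adegree'-\degree_{(i-1)2}$, $r_v := \bsmooth(\edge_v)$ and $r_h := \bsmooth(\edge_h)$, and applying Proposition~\ref{prop:vec_space_dim}(a) to each of the six graded pieces, Proposition~\ref{prop:vec_space_dim}(e) becomes an alternating sum of six products of the form $(\ast)_+(\ast)_+$ whose arguments are affine in $a, b, r_v, r_h, \degreev_{i1}, \degreev_{i2}$; the first two terms, for instance, are $(a+1)_+(b-r_h)_+$ and $(a-r_v)_+(b+1)_+$, and the three subtracted ones all carry a factor of the form $(a-r_v)_+$, $(a-r_v-\degreev_{i1})_+$ or $(b-r_h-\degreev_{i2})_+$.

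For part~(a), the hypothesis $\abdegree-\bdegree_i \geq (\bsmooth(\edge_v),\bsmooth(\edge_h))$ reads $a-\degreev_{i1} \geq r_v$ and $b-\degreev_{i2} \geq r_h$; since $r_v, r_h, \degreev_{i1}, \degreev_{i2}$ are all non-negative, one checks directly that the argument of every positive part that appears is non-negative, so all six occurrences of $(\ast)_+$ may be replaced by $(\ast)$. What is left is a polynomial identity: when the six products are expanded, the highest-order (bilinear) terms cancel among the summands, and collecting the remainder (using $\degree_{i1} = \degree_{(i-1)1}+\degreev_{i1}$ and $\degree_{i2} = \degree_{(i-1)2}+\degreev_{i2}$) yields exactly $(\adegree-\degree_{(i-1)1}+1)\degreev_{i2}+(\adegree'-\degree_{(i-1)2}+1)\degreev_{i1}-\degreev_{i1}\degreev_{i2}$. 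This bookkeeping is the only laborious step, and it is entirely mechanical. To see that the result coincides with $\dimwp{\ideal[M]{}_{\pB{i}}}_\abdegree = \dimwp{\ideal[L]{}_{\pB{i-1}}/\ideal[L]{}_{\pB{i}}}_\abdegree$, apply Proposition~\ref{prop:vec_space_dim}(c) with $j=k=0$ and then (b) to obtain $(\adegree-\degree_{(i-1)1}+1)_+(\adegree'-\degree_{(i-1)2}+1)_+ - (\adegree-\degree_{i1}+1)_+(\adegree'-\degree_{i2}+1)_+$; under the same hypothesis all four positive parts are active, and expanding with $\bdegree_i = \bdegree_{i-1}+\bdegreev_i$ gives the same polynomial.

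For part~(b), the hypothesis $\abdegree-\bdegree_{i-1} \leq (\bsmooth(\edge_v),\bsmooth(\edge_h))$ reads $a \leq r_v$ and $b \leq r_h$. Then each of the six products contains a positive-part factor with non-positive argument: the products carrying $(a-r_v)_+$ or $(a-r_v-\degreev_{i1})_+$ vanish because $a \leq r_v$, while the remaining products carry $(b-r_h)_+$ or $(b-r_h-\degreev_{i2})_+$ and vanish because $b \leq r_h$. Hence every summand is $0$, so $\dimwp{\degB{\ideal}{\vertex}{i}}_\abdegree = 0$. There is no genuine obstacle in either case; the substance is the sign analysis of the positive parts, together with the single algebraic simplification needed in part~(a).
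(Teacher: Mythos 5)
Your proof is correct and follows exactly the route the paper intends: the corollary is stated as a direct simplification of Proposition~\ref{prop:vec_space_dim}(e), and your sign analysis of the six positive-part products (all active under the hypothesis of (a), each containing a vanishing factor under the hypothesis of (b)) together with the mechanical expansion reproduces the claimed formulas, including the identification with $\dimwp{\ideal[M]{}_{\pB{i}}}_\abdegree$ via parts (b) and (c) of that proposition.
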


The next result follows from the definition of the quotient modules $\shiftideal{M}{\pB{i}}{ -j, -k}$ and Equation \ref{eq:deg_def_diff}.
\begin{lemma}
	The following hold for $\mbf{b} \in \ZZP^2$, where $1 \leq i \leq \nlevels$ for $(a)$ and $1 \leq i \leq \nlevels$ for $(b)-(d)$.
	\begin{align*}
		&\dimwp{\mbf{s}^{\mbf{b}}\shiftideal{M}{\pB{i}}{ -j, -k}}_\abdegree = \dimwp{\shiftideal{M}{\pB{i}}{ -j, -k}}_{\abdegree-\mbf{b}}\;. \tag*{$(a)$}\\
		&\dimwp{u\shiftideal{M}{\pB{i}}{ -j, -k}}_\abdegree = \dimwp{u\shiftideal{L}{\pB{i-1}}{ -j, -k}}_{\abdegree}\\
		&\qquad\qquad\qquad\qquad\qquad - \dimwp{u^{(1-\degreev_{i1})_+}\shiftideal{L}{\pB{i}}{ -j, -k}}_{\abdegree}\;. \tag*{$(b)$}\\
		&\dimwp{v\shiftideal{M}{\pB{i}}{ -j, -k}}_\abdegree = \dimwp{v\shiftideal{L}{\pB{i-1}}{ -j, -k}}_{\abdegree}\\
		&\qquad\qquad\qquad\qquad\qquad - \dimwp{v^{(1-\degreev_{i2})_+}\shiftideal{L}{\pB{i}}{ -j, -k}}_{\abdegree}\;. \tag*{$(c)$}\\
		&\dimwp{uv\shiftideal{M}{\pB{i}}{ -j, -k}}_\abdegree = 0\;. \tag*{$(d)$}
	\end{align*}
\end{lemma}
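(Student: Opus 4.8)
The plan is to deduce all four identities from a single structural observation. By Equation~\eqref{eq:ideal_i_im1}, inside the rank-one free $\pRingH$-module $F := \shiftideal{L}{\pB{i-1}}{ -j, -k} = \mbf{u}^{\bdegree_{i-1}}\pRingH(-\bdegree_{i-1}-(j,k))$, the submodule $\shiftideal{L}{\pB{i}}{ -j, -k}$ is exactly $\mbf{u}^{\bdegreev_i}F$, so that $\shiftideal{M}{\pB{i}}{ -j, -k} = F/\mbf{u}^{\bdegreev_i}F$; moreover, by Equation~\eqref{eq:deg_def_diff} the monomial $\mbf{u}^{\bdegreev_i} = u^{\degreev_{i1}}v^{\degreev_{i2}}$ is one of $u$, $v$, $uv$. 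Identifying $F$ with a shift of $\pRingH$, this exhibits $\shiftideal{M}{\pB{i}}{ -j, -k}$ as a shift of $\pRingH/(\mbf{u}^{\bdegreev_i})$, and the whole lemma reduces to elementary facts about this cyclic module and about intersections of principal monomial submodules of $F$.

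First I would prove $(a)$. Since $\mbf{u}^{\bdegreev_i}$ involves only the variables $u$ and $v$, multiplication by $s$ and by $t$ is a non-zero-divisor on $\pRingH/(\mbf{u}^{\bdegreev_i})$, hence on $\shiftideal{M}{\pB{i}}{ -j, -k}$. Therefore multiplication by the monomial $\mbf{s}^{\mbf{b}}$ (a monomial in $s$ and $t$ only) induces, for every $\abdegree$, an injective linear map from the $(\abdegree - \mbf{b})$-th graded piece of $\shiftideal{M}{\pB{i}}{ -j, -k}$ onto the $\abdegree$-th graded piece of $\mbf{s}^{\mbf{b}}\shiftideal{M}{\pB{i}}{ -j, -k}$, and comparing dimensions gives $(a)$.

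For $(b)$--$(d)$, fix $x \in \{u, v, uv\}$. The submodule $x\,\shiftideal{M}{\pB{i}}{ -j, -k}$ is the image of $xF$ under the quotient map $F \twoheadrightarrow \shiftideal{M}{\pB{i}}{ -j, -k}$, so it fits into the short exact sequence
\begin{equation*}
	0 \rightarrow xF \cap \mbf{u}^{\bdegreev_i}F \rightarrow xF \rightarrow x\,\shiftideal{M}{\pB{i}}{ -j, -k} \rightarrow 0\;.
\end{equation*}
As $F$ is free of rank one, $xF \cap \mbf{u}^{\bdegreev_i}F$ is the principal monomial submodule generated by $\LCM(x, \mbf{u}^{\bdegreev_i})$; a direct computation with $\degreev_{i1}, \degreev_{i2} \in \{0, 1\}$ shows this intersection equals $u^{(1-\degreev_{i1})_+}\shiftideal{L}{\pB{i}}{ -j, -k}$ when $x = u$, equals $v^{(1-\degreev_{i2})_+}\shiftideal{L}{\pB{i}}{ -j, -k}$ when $x = v$, and equals $uv\,\shiftideal{L}{\pB{i-1}}{ -j, -k} = xF$ when $x = uv$. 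Since $xF = x\,\shiftideal{L}{\pB{i-1}}{ -j, -k}$, taking $\abdegree$-th graded dimensions in the short exact sequence yields $(b)$ and $(c)$; in the case $x = uv$ the sequence forces $uv\,\shiftideal{M}{\pB{i}}{ -j, -k} = 0$, which is $(d)$.

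I expect the only real care-point to be the grading and shift bookkeeping: one must treat $\pRingH(-\mbf{a})$ strictly as a degree shift, track the generator degree $\bdegree_{i-1}+(j,k)$ of $F$ through the isomorphism $F \cong \pRingH(-\bdegree_{i-1}-(j,k))$, and make sure the identity ``intersection of two principal monomial submodules equals the submodule generated by their $\LCM$'' is applied within the concrete module $F$ rather than abstractly. It is also worth recording that throughout the stated range $1 \le i \le \nlevels$ we have $\shiftideal{L}{\pB{i}}{ -j, -k} \ne 0$ and $\bdegreev_i$ is defined, so no degenerate subcase needs separate handling; once this is settled, what remains are the routine dimension counts supplied by the short exact sequences above.
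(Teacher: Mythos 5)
Your proof is correct, and it is essentially the argument the paper has in mind: the paper states this lemma without proof, remarking only that it ``follows from the definition of the quotient modules $\shiftideal{M}{\pB{i}}{-j,-k}$ and Equation \eqref{eq:deg_def_diff},'' which is precisely your identification $\shiftideal{M}{\pB{i}}{-j,-k} \cong F/\mbf{u}^{\bdegreev_i}F$ with $\mbf{u}^{\bdegreev_i} \in \{u, v, uv\}$ followed by the non-zero-divisor and lcm computations. Your write-up supplies the details the paper omits, and the lcm bookkeeping (e.g.\ $\mathrm{lcm}(u,\mbf{u}^{\bdegreev_i}) = u^{(1-\degreev_{i1})_+}\mbf{u}^{\bdegreev_i}$) checks out for all three values of $\bdegreev_i$.
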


The next result has been adapted from \cite{mourrain2014dimension} in a form relevant for our analysis; see \cite{mourrain2014dimension} for its proof.
\begin{proposition}\label{prop:space_sum_dim}
	Consider $\mbf{b} \in \ZZP^2$, $l$ distinct numbers $a_1, \dots, a_l \in \RR$, and $d_1, \dots, d_l \in \ZZP$. Denote with $f_k$ the linear polynomials $t-a_kv$.
	Then, we have
	\begin{align*}
		&\dimwp{\sum_{k=1}^l f_k^{d_k}\pRingH(0, -d_k)}_\abdegree
		= (m+1) \times \min\left(m'+1,~\sum_{k=1}^l m'-d_k+1\right)\;. \tag*{$(a)$}\\
		&\dimwp{\sum_{k=1}^l f_k^{d_k}\ideal[L]{}_{\pB{i}}(-\mbf{b}-(0, d_k))}_\abdegree
		= \dimwp{\sum_{k=1}^l f_k^{d_k}\pRingH(0, -d_k)}_{\abdegree-\bdegree_i - \mbf{b}}\;. \tag*{$(b)$}\\
		&\dimwp{\sum_{k=1}^l f_k^{d_k}\ideal[M]{}_{\pB{i}}(-\mbf{b}-(0, d_k))}_\abdegree
		=\\
		&\qquad\qquad\qquad\qquad\dimwp{\left(\sum_{k=1}^l f_k^{d_k}\ideal[L]{}_{\pB{i-1}}(-\mbf{b}-(0, d_k))\right)\big/\ideal[L]{}_{\pB{i}}(-\mbf{b}-(0, d_k))}_\abdegree\;. \tag*{$(c)$}
	\end{align*}
	Symmetric claims can be made if the linear polynomials are instead chosen to be $s - a_k u$.
\end{proposition}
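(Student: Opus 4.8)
The plan is to treat part $(a)$ as the substantive statement and derive $(b)$ and $(c)$ from it by purely formal manipulations. For $(a)$, the first step is to decouple the variables $s,u$ from $t,v$: since every $f_k$ lies in $\RR[t,v]$, the bigraded piece $\left[\sum_{k=1}^l f_k^{d_k}\pRingH(0,-d_k)\right]_{(m,m')}$ equals $\RR[s,u]_m\otimes_{\RR}\left(\sum_{k=1}^l f_k^{d_k}\,\RR[t,v]_{m'-d_k}\right)$, where $\RR[s,u]_m$ is the space of forms of degree $m$ in $s,u$ (dimension $m+1$) and $\RR[t,v]_n$ the space of forms of degree $n$ in $t,v$ (dimension $(n+1)_+$). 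Hence $(a)$ reduces to the assertion that the degree-$m'$ graded piece of the ideal $J:=(f_1^{d_1},\dots,f_l^{d_l})\subseteq\RR[t,v]$ has dimension $\min\!\big(m'+1,\ \sum_{k=1}^l (m'-d_k+1)_+\big)$, the factor $m+1$ then factoring out as claimed (with the convention that each summand is clipped at $0$; in the applications one always has $m'\ge\max_k d_k$, so this is immaterial).

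I would compute $\dim J_{m'}$ using Macaulay's inverse systems (apolarity), which is legitimate over $\RR$; an equivalent route is the elementary induction on $l$ of \cite{mourrain2014dimension}, splitting off the last generator via $\dim J_{m'}=\dim J'_{m'}+(m'-d_l+1)_+-\dim\!\big(J'_{m'}\cap (f_l^{d_l})_{m'}\big)$ with $J'=(f_1^{d_1},\dots,f_{l-1}^{d_{l-1}})$ and then analysing the colon ideal $(J':f_l^{d_l})$, which reduces everything to one-variable computations. Via apolarity: for a single principal ideal, after a linear change of coordinates on the dual space turning $f_k(\partial)$ into $\partial_X$ for a suitable coordinate $X$, the degree-$m'$ part of the inverse system of $(f_k^{d_k})$ is exactly the space of degree-$m'$ forms divisible by $\ell_k^{(m'-d_k+1)_+}$ for a linear form $\ell_k$ (explicitly $\ell_k=a_kT+V$), and \emph{distinct} $a_k$ give pairwise non-proportional $\ell_k$. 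Since $(J_{m'})^{\perp}=\bigcap_k (f_k^{d_k})^{-1}_{m'}$ under the apolarity pairing, the inverse system of $J$ in degree $m'$ consists of the degree-$m'$ forms divisible by every $\ell_k^{(m'-d_k+1)_+}$; the $\ell_k$ being pairwise coprime linear forms in two variables, this means divisible by the single product $\prod_k\ell_k^{(m'-d_k+1)_+}$, a space of dimension $\big(m'+1-\sum_k(m'-d_k+1)_+\big)_+$. As the pairing on degree $m'$ is perfect, $\dim J_{m'}=(m'+1)-\big(m'+1-\sum_k(m'-d_k+1)_+\big)_+=\min\!\big(m'+1,\ \sum_k(m'-d_k+1)_+\big)$, which proves $(a)$.

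For $(b)$, recall $\shiftideal{L}{\pB{i}}{-\mbf{b}-(0, d_k)}=\mbf{u}^{\bdegree_i}\pRingH(-\bdegree_i-\mbf{b}-(0,d_k))$, so that forming $\sum_k f_k^{d_k}\shiftideal{L}{\pB{i}}{-\mbf{b}-(0, d_k)}$ amounts to multiplying the construction of $(a)$ by the monomial $\mbf{u}^{\bdegree_i}$ and applying an overall degree shift by $\bdegree_i+\mbf{b}$. Multiplication by $\mbf{u}^{\bdegree_i}$ is injective and shifts bidegree by $\bdegree_i$, so the $\abdegree$-graded piece has the same dimension as the $(\abdegree-\bdegree_i-\mbf{b})$-graded piece of $\sum_k f_k^{d_k}\pRingH(0,-d_k)$, which is $(b)$. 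For $(c)$, by definition $\shiftideal{M}{\pB{i}}{-\mbf{b}-(0, d_k)}=\shiftideal{L}{\pB{i-1}}{-\mbf{b}-(0, d_k)}\big/\shiftideal{L}{\pB{i}}{-\mbf{b}-(0, d_k)}$ with $\shiftideal{L}{\pB{i}}{\cdot}\subseteq\shiftideal{L}{\pB{i-1}}{\cdot}$ by Equation \eqref{eq:ideal_i_im1}, so the image of $f_k^{d_k}$ on this quotient is $\big(f_k^{d_k}\shiftideal{L}{\pB{i-1}}{-\mbf{b}-(0, d_k)}+\shiftideal{L}{\pB{i}}{-\mbf{b}-(0, d_k)}\big)\big/\shiftideal{L}{\pB{i}}{-\mbf{b}-(0, d_k)}$; summing over $k$ and invoking the second isomorphism theorem identifies $\sum_k f_k^{d_k}\shiftideal{M}{\pB{i}}{-\mbf{b}-(0, d_k)}$ with $\big(\sum_k f_k^{d_k}\shiftideal{L}{\pB{i-1}}{-\mbf{b}-(0, d_k)}\big)\big/\big(\sum_k f_k^{d_k}\shiftideal{L}{\pB{i-1}}{-\mbf{b}-(0, d_k)}\cap\shiftideal{L}{\pB{i}}{-\mbf{b}-(0, d_k)}\big)$, the module whose graded dimension appears on the right-hand side of $(c)$. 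The symmetric statements follow by exchanging the roles of $(s,u)$ and $(t,v)$ throughout.

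The only real work is in $(a)$: the closed form for the Hilbert function in two variables of an ideal generated by powers of linear forms. The two points needing care are the clipping of the summands $(m'-d_k+1)_+$ (vacuous when $m'\ge\max_k d_k$) and, crucially, the hypothesis that the $a_k$ are pairwise distinct — it is exactly this that makes the forms $\ell_k$ pairwise coprime, so that the apolar conditions for different $k$ combine into a single divisibility by a product and the count is exact; for repeated $a_k$ the formula fails. Parts $(b)$ and $(c)$ are bookkeeping around this core computation.
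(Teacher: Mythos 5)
Your proof is correct and complete. Be aware, though, that the paper itself does not prove this proposition: it is stated as adapted from \cite{mourrain2014dimension} and the proof is deferred entirely to that reference, so there is no in-paper argument to compare against. The substantive content is, as you say, part $(a)$, and your route through Macaulay inverse systems differs from the one in the cited reference, which proceeds by induction on $l$, splitting off one generator at a time and analyzing the colon ideal $(f_1^{d_1},\dots,f_{l-1}^{d_{l-1}}):f_l^{d_l}$ in two variables --- the alternative you yourself flag. The apolarity computation is clean and has the merit of making completely transparent where the hypothesis that the $a_k$ are distinct enters: it forces the apolar linear forms $\ell_k$ to be pairwise coprime, so the intersected divisibility conditions collapse into divisibility by a single product and the count is exact. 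Your reduction of $(a)$ to the Hilbert function of $(f_1^{d_1},\dots,f_l^{d_l})\subseteq\RR[t,v]$ via the tensor decomposition $\pRingH_{(m,m')}=\RR[s,u]_m\otimes\RR[t,v]_{m'}$, and the derivations of $(b)$ (injectivity of multiplication by $\mbf{u}^{\bdegree_i}$ plus a degree shift) and $(c)$ (second isomorphism theorem applied to the image of the sum in the quotient module) are all sound. You are also right that the summands in $(a)$ should be read as $(m'-d_k+1)_+$; as printed the formula is only valid when $m'\ge d_k-1$ for all $k$, which is the regime in which the proposition is actually applied.
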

\section[]{Homology of $\degBa{\constantComplex}{i}$}\label{ss:constant_homology}
In this section we collect main results characterizing the homology of the chain complex $\degBa{\constantComplex}{i}$. We will use its following equivalent form that follows from the proof of Proposition \ref{prop:exact_stair},
\begin{equation*}
	\begin{tikzcd}[column sep=small]
		\degBa{\constantComplex}{i}~:~ \moplus\limits_{\face \in \tmeshComponentF{,\pB{i}}{}} \faceE{} \ideal[M]{}_{\pB{i}} \arrow[r]& \moplus\limits_{\edge \in \tmeshInteriorComponentE{, \pB{i}}{}} \edgeE{} \ideal[M]{}_{\pB{i}} \arrow[r] & \moplus\limits_{\vertex \in \tmeshInteriorComponentV{, \pB{i}}{}} \vertexE{} \ideal[M]{}_{\pB{i}} \arrow[r] & 0\;.
	\end{tikzcd}
\end{equation*}
\begin{proposition}\label{prop:H2_constantComplex}
	\begin{equation*}
		\dimwp{H_2\left(\degBa{\constantComplex}{i}\right)}_\abdegree = 
		\begin{dcases}
			\dimwp{\ideal[L]{}_{\pB{\nlevels}}}_\abdegree\;, & i = \nlevels+1\\
			0\;, & 1 \leq i \leq \nlevels
		\end{dcases}\;.
	\end{equation*}
\end{proposition}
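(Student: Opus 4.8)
The plan is to compute $H_2\left(\degBa{\constantComplex}{i}\right)$ directly from the equivalent form of $\degBa{\constantComplex}{i}$ recorded just above the statement, in which every module is the graded $\pRingH$-module $\ideal[M]{}_{\pB{i}} = \ideal[L]{}_{\pB{i-1}}/\ideal[L]{}_{\pB{i}}$ sitting at the face, interior-edge and interior-vertex positions of the active mesh $\tmeshComponent{\pB{i}}{}$; topologically this is the cellular chain complex of the pair $(\domainComponent{\pB{i}}{},\, \partial\domainComponent{\pB{i}}{}\cap\partial\domain)$ with constant coefficients in $\ideal[M]{}_{\pB{i}}$. Since there is no degree-$3$ term, $H_2 = \ker\partial_2$, so I must describe the elements $\sum_{\face\in\tmeshComponentF{,\pB{i}}{}}\faceE{}m_\face$ (with $m_\face\in\ideal[M]{}_{\pB{i}}$) satisfying $\sum_\face\krond{\face}{\edge}\,m_\face = 0$ for every active interior edge $\edge$. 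Two elementary facts drive the analysis: every edge of $\tmesh$ interior to $\domain$ is shared by exactly two faces of $\tmesh$ (immediate from Definition \ref{def:tmesh}); and, because $\DEG$ is totally ordered, for such an edge the face carrying the smaller degree deficit realizes $\bdegree(\edge)$, so the edge is active if and only if that face is active — in particular an active interior edge meets at least one active face. Reading the kernel condition edge by edge, and using that counterclockwise orientation of all $2$-cells makes $\krond{\face}{\edge}=-\krond{\face'}{\edge}\in\{\pm1\}$ whenever an interior edge $\edge$ is shared by faces $\face$ and $\face'$: across an active interior edge meeting two active faces we get $m_\face=m_{\face'}$, while across an active interior edge meeting exactly one active face $\face$ we get $m_\face=0$.

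For $1\le i\le\nlevels$ I would deduce $\ker\partial_2 = 0$. Let $Y$ range over the connected components of the graph whose vertices are the faces in $\tmeshComponentF{,\pB{i}}{}$ and whose edges are the shared interior edges; the edge-wise analysis shows every kernel element is constant on each $Y$, and vanishes on a given $Y$ as soon as some face of $Y$ carries an interior edge whose opposite face is inactive. Thus it suffices to show each $Y$ has such an ``internal active-boundary'' edge. If some $Y$ had none, then every interior edge incident to a face of $Y$ would have both its faces in $Y$, so $|Y|:=\bigcup_{\face\in Y}\face$ — a connected union of closed rectangles — would have topological boundary contained in $\partial\domain$; then $|Y|\setminus\partial\domain$ and $\domain\setminus|Y|$ would be disjoint open sets partitioning $\domainInterior$, which is connected by Assumption \ref{ass:simplyConnectedDomain}, forcing $\domain\setminus|Y|=\emptyset$. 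Hence $|Y|=\domain$ and all faces of $\tmesh$ would be active; but a face carrying degree deficit $\bdegree_\nlevels=\max\DEG$ is inactive for $i\le\nlevels$ since $\bdegree_\nlevels\not\le\bdegree_{i-1}$ — a contradiction. Therefore every kernel coefficient vanishes and $\dimwp{H_2(\degBa{\constantComplex}{i})}_\abdegree=0$.

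For $i=\nlevels+1$ every cell of $\tmesh$ is active (all degree deficits being $\le\bdegree_\nlevels$), so $\domainComponent{\pB{\nlevels+1}}{}=\domain$, $\tmeshComponentF{,\pB{\nlevels+1}}{}=\tmeshF$, and $\ideal[M]{}_{\pB{\nlevels+1}}=\ideal[L]{}_{\pB{\nlevels}}/\ideal[L]{}_{\pB{\nlevels+1}}=\ideal[L]{}_{\pB{\nlevels}}$. Now every interior edge meets two active faces, so a kernel element has all its coefficients equal; the common value may be any $m\in\ideal[L]{}_{\pB{\nlevels}}$ because $\sum_\face\krond{\face}{\edge}=0$ for each interior edge, and the face-adjacency graph is connected since $\domainInterior$ is connected. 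Hence $m\mapsto\sum_{\face\in\tmeshF}\faceE{}m$ is a degree-preserving isomorphism $\ideal[L]{}_{\pB{\nlevels}}\xrightarrow{\ \sim\ }\ker\partial_2$, giving $\dimwp{H_2(\degBa{\constantComplex}{\nlevels+1})}_\abdegree=\dimwp{\ideal[L]{}_{\pB{\nlevels}}}_\abdegree$, as claimed. The one genuinely non-formal step is the planar-topology argument in the $i\le\nlevels$ case — that a sub-union of faces with no free interior edge has boundary in $\partial\domain$ and hence, by connectedness of $\domainInterior$, exhausts $\domain$; everything else (the orientation signs, the ``smaller face is active'' remark, the constancy on components) is routine bookkeeping.
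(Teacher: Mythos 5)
Your proof is correct and follows essentially the same route as the paper's: identify $H_2$ with $\ker\partial_2$, read the kernel condition edge by edge using $\krond{\face}{\edge}=-\krond{\face'}{\edge}$, and conclude constancy (hence a copy of $\ideal[L]{}_{\pB{\nlevels}}$) when $i=\nlevels+1$ and vanishing when $i\le\nlevels$. Your component-wise planar-topology argument is in fact a more careful justification of the step the paper states in one line (``there exists at least one edge contained in only one active face''), since it guarantees such an edge on \emph{every} connected component of the active region rather than just one.
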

\begin{proof}
	Let $p = \sum_{\face} \faceE{}p_\face$, $p_\face \in \ideal[L]{}_{\pB{i-1}}$, be in the kernel of $\boundary$, i.e.,
	\begin{equation*}
		0 = \boundary(p) = \sum_{\edge\in \tmeshInteriorComponentE{,\pB{i}}{}} \edgeE{} \sum_{\face\in\tmeshComponentF{,\pB{i}}{}} \kronFE{}{} p_\face \Leftrightarrow \forall \edge\in \tmeshInteriorComponentE{,\pB{i}}{}\;,\; \sum_{\face\in\tmeshComponentF{,\pB{i}}{}} \kronFE{}{} p_\face \in \ideal[L]{}_{\pB{i}}\;.
	\end{equation*}
	If any $\face$ and $\face'$ share an edge $\edge$, $\kronFE{}{} = -\krond{\face'}{\edge}$. Therefore, if both $\face$ and $\face'$ also belong to $\tmeshComponentF{,\pB{i}}{}$, we must have $p_\face - p_{\face'} \in \ideal[L]{}_{\pB{i}}$. Then,
	\begin{itemize}
		\item $i = \nlevels+1$: Following Assumption \ref{ass:simplyConnectedDomain}, all edges in $\tmeshInteriorComponentE{,\pB{\nlevels+1}}{}$ are shared by exactly two faces in $\tmeshComponentF{,\pB{\nlevels+1}}{} \equiv \tmeshF$. 
		Therefore, all $p_\face$ must correspond to the same global polynomial in $\ideal[L]{}_{\pB{\nlevels}}$ for all faces in $\tmeshF$.
		
		\item $i < \nlevels+1$: There exists at least one edge in $\tmeshInteriorComponentE{,\pB{i}}{}$ that is contained in only one face in $\tmeshComponentF{,\pB{i}}{}$. Therefore, all $p_\face$ must be polynomials in $\ideal[L]{}_{\pB{i}}$, and therefore must be zero in $\ideal[L]{}_{\pB{i-1}}/\ideal[L]{}_{\pB{i}}$.
	\end{itemize}
\end{proof}

\begin{definition}[Number of relative holes in $\domainComponent{\pB{i}}{}$]
	We define $\holes{\pB{i}}{}$ to be the number of linearly independent, non-trivial cycles in $\domainComponent{\pB{i}}{}$ relative to $\boundary\domainComponent{\pB{i}}{}\cap\domainBnd$,
	\begin{equation*}
	\holes{\pB{i}}{} := \rankwp{H_1\left(\domainComponent{\pB{i}}{},\boundary\domainComponent{\pB{i}}{}\cap\domainBnd\right)}\;.
	\end{equation*}
\end{definition}

\begin{proposition}\label{prop:H1_constantComplex}
	\begin{equation*}
	\dimwp{H_1\left( \degBa{\constantComplex}{i} \right)}_\abdegree = \holes{\pB{i}}{}\dimwp{\ideal[M]{}_{\pB{i}}}_\abdegree\;.
	\end{equation*}
\end{proposition}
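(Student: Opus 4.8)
The plan is to compute $H_1(\degBa{\constantComplex}{i})$ by reducing to a purely combinatorial statement about the topology of the active region $\domainComponent{\pB{i}}{}$, exploiting the fact that the complex $\degBa{\constantComplex}{i}$ has the constant coefficient module $\ideal[M]{}_{\pB{i}}$ attached to every cell. Concretely, using the equivalent form of $\degBa{\constantComplex}{i}$ recalled at the start of this section — where each face, interior edge and interior vertex of the active mesh $\tmeshComponent{\pB{i}}{}$ carries a copy of $\ideal[M]{}_{\pB{i}}$ — the boundary maps are just the usual simplicial boundary maps tensored with the identity on $\ideal[M]{}_{\pB{i}}$. Since we work relative to $\domainBnd$ (boundary cells being set to zero), this complex is exactly the relative cellular chain complex $C_\bullet(\domainComponent{\pB{i}}{}, \boundary\domainComponent{\pB{i}}{}\cap\domainBnd) \otimes_\RR \ideal[M]{}_{\pB{i}}$, graded by bi-degree. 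Because $\ideal[M]{}_{\pB{i}}$ is a graded $\RR$-vector space and tensoring with it is exact, the universal coefficient principle gives $H_j(\degBa{\constantComplex}{i})_\abdegree \isomorphic H_j\bigl(\domainComponent{\pB{i}}{}, \boundary\domainComponent{\pB{i}}{}\cap\domainBnd\bigr) \otimes_\RR (\ideal[M]{}_{\pB{i}})_\abdegree$ for each $j$ and each $\abdegree$.

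First I would make the identification of $\degBa{\constantComplex}{i}$ with the relative cellular chain complex precise: the only subtlety is that the complex uses \emph{interior} edges and \emph{interior} vertices and sets boundary cells to zero, which is exactly the passage to relative chains modulo $\boundary\domainComponent{\pB{i}}{}\cap\domainBnd$; one should also note that, by the remarks following Definition \ref{def:activeTmesh}, $\tmeshComponentE{,\pB{i}}{}$ and $\tmeshComponentV{,\pB{i}}{}$ are precisely the edges and vertices lying on the boundaries of the active faces, so the cell structure genuinely is that of the CW-pair $(\domainComponent{\pB{i}}{}, \boundary\domainComponent{\pB{i}}{}\cap\domainBnd)$. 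Next, since $\ideal[M]{}_{\pB{i}} = \ideal[L]{}_{\pB{i-1}}/\ideal[L]{}_{\pB{i}}$ is a fixed (bi-graded) $\RR$-vector space independent of the cell, tensoring the chain complex over $\RR$ with it splits off as a direct sum indexed by a basis of $(\ideal[M]{}_{\pB{i}})_\abdegree$ of copies of the scalar relative chain complex. Therefore $\dimwp{H_1(\degBa{\constantComplex}{i})}_\abdegree = \dimwp{H_1(\domainComponent{\pB{i}}{}, \boundary\domainComponent{\pB{i}}{}\cap\domainBnd; \RR)} \cdot \dimwp{\ideal[M]{}_{\pB{i}}}_\abdegree$, and the first factor is $\rankwp{H_1(\domainComponent{\pB{i}}{}, \boundary\domainComponent{\pB{i}}{}\cap\domainBnd)} = \holes{\pB{i}}{}$ by definition. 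This yields the claimed formula.

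The main obstacle I anticipate is \emph{justifying the identification of coefficient modules across all cells} — i.e., checking that for every active face, edge and vertex the relevant module appearing in the original definition of $\degBa{\constantComplex}{i}$ really does reduce to the single module $\ideal[M]{}_{\pB{i}}$. For faces this is immediate since $\degB{\pRingH}{\face}{i} = \degA{\pRingH}{\face}{i}\cap\ideal[L]{}_{\pB{i-1}}$ collapses to $\ideal[L]{}_{\pB{i-1}}/\ideal[L]{}_{\pB{i}}$ on the active faces; for edges and vertices one must invoke the computation from the proof of Proposition \ref{prop:exact_stair}, where the quotients $\degB{\pRingH}{\edge}{i}/\degB{\ideal}{\edge}{i}$ and $\degB{\pRingH}{\vertex}{i}/\degB{\ideal}{\vertex}{i}$ were shown — via the ideal identity $\ideal[L]{}_{\pB{i-1}}\cap(\ideal_\square+\ideal[L]{}_{\pB{i}}) = \ideal_\square\cap\ideal[L]{}_{\pB{i-1}}+\ideal[L]{}_{\pB{i}}$ — to be naturally isomorphic to $\ideal[L]{}_{\pB{i-1}}/\ideal[L]{}_{\pB{i}} = \ideal[M]{}_{\pB{i}}$. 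Once these local isomorphisms are in hand and seen to be compatible with the induced boundary maps, the rest is the standard tensor-with-a-vector-space argument, and the formula follows.
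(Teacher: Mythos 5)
Your argument is correct and is essentially the paper's proof in cleaner packaging: the paper establishes by hand, through explicit representatives $p_{i-1}\alpha_k$ with $\alpha_k$ an integer relative cycle, precisely the identification $H_1\left(\degBa{\constantComplex}{i}\right)_\abdegree \isomorphic H_1\left(\domainComponent{\pB{i}}{},\boundary\domainComponent{\pB{i}}{}\cap\domainBnd\right)\otimes_\RR \left(\ideal[M]{}_{\pB{i}}\right)_\abdegree$ that you derive from exactness of tensoring with an $\RR$-vector space. One minor correction to your last paragraph: in $\degBa{\constantComplex}{i}$ the edge and vertex modules are $\degB{\pRingH}{\edge}{i}$ and $\degB{\pRingH}{\vertex}{i}$ themselves (each equal to $\ideal[L]{}_{\pB{i-1}}/\ideal[L]{}_{\pB{i}} = \ideal[M]{}_{\pB{i}}$ on active cells), not the quotients $\degB{\pRingH}{\edge}{i}/\degB{\ideal}{\edge}{i}$, which belong to $\degBa{\quotientComplex}{i}$ --- so the identification you flag as the main obstacle is immediate.
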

\begin{proof}
	The entire kernel of
	\begin{equation*}
		\boundary : \moplus\limits_{\edge \in \tmeshInteriorComponentE{,\pB{i}}{}} \edgeE{} \ideal[M]{}_{\pB{i}} \rightarrow \moplus\limits_{\vertex \in \tmeshInteriorComponentV{,\pB{i}}{}} \vertexE{} \ideal[M]{}_{\pB{i}}
	\end{equation*}
	can be generated by ($\RR$-linear combinations of) $c_{k, i}$ of the form
	\begin{equation*}
		c_{k, i} = p_{i-1}\alpha_{k} + \sum_{\edge \in \tmeshInteriorComponentE{,\pB{i}}{}} \edgeE{} p_{\edge, i}\;,
	\end{equation*}
	where
	\begin{itemize}
		\item $\alpha_{k} = \sum_{\edge\in\tmeshInteriorComponentE{,\pB{i}}{}}\edgeE{}o_{\edge}$, $o_{\edge} \in \ZZ$, is a relative cycle, i.e., $\boundary \alpha_{k} = 0$; and,
		\item $p_{i-1} \in \ideal[L]{}_{\pB{i-1}}$ and $p_{\edge, i} \in \ideal[L]{}_{\pB{i}}$.
	\end{itemize}
	Then, we only need to see how many such $c_{k, i}$ are linearly independent and not nullhomologous. In particular, if $p_{i-1} \notin \ideal[L]{}_{\pB{i}}$, $c_{k, i}$ is nullhomologous iff there exists some $d_{k, i}$ of the form
	\begin{equation*}
	d_{k, i} = p_{i-1}\beta_{k} + \sum_{\face \in \tmeshComponentF{,\pB{i}}{}} \faceE{} p_{\face, i}
	\end{equation*}
	such that $\boundary\beta_{k} = \alpha_k$, where
	\begin{itemize}
		\item $\beta_{k} = \sum_{\face\in\tmeshComponentF{,\pB{i}}{}}\faceE{}o_{\face}$, $o_{\face} \in \ZZ$; and,
		\item $p_{\face, i} \in \ideal[L]{}_{\pB{i}}$.
	\end{itemize}
	Then, $c_{k, i}$ is not nullhomologous in $H_1\left( \degBa{\constantComplex}{i} \right)$ iff $\alpha_k$ is not nullhomologous in $H_1\left(\domainComponent{\pB{i}}{},\boundary\domainComponent{\pB{i}}{}\cap\domainBnd\right)$.
\end{proof}

\begin{definition}[Number of relative connected components in $\domainComponent{\pB{i}}{}$]
	We define $\nComponent{\pB{i}}$ to be the number of connected components in $\domainComponent{\pB{i}}{}$ relative to $\boundary\domainComponent{\pB{i}}{}\cap\domainBnd$,
	\begin{equation*}
		\nComponent{\pB{i}} := \rankwp{H_0\left(\domainComponent{\pB{i}}{},\boundary\domainComponent{\pB{i}}{}\cap\domainBnd\right)}\;.
	\end{equation*}
\end{definition}
\begin{proposition}\label{prop:H0_constantComplex}
	\begin{equation*}
		\dimwp{H_0\left( \degBa{\constantComplex}{i}\right)}_\abdegree = \nComponent{\pB{i}}\dimwp{\ideal[M]{}_{\pB{i}}}_\abdegree\;.
	\end{equation*}
\end{proposition}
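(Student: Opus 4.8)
The plan is to recognize $\degBa{\constantComplex}{i}$, in each fixed graded degree $\abdegree$, as a relative cellular chain complex with \emph{constant} coefficients. First I would note, using the equivalent form of $\degBa{\constantComplex}{i}$ recorded at the start of this section, that in homological degrees $2,1,0$ its modules are direct sums of copies of the single module $\ideal[M]{}_{\pB{i}}$ --- one copy per active face in $\tmeshComponentF{,\pB{i}}{}$, per active interior edge in $\tmeshInteriorComponentE{,\pB{i}}{}$, and per active interior vertex in $\tmeshInteriorComponentV{,\pB{i}}{}$, respectively --- and that the differentials act on these copies only through the integer incidence coefficients $\kronFE{}{}$ and $\kronEV{}{}$, with the standing convention that a generator attached to a cell lying on $\domainBnd$ is zero. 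Passing to the $\abdegree$-th graded piece, $(\ideal[M]{}_{\pB{i}})_\abdegree$ is a finite-dimensional $\RR$-vector space, and the resulting complex of vector spaces is precisely
\[
C_\bullet\!\left(\domainComponent{\pB{i}}{},\;\boundary\domainComponent{\pB{i}}{}\cap\domainBnd;\;\RR\right)\otimes_\RR (\ideal[M]{}_{\pB{i}})_\abdegree\;,
\]
the cellular chain complex of the pair $\left(\domainComponent{\pB{i}}{},\;\boundary\domainComponent{\pB{i}}{}\cap\domainBnd\right)$ --- whose chains in degrees $2,1,0$ are freely generated by the active faces, the active edges meeting $\domainInterior$, and the active vertices meeting $\domainInterior$, respectively (cells on $\domainBnd$ being collapsed into the subcomplex) --- tensored with that vector space, with differential the cellular boundary.

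Next, since $\RR$ is a field, the functor $-\otimes_\RR (\ideal[M]{}_{\pB{i}})_\abdegree$ is exact and hence commutes with taking homology. Therefore
\[
H_0\!\left(\degBa{\constantComplex}{i}\right)_\abdegree \cong H_0\!\left(\domainComponent{\pB{i}}{},\;\boundary\domainComponent{\pB{i}}{}\cap\domainBnd;\;\RR\right)\otimes_\RR (\ideal[M]{}_{\pB{i}})_\abdegree\;,
\]
and taking dimensions together with the definition $\rankwp{H_0(\domainComponent{\pB{i}}{},\boundary\domainComponent{\pB{i}}{}\cap\domainBnd)} = \nComponent{\pB{i}}$ yields the claim. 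Equivalently, and in the spirit of the proof of Proposition \ref{prop:H1_constantComplex}, one can argue directly: $H_0\!\left(\degBa{\constantComplex}{i}\right)$ is the cokernel of $\boundary\colon \moplus_{\edge\in\tmeshInteriorComponentE{,\pB{i}}{}} \edgeE{}\ideal[M]{}_{\pB{i}} \to \moplus_{\vertex\in\tmeshInteriorComponentV{,\pB{i}}{}} \vertexE{}\ideal[M]{}_{\pB{i}}$, and chasing the relations coming from the edges shows that all copies of $\ideal[M]{}_{\pB{i}}$ indexed by interior vertices in one connected component of $\domainComponent{\pB{i}}{}$ become identified, while those in a component meeting $\domainBnd$ are killed, leaving exactly one copy of $\ideal[M]{}_{\pB{i}}$ per relative connected component counted by $\nComponent{\pB{i}}$.

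The only step requiring care --- the ``main obstacle'', such as it is --- is the bookkeeping in the first paragraph: one must verify that the set of cells appearing in $\degBa{\constantComplex}{i}$ together with the zeroing-out of cells on $\domainBnd$ genuinely assembles $\left(\domainComponent{\pB{i}}{},\;\boundary\domainComponent{\pB{i}}{}\cap\domainBnd\right)$ as a CW pair and that the differentials coincide with its cellular boundary maps. This is routine given the explicit boundary formulas and Definition \ref{def:activeTmesh}: $\tmeshComponentE{,\pB{i}}{}$ (resp. $\tmeshComponentV{,\pB{i}}{}$) is exactly the set of edges (resp. vertices) contained in $\cup_{\face\in\tmeshComponentF{,\pB{i}}{}}\boundary\face$ (resp. $\cup_{\edge\in\tmeshComponentE{,\pB{i}}{}}\boundary\edge$), so the active cells do triangulate $\domainComponent{\pB{i}}{}$, and an active edge is either contained in $\domainBnd$ or meets $\domainInterior$ (likewise for vertices), so the cells absorbed into the subcomplex are precisely those on $\domainBnd$.
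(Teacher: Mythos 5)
Your proof is correct. Your primary argument --- identifying the $\abdegree$-th graded piece of $\degBa{\constantComplex}{i}$ with the relative cellular chain complex of the pair $\left(\domainComponent{\pB{i}}{},\,\boundary\domainComponent{\pB{i}}{}\cap\domainBnd\right)$ tensored over $\RR$ with the finite-dimensional vector space $(\ideal[M]{}_{\pB{i}})_\abdegree$, then using exactness of $-\otimes_\RR(\ideal[M]{}_{\pB{i}})_\abdegree$ --- is a cleaner and more general packaging than what the paper does. The paper argues directly at the level of generators and relations: every $\vertexE{}p_\vertex$ lies in the kernel of the final (zero) map, and such a class is nullhomologous precisely when $\vertex$ can be joined, through active interior edges carrying polynomials congruent to $p_\vertex$ modulo $\ideal[L]{}_{\pB{i}}$, to a boundary vertex in the same connected component of $\domainComponent{\pB{i}}{}$; counting the surviving generators gives one copy of $\ideal[M]{}_{\pB{i}}$ per relative connected component, i.e.\ $\nComponent{\pB{i}}$ copies. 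That is exactly your second, ``equivalently'' argument, so your proposal subsumes the paper's proof. The bookkeeping point you single out --- that the active cells with the convention $\edgeE{}=0$, $\vertexE{}=0$ on $\domainBnd$ really do assemble the CW pair and that the differentials are its cellular boundary maps --- is indeed the only thing to verify, and it holds for the reasons you give (in particular, the boundary of an active face consists only of active edges because $\bdegree(\edge)\leq\bdegree(\face)$ forces $\pRingH_\edge\supseteq\pRingH_\face\supseteq\ideal[L]{}_{\pB{i-1}}$). What the conceptual route buys is uniformity: the same identification yields Propositions \ref{prop:H2_constantComplex} and \ref{prop:H1_constantComplex} as well, with the homology of $\degBa{\constantComplex}{i}$ in every degree expressed through the relative homology of the pair, whereas the paper redoes the element chase separately in each homological degree.
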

\begin{proof}
	All $\vertexE{} p_\vertex$, $\vertex \in \tmeshInteriorComponentV{,\pB{i}}{}$, $p_\vertex \in \ideal[L]{}_{\pB{i-1}}$, are in the kernel of $\boundary$. Let vertex $\vertex_0$, edges $\edge_{1},\dots,\edge_{k} \in \tmeshInteriorComponentE{,\pB{i}}{}$, $p_{\edge_l} \in \ideal[L]{}_{\pB{i-1}}$, and $o_1,\dots,o_k \in \ZZ$ be such that
	\begin{itemize}
		\item $\forall l \in \{1,\dots,k\}$, $p_{\edge_l} - p_\vertex \in \ideal[L]{}_{\pB{i}}$; and,
		\item $\vertexE{} = [\vertex_0] + \boundary\left(\sum_{l=1}^k \edgeE{l}o_l\right)\;.$
	\end{itemize}
	Assuming $p_\vertex \notin \ideal[L]{}_{\pB{i}}$, $\vertexE{}p_\vertex$ is nullhomologous only if $\vertex$ and $\vertex_0$ belong to the same connected component of $\domainComponent{\pB{i}}{}$ and $\vertex_0 \in \domainBnd$. Else, $\vertexE{0} p_\vertex$ would be a generator of $H_0\left(\degBa{\constantComplex}{i} \right)$ for the particular connected component of $\domainComponent{\pB{i}}{}$ that it belongs to. Then, for each $p_\vertex$, the number of such generators is exactly equal to $\nComponent{\pB{i}}$, and the claim follows.
\end{proof}

\begin{example}\label{ex:tmesh_homology}
	Consider the setup in Example \ref{ex:tmesh_activeMesh} and Figure \ref{fig:tmesh_levels}. Then, for $\abdegree = (5, 5)$, it can be verified that:
	\begin{equation*}
		\dimwp{H_2(\degBa{\constantComplex}{i})}_\abdegree = \begin{dcases}
			16\;, & i = 3\\
			0\;, & i = 1, 2
		\end{dcases}\;,
	\end{equation*}
	\begin{equation*}
		\dimwp{H_1(\degBa{\constantComplex}{i})}_\abdegree = 0\;,\qquad i = 1, 2, 3\;,
	\end{equation*}
	\begin{equation*}
		\dimwp{H_0(\degBa{\constantComplex}{i})}_\abdegree = 
		\begin{dcases}
			0\;, & i = 3\\
			9\;, & i = 2\\
			11\;, & i = 1
		\end{dcases}\;.
	\end{equation*}
\end{example}

\section[]{The $0$-homology of $\degBa{\idealComplex}{i}$}\label{sec:ideals}
As per our objectives stated at the end of Section \ref{ss:approach_summary}, only the characterization of the $0$-homology of $\degBa{\idealComplex}{i}$
remains, and we collect the associated results in this section.
Similarly to the previous section, we will do so keeping in mind the simplified form of $\degBa{\idealComplex}{i}$ that follows from Proposition \ref{prop:exact_stair},
\begin{equation*}
	\begin{tikzcd}[column sep=small]
		\degBa{\idealComplex}{i}~:~ 0\arrow[r] & \moplus\limits_{\edge \in \tmeshInteriorComponentE{,\pB{i}}{}} \edgeE{} \ideal_\edge \cap \ideal[L]{}_{\pB{i-1}} + \ideal[L]{}_{\pB{i}} / \ideal[L]{}_{\pB{i}} \arrow[r] & \moplus\limits_{\vertex \in \tmeshInteriorComponentV{,\pB{i}}{}} \vertexE{} \ideal_\vertex \cap \ideal[L]{}_{\pB{i-1}} + \ideal[L]{}_{\pB{i}} / \ideal[L]{}_{\pB{i}} \arrow[r] & 0\;.
	\end{tikzcd}
\end{equation*}
We first provide a lower bound on the dimension of $H_0\left(\degBa{\idealComplex}{i}\right)$ that holds for special choices of $\bdegree, \bsmooth$ and $\abdegree$.
\begin{proposition}\label{prop:H0_idealComplex_lower_bound}
	Let $\abdegree \in \ZZP^2$ and $1 \leq i \leq \nlevels$.
	\begin{enumerate}[label=(\alph*)]
		\item If $\abdegree - \bdegree_{i}$ is entry-wise greater than or equal to $(\bsmooth_{\vertex, h}, \bsmooth_{\vertex, v})$ for each $\vertex \in \tmeshInteriorComponentV{,\pB{i}}{}$, then
		\begin{equation*}
			\dimwp{H_0\left(\degBa{\idealComplex}{i}\right)}_\abdegree \geq \dimwp{H_0\left(\degBa{\constantComplex}{i}\right)}_\abdegree\;.
		\end{equation*}
		In particular, the map from $H_0\left(\degBa{\idealComplex}{i}\right)_\abdegree$ to $H_0\left(\degBa{\constantComplex}{i}\right)_\abdegree$ in Equation \eqref{eq:long_ex_seq_homology} is a surjection and $H_0\left(\degBa{\quotientComplex}{i}\right)_\abdegree$ vanishes.
		\item If $\abdegree - \bdegree_{i-1}$ is entry-wise smaller than or equal to $(\bsmooth_{\vertex, h}, \bsmooth_{\vertex, v})$ for each $\vertex \in \tmeshInteriorComponentV{,\pB{i}}{}$, then
		\begin{equation*}
			\dimwp{H_0\left(\degBa{\idealComplex}{i}\right)}_\abdegree = 0\;.
		\end{equation*}
		In particular, the map from $H_0\left(\degBa{\constantComplex}{i}\right)_\abdegree$ to $H_0\left(\degBa{\quotientComplex}{i}\right)_\abdegree$ in Equation \eqref{eq:long_ex_seq_homology} is an isomorphism and $H_1\left(\degBa{\quotientComplex}{i}\right)_\abdegree$ vanishes.
	\end{enumerate}
\end{proposition}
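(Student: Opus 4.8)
The plan is to deduce both parts from the short exact sequence of complexes
\[
0 \to \degBa{\idealComplex}{i} \to \degBa{\constantComplex}{i} \to \degBa{\quotientComplex}{i} \to 0
\]
read off column-wise from the diagram in Equation \eqref{eq:base_complex}, combined with the explicit graded-dimension count for $\degB{\ideal}{\vertex}{i}$ recorded in Corollary \ref{cor:vertex_ideal_dim}. Under Assumption \ref{ass:no_holes} the associated long exact homology sequence collapses, in its tail and in the $\abdegree$-graded piece, to the four-term exact sequence
\[
0 \to H_1\left(\degBa{\quotientComplex}{i}\right)_\abdegree \to H_0\left(\degBa{\idealComplex}{i}\right)_\abdegree \to H_0\left(\degBa{\constantComplex}{i}\right)_\abdegree \to H_0\left(\degBa{\quotientComplex}{i}\right)_\abdegree \to 0
\]
exactly as in the proof of Proposition \ref{prop:homo_dim}. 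Both parts will be extracted from this sequence once one of its end terms is shown to vanish in degree $\abdegree$.

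For part (b) the plan is to show that the left-hand term vanishes. Since $\degBa{\idealComplex}{i}$ is concentrated in homological degrees $1$ and $0$, its $0$-homology is the cokernel of the edge-to-vertex map and hence a quotient of $\bigoplus_{\vertex \in \tmeshInteriorComponentV{,\pB{i}}{}} \vertexE{}\,\degB{\ideal}{\vertex}{i}$. Writing $\vertex = \edge_h \cap \edge_v$, Equation \eqref{eq:smoothnessAtVertex} shows that the hypothesis $\abdegree - \bdegree_{i-1} \leq (\bsmooth_{\vertex,h}, \bsmooth_{\vertex,v})$ entry-wise is exactly the hypothesis of Corollary \ref{cor:vertex_ideal_dim}(b) at $\vertex$, so $\dimwp{\degB{\ideal}{\vertex}{i}}_\abdegree = 0$ for every active interior vertex. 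Hence $H_0\left(\degBa{\idealComplex}{i}\right)_\abdegree = 0$, and the four-term sequence then forces $H_1\left(\degBa{\quotientComplex}{i}\right)_\abdegree = 0$ together with an isomorphism $H_0\left(\degBa{\constantComplex}{i}\right)_\abdegree \isomorphic H_0\left(\degBa{\quotientComplex}{i}\right)_\abdegree$, which is the claim.

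For part (a) the plan is symmetric, showing instead that the right-hand term $H_0\left(\degBa{\quotientComplex}{i}\right)_\abdegree$ vanishes. Using the simplified form of $\degBa{\constantComplex}{i}$ recorded at the start of Section \ref{ss:constant_homology} — the constant-coefficient complex on the active mesh $\tmeshComponent{\pB{i}}{}$ with value $\ideal[M]{}_{\pB{i}} = \ideal[L]{}_{\pB{i-1}}/\ideal[L]{}_{\pB{i}}$ — the vertex term of $\degBa{\quotientComplex}{i}$ is $\bigoplus_\vertex \vertexE{}\,\ideal[M]{}_{\pB{i}}/\degB{\ideal}{\vertex}{i}$, where $\degB{\ideal}{\vertex}{i} = (\ideal_\vertex \cap \ideal[L]{}_{\pB{i-1}} + \ideal[L]{}_{\pB{i}})/\ideal[L]{}_{\pB{i}}$ is a submodule of $\ideal[M]{}_{\pB{i}}$ (the containment holds because $\ideal[L]{}_{\pB{i}} \subseteq \ideal[L]{}_{\pB{i-1}}$). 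Now the hypothesis $\abdegree - \bdegree_i \geq (\bsmooth_{\vertex,h}, \bsmooth_{\vertex,v})$ entry-wise is the hypothesis of Corollary \ref{cor:vertex_ideal_dim}(a), which gives $\dimwp{\degB{\ideal}{\vertex}{i}}_\abdegree = \dimwp{\ideal[M]{}_{\pB{i}}}_\abdegree$; since both spaces are finite-dimensional in each bi-degree and one is contained in the other, this upgrades to $(\degB{\ideal}{\vertex}{i})_\abdegree = (\ideal[M]{}_{\pB{i}})_\abdegree$, so $(\ideal[M]{}_{\pB{i}}/\degB{\ideal}{\vertex}{i})_\abdegree = 0$ for every active interior vertex. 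The vertex term of $\degBa{\quotientComplex}{i}$ therefore has trivial $\abdegree$-graded piece, and since $H_0\left(\degBa{\quotientComplex}{i}\right)$ is a quotient of it we get $H_0\left(\degBa{\quotientComplex}{i}\right)_\abdegree = 0$. Feeding this into the four-term sequence makes $H_0\left(\degBa{\idealComplex}{i}\right)_\abdegree \to H_0\left(\degBa{\constantComplex}{i}\right)_\abdegree$ surjective, whence $\dimwp{H_0\left(\degBa{\idealComplex}{i}\right)}_\abdegree \geq \dimwp{H_0\left(\degBa{\constantComplex}{i}\right)}_\abdegree$ and the remaining assertions of part (a).

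Neither part poses a genuine obstacle once Corollary \ref{cor:vertex_ideal_dim} and the four-term sequence are available; the one point demanding care is the bookkeeping that identifies the vertex modules across the three complexes, namely that the vertex term $\degB{\pRingH}{\vertex}{i}$ of $\degBa{\constantComplex}{i}$ is canonically isomorphic to $\ideal[M]{}_{\pB{i}}$ and that $\degB{\ideal}{\vertex}{i}$ sits inside it as $(\ideal_\vertex \cap \ideal[L]{}_{\pB{i-1}} + \ideal[L]{}_{\pB{i}})/\ideal[L]{}_{\pB{i}}$, so that equality of $\abdegree$-graded dimensions genuinely implies equality of the $\abdegree$-graded pieces. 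This is already recorded in the simplified forms coming from the proof of Proposition \ref{prop:exact_stair} and at the openings of Sections \ref{ss:constant_homology} and \ref{sec:ideals}, so it only needs to be invoked, not re-derived.
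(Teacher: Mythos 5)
Your proof is correct and follows essentially the same route as the paper: in each part you use Corollary \ref{cor:vertex_ideal_dim} to kill the relevant vertex-term module in degree $\abdegree$ (the quotient term $\degB{\pRingH}{\vertex}{i}/\degB{\ideal}{\vertex}{i}$ for (a), the ideal term $\degB{\ideal}{\vertex}{i}$ for (b)), and then read the remaining claims off the four-term exact sequence $0 \to H_1(\degBa{\quotientComplex}{i}) \to H_0(\degBa{\idealComplex}{i}) \to H_0(\degBa{\constantComplex}{i}) \to H_0(\degBa{\quotientComplex}{i}) \to 0$ coming from Equation \eqref{eq:base_complex} under Assumption \ref{ass:no_holes}. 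The only difference is that you spell out the bookkeeping (the identification $\degB{\pRingH}{\vertex}{i} \isomorphic \ideal[M]{}_{\pB{i}}$ and the upgrade from equality of graded dimensions to equality of graded pieces via the containment), which the paper leaves implicit.
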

\begin{proof}
	~\par
	\begin{enumerate}[label=(\alph*)]
		\item If $\abdegree - \bdegree_{i}$ is entry-wise greater than or equal to $(\bsmooth_{\vertex, h}, \bsmooth_{\vertex, v})$, then Corollary \ref{cor:vertex_ideal_dim} implies that
		the $\abdegree^{th}$ graded piece of $\moplus_{\vertex \in \tmeshInteriorComponentV{,\pB{i}}{}}\vertexE{} \degB{\pRingH}{\vertex}{i}/\degB{\ideal}{\vertex}{i}$ vanishes and so does $H_0\left(\degBa{\quotientComplex}{i}\right)_\abdegree$.
		
		\item If $\abdegree - \bdegree_{i-1}$ is entry-wise smaller than or equal to $(\bsmooth_{\vertex, h}, \bsmooth_{\vertex, v})$, then Corollary \ref{cor:vertex_ideal_dim} implies that
		the $\abdegree^{th}$ graded piece of $\moplus_{\vertex \in \tmeshInteriorComponentV{,\pB{i}}{}}\vertexE{} \degB{\ideal}{\vertex}{i}$ vanishes and so does $H_0\left(\degBa{\idealComplex}{i}\right)_\abdegree$.
	\end{enumerate}
	The claims then follow from Equation \eqref{eq:long_ex_seq_homology}.
\end{proof}

Let us define the graded multiplication map $\phi_{\pB{i}, \vertex}$ for $\vertex \in \tmeshInteriorComponentV{,\pB{i}}{}$,
\begin{equation}
\begin{split}
\phi_{\pB{i}, \vertex}~:~\moplus_{\edge \in \tmeshInteriorComponentE{,\pB{i}}{}} &\halfEdge{\vertex}{\edge} \shiftideal{M}{\pB{i}}{ -\bideg{\edge}} \rightarrow \vertexE{} \degB{\ideal}{\vertex}{i}\\
&\halfEdge{\vertex}{\edge}p \mapsto \vertexE{} p \Delta_\edge\;,
\end{split}
\end{equation}
where $\halfEdge{\vertex}{\edge}$ is a half-edge element, with $\halfEdge{\vertex}{\edge} := 0$ when $\kronEV{}{} = 0$ or when $\vertex \in \domainBnd$.

\begin{lemma}\label{lem:redIdealComplex_surjection0}
	The map $\phi_{\pB{i}, \vertex}$ is surjective.
\end{lemma}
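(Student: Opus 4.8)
The goal is to show that every element $\vertexE{} q$ with $q \in \degB{\ideal}{\vertex}{i}$ lies in the image of $\phi_{\pB{i}, \vertex}$. Recall that $\degB{\ideal}{\vertex}{i} = \ideal_\vertex \cdot \degB{\pRingH}{\vertex}{i}$, and that (from the proof of Proposition~\ref{prop:exact_stair}) the relevant nontrivial module is $\ideal_\vertex \cap \ideal[L]{}_{\pB{i-1}} + \ideal[L]{}_{\pB{i}}$ modulo $\ideal[L]{}_{\pB{i}}$, with $\ideal_\vertex = \sum_{\vertex \in \edge} (\Delta_\edge \mbf{u}^{\bdegree(\edge)})$. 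So a generic element is a sum $\sum_{\vertex \in \edge} \Delta_\edge g_\edge$ with $g_\edge \in \pRingH_\edge$, and we must rewrite it (modulo $\ideal[L]{}_{\pB{i}}$) as $\sum_\edge \Delta_\edge p_\edge$ with $p_\edge \in \shiftideal{M}{\pB{i}}{-\bideg{\edge}}$, i.e. $p_\edge$ running over $\ideal[L]{}_{\pB{i-1}}(-\bideg{\edge})/\ideal[L]{}_{\pB{i}}(-\bideg{\edge})$ after accounting for the degree shift by $\bideg{\edge}$ built into $\Delta_\edge = l_\edge^{\bsmooth(\edge)+1}$.

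\textbf{Step 1.} Reduce to the at-most-two-edges situation. Since $\vertex$ is an interior vertex lying on one horizontal and one vertical interior edge (crossing vertex case) — or possibly a T-vertex where $\halfEdge{\vertex}{\edge}=0$ for the "missing" direction — the sum $\ideal_\vertex$ has at most two summands $\Delta_{\edge_h}(\cdot)$ and $\Delta_{\edge_v}(\cdot)$ that survive after imposing $\halfEdge{\vertex}{\edge}=0$ on boundary half-edges. For non-crossing vertices the statement is immediate because $\ideal_\vertex$ is already a single principal-type piece; so the content is the crossing case.

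\textbf{Step 2.} Handle each edge summand. Fix $\edge \ni \vertex$ with $\halfEdge{\vertex}{\edge} \neq 0$. An element of $\vertexE{}\,\Delta_\edge g$ with $g \in \pRingH_\edge = \mbf{u}^{\bdegree(\edge)} \pRingH(-\bdegree(\edge))$ contributes to $\degB{\ideal}{\vertex}{i}$ only through its class modulo $\ideal[L]{}_{\pB{i}}$, and only the part of $g$ with $\Delta_\edge g \in \ideal[L]{}_{\pB{i-1}}$ matters. The key algebraic point is that $\Delta_\edge g \in \ideal[L]{}_{\pB{i-1}} = \mbf{u}^{\bdegree_{i-1}} \pRingH(-\bdegree_{i-1})$ together with $\Delta_\edge$ being a power of a single linear form $l_\edge$ (which divides at most one of $u,v$) forces $g$ itself, modulo $\ideal[L]{}_{\pB{i}}$, to be representable by an element of $\shiftideal{M}{\pB{i}}{-\bideg{\edge}}$; here one uses that $l_\edge$ is coprime to the monomial $\mbf{u}^{\bdegree_{i-1}}$ in all but one variable, so the divisibility condition transfers from the product to the factor $g$ after the appropriate degree shift. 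This is where Equation~\eqref{eq:ideal_i_im1} and the constraint $\bdegreev_i \in \{(1,0),(0,1),(1,1)\}$ enter: they guarantee the shift bookkeeping between $\shiftideal{L}{\pB{i-1}}{-\bideg{\edge}}$ and $\shiftideal{L}{\pB{i}}{-\bideg{\edge}}$ is exactly by $\mbf{u}^{\bdegreev_i}$, matching the definition of $\shiftideal{M}{\pB{i}}{-\bideg{\edge}}$.

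\textbf{Step 3.} Assemble. Choosing for each surviving $\edge$ the preimage $p_\edge \in \shiftideal{M}{\pB{i}}{-\bideg{\edge}}$ produced in Step~2 and summing, $\sum_\edge \halfEdge{\vertex}{\edge} p_\edge$ maps under $\phi_{\pB{i},\vertex}$ to $\vertexE{}\sum_\edge p_\edge \Delta_\edge$, which equals the prescribed generator of $\degB{\ideal}{\vertex}{i}$ modulo $\ideal[L]{}_{\pB{i}}$. Hence $\phi_{\pB{i},\vertex}$ is surjective.

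\textbf{Main obstacle.} The delicate point is Step~2: showing that divisibility of the \emph{product} $\Delta_\edge g$ by $\mbf{u}^{\bdegree_{i-1}}$ (i.e. membership in $\ideal[L]{}_{\pB{i-1}}$) descends to a statement about $g$ modulo $\ideal[L]{}_{\pB{i}}$. One must argue carefully using that $l_\edge$ is $u$-homogeneous (for vertical edges) or $v$-homogeneous (for horizontal edges) — in particular $l_\edge$ shares a variable with $\mbf{u} = (u,v)$ in exactly one slot — and combine this with $\GCD$-type reasoning in the graded ring $\pRingH$. Once this transfer-of-divisibility lemma is in hand the rest is formal.
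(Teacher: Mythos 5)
Your proposal follows essentially the same route as the paper: the paper's two-line proof invokes exactly your two ingredients, namely the freeness of $\ideal_\edge$ on the generator $\Delta_\edge \mbf{u}^{\bdegree(\edge)}$ (your Step 2 transfer-of-divisibility, which works because $l_\edge$ is coprime to $u$ and $v$) together with the surjectivity of the edge-wise sum map $\moplus_{\edge}\left(\ideal_\edge \cap \ideal[L]{}_{\pB{i-1}}\right) \rightarrow \ideal_\vertex \cap \ideal[L]{}_{\pB{i-1}}$ (your Step 3). One small slip in Step 1: a T-junction does not make $\ideal_\vertex$ ``a single principal-type piece,'' since every interior vertex of the T-mesh lies on at least one horizontal and one vertical edge and hence $\ideal_\vertex$ always carries the two coprime factors $\Delta_{\edge_h}$ and $\Delta_{\edge_v}$ --- but this is immaterial, as your Steps 2--3 treat every edge summand uniformly.
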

\begin{proof}
	The claim follows from the isomorphism $\ideal_\edge \isomorphic \pRingH(-\bdegree(\edge) -\bideg{\edge})$ and the surjective map
	\begin{equation*}
	\begin{split}
		\moplus_{\edge \in \tmeshInteriorComponentE{,\pB{i}}{}} &\halfEdge{\vertex}{\edge} \ideal_\edge \cap \ideal[L]{}_{\pB{i-1}} \rightarrow \vertexE{} \ideal_\vertex \cap \ideal[L]{}_{\pB{i-1}}\\
		&\halfEdge{\vertex}{\edge}p \mapsto \vertexE{} p\;.
	\end{split}
	\end{equation*}
\end{proof}

Define $E_{h, \pB{i}}(\vertex)$, $E_{v, \pB{i}}(\vertex)$ as the sets of horizontal and vertical edges in $\tmeshInteriorComponentE{,\pB{i}}{}$ that contain $\vertex \in \tmeshInteriorComponentV{,\pB{i}}{}$, respectively, and let $E_{\pB{i}}(\vertex) = E_{h, \pB{i}}(\vertex) \cup E_{v, \pB{i}}(\vertex)$.
Let $P_{\pB{i}}(\vertex)$ be the set that contains edge-pairs $(\edge, \edge')$ containing $\vertex$, both either in $E_{h, \pB{i}}(\vertex)$ or $E_{v, \pB{i}}(\vertex)$; we identify $(\edge,\edge')$ with $(\edge',\edge)$.
Note that, depending on the index $i$, $P_{\pB{i}}(\vertex)$ may be empty, or may contain either one or two elements.
When the vertex $\vertex$ is obvious from the context, we will exclude it from the above notation to simplify the reading (and writing!) of the text.

\begin{proposition}\label{prop:halfedge_map_kernel}
	\begin{equation*}
	\begin{split}
	\kerwp{\phi_{\pB{i}, \vertex}} = 
		&\sum_{(\edge,\edge')\in P_{\pB{i}}} \left(\halfEdge{\vertex}{\edge} - \halfEdge{\vertex}{\edge'} \right) \shiftideal{M}{\pB{i}}{ -\bideg{\edge}} 
		+ \sum_{\substack{\edge\in E_{v, \pB{i}}\\\edge'\in E_{h, \pB{i}}}} \left(\halfEdge{\vertex}{\edge}\Delta_{\edge'}  - \halfEdge{\vertex}{\edge'}\Delta_{\edge} \right) \shiftideal{M}{\pB{i}}{ -\bideg{\vertex}}\;.
	\end{split}
	\end{equation*}
\end{proposition}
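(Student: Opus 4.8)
The plan is to establish the two inclusions separately. The inclusion ``$\supseteq$'' is a direct computation, carried out after normalising the linear forms $l_\edge$ so that $l_\edge = l_{\edge'}$ whenever $\edge,\edge'$ are collinear. Since the smoothness distribution forces $\bsmooth(\edge)=\bsmooth(\edge')$ for such edges, we then also have $\Delta_\edge=\Delta_{\edge'}$ and $\bideg{\edge}=\bideg{\edge'}$, so $\phi_{\pB{i},\vertex}\bigl((\halfEdge{\vertex}{\edge}-\halfEdge{\vertex}{\edge'})p\bigr)=\vertexE{}(p\Delta_\edge-p\Delta_{\edge'})=0$ for every $p\in\shiftideal{M}{\pB{i}}{-\bideg{\edge}}$, which takes care of the first family of generators. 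For the second family I would use $\bideg{\vertex}=\bideg{\edge_h}+\bideg{\edge_v}$ and the fact that $\Delta_{\edge'}$ has bidegree $\bideg{\edge'}$: multiplication by $\Delta_{\edge'}$ maps $\shiftideal{M}{\pB{i}}{-\bideg{\vertex}}$ into $\shiftideal{M}{\pB{i}}{-\bideg{\edge}}$, so $(\halfEdge{\vertex}{\edge}\Delta_{\edge'}-\halfEdge{\vertex}{\edge'}\Delta_\edge)q$ really lies in the domain of $\phi_{\pB{i},\vertex}$, and it is annihilated because $\phi_{\pB{i},\vertex}$ sends it to $\vertexE{}(\Delta_{\edge'}q\Delta_\edge-\Delta_\edge q\Delta_{\edge'})=0$.

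For ``$\subseteq$'', given $x=\sum_{\edge\in E_{\pB{i}}(\vertex)}\halfEdge{\vertex}{\edge}p_\edge\in\kerwp{\phi_{\pB{i},\vertex}}$, the first step is to use the first family of generators (now known to lie in the kernel) to transfer all horizontal mass onto one chosen active horizontal half-edge $\halfEdge{\vertex}{\edge_h}$ and all vertical mass onto one chosen $\halfEdge{\vertex}{\edge_v}$: if $E_{h,\pB{i}}(\vertex)=\{\edge_h,\edge_h'\}$ then subtracting $(\halfEdge{\vertex}{\edge_h}-\halfEdge{\vertex}{\edge_h'})p_{\edge_h'}$ (a valid element of the first family, since $\bideg{\edge_h}=\bideg{\edge_h'}$) replaces the two horizontal terms by $\halfEdge{\vertex}{\edge_h}(p_{\edge_h}+p_{\edge_h'})$, and symmetrically in the vertical direction. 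After this reduction $x$ equals, modulo the first family, an element $x'=\halfEdge{\vertex}{\edge_h}q_h+\halfEdge{\vertex}{\edge_v}q_v$ in which one or both summands may be absent, depending on whether $\vertex$ is a crossing, T-, or corner vertex of $\tmeshComponent{\pB{i}}{}$, and $x'$ is still in the kernel. If only one summand survives, say $x'=\halfEdge{\vertex}{\edge_h}q_h$, then $\vertexE{}q_h\Delta_{\edge_h}=0$; since $\Delta_{\edge_h}$ is a power of a linear form coprime to $\mbf{u}^{\bdegreev_i}$, it is a non-zerodivisor on $\shiftideal{M}{\pB{i}}{-\bideg{\edge_h}}$, forcing $q_h=0$ and $x'=0$.

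In the remaining case I would fix the bidegree $\abdegree$ under consideration and pass to the quotient ring $R:=\pRingH/(\mbf{u}^{\bdegreev_i})$: since every $\shiftideal{L}{\pB{j}}{-\bullet}$ is, inside $\pRingH$, the ideal generated by a monomial in $\mbf{u}$, dividing out $\mbf{u}^{\bdegree_{i-1}}$ identifies $\shiftideal{M}{\pB{i}}{-\bideg{\square}}$ with a grading shift of $R$, and the equation $\phi_{\pB{i},\vertex}(x')=0$ becomes $\bar q_h\bar\Delta_{\edge_h}+\bar q_v\bar\Delta_{\edge_v}=0$ in $R$. Because $\bdegreev_i\in\{(1,0),(0,1),(1,1)\}$, $R$ is one of $\pRingH/(u)$, $\pRingH/(v)$, $\pRingH/(uv)$, and in each case $\bar\Delta_{\edge_v}$ (a power of $s-\alpha u$, which reduces to $s^{\bsmooth(\edge_v)+1}$ when $u$ is killed) together with $\bar\Delta_{\edge_h}$ (a power of $t-\beta v$, reducing to $t^{\bsmooth(\edge_h)+1}$ when $v$ is killed) forms a regular sequence in $R$: a linear change of variable that fixes the defining ideal of $R$ and $\bar\Delta_{\edge_h}$ turns the second element into a power of a fresh polynomial variable, which is visibly a non-zerodivisor modulo the first. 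Hence the first Koszul homology of the pair $(\bar\Delta_{\edge_h},\bar\Delta_{\edge_v})$ over $R$ vanishes, so the syzygy is Koszul, $(\bar q_h,\bar q_v)=\bar\lambda\,(\bar\Delta_{\edge_v},-\bar\Delta_{\edge_h})$ for some $\bar\lambda\in R$; a bidegree count using $\bideg{\vertex}=\bideg{\edge_h}+\bideg{\edge_v}$ shows the corresponding class $\lambda$ lands in $\shiftideal{M}{\pB{i}}{-\bideg{\vertex}}$, so $x'=\pm\lambda\bigl(\halfEdge{\vertex}{\edge_v}\Delta_{\edge_h}-\halfEdge{\vertex}{\edge_h}\Delta_{\edge_v}\bigr)$ belongs to the second family. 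The boundary case $i=\nlevels+1$ is analogous but easier: there $\shiftideal{M}{\pB{\nlevels+1}}{-\bullet}=\shiftideal{L}{\pB{\nlevels}}{-\bullet}$ is torsion free, $(\Delta_{\edge_h},\Delta_{\edge_v})$ is already a regular sequence in $\pRingH$, and the Koszul argument runs directly there.

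I expect the hard part to be the regular-sequence verification in $R$ together with the grading bookkeeping that places $\lambda$ in the correctly shifted module $\shiftideal{M}{\pB{i}}{-\bideg{\vertex}}$; the combinatorial accounting over the possible configurations of active edges at $\vertex$ — which controls the cardinality of $P_{\pB{i}}(\vertex)$ and hence the number of transfers needed in the reduction step — is routine but must be done carefully so as not to miss the degenerate vertex types (all-collinear, T-shaped, or a single edge of each orientation). As a consistency check one can also match graded dimensions: the inclusion ``$\supseteq$'' gives one containment, and equality in every bidegree follows from comparing $\dimwp{\kerwp{\phi_{\pB{i},\vertex}}}_\abdegree=\dimwp{\bigoplus_{\edge\in E_{\pB{i}}(\vertex)}\shiftideal{M}{\pB{i}}{-\bideg{\edge}}}_\abdegree-\dimwp{\degB{\ideal}{\vertex}{i}}_\abdegree$ (using the surjectivity of $\phi_{\pB{i},\vertex}$ from Lemma \ref{lem:redIdealComplex_surjection0}) with the dimension of the right-hand side, which is computable via Proposition \ref{prop:vec_space_dim}.
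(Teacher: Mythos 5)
Your proof is correct and follows essentially the same route as the paper's: use the collinear-edge relations to reduce to a single horizontal and a single vertical half-edge, and then observe that the resulting syzygy between $\Delta_{\edge_h}$ and $\Delta_{\edge_v}$ is the Koszul one because the two forms are relatively prime. Your explicit verification that $(\bar\Delta_{\edge_h},\bar\Delta_{\edge_v})$ is a regular sequence on $\pRingH/(\mbf{u}^{\bdegreev_i})$, together with the grading bookkeeping placing $\lambda$ in $\shiftideal{M}{\pB{i}}{-\bideg{\vertex}}$, supplies a detail that the paper's proof leaves implicit behind the phrase ``relatively prime'' (the coefficient modules $\shiftideal{M}{\pB{i}}{-\bideg{\edge}}$ being quotients rather than free modules), so it is a welcome sharpening rather than a deviation.
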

\begin{proof}
	With respect to the T-mesh $\tmeshComponent{\pB{i}}{}$, let $\vertex$ be a crossing vertex. (The proof for when $\vertex$ is a T-junction will proceed analogously.) Let $\edge_1,\edge_2 \in E_{h, \pB{i}}$, $\edge_3,\edge_4 \in E_{v, \pB{i}}$, and $P_{\pB{i}} = \big\{(\edge_1,\edge_2), (\edge_3,\edge_4)\big\}$, and let $p_j \in \shiftideal{M}{\pB{i}}{ -\bideg{\edge_j}}$. Then,
	\begin{equation*}
	\begin{split}
		\phi_{\pB{i}, \vertex}\left( \sum_{j=1}^4 \halfEdge{\vertex}{\edge_j}p_j \right) &= \vertexE{}\sum_{j=1}^4 p_j \Delta_{\edge_j}\;,\\
		&= \vertexE{}\left( p_1 + p_2 \right)\Delta_{\edge_1} + \vertexE{}\left( p_3 + p_4\right)\Delta_{\edge_3}\;,
	\end{split}
	\end{equation*}
	where $\Delta_{\edge_1} = \Delta_{\edge_2}$, and $\Delta_{\edge_3} = \Delta_{\edge_4}$, and $\Delta_{\edge_1}$ and $\Delta_{\edge_3}$ are relatively prime. Therefore, the kernel of the map is generated by,
	\begin{equation*}
	\begin{split}
		\left(\halfEdge{\vertex}{\edge_1} - \halfEdge{\vertex}{\edge_2}\right)p_{12}\;,~
		\left(\halfEdge{\vertex}{\edge_3} - \halfEdge{\vertex}{\edge_4}\right)p_{34}\;,~
		\left(\halfEdge{\vertex}{\edge_h}\Delta_{\edge_v} - \halfEdge{\vertex}{\edge_v}\Delta_{\edge_h}\right)p_{hv}\;,
	\end{split}
	\end{equation*}
	where,
	\begin{equation*}
	\begin{split}
		&p_{12} \in \shiftideal{M}{\pB{i}}{ -\bideg{\edge_1}}\;,\;
		p_{34} \in \shiftideal{M}{\pB{i}}{ -\bideg{\edge_3}}\;,\;
		p_{hv} \in \shiftideal{M}{\pB{i}}{ -\bideg{\vertex}}\;.
	\end{split}
	\end{equation*}
\end{proof}

Using $\phi_{\pB{i}, \vertex}$, we can define a map $\phi_{\pB{i}}$ as
\begin{equation}
	\phi_{\pB{i}}~:~\moplus_{\vertex \in \tmeshInteriorComponentV{,\pB{i}}{}}\moplus_{\edge \in \tmeshInteriorComponentE{,\pB{i}}{}}\halfEdge{\vertex}{\edge}  \shiftideal{M}{\pB{i}}{ -\bideg{\edge}} \rightarrow \moplus_{\vertex \in \tmeshInteriorComponentV{,\pB{i}}{}} \vertexE{} \degB{\ideal}{\vertex}{i}\;,
\end{equation}
with kernel,
\begin{equation}
	\kerwp{\phi_{\pB{i}}} = \sum_{\vertex \in \tmeshInteriorComponentV{,\pB{i}}{}} \kerwp{\phi_{\pB{i}, \vertex}}\;.
	\label{eq:kernelOfPhi}
\end{equation}

Next, let us consider the diagram
\begin{equation}
	\begin{tikzcd}[row sep=large]
		\moplus\limits_{\edge \in \tmeshInteriorComponentE{,\pB{i}}{}} \edgeE{} \shiftideal{M}{\pB{i}}{ -\bideg{\edge}} \arrow[r,"\tilde{\boundary}"] \arrow[d,"\psi_{\pB{i}}"] & \moplus\limits_{\vertex \in \tmeshInteriorComponentV{,\pB{i}}{}} \moplus\limits_{\edge \in \tmeshInteriorComponentE{,\pB{i}}{}} \halfEdge{\vertex}{\edge} \shiftideal{M}{\pB{i}}{ -\bideg{\edge}}  \arrow[d,"\phi_{\pB{i}}"] \\
		\moplus\limits_{\edge \in \tmeshInteriorComponentE{,\pB{i}}{}} \edgeE{} \ideal_{\edge, \pB{i}} \arrow[r,"\boundary"] & \moplus\limits_{\vertex\in \tmeshInteriorComponentV{,\pB{i}}{}} \vertexE{} \ideal_{\vertex, \pB{i}}
	\end{tikzcd}\;,
\label{eq:H0_idealComplex_commDiag}
\end{equation}
where the maps $\tilde{\boundary}$ and ${\boundary}$ are the restrictions of the following maps to the active T-mesh,
\begin{align}
	&\tilde{\boundary}~:~\edgeE{} \mapsto \sum_{\vertex}\kronEV{}{}\halfEdge{\vertex}{\edge}\;,\qquad
	&{\boundary}~:~\edgeE{} \mapsto \sum_{\vertex}\kronEV{}{}\vertexE{}\;,
\end{align}
and the graded map $\psi_{\pB{i}}$ is defined as
\begin{equation*}
 \begin{split}
&\psi_{\pB{i}}~:~\edgeE{} \mapsto \edgeE{}\Delta_\edge\;.
 \end{split}
\end{equation*}
\begin{lemma}\label{lem:redIdealComplex_surjection1}
	The map $\psi_{\pB{i}}$ is surjective.
\end{lemma}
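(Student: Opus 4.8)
The plan is to check surjectivity of $\psi_{\pB{i}}$ one edge summand at a time. Fix $\edge \in \tmeshInteriorComponentE{,\pB{i}}{}$; the restriction of $\psi_{\pB{i}}$ to the $\edge$-component is the multiplication map $\edgeE{}\,\shiftideal{M}{\pB{i}}{-\bideg{\edge}} \to \edgeE{}\,\degB{\ideal}{\edge}{i}$, $\edgeE{}p \mapsto \edgeE{}\,p\,\Delta_\edge$. Using the simplified description of $\degBa{\idealComplex}{i}$ coming from Proposition \ref{prop:exact_stair}, the target is $\degB{\ideal}{\edge}{i} = \left(\ideal_\edge \cap \ideal[L]{}_{\pB{i-1}} + \ideal[L]{}_{\pB{i}}\right)/\ideal[L]{}_{\pB{i}}$, and the source is $\shiftideal{M}{\pB{i}}{-\bideg{\edge}} = \shiftideal{L}{\pB{i-1}}{-\bideg{\edge}}/\shiftideal{L}{\pB{i}}{-\bideg{\edge}}$. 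Since $\Delta_\edge\cdot\shiftideal{L}{\pB{i}}{-\bideg{\edge}} \subseteq \ideal[L]{}_{\pB{i}}$, the map descends to these quotients, and the whole statement reduces to the single identity of graded $\pRingH$-modules
\[
  \Delta_\edge\cdot\shiftideal{L}{\pB{i-1}}{-\bideg{\edge}} \;=\; \ideal_\edge \cap \ideal[L]{}_{\pB{i-1}}\;;
\]
once this is in hand, surjectivity is immediate, since every class in $\degB{\ideal}{\edge}{i}$ is represented by some $x \in \ideal_\edge \cap \ideal[L]{}_{\pB{i-1}}$, and writing $x = \Delta_\edge q$ with $q \in \shiftideal{L}{\pB{i-1}}{-\bideg{\edge}}$ exhibits the class of $q$ as a preimage.

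To prove the displayed identity I would argue entirely with principal ideals of the unique factorization domain $\pRingH = \RR[s,t,u,v]$, using two observations. First, $\Delta_\edge = l_\edge^{\bsmooth(\edge)+1}$ is bi-homogeneous of bi-degree $\bideg{\edge}$, and $l_\edge$ is the homogenization of a nonzero affine-linear form vanishing on the axis-aligned segment $\edge$, hence of the shape $\alpha s + \beta u$ with $\alpha \neq 0$ (for $\edge \in \tmeshEV$), respectively $\alpha t + \beta v$ with $\alpha \neq 0$ (for $\edge \in \tmeshEH$); in either case $l_\edge$, and therefore $\Delta_\edge$, is coprime to both $u$ and $v$. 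Second, the defining property $\edge \in \tmeshInteriorComponentE{,\pB{i}}{}$ says $\pRingH_\edge = (\mbf{u}^{\bdegree(\edge)}) \supseteq \ideal[L]{}_{\pB{i-1}} = (\mbf{u}^{\bdegree_{i-1}})$, which forces $\bdegree(\edge) \leq_{\DEG} \bdegree_{i-1}$ (component-wise). Now $\ideal_\edge = (\Delta_\edge\,\mbf{u}^{\bdegree(\edge)})$ and $\ideal[L]{}_{\pB{i-1}} = (\mbf{u}^{\bdegree_{i-1}})$ are both principal, so $\ideal_\edge \cap \ideal[L]{}_{\pB{i-1}}$ is generated by the least common multiple of the two generators; coprimality of $\Delta_\edge$ with $u,v$ together with $\bdegree(\edge) \leq_{\DEG} \bdegree_{i-1}$ gives $\LCM\!\big(\Delta_\edge\,\mbf{u}^{\bdegree(\edge)},\, \mbf{u}^{\bdegree_{i-1}}\big) = \Delta_\edge\,\mbf{u}^{\bdegree_{i-1}}$, whence $\ideal_\edge \cap \ideal[L]{}_{\pB{i-1}} = (\Delta_\edge\,\mbf{u}^{\bdegree_{i-1}}) = \Delta_\edge\cdot(\mbf{u}^{\bdegree_{i-1}}) = \Delta_\edge\cdot\shiftideal{L}{\pB{i-1}}{-\bideg{\edge}}$. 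The last equality is an equality of graded modules because the shift $-\bideg{\edge}$ is exactly what makes multiplication by the bi-degree-$\bideg{\edge}$ element $\Delta_\edge$ degree preserving. When $i = \nlevels+1$ no separate argument is needed: there $\ideal[L]{}_{\pB{\nlevels+1}} = 0 = \shiftideal{L}{\pB{\nlevels+1}}{-\bideg{\edge}}$, and the identity above is used verbatim with $i-1 = \nlevels$.

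The argument is mostly formal; the one step carrying real content — and thus the place to be careful — is translating the defining containment $\pRingH_\edge \supseteq \ideal[L]{}_{\pB{i-1}}$ of the active mesh into the inequality $\bdegree(\edge) \leq_{\DEG} \bdegree_{i-1}$ and combining it with the coprimality of $\Delta_\edge$ with the homogenizing variables $u,v$: this is precisely what collapses the $\LCM$ and lets the superfluous powers of $\mbf{u}$ cancel. As a consistency check, pairing this identity with Proposition \ref{prop:vec_space_dim}(b)--(d) yields $\dimwp{\shiftideal{M}{\pB{i}}{-\bideg{\edge}}}_\abdegree = \dimwp{\degB{\ideal}{\edge}{i}}_\abdegree$ in every bi-degree, so the edgewise map, and hence $\psi_{\pB{i}}$ itself, is in fact an isomorphism and not merely a surjection.
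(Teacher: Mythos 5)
Your argument is correct and is essentially the paper's own (one-line) proof spelled out in full: the paper simply invokes the freeness $\ideal_\edge \isomorphic \pRingH(-\bdegree(\edge)-\bideg{\edge})$, and your identity $\ideal_\edge \cap \ideal[L]{}_{\pB{i-1}} = \Delta_\edge\cdot\shiftideal{L}{\pB{i-1}}{-\bideg{\edge}}$, obtained from principality, the coprimality of $\Delta_\edge$ with $u,v$, and the containment $\bdegree(\edge)\leq_{\DEG}\bdegree_{i-1}$ defining the active mesh, is exactly the content of that invocation. The closing observation that the edgewise map is in fact an isomorphism is also correct and consistent with Proposition \ref{prop:vec_space_dim}.
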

\begin{proof}
	The claim follows from the isomorphism $\ideal_\edge \isomorphic \pRingH(-\bdegree(\edge) -\bideg{\edge})$.
\end{proof}

\begin{lemma}\label{lem:H0_idealComplex_halfedges}
	\begin{equation*}
	\begin{split}
		H_0\left(\degBa{\idealComplex}{i} \right)
			&\isomorphic \moplus_{\vertex \in \tmeshInteriorComponentV{,\pB{i}}{}} \moplus_{\edge \in \tmeshInteriorComponentE{,\pB{i}}{}}\halfEdge{\vertex}{\edge} \shiftideal{M}{\pB{i}}{ -\bideg{\edge}} 
			\bigg/ \left(\kerwp{\phi_{\pB{i}}} + \tilde{\boundary}\left( \moplus_{\edge \in \tmeshInteriorComponentE{,\pB{i}}{}}\edgeE{} \shiftideal{M}{\pB{i}}{ -\bideg{\edge}} \right)\right)\;.
	\end{split}
	\end{equation*}
\end{lemma}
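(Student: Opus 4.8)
The plan is to extract $H_0(\degBa{\idealComplex}{i})$ from the commutative diagram in Equation \eqref{eq:H0_idealComplex_commDiag} by exploiting the surjectivity of the vertical maps $\phi_{\pB{i}}$ and $\psi_{\pB{i}}$ (Lemmas \ref{lem:redIdealComplex_surjection0}, \ref{lem:redIdealComplex_surjection1}, after passing from $\phi_{\pB{i},\vertex}$ to the assembled map $\phi_{\pB{i}}$). By definition, $H_0(\degBa{\idealComplex}{i})$ is the cokernel of the bottom horizontal map $\boundary~:~\moplus_\edge \edgeE{}\ideal_{\edge,\pB{i}} \to \moplus_\vertex \vertexE{}\ideal_{\vertex,\pB{i}}$, i.e. the target modulo $\imwp{\boundary}$. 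The diagram lets us replace the bottom row by the top row: since $\phi_{\pB{i}}$ is onto, every element of $\moplus_\vertex \vertexE{}\degB{\ideal}{\vertex}{i}$ lifts to the half-edge module $\moplus_{\vertex}\moplus_\edge \halfEdge{\vertex}{\edge}\shiftideal{M}{\pB{i}}{-\bideg{\edge}}$, and the lift is unique modulo $\kerwp{\phi_{\pB{i}}}$.

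First I would record the diagram chase explicitly. Let $T$ denote the top-right module $\moplus_{\vertex}\moplus_\edge \halfEdge{\vertex}{\edge}\shiftideal{M}{\pB{i}}{-\bideg{\edge}}$ and $E$ the top-left module $\moplus_\edge \edgeE{}\shiftideal{M}{\pB{i}}{-\bideg{\edge}}$. Commutativity of the square, together with surjectivity of $\psi_{\pB{i}}$, gives $\phi_{\pB{i}}\left(\tilde\boundary(E)\right) = \boundary\left(\psi_{\pB{i}}(E)\right) = \boundary\left(\moplus_\edge \edgeE{}\ideal_{\edge,\pB{i}}\right) = \imwp{\boundary}$. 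Hence $\phi_{\pB{i}}$ induces a surjection $T \twoheadrightarrow \moplus_\vertex \vertexE{}\degB{\ideal}{\vertex}{i}\big/\imwp{\boundary} = H_0(\degBa{\idealComplex}{i})$, and I claim its kernel is exactly $\kerwp{\phi_{\pB{i}}} + \tilde\boundary(E)$. The inclusion $\supseteq$ is immediate: $\kerwp{\phi_{\pB{i}}}$ maps to $0$, and $\tilde\boundary(E)$ maps into $\imwp{\boundary}$ by the computation just made, hence to $0$ in the quotient. For $\subseteq$, take $x \in T$ with $\phi_{\pB{i}}(x) \in \imwp{\boundary}$; write $\phi_{\pB{i}}(x) = \boundary(y)$ for some $y \in \moplus_\edge \edgeE{}\ideal_{\edge,\pB{i}}$, lift $y$ through the surjection $\psi_{\pB{i}}$ to some $\tilde y \in E$, so $\phi_{\pB{i}}(x) = \boundary(\psi_{\pB{i}}(\tilde y)) = \phi_{\pB{i}}(\tilde\boundary(\tilde y))$ by commutativity; then $x - \tilde\boundary(\tilde y) \in \kerwp{\phi_{\pB{i}}}$, so $x \in \tilde\boundary(E) + \kerwp{\phi_{\pB{i}}}$. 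The isomorphism in the statement follows by the first isomorphism theorem.

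The routine but necessary bookkeeping step is to confirm that the vertical maps are well-defined as graded module maps and that the square genuinely commutes: on a generator $\edgeE{}q$ with $q \in \shiftideal{M}{\pB{i}}{-\bideg{\edge}}$, going right-then-down gives $\sum_\vertex \kronEV{}{}\vertexE{}q\Delta_\edge$, while down-then-right gives $\psi_{\pB{i}}(\edgeE{}q) = \edgeE{}q\Delta_\edge \mapsto \sum_\vertex \kronEV{}{}\vertexE{}q\Delta_\edge$ — these agree, using that $\phi_{\pB{i},\vertex}$ sends $\halfEdge{\vertex}{\edge}p$ to $\vertexE{}p\Delta_\edge$ and that $q\Delta_\edge$ indeed lands in $\degB{\ideal}{\vertex}{i} = \ideal_\vertex\cdot\degB{\pRingH}{\vertex}{i}$. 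One should also note that $\tilde\boundary$ genuinely lands in $T$ (the half-edge convention $\halfEdge{\vertex}{\edge}=0$ when $\kronEV{}{}=0$ or $\vertex\in\domainBnd$ is compatible with the summation over interior vertices). I expect the only mild subtlety — and the part worth stating carefully rather than the diagram chase itself — is the passage from the per-vertex surjectivity of $\phi_{\pB{i},\vertex}$ (Lemma \ref{lem:redIdealComplex_surjection0}) and the per-vertex kernel description (Proposition \ref{prop:halfedge_map_kernel}) to the assembled statements for $\phi_{\pB{i}}$, which holds because $T$ and $\moplus_\vertex\vertexE{}\degB{\ideal}{\vertex}{i}$ both split as direct sums over $\vertex$ and $\phi_{\pB{i}}$ is the corresponding direct sum of the $\phi_{\pB{i},\vertex}$, whence both surjectivity and $\kerwp{\phi_{\pB{i}}} = \moplus_\vertex \kerwp{\phi_{\pB{i},\vertex}} = \sum_\vertex \kerwp{\phi_{\pB{i},\vertex}}$ as in Equation \eqref{eq:kernelOfPhi}.
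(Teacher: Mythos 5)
Your proposal is correct and follows essentially the same route as the paper: commutativity of the square in Equation \eqref{eq:H0_idealComplex_commDiag} plus surjectivity of $\phi_{\pB{i}}$ and $\psi_{\pB{i}}$ yield a surjection from the half-edge module onto $H_0(\degBa{\idealComplex}{i})$ whose kernel is $\kerwp{\phi_{\pB{i}}}+\imwp{\tilde\boundary}$, and the isomorphism theorem finishes. Your explicit diagram chase for the inclusion $\subseteq$ (lifting $y$ through $\psi_{\pB{i}}$) is exactly the detail the paper's assertion about $\kerwp{\phi_\star}$ implicitly relies on, so nothing is missing.
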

\begin{proof}
	The diagram in Equation \eqref{eq:H0_idealComplex_commDiag} commutes. Indeed,
	\begin{equation*}
	\begin{tikzcd}
		\edgeE{}p \rar[maps to] \dar[maps to] &  \sum_{\vertex}\halfEdge{\vertex}{\edge}\kronEV{}{}p \dar[maps to]\\
		\edgeE{}p\Delta_\edge \rar[maps to] & \sum_{\vertex}\vertexE{}\kronEV{}{}p\Delta_\edge
	\end{tikzcd}\;.
	\end{equation*}
	Then, the result follows from surjectivities of $\phi_{\pB{i}}$ and $\psi_{\pB{i}}$ (Lemmas \ref{lem:redIdealComplex_surjection0} and \ref{lem:redIdealComplex_surjection1}, respectively), and surjectivity of the induced morphism,
	\begin{equation*}
		\begin{split}
			\phi_\star~:~&\moplus_{\vertex \in \tmeshInteriorComponentV{,\pB{i}}{}}\moplus_{\edge \in \tmeshInteriorComponentE{,\pB{i}}{}}\halfEdge{\vertex}{\edge}\shiftideal{M}{\pB{i}}{ -\bideg{\edge}} ~\big/~ \imwp{\tilde{\boundary}} \rightarrow \moplus\limits_{\vertex \in \tmeshInteriorComponentV{,\pB{i}}{}} \vertexE{} \degB{\ideal}{\vertex}{i} ~\big/~ \imwp{\boundary}\;,\\
			& \sum_{\vertex}\sum_{\edge}\halfEdge{\vertex}{\edge}p_{\vertex\edge} + \imwp{\tilde{\boundary}} \mapsto \phi_{\pB{i}}\left(\sum_{\vertex}\sum_{\edge}\halfEdge{\vertex}{\edge}p_{\vertex\edge}\right) + \imwp{\boundary}\;.
		\end{split}
	\end{equation*}
	Indeed, the kernel of $\phi_\star$ is exactly $\kerwp{\phi_{\pB{i}}} + \imwp{\tilde{\boundary}} / \imwp{\tilde{\boundary}}$ and we have the isomorphism, 
	\begin{equation*}
		\left(\moplus_{\vertex \in \tmeshInteriorComponentV{,\pB{i}}{}}\moplus_{\edge \in \tmeshInteriorComponentE{,\pB{i}}{}}\halfEdge{\vertex}{\edge}\shiftideal{M}{\pB{i}}{ -\bideg{\edge}} ~\big/~ \imwp{\tilde{\boundary}}\right) ~\big/~ \kerwp{\phi_\star} \isomorphic \moplus\limits_{\vertex \in \tmeshInteriorComponentV{,\pB{i}}{}} \vertexE{} \degB{\ideal}{\vertex}{i} ~\big/~ \imwp{\boundary} = H_0 \left( \degBa{\idealComplex}{i} \right)\;.
	\end{equation*}
\end{proof}

Before proceeding, we first introduce the concept of maximal segments for the T-mesh $\tmeshComponent{\pB{i}}{}$. This will help us further simplify the half-edge based form of $H_0 \left( \degBa{\idealComplex}{i} \right)$ from Proposition \ref{lem:H0_idealComplex_halfedges}.

\begin{definition}[Active maximal segments]\label{def:max_segs}
	Given index $i \in \{ 1, \dots, \nlevels+1 \}$, the set of active horizontal (resp. vertical) maximal segments $\MSH{\pB{i}}{}$ (resp. $\MSV{\pB{i}}{}$) is the set containing maximal connected unions of edges in $\tmeshComponentEH{,\pB{i}}{}$ (resp. $\tmeshComponentEV{,\pB{i}}{}$).
\end{definition}
The set of all active maximal segments will be denoted by $\MSA{\pB{i}}{} = \MSH{\pB{i}}{} \cup \MSV{\pB{i}}{}$, while the set of active maximal segments that do not intersect the boundary will be denoted by $\IMSA{\pB{i}}{}$; with some abuse of notation, we will refer to these maximal segments as ``interior maximal segments.''
By definition of the smoothness distribution, we can unambiguously define $\bideg{\ms} = \bideg{\edge}$ and $\Delta_{\ms} = \Delta_{\edge}$ for any edge $\edge \subseteq \ms \in \IMSA{\pB{i}}{}$.
\begin{proposition}\label{prop:H0_idealComplex_ms}
	\begin{equation*}
		\begin{split}
			H_0\left( \degBa{\idealComplex}{i} \right) \isomorphic 
			&\moplus\limits_{\ms \in \IMSA{\pB{i}}{}} \msE{} \shiftideal{M}{\pB{i}}{ -\bideg{\ms}}\;\bigg/\;
			\sum_{\ms_h \in \IMSH{\pB{i}}{}}\sum_{\ms_v \in \IMSV{\pB{i}}{}}\sum_{\substack{\vertex \in \tmeshInteriorComponentV{,\pB{i}}{}\\\ms_h \cap \ms_v = \{\vertex\}}} \left(\msE{h}\Delta_{\ms_v}  - \msE{v} \Delta_{\ms_h} \right) \shiftideal{M}{\pB{i}}{ -\bideg{\vertex}}\;.
		\end{split}
	\end{equation*}
\end{proposition}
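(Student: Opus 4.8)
\emph{Proof plan.} The plan is to deduce Proposition~\ref{prop:H0_idealComplex_ms} from the half-edge presentation of $H_0\left(\degBa{\idealComplex}{i}\right)$ obtained in Lemma~\ref{lem:H0_idealComplex_halfedges}, by carrying out the quotient appearing there in two successive stages, in the spirit of the passage from half-edges to maximal segments in \cite{mourrain2014dimension}. Recall that Lemma~\ref{lem:H0_idealComplex_halfedges} identifies $H_0\left(\degBa{\idealComplex}{i}\right)$ with $\moplus_{\vertex,\edge}\halfEdge{\vertex}{\edge}\shiftideal{M}{\pB{i}}{-\bideg{\edge}}$ modulo $\kerwp{\phi_{\pB{i}}} + \imwp{\tilde{\boundary}}$, that $\kerwp{\phi_{\pB{i}}}$ is the sum over interior vertices of the local kernels $\kerwp{\phi_{\pB{i},\vertex}}$, and that Proposition~\ref{prop:halfedge_map_kernel} writes each $\kerwp{\phi_{\pB{i},\vertex}}$ as the sum of a \emph{collinear} part (indexed by the edge-pairs $P_{\pB{i}}(\vertex)$) and a \emph{mixed} part (indexed by $E_{v,\pB{i}}(\vertex)\times E_{h,\pB{i}}(\vertex)$).

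First, I would quotient only by $\imwp{\tilde{\boundary}}$ together with the collinear relations. A generator $\tilde{\boundary}(\edgeE{}p)=\sum_{\vertex}\kronEV{}{}\halfEdge{\vertex}{\edge}p$ identifies, up to sign, the two half-edges $\halfEdge{\vertex}{\edge}$ and $\halfEdge{\vertex'}{\edge}$ attached to an edge $\edge$ with endpoints $\vertex,\vertex'$, and forces $\halfEdge{\vertex}{\edge}p\equiv 0$ when $\vertex'\in\domainBnd$ since then $\halfEdge{\vertex'}{\edge}=0$; a collinear relation at $\vertex$ identifies $\halfEdge{\vertex}{\edge}$ with $\halfEdge{\vertex}{\edge'}$ for collinear active interior edges $\edge,\edge'$ meeting at $\vertex$. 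I would group the half-edge generators by the active maximal segment (Definition~\ref{def:max_segs}) containing the underlying interior edge, using that the smoothness distribution is constant along such a segment so that $\bideg{\edge}=\bideg{\ms}$ and $\Delta_\edge=\Delta_\ms$ for every $\edge\subseteq\ms\in\IMSA{\pB{i}}{}$. Walking along a maximal segment with edges $\edge_1,\dots,\edge_k$ and vertices $\vertex_0,\dots,\vertex_k$, the edge-relations and collinear relations chain all $2k$ half-edges supported on it into a single generator $\msE{}$ carrying $\shiftideal{M}{\pB{i}}{-\bideg{\ms}}$; if the segment meets $\domainBnd$, one boundary endpoint half-edge vanishes and the same chain propagates this vanishing along the whole segment, whereas interior segments suffer no such collapse. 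Hence after this first stage the module becomes $\moplus_{\ms\in\IMSA{\pB{i}}{}}\msE{}\shiftideal{M}{\pB{i}}{-\bideg{\ms}}$.

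Next, since the submodule in Lemma~\ref{lem:H0_idealComplex_halfedges} is the sum of the first-stage submodule and the mixed part, I would quotient the above by the images of the mixed relations. Fixing $\vertex\in\tmeshInteriorComponentV{,\pB{i}}{}$ with incident horizontal maximal segment $\ms_h$ and vertical one $\ms_v$, so $\ms_h\cap\ms_v=\{\vertex\}$, each mixed generator $\left(\halfEdge{\vertex}{\edge}\Delta_{\edge'}-\halfEdge{\vertex}{\edge'}\Delta_\edge\right)p$, with $\edge\in E_{v,\pB{i}}(\vertex)$, $\edge'\in E_{h,\pB{i}}(\vertex)$ and $p\in\shiftideal{M}{\pB{i}}{-\bideg{\vertex}}$, descends under the first-stage gluing to a generator of $\left(\msE{h}\Delta_{\ms_v}-\msE{v}\Delta_{\ms_h}\right)\shiftideal{M}{\pB{i}}{-\bideg{\vertex}}$; here one uses $\bideg{\vertex}=\bideg{\ms_h}+\bideg{\ms_v}$, $\Delta_\edge=\Delta_{\ms_v}$, $\Delta_{\edge'}=\Delta_{\ms_h}$, that the outcome is independent of the chosen representative edges at $\vertex$, and that the term drops out whenever one of the two segments meets $\domainBnd$. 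Ranging over all such $\vertex$ recovers exactly the double sum in the statement, which yields the claimed isomorphism.

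The main obstacle is the first stage: one has to verify that $\imwp{\tilde{\boundary}}$ together with the collinear relations glue the half-edges of each active maximal segment into exactly one generator while collapsing precisely those segments meeting $\domainBnd$ to zero. This is a connectivity argument on the incidence structure of the half-edges that must be run uniformly over the shifted coefficient modules and must correctly accommodate the vanishing conventions $\halfEdge{\vertex}{\edge}=0$ at boundary vertices as well as the T-junction endpoints of segments. The accompanying degree bookkeeping — that $\bideg{}$ and $\Delta$ are genuinely constant along interior maximal segments, and that the shifts $\shiftideal{M}{\pB{i}}{-\bideg{\edge}}$ and $\shiftideal{M}{\pB{i}}{-\bideg{\vertex}}$ reconcile in the mixed relations — is routine but should be checked.
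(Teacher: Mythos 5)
Your plan is correct and follows essentially the same route as the paper: the paper likewise starts from Lemma \ref{lem:H0_idealComplex_halfedges}, splits the submodule into a part $K'$ (the image of $\tilde{\boundary}$ plus the collinear relations from $P_{\pB{i}}$) and the mixed part, quotients first by $K'$ to glue the half-edges of each active maximal segment into a single generator (with segments meeting $\domainBnd$ collapsing to zero), and then imposes the descended mixed relations via $B/K \isomorphic (B/K')/(K/K')$. No substantive difference.
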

\begin{proof}
	Using Equation \eqref{eq:kernelOfPhi} and Lemma \ref{lem:H0_idealComplex_halfedges}, we define
	\begin{equation*}
		\begin{split}
			&K' =  \sum_{\edge \in \tmeshInteriorComponentE{,\pB{i}}{}} \left(\sum_\vertex\kronEV{}{}\halfEdge{\vertex}{\edge}\right) \shiftideal{M}{\pB{i}}{ -\bideg{\edge}} + \sum_{\vertex \in \tmeshInteriorComponentV{,\pB{i}}{}}\sum_{(\edge, \edge') \in P_{\pB{i}}} \left(\halfEdge{\vertex}{\edge} - \halfEdge{\vertex}{\edge'} \right) \shiftideal{M}{\pB{i}}{ -\bideg{\edge}}\;,\\
			&K = K' +
			\sum_{\vertex \in \tmeshInteriorComponentV{,\pB{i}}{}} \sum_{\substack{\edge\in E_{v, \pB{i}}\\\edge'\in E_{h, \pB{i}}}} \left(\halfEdge{\vertex}{\edge}\Delta_{\edge'}  - \halfEdge{\vertex}{\edge'}\Delta_\edge \right) \shiftideal{M}{\pB{i}}{ -\bideg{\vertex}}\;,\\
			&B = \moplus_{\vertex \in \tmeshInteriorComponentV{,\pB{i}}{}} \moplus_{\edge \in \tmeshInteriorComponentE{,\pB{i}}{}} \halfEdge{\vertex}{\edge} \shiftideal{M}{\pB{i}}{ -\bideg{\edge}}\;.
		\end{split}
	\end{equation*}
	The first term of $K'$ corresponds to relations yielding the identification of $\halfEdge{\vertex}{\edge}$ with $\edgeE{}$. The second term of $K'$ corresponds to relations yielding the identification of $\halfEdge{\vertex}{\edge}$ with $\halfEdge{\vertex}{\edge'}$ whenever $\edge, \edge' \subset \ms$. Therefore, $K'$ leads to the identification of all edges that belong to the same maximal segment $\ms \in \MSA{\pB{i}}{}$.
	
	Keeping the above in mind, and since $B / K \isomorphic (B/K') / (K/K')$, we take the quotient with $K'$. The required description is obtained by noticing that, since $\halfEdge{\vertex}{\edge} = 0$ if $\vertex \in \boundary\domain$, terms corresponding to $\msE{}$ must be zero in the quotient for all active maximal segments that intersect $\boundary\domain$.
\end{proof}

\section[]{Bounds on the dimension of $\splSpaceH^\bsmooth_\bdegree$}\label{sec:dim_formula}
We will use Proposition \ref{prop:homo_dim} in this section to provide upper and lower bounds on the dimensions of graded pieces of $\splSpaceH^\bsmooth_{\bdegree}$.
We start by providing the following lower and upper bounds on the spline space dimension.
Some of the results presented here will assume that the condition of sufficiency in Proposition \ref{prop:H0_idealComplex_lower_bound}(a) is satisfied. Therefore, for the sake of convenience, we define the following configuration so that we can refer to it later.
\begin{configuration}\label{config:practical_smoothness}
  The degree and smoothness distributions are such that, for all $i$,
  $\abdegree - \bdegree_{i}$ is entry-wise greater than or equal to
  $(\bsmooth_{\vertex, h}, \bsmooth_{\vertex, v})$ for each $\vertex
  \in \tmeshInteriorComponentV{,\pB{i}}{}$.
\end{configuration}

\begin{theorem}[Lower bound for general smoothness distributions]\label{thm:lower_bound_general}
	\begin{equation*}
		\begin{split}
			\dimwp{\splSpaceH^\bsmooth_\bdegree}_\abdegree &\geq \euler{\quotientComplex}_\abdegree - \sum_{i=1}^{\nlevels+1}\nComponent{\pB{i}}\dimwp{\ideal[M]{}_{\pB{i}}}_\abdegree\;.
		\end{split}
	\end{equation*}
\end{theorem}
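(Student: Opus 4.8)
The plan is to obtain the bound by starting from the exact decomposition of the homological contribution $\homoDim^\bsmooth_{\bdegree, \abdegree}$ supplied by Proposition \ref{prop:homo_dim} and then discarding a manifestly non-negative quantity. Combining Definition \ref{def:homological_contribution} with Proposition \ref{prop:homo_dim}, and replacing $\dimwp{H_0(\degBa{\constantComplex}{i})}_\abdegree$ by $\nComponent{\pB{i}}\dimwp{\ideal[M]{}_{\pB{i}}}_\abdegree$ as in Proposition \ref{prop:H0_constantComplex}, one gets
\begin{equation*}
	\dimwp{\splSpaceH^\bsmooth_\bdegree}_\abdegree = \euler{\quotientComplex}_\abdegree - \sum_{i=1}^{\nlevels+1}\nComponent{\pB{i}}\dimwp{\ideal[M]{}_{\pB{i}}}_\abdegree + \sum_{i=1}^{\nlevels+1}\left( \dimwp{H_0\left(\degBa{\idealComplex}{i}\right)}_\abdegree - \dimwp{\im{\hat{\boundary}_i}}_\abdegree \right)\;.
\end{equation*}
Hence the theorem follows as soon as the last sum is shown to be non-negative, and I would establish this termwise, i.e.\ prove $\dimwp{\im{\hat{\boundary}_i}}_\abdegree \leq \dimwp{H_0(\degBa{\idealComplex}{i})}_\abdegree$ for each $i$.

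For this I would invoke the exact sequence that already appears in the proof of Proposition \ref{prop:homo_dim},
\begin{equation*}
	0 \rightarrow H_1\left(\degBa{\quotientComplex}{i}\right) \rightarrow H_0\left(\degBa{\idealComplex}{i}\right) \rightarrow H_0\left(\degBa{\constantComplex}{i}\right) \rightarrow H_0\left(\degBa{\quotientComplex}{i}\right) \rightarrow 0\;,
\end{equation*}
which is available for every $i$ by Assumption \ref{ass:no_holes} together with the columns of diagram \eqref{eq:base_complex}. Its left-exactness gives an injection $H_1(\degBa{\quotientComplex}{i}) \hookrightarrow H_0(\degBa{\idealComplex}{i})$, whence $\dimwp{H_1(\degBa{\quotientComplex}{i})}_\abdegree \leq \dimwp{H_0(\degBa{\idealComplex}{i})}_\abdegree$ in every bi-degree. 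On the other hand, in the long exact sequence \eqref{eq:long_ex_seq_homology} the connecting map $\hat{\boundary}_i$ has codomain $H_1(\degBa{\quotientComplex}{i})$, so its image is a submodule of $H_1(\degBa{\quotientComplex}{i})$ and $\dimwp{\im{\hat{\boundary}_i}}_\abdegree \leq \dimwp{H_1(\degBa{\quotientComplex}{i})}_\abdegree$. Chaining the two inequalities yields $\dimwp{\im{\hat{\boundary}_i}}_\abdegree \leq \dimwp{H_0(\degBa{\idealComplex}{i})}_\abdegree$; summing over $i$ and substituting into the displayed identity gives exactly the asserted lower bound.

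I do not expect a genuine obstacle here: morally the statement just says ``drop the $0$-homology-of-the-ideal-complex contribution from $\homoDim^\bsmooth_{\bdegree, \abdegree}$''. The only points that need a little care are that the four-term sequence above is indeed available uniformly in $i$ (it is, being a consequence of the columns of \eqref{eq:base_complex} under Assumption \ref{ass:no_holes}), and the observation that one never has to evaluate $\dimwp{H_0(\degBa{\idealComplex}{i})}_\abdegree$ itself — only the trivial estimate $\dimwp{\im{\hat{\boundary}_i}}_\abdegree \leq \dimwp{H_0(\degBa{\idealComplex}{i})}_\abdegree$ is used. This is precisely why the resulting lower bound is purely combinatorial: $\euler{\quotientComplex}_\abdegree$ and $\dimwp{\ideal[M]{}_{\pB{i}}}_\abdegree$ are computed explicitly by the formulas of Section \ref{ss:vec_dim_formulas}, and $\nComponent{\pB{i}}$ depends only on the topology of the active domains $\domainComponent{\pB{i}}{}$.
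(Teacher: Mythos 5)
Your argument is correct, but it follows a genuinely different route from the paper's. The paper deliberately sidesteps the degree decomposition for this bound: it embeds $\quotientComplex$ directly in the undecomposed short exact sequence $0 \rightarrow \idealComplex \rightarrow \constantComplex \rightarrow \quotientComplex \rightarrow 0$, derives $\dimwp{\splSpaceH^\bsmooth_\bdegree}_\abdegree = \euler{\quotientComplex}_\abdegree + \dimwp{H_0(\idealComplex)}_\abdegree - \dimwp{H_0(\constantComplex)}_\abdegree$, identifies $\dimwp{H_0(\constantComplex)}_\abdegree$ with $\sum_i \nComponent{\pB{i}}\dimwp{\ideal[M]{}_{\pB{i}}}_\abdegree$ by a levelwise additivity argument, and then simply drops the single non-negative term $\dimwp{H_0(\idealComplex)}_\abdegree \geq 0$; the connecting maps $\hat{\boundary}_i$ never appear. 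You instead start from Proposition \ref{prop:homo_dim}, which forces you to control the terms $\dimwp{\im{\hat{\boundary}_i}}_\abdegree$, and you do so correctly via the chain $\im{\hat{\boundary}_i} \subseteq H_1\left(\degBa{\quotientComplex}{i}\right) \hookrightarrow H_0\left(\degBa{\idealComplex}{i}\right)$, the second inclusion being the left-exactness of the four-term sequence available under Assumption \ref{ass:no_holes} (which both proofs need, the paper's implicitly through $H_1(\constantComplex)=0$). The trade-off: your route needs the extra termwise estimate on the connecting maps but gets the constant-complex bookkeeping for free from Proposition \ref{prop:H0_constantComplex}, whereas the paper's route avoids $\hat{\boundary}_i$ entirely at the cost of re-establishing the additivity $\dimwp{H_0(\constantComplex)}_\abdegree = \sum_i \dimwp{H_0(\degBa{\constantComplex}{i})}_\abdegree$ for the global complexes. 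The quantities discarded in the two proofs are of course equal, so both yield exactly the stated bound.
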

\begin{proof}
	The lower bound can be arrived at in exactly the same way as Proposition \ref{prop:homo_dim} but with a slightly different point of departure.
	Instead of using the short exact sequence in Equation \eqref{eq:exact_stair}, we embed the complex  $\quotientComplex$ directly in the short exact sequence $0 \rightarrow \idealComplex \rightarrow \constantComplex \rightarrow \quotientComplex \rightarrow 0$,
	\begin{equation*}
		\begin{tikzcd}
			~ & ~ & 0 \arrow[d,""] & 0 \arrow[d,""] & ~ \\
			\idealComplex~:~ & 0\arrow[r] \arrow[d]& \moplus\limits_{\edge \in \tmeshInteriorE} \edgeE{} \ideal_\edge \arrow[r] \arrow[d] & \moplus\limits_{\vertex \in \tmeshInteriorV} \vertexE{} \ideal_\vertex \arrow[r] \arrow[d] & 0 \\
			\constantComplex~:~ & \moplus\limits_{\face \in \tmeshF} \faceE{} \pRingH_\face \arrow[r] \arrow[d] & \moplus\limits_{\edge \in \tmeshInteriorE} \edgeE{} \pRingH_\edge \arrow[r] \arrow[d] & \moplus\limits_{\vertex \in \tmeshInteriorV} \vertexE{} \pRingH_\vertex \arrow[r] \arrow[d] & 0 \\
			\quotientComplex~:~ & \moplus\limits_{\face \in \tmeshF} \faceE{} \pRingH_\face \arrow[r] & \moplus\limits_{\edge \in \tmeshInteriorE} \edgeE{} \pRingH_\edge/\ideal_\edge \arrow[r] \arrow[d] & \moplus\limits_{\vertex \in \tmeshInteriorV} \vertexE{} \pRingH_\vertex/ \ideal_\vertex \arrow[r] \arrow[d] & 0 \\
			~ & ~ & 0 & 0 & ~
		\end{tikzcd}
	\end{equation*}
	In a manner similar to the proof of Proposition \ref{prop:H0_constantComplex}, it is easy to establish that
	\begin{equation*}
		\dimwp{H_0(\constantComplex)}_\abdegree = \sum_{i=1}^{\nlevels+1}\dimwp{H_0(\degBa{\constantComplex}{i})}_\abdegree\;,\quad
		\dimwp{H_1(\constantComplex)}_\abdegree = \sum_{i=1}^{\nlevels+1}\dimwp{H_1(\degBa{\constantComplex}{i})}_\abdegree\;.
	\end{equation*}
	Then, following the same steps as in Section \ref{ss:approach_summary} but for the diagram above, one can derive the following equation,
	\begin{equation*}
		\dimwp{\splSpaceH^\bsmooth_\bdegree}_\abdegree = \euler{\quotientComplex}_\abdegree + \dimwp{H_0(\idealComplex)}_\abdegree - \dimwp{H_0(\constantComplex)}_\abdegree\;.
	\end{equation*}
	which yields the claimed lower bound.
\end{proof}

Before presenting a sharper lower bound on the spline space dimension, let us present simple upper bounds on $\dimwp{\im{\hat{\boundary}_i}}_\abdegree$, which are themselves often pessimistic in practical configurations. The following can be readily derived using Equation \eqref{eq:long_ex_seq_homology}.
\begin{corollary}\label{cor:dimension_boundary_hat}
	~\par
	\begin{enumerate}[label=(\alph*)]
		\item In general,
		\begin{equation*}
		\begin{split}
		\dimwp{\im{\hat{\boundary}_i}}_\abdegree \leq 
		\min\bigg\{
		&\dimwp{H_2(\degAa{\quotientComplex}{i-1})}_\abdegree,\\
		&\dimwp{H_0(\degBa{\idealComplex}{i})}_\abdegree - \nComponent{\pB{i}}\dimwp{\ideal[M]{}_{\pB{i}}}_\abdegree + \dimwp{H_0(\degBa{\quotientComplex}{i})}_\abdegree
		\bigg\}\;.
		\end{split}			
		\end{equation*}
		\item For Configuration \ref{config:practical_smoothness},
		\begin{equation*}
		\begin{split}
		\dimwp{\im{\hat{\boundary}_i}}_\abdegree \leq 
		\min\bigg\{
		&\dimwp{H_2(\degAa{\quotientComplex}{i-1})}_\abdegree,~\dimwp{H_0(\degBa{\idealComplex}{i})}_\abdegree - \nComponent{\pB{i}}\dimwp{\ideal[M]{}_{\pB{i}}}_\abdegree
		\bigg\}\;.
		\end{split}			
		\end{equation*}
	\end{enumerate}
\end{corollary}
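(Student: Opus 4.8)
The plan is to read both bounds straight off the two long exact sequences already at hand. First, the sequence \eqref{eq:long_ex_seq_homology} places the connecting map $\hat{\boundary}_i \colon H_2(\degAa{\quotientComplex}{i-1}) \to H_1(\degBa{\quotientComplex}{i})$ in a position where $\im\hat{\boundary}_i$ is simultaneously a quotient of $H_2(\degAa{\quotientComplex}{i-1})$ and a submodule of $H_1(\degBa{\quotientComplex}{i})$. Since every module and every map in sight is $\ZZ^2$-graded, passing to the $\abdegree$-th graded piece gives $\dimwp{\im\hat{\boundary}_i}_\abdegree \le \dimwp{H_2(\degAa{\quotientComplex}{i-1})}_\abdegree$ and $\dimwp{\im\hat{\boundary}_i}_\abdegree \le \dimwp{H_1(\degBa{\quotientComplex}{i})}_\abdegree$, hence the bound by their minimum. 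The first quantity is already the first entry of the minimum in $(a)$, so the only work left is to rewrite $\dimwp{H_1(\degBa{\quotientComplex}{i})}_\abdegree$.

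For that I would use the second long exact sequence, namely the homology sequence of the short exact sequence of complexes $0 \to \degBa{\idealComplex}{i} \to \degBa{\constantComplex}{i} \to \degBa{\quotientComplex}{i} \to 0$ read off from the diagram \eqref{eq:base_complex}. The complex $\degBa{\idealComplex}{i}$ is concentrated in homological degrees $1$ and $0$, so $H_2(\degBa{\idealComplex}{i}) = 0$; together with $H_1(\degBa{\constantComplex}{i}) = 0$ (Assumption \ref{ass:no_holes}) this yields in particular the exact sequence
\[
0 \to H_1(\degBa{\quotientComplex}{i}) \to H_0(\degBa{\idealComplex}{i}) \to H_0(\degBa{\constantComplex}{i}) \to H_0(\degBa{\quotientComplex}{i}) \to 0,
\]
exactly the sequence appearing in the proof of Proposition \ref{prop:homo_dim}. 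Taking $\abdegree$-graded dimensions and substituting $\dimwp{H_0(\degBa{\constantComplex}{i})}_\abdegree = \nComponent{\pB{i}}\dimwp{\ideal[M]{}_{\pB{i}}}_\abdegree$ from Proposition \ref{prop:H0_constantComplex} gives
\[
\dimwp{H_1(\degBa{\quotientComplex}{i})}_\abdegree = \dimwp{H_0(\degBa{\idealComplex}{i})}_\abdegree - \nComponent{\pB{i}}\dimwp{\ideal[M]{}_{\pB{i}}}_\abdegree + \dimwp{H_0(\degBa{\quotientComplex}{i})}_\abdegree,
\]
which is exactly the second entry of the minimum in $(a)$; combined with the first step this proves $(a)$. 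For $(b)$, Configuration \ref{config:practical_smoothness} is precisely the hypothesis of Proposition \ref{prop:H0_idealComplex_lower_bound}(a), so $H_0(\degBa{\quotientComplex}{i})_\abdegree = 0$ and the last summand disappears.

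The one place that needs care — and which I expect to be the only real obstacle — is the index $i = \nlevels+1$, which Proposition \ref{prop:H0_idealComplex_lower_bound} does not cover and for which $\bdegree_i$ is not even defined. To close $(b)$ there I would either extend Configuration \ref{config:practical_smoothness} with the convention $\ideal[L]{}_{[\nlevels+1]} = 0$ and check directly that $H_0(\degBa{\quotientComplex}{\nlevels+1})_\abdegree$ vanishes once $\abdegree - \bdegree_\nlevels$ dominates the local smoothnesses at every interior vertex (the argument being the constant bi-degree case of Proposition \ref{prop:H0_idealComplex_lower_bound}(a)), or else restrict $(b)$ to $1 \le i \le \nlevels$ and invoke $(a)$ at $i = \nlevels+1$. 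Beyond that, the derivation is a routine diagram chase whose only genuine inputs are Propositions \ref{prop:H0_constantComplex} and \ref{prop:H0_idealComplex_lower_bound} and Assumption \ref{ass:no_holes}.
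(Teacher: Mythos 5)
Your proof is correct and is exactly the argument the paper intends (the paper only remarks that the corollary ``can be readily derived'' from the long exact sequence \eqref{eq:long_ex_seq_homology}): reading $\im{\hat{\boundary}_i}$ as simultaneously a quotient of $H_2(\degAa{\quotientComplex}{i-1})$ and a submodule of $H_1(\degBa{\quotientComplex}{i})$, and then computing $\dimwp{H_1(\degBa{\quotientComplex}{i})}_\abdegree$ from the four-term exact sequence appearing in the proof of Proposition \ref{prop:homo_dim} together with Proposition \ref{prop:H0_constantComplex}, is the intended route. Your worry about $i=\nlevels+1$ in part (b) resolves itself without any new convention: there $\nComponent{\pB{\nlevels+1}}=0$ because $\domain$ is connected with non-empty boundary, so the second entry of the minimum is just $\dimwp{H_0(\degBa{\idealComplex}{\nlevels+1})}_\abdegree$, which already bounds $\dimwp{H_1(\degBa{\quotientComplex}{\nlevels+1})}_\abdegree$ via the injection $H_1(\degBa{\quotientComplex}{i}) \hookrightarrow H_0(\degBa{\idealComplex}{i})$ supplied by Assumption \ref{ass:no_holes}.
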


\begin{theorem}[Lower bound for practical smoothness distributions]\label{thm:lower_bound_special}
	For Configuration \ref{config:practical_smoothness},
	\begin{equation*}
		\dimwp{\splSpaceH^\bsmooth_\bdegree}_\abdegree \geq \euler{\quotientComplex}_\abdegree 
		\;.
	\end{equation*}
\end{theorem}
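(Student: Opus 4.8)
Via Definition~\ref{def:homological_contribution} the asserted inequality is equivalent to $\homoDim^\bsmooth_{\bdegree, \abdegree}\ge 0$, and the plan is to prove this by showing that each summand of an already-available decomposition of $\homoDim^\bsmooth_{\bdegree, \abdegree}$ is non-negative. Concretely, Proposition~\ref{prop:homo_dim} supplies
\[
\homoDim^\bsmooth_{\bdegree, \abdegree} = \sum_{i=1}^{\nlevels+1}\left(\dimwp{H_0\left(\degBa{\idealComplex}{i}\right)}_\abdegree - \dimwp{H_0\left(\degBa{\constantComplex}{i}\right)}_\abdegree - \dimwp{\im{\hat{\boundary}_i}}_\abdegree\right),
\]
so it is enough to bound each bracketed term below by $0$.

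Into the $i$-th term I would feed two facts. First, Proposition~\ref{prop:H0_constantComplex} rewrites the middle quantity in closed form as $\dimwp{H_0(\degBa{\constantComplex}{i})}_\abdegree = \nComponent{\pB{i}}\dimwp{\ideal[M]{}_{\pB{i}}}_\abdegree$. Second, the hypothesis of the theorem is precisely Configuration~\ref{config:practical_smoothness}, which is exactly what licenses Corollary~\ref{cor:dimension_boundary_hat}(b); from that corollary one extracts $\dimwp{\im{\hat{\boundary}_i}}_\abdegree \le \dimwp{H_0(\degBa{\idealComplex}{i})}_\abdegree - \nComponent{\pB{i}}\dimwp{\ideal[M]{}_{\pB{i}}}_\abdegree$. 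Feeding these in, the $i$-th summand is seen to be $\ge 0$ because the $\dimwp{H_0(\degBa{\idealComplex}{i})}_\abdegree$ and the $\nComponent{\pB{i}}\dimwp{\ideal[M]{}_{\pB{i}}}_\abdegree$ contributions cancel exactly against those appearing in the bound on $\dimwp{\im{\hat{\boundary}_i}}_\abdegree$. Summing over $i=1,\dots,\nlevels+1$ gives $\homoDim^\bsmooth_{\bdegree, \abdegree}\ge 0$, i.e.\ $\dimwp{\splSpaceH^\bsmooth_\bdegree}_\abdegree\ge\euler{\quotientComplex}_\abdegree$.

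The only point needing attention is that this cancellation must be valid at every index, in particular at the two degenerate ends of the range. For $i=1$ one has $\hat{\boundary}_1\equiv 0$ and $H_2(\degAa{\quotientComplex}{0})=0$, so the required inequality is just $\dimwp{H_0(\degBa{\idealComplex}{1})}_\abdegree\ge\dimwp{H_0(\degBa{\constantComplex}{1})}_\abdegree$, which is Proposition~\ref{prop:H0_idealComplex_lower_bound}(a) itself; for $i=\nlevels+1$ one uses that $\domain$ is simply connected, whence $\nComponent{\pB{\nlevels+1}}=\rankwp{H_0(\domain,\domainBnd)}=0$ and the $\ideal[M]{}_{\pB{\nlevels+1}}$-terms are simply absent. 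I expect the main (and essentially only) obstacle to be conceptual bookkeeping: recognising that the bound of Corollary~\ref{cor:dimension_boundary_hat}(b) is calibrated exactly to cancel against the closed form of Proposition~\ref{prop:H0_constantComplex}, a matching which itself rests on Configuration~\ref{config:practical_smoothness} forcing $H_0(\degBa{\quotientComplex}{i})_\abdegree$ to vanish (Proposition~\ref{prop:H0_idealComplex_lower_bound}(a)); once that link is accepted the proof is the one-line substitution above. As an aside, a route avoiding Assumption~\ref{ass:no_holes} would instead start from the identity $\dimwp{\splSpaceH^\bsmooth_\bdegree}_\abdegree = \euler{\quotientComplex}_\abdegree + \dimwp{H_0(\idealComplex)}_\abdegree - \dimwp{H_0(\constantComplex)}_\abdegree$ established inside the proof of Theorem~\ref{thm:lower_bound_general} and control $\dimwp{H_0(\idealComplex)}_\abdegree-\dimwp{H_0(\constantComplex)}_\abdegree$ level by level via Proposition~\ref{prop:H0_idealComplex_lower_bound}(a), which is slightly more delicate at the top level $i=\nlevels+1$.
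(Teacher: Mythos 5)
Your proposal is correct and follows essentially the same route as the paper's proof: it feeds the bound of Corollary~\ref{cor:dimension_boundary_hat}(b) (valid under Configuration~\ref{config:practical_smoothness} via Proposition~\ref{prop:H0_idealComplex_lower_bound}(a)) into the decomposition of $\homoDim^\bsmooth_{\bdegree,\abdegree}$ from Proposition~\ref{prop:homo_dim} and cancels against the closed form of Proposition~\ref{prop:H0_constantComplex}. Your explicit treatment of the endpoints $i=1$ and $i=\nlevels+1$ matches the paper's handling of the top index.
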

\begin{proof}
	Since the conditions of sufficiency in Proposition \ref{prop:H0_idealComplex_lower_bound}(a) is assumed to be satisfied for all $1 \leq i \leq \nlevels$, $H_0(\degBa{\idealComplex}{i})_\abdegree$ surjects onto $H_0(\degBa{\constantComplex}{i})_\abdegree$.
	For $i = \nlevels+1$, the dimension of $H_0(\degBa{\idealComplex}{i})_\abdegree$ is trivially bounded from below by $0 = \dimwp{H_0(\degBa{\constantComplex}{i})}_\abdegree$.
	From Corollary \ref{cor:dimension_boundary_hat}, this implies that
	\begin{equation*}
		-\dimwp{\im{\hat{\boundary}_i}}_\abdegree \geq \nComponent{\pB{i}}\dimwp{\ideal[M]{}_{\pB{i}}}_\abdegree - \dimwp{H_0(\degBa{\idealComplex}{i})}_\abdegree\;.
	\end{equation*}
	The claim follows from Proposition \ref{prop:homo_dim} and  Proposition \ref{prop:H0_constantComplex}.
\end{proof}

\begin{theorem}[Upper bound for general smoothness distributions]\label{thm:upper_bound_general}
	\begin{equation*}
	\begin{split}
	\dimwp{\splSpaceH^\bsmooth_\bdegree}_\abdegree &\leq \euler{\quotientComplex}_\abdegree + 
	\sum_{i=1}^{\nlevels+1}\dimwp{H_0(\degBa{\idealComplex}{i})}_\abdegree  - \nComponent{\pB{i}}\dimwp{\ideal[M]{}_{\pB{i}}}_\abdegree \;.
	\end{split}
	\end{equation*}
\end{theorem}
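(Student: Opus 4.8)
The plan is to read the upper bound off directly from the exact dimension identity already assembled in Proposition~\ref{prop:homo_dim}, by discarding a single non-negative term. Concretely, Proposition~\ref{prop:homo_dim} (valid under the standing Assumption~\ref{ass:no_holes}) and the definition of $\homoDim^\bsmooth_{\bdegree,\abdegree}$ give
\[
\dimwp{\splSpaceH^\bsmooth_\bdegree}_\abdegree = \euler{\quotientComplex}_\abdegree + \sum_{i=1}^{\nlevels+1}\left(\dimwp{H_0(\degBa{\idealComplex}{i})}_\abdegree - \dimwp{H_0(\degBa{\constantComplex}{i})}_\abdegree - \dimwp{\im{\hat{\boundary}_i}}_\abdegree\right),
\]
after which I would substitute $\dimwp{H_0(\degBa{\constantComplex}{i})}_\abdegree = \nComponent{\pB{i}}\dimwp{\ideal[M]{}_{\pB{i}}}_\abdegree$ from Proposition~\ref{prop:H0_constantComplex}. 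The theorem then follows by bounding $-\dimwp{\im{\hat{\boundary}_i}}_\abdegree \le 0$ term by term (each is a vector-space dimension, hence non-negative; for $i=1$ it is $0$ since $\hat{\boundary}_1\equiv 0$) and summing over $i$.

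So the ordered steps are: (i) invoke Proposition~\ref{prop:homo_dim}; (ii) rewrite the $H_0$-terms of the constant complexes via Proposition~\ref{prop:H0_constantComplex}; (iii) drop the $\dimwp{\im{\hat{\boundary}_i}}_\abdegree$ contributions. One subtlety worth flagging explicitly is that the argument does \emph{not} require the sufficiency hypothesis of Configuration~\ref{config:practical_smoothness}: we only use non-negativity of $\dimwp{\im{\hat{\boundary}_i}}_\abdegree$, not the finer surjectivity/vanishing statements of Proposition~\ref{prop:H0_idealComplex_lower_bound}, so the bound holds for an arbitrary smoothness distribution. An alternative, slightly longer route would start from the identity derived in the proof of Theorem~\ref{thm:lower_bound_general}, namely $\dimwp{\splSpaceH^\bsmooth_\bdegree}_\abdegree = \euler{\quotientComplex}_\abdegree + \dimwp{H_0(\idealComplex)}_\abdegree - \dimwp{H_0(\constantComplex)}_\abdegree$, and then re-expand $\dimwp{H_0(\idealComplex)}_\abdegree$ and $\dimwp{H_0(\constantComplex)}_\abdegree$ across the degree-deficit filtration; but routing through Proposition~\ref{prop:homo_dim} is the shortest path and keeps the $H_0(\degBa{\idealComplex}{i})$ terms intact, exactly as they appear in the statement.

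Since every quantity entering the formula has already been characterized in Sections~\ref{ss:constant_homology} and \ref{sec:ideals}, there is essentially no obstacle here: the result is a book-keeping corollary of Proposition~\ref{prop:homo_dim}. The only ``content'' is recognizing that the obstruction terms $\dimwp{\im{\hat{\boundary}_i}}_\abdegree$ are precisely the gap between this upper bound and the true dimension, and that Example~\ref{ex:dim_additive_counter} already shows these can be strictly positive, so the bound is genuinely an inequality in general; sharpening it is exactly the task taken up in Theorems~\ref{thm:lower_bound_special} and \ref{thm:homo_dim_zero}.
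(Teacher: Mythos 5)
Your proposal is correct and is essentially identical to the paper's own proof, which likewise reads the bound off Proposition~\ref{prop:homo_dim} (with $\dimwp{H_0(\degBa{\constantComplex}{i})}_\abdegree = \nComponent{\pB{i}}\dimwp{\ideal[M]{}_{\pB{i}}}_\abdegree$ from Proposition~\ref{prop:H0_constantComplex}) and discards the non-negative terms $\dimwp{\im{\hat{\boundary}_i}}_\abdegree$. Your observations that only Assumption~\ref{ass:no_holes} (not Configuration~\ref{config:practical_smoothness}) is needed, and that Example~\ref{ex:dim_additive_counter} shows the inequality can be strict, are both accurate.
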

\begin{proof}
	Since $\dimwp{\im{\hat{\boundary}_i}}_\abdegree \geq 0$, the claim follows from Proposition \ref{prop:homo_dim}.
\end{proof}

It only remains to derive upper bounds on $\dimwp{H_0(\degBa{\idealComplex}{i})}_\abdegree$ and we do so next.
Given a particular $i$, we bound the dimensions of graded pieces of $H_0\left( \degBa{\idealComplex}{i} \right)$ from above
by introducing an ordering on the active interior maximal segments, i.e., on the elements of $\IMSA{\pB{i}}{}$ and by utilizing the representation of $H_0\left( \degBa{\idealComplex}{i} \right)$ from Proposition \ref{prop:H0_idealComplex_ms}.

\begin{definition}[Ordering of ${\IMSA{\pB{i}}{}}$]
	Given $i$, let $\xi_{\pB{i}}$ be an ordering on $\IMSA{\pB{i}}{}$, i.e., an injective map from $\IMSA{\pB{i}}{}$ to $\NN$.
	Given $\xi_{\pB{i}}$ and $\ms \in \IMSA{\pB{i}}{}$, define $\Gamma_{[i]} (\ms) \subset \MSA{\pB{i}}{}$ as the set of maximal segments $\ms'$ that intersect $\ms$ non-trivially and such that either $\xi_{\pB{i}}(\ms)  > \xi_{\pB{i}}(\ms')$ or $\ms' \cap \boundary\domain \neq \emptyset$.
\end{definition}
Hereafter, we will assume that given $i$ the ordering $\xi_{\pB{i}}$ is fixed. We will abuse the notation by using $\ms > \ms'$ to mean the same thing as $\xi_{\pB{i}}(\ms)  > \xi_{\pB{i}}(\ms')$.
Let us define the modules
\begin{align}
	M_{\pB{i}} &:= \moplus\limits_{\ms \in \IMSA{\pB{i}}{}} \msE{} \shiftideal{M}{\pB{i}}{ -\bideg{\ms}}\;,\label{eq:H0_numerator}\\
	D_{\pB{i}} &:= \sum_{\ms_h \in \IMSH{\pB{i}}{}}\sum_{\ms_v \in \IMSV{\pB{i}}{}}\sum_{\substack{\vertex \in \tmeshInteriorComponentV{,\pB{i}}{}\\\ms_h \cap \ms_v = \{\vertex\}}} \left(\msE{h}\Delta_{\ms_v}  - \msE{v} \Delta_{\ms_h} \right) \shiftideal{M}{\pB{i}}{ -\bideg{\vertex}}\;.
\label{eq:H0_denominator}
\end{align}
For $p = \sum_{\ms}\msE{} p_{\ms} \in M_{\pB{i}}$, we define its initial, denoted $\initial{p}$, as $\elem{\ms'}p_{\ms'}$ if, out of all $\ms$ such that $p_\ms \neq 0$, $\ms'$ has the biggest index according to $\xi_{\pB{i}}$.
\begin{lemma}\label{lem:H0_idealComplex_initial}
	\begin{equation*}
		H_0\left(\degBa{\idealComplex}{i}\right) \isomorphic M_{\pB{i}} / \initial{D_{\pB{i}}}\;.
	\end{equation*}
\end{lemma}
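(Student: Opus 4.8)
The plan is to build directly on Proposition~\ref{prop:H0_idealComplex_ms}, which already exhibits an isomorphism $H_0\left(\degBa{\idealComplex}{i}\right)\isomorphic M_{\pB{i}}/D_{\pB{i}}$, and then to show that replacing $D_{\pB{i}}$ by the span $\initial{D_{\pB{i}}}$ of the initials of its elements leaves the dimension of every bigraded piece of the quotient unchanged. This is the module analogue of the classical fact that an ideal and its initial ideal share a Hilbert function; the role usually played by a monomial order is taken here by the coarse order $\xi_{\pB{i}}$ on $\IMSA{\pB{i}}{}$, which records, for a nonzero $p=\sum_{\ms}\msE{}p_{\ms}\in M_{\pB{i}}$, only the top maximal segment $\ms'$ with $p_{\ms'}\neq 0$ and then sets $\initial{p}=\elem{\ms'}p_{\ms'}$. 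Since $H_0\left(\degBa{\idealComplex}{i}\right)$ only ever enters the later results through the dimensions of its graded pieces, it suffices to prove the isomorphism at the level of bigraded vector spaces.

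Fix a bidegree $\abdegree\in\ZZP^2$. The generators $\left(\msE{h}\Delta_{\ms_v}-\msE{v}\Delta_{\ms_h}\right)p$ of $D_{\pB{i}}$ in \eqref{eq:H0_denominator} are bihomogeneous whenever $p$ is, so $D_{\pB{i}}$ is a bigraded submodule of $M_{\pB{i}}$ and it is enough to show $\dimwp{\left(M_{\pB{i}}/D_{\pB{i}}\right)}_\abdegree=\dimwp{\left(M_{\pB{i}}/\initial{D_{\pB{i}}}\right)}_\abdegree$. Enumerate $\IMSA{\pB{i}}{}$ as $\ms_1,\ms_2,\dots,\ms_n$ in increasing $\xi_{\pB{i}}$-order and set $F_j:=\moplus_{k\le j}\elem{\ms_k}\shiftideal{M}{\pB{i}}{-\bideg{\ms_k}}$, an exhaustive increasing filtration of $M_{\pB{i}}$ by bigraded subspaces, with induced filtration $F_jD_{\pB{i}}:=D_{\pB{i}}\cap F_j$. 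For each $j$, the natural inclusion $F_jD_{\pB{i}}/F_{j-1}D_{\pB{i}}\hookrightarrow F_j/F_{j-1}\isomorphic \elem{\ms_j}\shiftideal{M}{\pB{i}}{-\bideg{\ms_j}}$ has image exactly the set $\{\initial{p}:p\in D_{\pB{i}},\ p\in F_j,\ p\notin F_{j-1}\}\cup\{0\}$, which is precisely the part of $\initial{D_{\pB{i}}}$ lying in the $\ms_j$-summand (every initial lies entirely within one summand, and $p$ has top segment $\ms_j$ iff $p\in F_j\setminus F_{j-1}$). Summing over $j$ identifies $\moplus_j F_jD_{\pB{i}}/F_{j-1}D_{\pB{i}}$ with $\initial{D_{\pB{i}}}$ as bigraded vector spaces. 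Since passing to the associated graded of a finite filtration of a finite-dimensional space preserves dimension in each bidegree, $\dimwp{(D_{\pB{i}})}_\abdegree=\dimwp{(\initial{D_{\pB{i}}})}_\abdegree$; subtracting from $\dimwp{(M_{\pB{i}})}_\abdegree$ gives the claimed equality, and Proposition~\ref{prop:H0_idealComplex_ms} then yields the stated isomorphism.

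Two further points should be recorded. First, $\initial{D_{\pB{i}}}$ genuinely is an $\pRingH$-submodule of $M_{\pB{i}}$, so the right-hand side is well defined: if $p\in D_{\pB{i}}$ has top segment $\ms'$ and $f\in\pRingH$, then $fp\in D_{\pB{i}}$ and either $f\,p_{\ms'}\neq 0$ in $\shiftideal{M}{\pB{i}}{-\bideg{\ms'}}$, so that $\initial{fp}=f\cdot\initial{p}$, or $f\,p_{\ms'}=0$, so that $f\cdot\initial{p}=0$; in either case $f\cdot\initial{p}\in\initial{D_{\pB{i}}}$, and closure under $\RR$-linear combinations is immediate. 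Second, the one thing that requires genuine care — and the only real obstacle — is that the summands $\shiftideal{M}{\pB{i}}{-\bideg{\ms}}$ are \emph{quotient} modules, so a coefficient nonzero in $M_{\pB{i}}$ may vanish after multiplication (which is exactly what makes the coarse segment order behave well), and that the sum in \eqref{eq:H0_denominator} ranges only over crossings of two \emph{interior} active maximal segments, so no boundary-touching segment can ever appear as a top segment; beyond checking that leading segments behave as described, the argument is pure bookkeeping.
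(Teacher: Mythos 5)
Your proposal is correct: the paper itself disposes of this lemma with a one-line citation to a standard result on initial modules (Schenck), and your filtration argument --- identifying the associated graded of $D_{\pB{i}}$ with respect to the segment-order filtration $F_j$ with $\initial{D_{\pB{i}}}$, and concluding equality of Hilbert functions --- is precisely the standard proof being invoked, correctly adapted to the ``position-over-coefficient'' order $\xi_{\pB{i}}$ and to the fact that the summands are quotient modules. Your observation that the isomorphism is one of bigraded vector spaces (which is all that is used downstream) rather than of $\pRingH$-modules is the right reading of the statement.
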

\begin{proof}
	The claim is a standard result; see \cite{schenck2003computational}, for example.
\end{proof}

\begin{proposition}\label{prop:H0_idealComplex_upperBound}
	\begin{equation*}
	\begin{split}
		\dimwp{H_0(\degBa{\idealComplex}{i})}_\abdegree \leq \sum_{\ms \in \IMSA{\pB{i}}{}} 
		\left(\dimwp{\shiftideal{M}{\pB{i}}{ -\bideg{\ms}}}_\abdegree - \dimwp{\sum_{\ms' \in \Gamma_{\pB{i}}(\ms) } \Delta_{\ms'}\shiftideal{M}{\pB{i}}{ -\bideg{\ms}-\bideg{\ms'}}}_\abdegree\right)\;.
	\end{split}
	\end{equation*}
\end{proposition}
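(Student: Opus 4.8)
The plan is to compute $\dimwp{H_0(\degBa{\idealComplex}{i})}_\abdegree$ directly from the presentation in Lemma~\ref{lem:H0_idealComplex_initial}, $H_0(\degBa{\idealComplex}{i}) \isomorphic M_{\pB{i}}/\initial{D_{\pB{i}}}$, and to bound the graded dimension of the initial submodule $\initial{D_{\pB{i}}}$ from below. Since $M_{\pB{i}} = \moplus_{\ms \in \IMSA{\pB{i}}{}}\msE{}\shiftideal{M}{\pB{i}}{-\bideg{\ms}}$ is a direct sum, $\dimwp{M_{\pB{i}}}_\abdegree = \sum_{\ms \in \IMSA{\pB{i}}{}}\dimwp{\shiftideal{M}{\pB{i}}{-\bideg{\ms}}}_\abdegree$, and because $\initial{D_{\pB{i}}} \subseteq M_{\pB{i}}$ we have $\dimwp{H_0(\degBa{\idealComplex}{i})}_\abdegree = \dimwp{M_{\pB{i}}}_\abdegree - \dimwp{\initial{D_{\pB{i}}}}_\abdegree$. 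Hence the assertion is equivalent to the inequality
\[
	\dimwp{\initial{D_{\pB{i}}}}_\abdegree \;\geq\; \sum_{\ms \in \IMSA{\pB{i}}{}}\dimwp{\;\sum_{\ms' \in \Gamma_{\pB{i}}(\ms)}\Delta_{\ms'}\shiftideal{M}{\pB{i}}{-\bideg{\ms}-\bideg{\ms'}}\;}_\abdegree\;.
\]

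To establish this I would show that for every $\ms \in \IMSA{\pB{i}}{}$ and every $\ms' \in \Gamma_{\pB{i}}(\ms)$ the submodule $\msE{}\,\Delta_{\ms'}\shiftideal{M}{\pB{i}}{-\bideg{\ms}-\bideg{\ms'}}$ — which sits inside the $\ms$-th direct summand $\msE{}\shiftideal{M}{\pB{i}}{-\bideg{\ms}}$ of $M_{\pB{i}}$, since $\ms'$ meets $\ms$ transversally in a single interior vertex $\vertex$ with $\bideg{\vertex} = \bideg{\ms}+\bideg{\ms'}$ — is contained in $\initial{D_{\pB{i}}}$. There are two cases, matching the definition of $\Gamma_{\pB{i}}(\ms)$. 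If $\ms'$ is an interior maximal segment, then necessarily $\ms > \ms'$, and for each $q \in \shiftideal{M}{\pB{i}}{-\bideg{\vertex}}$ the element $g = \left(\msE{}\Delta_{\ms'} - \msE{'}\Delta_{\ms}\right)q$ is a defining generator of $D_{\pB{i}}$ (Proposition~\ref{prop:H0_idealComplex_ms}); whenever $\msE{}\Delta_{\ms'}q \neq 0$ its leading term with respect to $\xi_{\pB{i}}$ is exactly $\msE{}\Delta_{\ms'}q$, because $\ms$ outranks $\ms'$ (and this also covers the subcase $\Delta_{\ms}q = 0$, where $g = \msE{}\Delta_{\ms'}q$ already), so $\msE{}\Delta_{\ms'}q \in \initial{D_{\pB{i}}}$. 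If instead $\ms' \cap \domainBnd \neq \emptyset$, the contribution is even more direct: tracing the construction behind Proposition~\ref{prop:H0_idealComplex_ms}, the generator of $M_{\pB{i}}$ attached to $\ms'$ has already been identified with $0$ (its terminal half-edge sits at a boundary vertex), so the crossing relation at $\vertex$ collapses to $\msE{}\Delta_{\ms'}q$ itself, which therefore lies in $D_{\pB{i}} \subseteq \initial{D_{\pB{i}}}$. Letting $q$ range over $\shiftideal{M}{\pB{i}}{-\bideg{\vertex}}$ gives $\msE{}\Delta_{\ms'}\shiftideal{M}{\pB{i}}{-\bideg{\ms}-\bideg{\ms'}} \subseteq \initial{D_{\pB{i}}}$ in both cases.

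Summing over $\ms' \in \Gamma_{\pB{i}}(\ms)$ then yields $\msE{}\left(\sum_{\ms' \in \Gamma_{\pB{i}}(\ms)}\Delta_{\ms'}\shiftideal{M}{\pB{i}}{-\bideg{\ms}-\bideg{\ms'}}\right) \subseteq \initial{D_{\pB{i}}}$ for each $\ms$. These submodules lie in pairwise distinct direct summands $\msE{}\shiftideal{M}{\pB{i}}{-\bideg{\ms}}$ of $M_{\pB{i}}$, so their graded dimensions add, which is exactly the displayed inequality, and the upper bound on $\dimwp{H_0(\degBa{\idealComplex}{i})}_\abdegree$ follows.

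The step I expect to be the main obstacle is the leading-term bookkeeping. One must verify carefully that the leading term of $\left(\msE{}\Delta_{\ms'} - \msE{'}\Delta_{\ms}\right)q$ is $\msE{}\Delta_{\ms'}q$ and not something supported on a smaller index — this is precisely where the ordering hypothesis $\ms > \ms'$ built into $\Gamma_{\pB{i}}(\ms)$ is used — and that each boundary maximal segment appearing in $\Gamma_{\pB{i}}(\ms)$ really feeds the \emph{whole} module $\msE{}\Delta_{\ms'}\shiftideal{M}{\pB{i}}{-\bideg{\ms}-\bideg{\ms'}}$ into $D_{\pB{i}}$ rather than only a diagonal combination, which rests on the vanishing of half-edges at boundary vertices exactly as in the proof of Proposition~\ref{prop:H0_idealComplex_ms} (and on $\Delta_{\ms'}$, $\bideg{\ms'}$ being unambiguous along $\ms'$, which holds because the smoothness distribution is constant along maximal segments). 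Once these points are settled, the reduction via Lemma~\ref{lem:H0_idealComplex_initial} and the direct-sum dimension count are routine.
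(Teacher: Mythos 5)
Your proposal is correct and follows exactly the route the paper takes: pass to $M_{\pB{i}}/\initial{D_{\pB{i}}}$ via Lemma \ref{lem:H0_idealComplex_initial}, bound $\dimwp{\initial{D_{\pB{i}}}}_\abdegree$ from below by the initials of the generators of $D_{\pB{i}}$, and observe that for each $\ms$ these initials are precisely $\msE{}\sum_{\ms'\in\Gamma_{\pB{i}}(\ms)}\Delta_{\ms'}\shiftideal{M}{\pB{i}}{-\bideg{\ms}-\bideg{\ms'}}$, sitting in distinct direct summands. The paper's proof is a two-sentence sketch of this same argument; your write-up simply supplies the leading-term bookkeeping and the boundary-segment case that the paper leaves implicit.
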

\begin{proof}
	From Lemma \ref{lem:H0_idealComplex_initial}, if we can provide a lower bound on the dimension of  $\initial{D_{\pB{i}}}$, then we can provide an upper bound on $\dimwp{H_0(\degBa{\idealComplex}{i})}_\abdegree$.
	
	Notice that $\initial{D_{\pB{i}}}$ is going to be at least partially generated by the initials of its generators. Looking at the generators, for each $\ms \in \IMSA{\pB{i}}{}$ the contributions only come from the $\ms' \in \Gamma_{\pB{i}}(\ms)$. The claim follows.
\end{proof}

In practice, we have observed that if each connected component of $\domainComponent{\pB{i}}{}$ intersects $\boundary\domain$, the upper bound in Proposition \ref{prop:H0_idealComplex_upperBound} is usually optimal.
Here we use ``optimal'' in the sense of the upper bound coinciding with the exact dimension of $H_0(\degBa{\idealComplex}{i})$ in bi-degree $\abdegree$.
However, if $\boundary\domainComponent{\pB{i}}{}$ does not intersect $\boundary\domain$, it turns out that the upper bound in Proposition \ref{prop:H0_idealComplex_upperBound} is a poor estimate.
In other words, the initials of the generators of $D_{\pB{i}}$ used in Proposition \ref{prop:H0_idealComplex_upperBound} do a bad job of approximating its dimension.
Nonetheless, we can significantly improve upon this estimate by \emph{systematically} adding some new generators to the ones used previously.
In particular, doing so will allow us to compute the dimension of  $H_0(\degBa{\idealComplex}{i})$ exactly even when there exist connected components of $\domainComponent{\pB{i}}{}$ that do not intersect $\boundary\domain$; see Sections \ref{sec:stable_dimension} and \ref{sec:examples} for examples.
In turn, this will enable the use of the sufficient conditions outlined later in Section \ref{sec:stable_dimension} for computing the exact spline space dimension in many cases.

Following the above comments, for a given maximal segment $\ms \in \MSA{\pB{i}}{}$, consider the particular connected component of $\domainComponent{\pB{i}}{}$ that $\ms$ belongs to.
Let us focus on enlarging the set of generators involving $\ms$ in Equation \eqref{eq:H0_denominator}.
Define $\Upsilon_{\pB{i}}(\ms)$ as the following set of maximal segment pairs,
\begin{equation}
\Upsilon_{\pB{i}}(\ms) := \left\{ (\ms_1, \ms_2) \in \IMSA{\pB{i}}{}\times\IMSA{\pB{i}}{}~:~ \ms_2 \cap \ms \neq \emptyset \neq \ms_2 \cap \ms_1,~\bsmooth(\ms) \geq \bsmooth(\ms_1),~\ms > \ms_1\right\}\;.
\end{equation}
Note that in the above definition $\ms$ and $\ms_1$ must be parallel and $\ms_2$ must be perpendicular to both.
In Equation \eqref{eq:H0_denominator}, the generators of $D_{\pB{i}}$ already contain explicit relations between $\msE{}$ and $\msE{2}$, and between $\msE{1}$ and $\msE{2}$ for $(\ms_1, \ms_2) \in \Upsilon_{\pB{i}}(\ms)$.
Then, these generators can be manipulated to give a generator involving only $\msE{}$ and $\msE{1}$.

\vspace{-2\baselineskip}
\leavevmode{
	\begin{wrapfigure}[4]{r}{0.20\textwidth}
		\begin{tikzpicture}
		\coordinate (c0) at (0,0);
		\coordinate (c1) at (2,0);
		\coordinate (c2) at (0,2);
		\coordinate (c3) at (2,2);
		\draw[eThickness] (c0) -- (c1);
		\draw[eThickness] (c2) -- (c3);
		\draw[eThickness] (c0) -- (c2);
		\node[circle,fill=black,black,inner sep=0pt,minimum size=2pt] at (c0) {};
		\node[circle,fill=black,black,inner sep=0pt,minimum size=2pt] at (c2) {};
		\node[label={west:$\ms_2$}] at ($(c0)!0.5!(c2)$) {};
		\node[label={south:$\ms_1$}] at ($(c0)!0.5!(c1)$) {};
		\node[label={north:$\ms$}] at ($(c2)!0.5!(c3)$) {};
		\draw[<->] ($(c0)!0.74!(c2)$) arc (-90:0:14pt);
		\draw[<->] ($(c0)!0.26!(c1)$) arc (0:90:14pt);
		\draw[<->, dashed] ($(c0)!0.5!(c1)$) -- ($(c2)!0.5!(c3)$);
		\end{tikzpicture}
	\end{wrapfigure}
	\begin{lemma}\label{lem:new_relations}
		Let $1 \leq i \leq \nlevels$.
		Given maximal segment $\ms \in \IMSA{\pB{i}}{}$, let $(\ms_1, \ms_2) \in \Upsilon_{\pB{i}}(\ms)$. Recalling Equation \eqref{eq:deg_def_diff}, define $\mbf{\Delta p}_{\ms}$ as
		\begin{equation*}
		\mbf{\Delta p}_{\ms} := 
			\begin{dcases}
			(\Delta n_{i1}, 0)\;,& \ms \in \MSH{\pB{i}}{j}\;,\\
			(0, \Delta n_{i2})\;,& \ms \in \MSV{\pB{i}}{j}\;.
		\end{dcases}
		\end{equation*}
		Then, there is a polynomial $\hat{\Delta}$ such that
		\begin{equation*}
		\mbf{u}^{\mbf{\Delta p}_\ms}\Delta_{\ms_2}\left(\msE{} - \elem{\ms_1}\hat{\Delta}\right) \shiftideal{M}{\pB{i}}{ -\bideg{\ms}-\bideg{\ms_2}-\mbf{\Delta p}_{\ms}} \subset {D}_{\pB{i}}\;.
		\end{equation*}
\end{lemma}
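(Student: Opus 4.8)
The plan is to realise the desired submodule of $D_{\pB{i}}$ by chaining, through $\hat\Delta$, the two crossing relations that $D_{\pB{i}}$ already contains — the one at the vertex $\ms\cap\ms_2$ and the one at $\ms_1\cap\ms_2$ — the auxiliary factor $\mbf{u}^{\mbf{\Delta p}_\ms}$ being present only to repair a bidegree mismatch between $\Delta_\ms$ and $\Delta_{\ms_1}$. Recall, from the observation immediately after the definition of $\Upsilon_{\pB{i}}(\ms)$, that $\ms\parallel\ms_1$ and that $\ms_2$ is transversal to both; by the symmetry $(s,u)\leftrightarrow(t,v)$ it suffices to treat the case $\ms,\ms_1\in\MSH{\pB{i}}{}$, $\ms_2\in\MSV{\pB{i}}{}$, where $\mbf{\Delta p}_\ms=(\degreev_{i1},0)$. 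Put $\{\vertex\}:=\ms\cap\ms_2$, $\{\vertex'\}:=\ms_1\cap\ms_2$; since $\ms,\ms_1,\ms_2$ are interior active maximal segments, both $\vertex,\vertex'$ lie in $\tmeshInteriorComponentV{,\pB{i}}{}$, with $\bideg{\vertex}=\bideg{\ms}+\bideg{\ms_2}$ and $\bideg{\vertex'}=\bideg{\ms_1}+\bideg{\ms_2}$, so by \eqref{eq:H0_denominator} both
\begin{equation*}
(\msE{}\Delta_{\ms_2}-\msE{2}\Delta_{\ms})\,\shiftideal{M}{\pB{i}}{-\bideg{\ms}-\bideg{\ms_2}}\qquad\text{and}\qquad(\msE{1}\Delta_{\ms_2}-\msE{2}\Delta_{\ms_1})\,\shiftideal{M}{\pB{i}}{-\bideg{\ms_1}-\bideg{\ms_2}}
\end{equation*}
are contained in $D_{\pB{i}}$.

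Next I would construct $\hat\Delta$ and record the bridging identity. Write $\Delta_\ms=l_\ms^{\bsmooth(\ms)+1}$ and $\Delta_{\ms_1}=l_{\ms_1}^{\bsmooth(\ms_1)+1}$, where $l_\ms,l_{\ms_1}$ are the $v$-homogeneous linear forms vanishing on $\ms,\ms_1$ (both congruent to $t$ modulo $v$), and set $\hat\Delta:=t^{\,\bsmooth(\ms)-\bsmooth(\ms_1)}$, which is a genuine polynomial precisely because $\bsmooth(\ms)\ge\bsmooth(\ms_1)$ — this is exactly where that hypothesis in $\Upsilon_{\pB{i}}(\ms)$ is used. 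Then $\Delta_\ms\equiv t^{\bsmooth(\ms)+1}\equiv\Delta_{\ms_1}\hat\Delta\pmod v$, hence $v\mid(\Delta_\ms-\Delta_{\ms_1}\hat\Delta)$, so $\mbf{u}^{\mbf{\Delta p}_\ms}(\Delta_\ms-\Delta_{\ms_1}\hat\Delta)$ is a multiple of $u^{\degreev_{i1}}v=\mbf{u}^{\bdegreev_i}v^{\,1-\degreev_{i2}}$ (using $\degreev_{i2}\in\{0,1\}$ from \eqref{eq:deg_def_diff}). Since \eqref{eq:ideal_i_im1} gives $\mbf{u}^{\bdegreev_i}\shiftideal{L}{\pB{i-1}}{-j,-k}=\shiftideal{L}{\pB{i}}{-j,-k}$ (the submodule $\shiftideal{L}{\pB{i-1}}{\cdot}$ of $\pRingH$ being independent of the shift), multiplication by $\mbf{u}^{\bdegreev_i}$ annihilates $\shiftideal{M}{\pB{i}}{-j,-k}$; consequently, on any such module, multiplication by $\mbf{u}^{\mbf{\Delta p}_\ms}\Delta_\ms$ and by $\mbf{u}^{\mbf{\Delta p}_\ms}\Delta_{\ms_1}\hat\Delta$ coincide.

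The final step is the chain. For arbitrary $g\in\shiftideal{M}{\pB{i}}{-\bideg{\ms}-\bideg{\ms_2}-\mbf{\Delta p}_\ms}$, a bidegree count (using $\bideg{\ms}-\bideg{\ms_1}=(0,\bsmooth(\ms)-\bsmooth(\ms_1))$) shows $\mbf{u}^{\mbf{\Delta p}_\ms}g\in\shiftideal{M}{\pB{i}}{-\bideg{\ms}-\bideg{\ms_2}}$ and $\hat\Delta\mbf{u}^{\mbf{\Delta p}_\ms}g\in\shiftideal{M}{\pB{i}}{-\bideg{\ms_1}-\bideg{\ms_2}}$. Applying the first relation above to $\mbf{u}^{\mbf{\Delta p}_\ms}g$, then the identity of the previous paragraph, and then the second relation to $\hat\Delta\mbf{u}^{\mbf{\Delta p}_\ms}g$, one obtains, modulo $D_{\pB{i}}$,
\begin{equation*}
\msE{}\,\Delta_{\ms_2}\mbf{u}^{\mbf{\Delta p}_\ms}g\;\equiv\;\msE{2}\,\Delta_{\ms}\mbf{u}^{\mbf{\Delta p}_\ms}g\;=\;\msE{2}\,\Delta_{\ms_1}\hat\Delta\mbf{u}^{\mbf{\Delta p}_\ms}g\;\equiv\;\msE{1}\,\Delta_{\ms_2}\hat\Delta\mbf{u}^{\mbf{\Delta p}_\ms}g\;,
\end{equation*}
that is, $\mbf{u}^{\mbf{\Delta p}_\ms}\Delta_{\ms_2}(\msE{}-\msE{1}\hat\Delta)g\in D_{\pB{i}}$; as $g$ ranges over the module, this gives the claimed inclusion, the case $\ms,\ms_1\in\MSV{\pB{i}}{}$ being identical after $(s,u)\leftrightarrow(t,v)$ with $\hat\Delta=s^{\,\bsmooth(\ms)-\bsmooth(\ms_1)}$. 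I expect the only real difficulty to be bookkeeping: tracking every bidegree shift so that each intermediate element lands in exactly the graded module on which the corresponding generator of $D_{\pB{i}}$ acts, and observing that when $\bdegreev_i=(1,0)$ (with $\ms$ horizontal) the whole chain is vacuous because $\mbf{u}^{\mbf{\Delta p}_\ms}=\mbf{u}^{\bdegreev_i}$ already annihilates $\shiftideal{M}{\pB{i}}{-j,-k}$.
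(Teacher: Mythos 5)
Your proof is correct and follows essentially the same route as the paper's: you combine the two crossing-vertex generators of $D_{\pB{i}}$ located at $\ms\cap\ms_2$ and $\ms_1\cap\ms_2$ by multiplying the second through $\hat\Delta$, and then use the extra factor $\mbf{u}^{\mbf{\Delta p}_\ms}$ to push the remainder $\Delta_\ms-\Delta_{\ms_1}\hat\Delta$ (a multiple of the homogenizing variable) into $\ideal[L]{}_{\pB{i}}$ via Equation \eqref{eq:ideal_i_im1}, where it vanishes in the quotient $\shiftideal{M}{\pB{i}}{\cdot}$. The only cosmetic difference is your explicit choice $\hat\Delta=t^{\bsmooth(\ms)-\bsmooth(\ms_1)}$ in place of the paper's division quotient of $\Delta_\ms$ by $\Delta_{\ms_1}$; both choices make the remainder divisible by the homogenizing variable, which is all the argument requires.
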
}
\begin{proof}
	Without loss of generality, let $\ms, \ms_1 \in \MSV{\pB{i}}{}$ and define $\smooth_{\ms\ms_1} := \bsmooth(\ms) - \bsmooth(\ms_1) \geq 0$. By definition of $D_{\pB{i}}$, the following are two of its generators,
	\begin{align*}
	&\left(\elem{\ms_2} \Delta_{\ms_1} - \elem{\ms_1} \Delta_{\ms_2} \right)\shiftideal{M}{\pB{i}}{ -\bideg{\ms_1}-\bideg{\ms_2}}\;,\tag*{($\star_1$)}\\
	&\left(\elem{\ms_2} \Delta_{\ms} - \msE{} \Delta_{\ms_2}\right) \shiftideal{M}{\pB{i}}{ -\bideg{\ms}-\bideg{\ms_2}}\;. \tag*{($\star_2$)}
	\end{align*}
	
	Let $\Delta_{\ms_1} = s^{\bsmooth(\ms_1)+1}$ and $\Delta_{\ms} = (s + au)^{(\bsmooth(\ms)+ 1)}$, $a \in \RR$. We can write $\Delta_{\ms} = \Delta_{\ms_1}\hat{\Delta} + \Delta'$, where the term with the highest power of $s$ in $\Delta'$ is a multiple of $s^{\bsmooth(\ms_1)}u^{\smooth_{\ms\ms_1}+1}$.
	Then, we can combine the two generators in Equations ($\star_1$) and ($\star_2$) to yield
	\begin{equation*}
	\begin{split}
	(\star_2) - (\star_1)\times\hat{\Delta} = \left(\elem{\ms_2} \Delta' - \msE{} \Delta_{\ms_2}+\elem{\ms_1}\Delta_{\ms_2}\hat{\Delta}\right) \shiftideal{M}{\pB{i}}{ -\bideg{\ms}-\bideg{\ms_2}} \subset {D}_{\pB{i}}\;.
	\end{split}
	\end{equation*}
	We can further reduce the above relation to
	\begin{equation*}
	\begin{split}
	&\mbf{u}^{\mbf{\Delta p}_{\ms}}\Delta_{\ms_2}\left(\msE{} - \elem{\ms_1}\hat{\Delta}\right) \shiftideal{M}{\pB{i}}{ -\bideg{\ms}-\bideg{\ms_2}-\mbf{\Delta p}_{\ms}} \subset {D}_{\pB{i}}
	\end{split}
	\end{equation*}
	because $\shiftideal{L}{\pB{i}}{ -\bideg{\ms_2}} = \mbf{u}^{\bdegreev_i}\shiftideal{L}{\pB{i-1}}{ -\bideg{\ms_2}-\bdegreev_i}$ from Equation \eqref{eq:ideal_i_im1} and $\Delta'$ is a multiple of $u$.
\end{proof}

Lemma \ref{lem:new_relations} shows that, even if $\ms_2 > \ms$, it may be a part of the contribution that $\msE{}$ makes toward $\initial{D_{\pB{i}}}$.
Given enough new generators of ${D}_{\pB{i}}$ of this form, we can go a step further and identify some additional generators.
To this end, given a bi-degree $\abdegree$, define the set ${\Upsilon}_{\pB{i}}(\ms,\abdegree)$ as
\begin{equation}
\begin{split}
\Upsilon_{\pB{i}}(\ms, \abdegree) := \bigg\{ \Upsilon \subset \Upsilon_{\pB{i}}(\ms)~:~
&\sum_{(\cdot,\ms') \in \Upsilon} \Delta_{\ms'} \ideal[M]{}_{\pB{i}}(-\bideg{\ms'}-\bideg{\ms}-\mbf{\Delta p}_{\ms})_{\abdegree}\\
&\qquad\qquad\qquad\qquad\qquad= 
\shiftideal{M}{\pB{i}}{-\bideg{\ms}-\mbf{\Delta p}_{\ms}}_{\abdegree} \bigg\}\;.
\end{split}
\label{eq:contributions_partial}
\end{equation}
\begin{corollary}\label{cor:new_relations_weight}
	Let $\abdegree = (\adegree, \adegree') \in \ZZP^2$, $\ms \in \IMSA{\pB{i}}{}$, $1 \leq i \leq \nlevels$, and consider $\Upsilon \subset \Upsilon_{\pB{i}}(\ms)$.
	Then $\Upsilon \in \Upsilon_{\pB{i}}(\ms, \abdegree)$ if
	\begin{equation*}
		\begin{dcases}
		\sum_{\substack{\ms'\\(\cdot, \ms') \in \Upsilon}} (m - \degree_{i1}-\bsmooth(\ms'))_+ \geq m - \degree_{i1} + 1\;, & \ms \in \MSH{\pB{i}}{}\;,\\
		\sum_{\substack{\ms'\\(\cdot, \ms') \in \Upsilon}} (m' - \degree_{i2}-\bsmooth(\ms'))_+ \geq m' - \degree_{i2} + 1\;, & \ms \in \MSV{\pB{i}}{}\;.
		\end{dcases}
	\end{equation*}
\end{corollary}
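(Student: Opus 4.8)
The plan is to reduce the set identity in Equation~\eqref{eq:contributions_partial} --- the one defining membership in $\Upsilon_{\pB{i}}(\ms,\abdegree)$ --- to a single dimension count, and then to read that count off from Proposition~\ref{prop:space_sum_dim}. First I would fix $\ms$ and, without loss of generality, assume $\ms \in \MSH{\pB{i}}{}$; the case $\ms \in \MSV{\pB{i}}{}$ is entirely symmetric. Every segment $\ms'$ occurring as a second coordinate of a pair in $\Upsilon \subseteq \Upsilon_{\pB{i}}(\ms)$ is perpendicular to $\ms$, hence vertical, so $\Delta_{\ms'} = l_{\ms'}^{\bsmooth(\ms')+1}$ with $l_{\ms'}$ a $u$-homogeneous linear form and $\bideg{\ms'} = (\bsmooth(\ms')+1, 0)$. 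All such $\ms'$ cross $\ms$, and two collinear vertical maximal segments both meeting the horizontal segment $\ms$ would share a point and hence coincide; thus distinct $\ms'$ give distinct linear forms $l_{\ms'}$, which is exactly the hypothesis required to invoke Proposition~\ref{prop:space_sum_dim}. Using $\shiftideal{M}{\pB{i}}{-j,-k} = \ideal[M]{}_{\pB{i}}(-j,-k)$ and setting $\mbf{b} := \bideg{\ms} + \mbf{\Delta p}_{\ms} = (\degreev_{i1}, \bsmooth(\ms)+1)$, the right-hand side of~\eqref{eq:contributions_partial} is $\shiftideal{M}{\pB{i}}{-\mbf{b}}_\abdegree$ while the left-hand side is $\bigl(\sum_{\ms'} l_{\ms'}^{\bsmooth(\ms')+1}\shiftideal{M}{\pB{i}}{-\bideg{\ms'}-\mbf{b}}\bigr)_\abdegree$. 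Since multiplication by $l_{\ms'}^{\bsmooth(\ms')+1}$ raises bidegree by $\bideg{\ms'}$, the left-hand side is a subspace of the right-hand side, so it suffices to show the two have equal dimension in bidegree $\abdegree$.

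Next I would compute the dimension of the left-hand side using Proposition~\ref{prop:space_sum_dim}. Its symmetric part~(c) reduces the computation to the analogous sum inside $\ideal[L]{}_{\pB{i-1}}$ modulo $\ideal[L]{}_{\pB{i}}$; part~(b) removes the $\mbf{u}^{\bdegree_{i-1}}$ twist, leaving $\dimwp{\sum_{\ms'} l_{\ms'}^{\bsmooth(\ms')+1}\pRingH(-(\bsmooth(\ms')+1),0)}_{\abdegree-\bdegree_{i-1}-\mbf{b}}$; and part~(a) evaluates this as $(c_2+1)_+\min\bigl((c_1+1)_+,\ \sum_{\ms'}(c_1-\bsmooth(\ms'))_+\bigr)$, where $(c_1,c_2) := \abdegree - \bdegree_{i-1} - \mbf{b}$. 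Because $\bdegree_i = \bdegree_{i-1} + \bdegreev_i$ one has $c_1 = m - \degree_{i1}$, hence $c_1+1 = m - \degree_{i1} + 1$ and $c_1 - \bsmooth(\ms') = m - \degree_{i1} - \bsmooth(\ms')$, so the hypothesis of the corollary is \emph{exactly} the inequality $\sum_{\ms'}(c_1-\bsmooth(\ms'))_+ \geq (c_1+1)_+$ that forces the $\min$ to equal $(c_1+1)_+$, i.e.\ forces $\sum_{\ms'} l_{\ms'}^{\bsmooth(\ms')+1}\pRingH(-(\bsmooth(\ms')+1),0)$ to fill all of $\pRingH$ in bidegree $\abdegree - \bdegree_{i-1} - \mbf{b}$. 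Lifting this back through~(b) shows $\sum_{\ms'} l_{\ms'}^{\bsmooth(\ms')+1}\ideal[L]{}_{\pB{i-1}}(-\bideg{\ms'}-\mbf{b})$ equals $\ideal[L]{}_{\pB{i-1}}(-\mbf{b})$ in bidegree $\abdegree$; passing to the quotient by $\ideal[L]{}_{\pB{i}}(-\mbf{b})$ via~(c) then yields $\dimwp{\sum_{\ms'} l_{\ms'}^{\bsmooth(\ms')+1}\shiftideal{M}{\pB{i}}{-\bideg{\ms'}-\mbf{b}}}_\abdegree = \dimwp{\shiftideal{M}{\pB{i}}{-\mbf{b}}}_\abdegree$, which is the claimed equality.

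I expect the main obstacle to be bookkeeping rather than anything conceptual: one must carry all the twists ($\bideg{\ms}$, $\bideg{\ms'}$, $\mbf{\Delta p}_{\ms}$, $\bdegree_{i-1}$, $\bdegree_i$) that appear in~\eqref{eq:contributions_partial} and in Lemma~\ref{lem:new_relations} and align them precisely with the shifts $\mbf{b}$, $(\bsmooth(\ms')+1,0)$, $\bdegree_i$ in Proposition~\ref{prop:space_sum_dim}, and one must track the $\mbf{u}^{\bdegreev_i}$ factor hidden inside $\ideal[M]{}_{\pB{i}} = \ideal[L]{}_{\pB{i-1}}/\ideal[L]{}_{\pB{i}}$ so that the quotient in part~(c) genuinely collapses to $\shiftideal{M}{\pB{i}}{-\mbf{b}}_\abdegree$ once the numerator has filled $\ideal[L]{}_{\pB{i-1}}(-\mbf{b})_\abdegree$. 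The only step outside this module bookkeeping is the small geometric observation that the linear forms $l_{\ms'}$ are distinct, which follows from maximality of the segments and the fact that they all cross $\ms$.
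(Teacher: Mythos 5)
Your proposal is correct and follows exactly the route the paper intends: the paper's own proof is the one-line remark that the claim ``follows directly from Proposition \ref{prop:space_sum_dim} since $\degree_{(i-1)j} + \degreev_{ij} = \degree_{ij}$,'' and your argument is precisely the detailed unwinding of that remark --- reducing the set equality in Equation \eqref{eq:contributions_partial} to a dimension count via parts (a)--(c) of Proposition \ref{prop:space_sum_dim}, with the stated inequality forcing the $\min$ to saturate. Your added observations (distinctness of the linear forms $l_{\ms'}$ and the containment of the left-hand side in the right-hand side, so that equal dimensions give equality of modules) are correct and fill in steps the paper leaves implicit.
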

\begin{proof}
	The proof follows directly from Proposition \ref{prop:space_sum_dim} since $\degree_{(i-1)j} + \degreev_{ij} = \degree_{ij}$, $j = 1, 2$.
\end{proof}
Using the above definition, we define the set $\Theta_{\pB{i}}(\ms, \abdegree)$ as
\begin{equation}
\begin{split}
\Theta_{\pB{i}}(\ms,\abdegree) := \bigg\{ (\ms_1, \ms_2) \in &\IMSA{\pB{i}}{}\times\IMSA{\pB{i}}{}~:~ \ms_1 \cap \ms \neq \emptyset \neq \ms_2 \cap \ms\;,~\bsmooth(\ms_2) \geq \bsmooth(\ms_1),\\
&\exists \Upsilon \in \Upsilon_{\pB{i}}(\ms, \abdegree),~~ \forall (\ms_3, \cdot) \in \Upsilon,~~\ms_2 > \ms_1 > \ms_3
\bigg\}\;.
\end{split}
\label{eq:contributions_total}
\end{equation}
Given a maximal segment $\ms$ such that $\Upsilon_{\pB{i}}(\ms,\abdegree)$ is not empty, we can identify further contributions to the initial $\initial{D_{\pB{i},\abdegree}}$ from maximal segments that intersect $\ms$.
The next result elucidates our reasoning.

\vspace{-2\baselineskip}
\leavevmode{
	\begin{wrapfigure}[4]{r}{0.20\textwidth}
		\begin{tikzpicture}
		\coordinate (c0) at (0,0);
		\coordinate (c1) at (2,0);
		\coordinate (c2) at (0,2);
		\coordinate (c3) at (2,2);
		\draw[eThickness] (c0) -- (c1);
		\draw[eThickness] (c2) -- (c3);
		\draw[eThickness] (c0) -- (c2);
		\node[circle,fill=black,black,inner sep=0pt,minimum size=2pt] at (c0) {};
		\node[circle,fill=black,black,inner sep=0pt,minimum size=2pt] at (c2) {};
		\node[label={west:$\ms$}] at ($(c0)!0.5!(c2)$) {};
		\node[label={south:$\ms_1$}] at ($(c0)!0.5!(c1)$) {};
		\node[label={north:$\ms_2$}] at ($(c2)!0.5!(c3)$) {};
		\draw[->] ($(c0)!0.74!(c2)$) arc (-90:0:14pt) node [midway, right] {$\alpha_{\ms_2\ms\ms_1}$};
		\draw[<-] ($(c0)!0.26!(c1)$) arc (0:90:14pt) node [midway, right] {$\alpha_{\ms_1\ms\ms_2}$};
		\end{tikzpicture}
	\end{wrapfigure}
\begin{lemma}\label{lem:new_relations_transversal}
	For  bi-degree $\abdegree \in \ZZP^2$ and fixed maximal segment $\ms$, let $(\ms_1, \ms_2) \in \Theta_{\pB{i}}(\ms,\abdegree)$. Define
	\begin{equation*}
	\alpha_{\ms_1\ms\ms_2} := \begin{dcases}
	1\;, & \mbf{\Delta p}_\ms = \mbf{0}\;,\\
	0\;, & \mbf{\Delta p}_\ms \neq \mbf{0}\;,
	\end{dcases}\qquad
	\alpha_{\ms_2\ms\ms_1} := 1\;.
	\end{equation*}
	Then, we have
	\begin{equation*}
	\begin{split}
	&\msE{1}\alpha_{\ms_1\ms\ms_2}\Delta_{\ms} \shiftideal{M}{\pB{i}}{-\bideg{\ms_1}-\bideg{\ms}}_\abdegree \subset \initial{D_{\pB{i},\abdegree}}\;,\\
	&\msE{2}\alpha_{\ms_2\ms\ms_1}\Delta_{\ms} \shiftideal{M}{\pB{i}}{-\bideg{\ms_2}-\bideg{\ms}}_\abdegree \subset \initial{D_{\pB{i},\abdegree}}\;.
	\end{split}
	\end{equation*}
\end{lemma}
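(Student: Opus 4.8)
The plan is to produce, for each admissible value of the free polynomial $q$, an explicit element of $D_{\pB{i},\abdegree}$ whose initial with respect to $\xi_{\pB{i}}$ is precisely the $\msE{j}\Delta_\ms q$ we want; by the definition of $\initial{D_{\pB{i}}}$ (cf. Lemma \ref{lem:H0_idealComplex_initial}) this yields the two claimed inclusions. First I would fix, using $(\ms_1,\ms_2)\in\Theta_{\pB{i}}(\ms,\abdegree)$, a witness $\Upsilon=\{(\ms_3^{(k)},\ms_4^{(k)})\}_k\in\Upsilon_{\pB{i}}(\ms,\abdegree)$ with $\ms_2>\ms_1>\ms_3^{(k)}$ for all $k$ (each $\ms_3^{(k)}$ parallel to $\ms$, $\ms>\ms_3^{(k)}$, $\bsmooth(\ms)\ge\bsmooth(\ms_3^{(k)})$; each $\ms_4^{(k)}$ perpendicular to $\ms$). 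The generators of $D_{\pB{i}}$ I would work with are the two crossing-vertex generators at $\vertex_j:=\ms\cap\ms_j$, namely $(\msE{j}\Delta_\ms-\msE{}\Delta_{\ms_j})\shiftideal{M}{\pB{i}}{-\bideg{\ms}-\bideg{\ms_j}}\subset D_{\pB{i}}$ ($j=1,2$; recall $\bideg{\vertex_j}=\bideg{\ms}+\bideg{\ms_j}$), and, for each $k$, the augmented generator from Lemma \ref{lem:new_relations},
\begin{equation*}
\mbf{u}^{\mbf{\Delta p}_\ms}\Delta_{\ms_4^{(k)}}\bigl(\msE{}-\elem{\ms_3^{(k)}}\hat{\Delta}_k\bigr)\shiftideal{M}{\pB{i}}{-\bideg{\ms}-\bideg{\ms_4^{(k)}}-\mbf{\Delta p}_\ms}\subset D_{\pB{i}}\,.
\end{equation*}

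Next I would eliminate the $\msE{}$-component in bi-degree $\abdegree$. Given any $w\in\shiftideal{M}{\pB{i}}{-\bideg{\ms}-\mbf{\Delta p}_\ms}_\abdegree$, the defining equality of $\Upsilon_{\pB{i}}(\ms,\abdegree)$ in \eqref{eq:contributions_partial} lets me write $w=\sum_k\Delta_{\ms_4^{(k)}}r_k$ with $r_k\in\shiftideal{M}{\pB{i}}{-\bideg{\ms}-\bideg{\ms_4^{(k)}}-\mbf{\Delta p}_\ms}_\abdegree$; multiplying the $k$-th augmented generator by $r_k$ and summing over $k$ gives an element of $D_{\pB{i},\abdegree}$ of the form $\msE{}\,\mbf{u}^{\mbf{\Delta p}_\ms}w-\sum_k\elem{\ms_3^{(k)}}(\,\cdot\,)$. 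Hence modulo $D_{\pB{i}}$ the term $\msE{}\,\mbf{u}^{\mbf{\Delta p}_\ms}w$ equals a combination of $\elem{\ms_3^{(k)}}$-terms, all of index strictly below $\ms_1$.

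Then I would assemble the final elements. For the $\ms_2$-inclusion, fix $q\in\shiftideal{M}{\pB{i}}{-\bideg{\ms}-\bideg{\ms_2}}_\abdegree$; using $\bsmooth(\ms_2)\ge\bsmooth(\ms_1)$ together with the structure of the modules $\ideal[M]{}_{\pB{i}}$ — in particular that $uv$ annihilates them and the graded identities for multiplication by $u$ and by $v$ recorded just before Proposition \ref{prop:space_sum_dim} — I would choose $q_1\in\shiftideal{M}{\pB{i}}{-\bideg{\ms}-\bideg{\ms_1}}_\abdegree$ so that $\Delta_{\ms_2}q+\Delta_{\ms_1}q_1=\mbf{u}^{\mbf{\Delta p}_\ms}w$ for some $w$ (the point being that the lower-order factor $\Delta_{\ms_1}$ has enough room to absorb the part of $\Delta_{\ms_2}q$ not divisible by $\mbf{u}^{\mbf{\Delta p}_\ms}$ precisely because $\bsmooth(\ms_1)\le\bsmooth(\ms_2)$). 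Adding the two crossing-vertex generators scaled by $q$ and $q_1$ to the $\msE{}$-eliminating element of the previous step cancels every $\msE{}$-term, leaving an element of $D_{\pB{i},\abdegree}$ supported on $\ms_2,\ms_1$ and the $\ms_3^{(k)}$ with $\ms_2$-component $\Delta_\ms q$; since $\ms_2>\ms_1>\ms_3^{(k)}$, its initial is $\elem{\ms_2}\Delta_\ms q$, and letting $q$ vary gives the second inclusion. The $\ms_1$-inclusion is the same computation with $\ms_1$ in the role of $\ms_2$: if $\mbf{\Delta p}_\ms=\mbf{0}$ then $\alpha_{\ms_1\ms\ms_2}=1$ and $\mbf{u}^{\mbf{\Delta p}_\ms}=1$, so no divisibility and no $\ms_2$-correction are needed (apply the previous step with $w=\Delta_{\ms_1}q$ and add only the crossing-vertex generator at $\vertex_1$ scaled by $q$); if $\mbf{\Delta p}_\ms\ne\mbf{0}$ then $\alpha_{\ms_1\ms\ms_2}=0$ and the inclusion is vacuous.

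The hard part will be the bookkeeping in the third step — verifying that $\Delta_{\ms_2}q+\Delta_{\ms_1}q_1$ (and, in the $\mbf{\Delta p}_\ms=\mbf{0}$ case, $\Delta_{\ms_1}q$) can be arranged to lie in $\mbf{u}^{\mbf{\Delta p}_\ms}\shiftideal{M}{\pB{i}}{-\bideg{\ms}-\mbf{\Delta p}_\ms}$ inside the quotient module $\shiftideal{M}{\pB{i}}{-\bideg{\ms}}$ in bi-degree $\abdegree$. This is exactly where the hypothesis $\bsmooth(\ms_2)\ge\bsmooth(\ms_1)$ and the chosen form of $\bdegreev_i$ from \eqref{eq:deg_def_diff} enter (which is why the remark following Definition \ref{def:activeTmesh} flags this lemma), and it requires a careful description of $\shiftideal{M}{\pB{i}}{-\bideg{\ms}}$ as a shifted copy of $\pRingH/(\mbf{u}^{\bdegreev_i})$ and use of the module identities cited above. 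Everything else is routine manipulation of the generators of $D_{\pB{i}}$ and the order $\xi_{\pB{i}}$.
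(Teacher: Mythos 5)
Your proposal is correct and takes essentially the same route as the paper: your $\msE{}$-eliminating element in step 2 is exactly the paper's relation $g_0$ built from Lemma \ref{lem:new_relations} and the witness $\Upsilon$, and your choice of $q_1$ making $\Delta_{\ms_2}q+\Delta_{\ms_1}q_1$ divisible by $\mbf{u}^{\mbf{\Delta p}_\ms}$ is precisely the paper's combination $g_3=g_2-g_1\hat{\Delta}$ with $q_1=-\hat{\Delta}q$. The divisibility bookkeeping you flag as the hard part is resolved in the paper exactly as in the proof of Lemma \ref{lem:new_relations}, by writing $\Delta_{\ms_2}=\Delta_{\ms_1}\hat{\Delta}+\Delta'$ with $\Delta'$ a multiple of the relevant homogenizing variable.
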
}
\begin{proof}
	From Lemma \ref{lem:new_relations}, there exists $\Upsilon \in \Upsilon_{\pB{i}}(\ms, \abdegree)$ and polynomials $\eta_{\ms_3}$ such that
	\begin{equation*}
	g_0 = \msE{}\mbf{u}^{\mbf{\Delta p}_\ms}\ideal[M]{}_{\pB{i}}(-\bideg{\ms}-\mbf{\Delta p}_\ms)_\abdegree
	- \sum_{(\ms_3, \ms_4) \in \Upsilon} \elem{\ms_3} \mbf{u}^{\mbf{\Delta p}_\ms}\eta_{\ms_3} \Delta_{\ms_4}\ideal[M]{}_{\pB{i}}(-\bideg{\ms_4}-\bideg{\ms}-\mbf{\Delta p}_\ms)_{\abdegree}
	\end{equation*}
	is a relation in ${D_{\pB{i},\abdegree}}$.
	Then, as in the proof of Lemma \ref{lem:new_relations}, consider the relations between $\msE{1}$ and $\msE{}$, and between $\msE{2}$ and $\msE{}$; denote these with $g_1$ and $g_2$, respectively.
	Thereafter, if $\mbf{\Delta p}_\ms = \mbf{0}$, eliminate $\msE{}$ from both $g_1$ and $g_2$ using $g_0$ to get new relations $\overline{g}_1$ and $\overline{g}_2$; the claim will follow since $\ms_1, \ms_2 > \ms_3$ for all $(\ms_3, \cdot) \in \Upsilon$.
	
	If, on the other hand, $\mbf{\Delta p}_\ms \neq \mbf{0}$, combine $g_1$ and $g_2$ to get a new relation $g_3 = g_2 - g_1 \hat{\Delta}$.
	For an appropriate choice of $\hat{\Delta}$ (as in the proof of Lemma \ref{lem:new_relations}), it will be possible to eliminate $\msE{}$ from $g_3$, and the initial of $g_3$ will thus be a part of the contribution that $\msE{2}$ makes toward $\initial{D_{\pB{i},\abdegree}}$.
\end{proof}

Finally, we can toss in all the new generators identified in Lemmas \ref{lem:new_relations} and \ref{lem:new_relations_transversal} with the original set of generators in Equation \eqref{eq:H0_denominator}, and define
\begin{equation}
\ideal[D]{}_{\pB{i}, \abdegree}^{\ms} :=
\begin{array}{l}
\sum_{\ms' \in \Gamma_{\pB{i}}(\ms) } \Delta_{\ms'}\shiftideal{M}{\pB{i}}{-\bideg{\ms}-\bideg{\ms'}}_\abdegree\\
+ \sum_{(\cdot, \ms_1) \in \Upsilon_{\pB{i}}(\ms) } \mbf{u}^{\mbf{\Delta p}_\ms} \Delta_{\ms_1}\shiftideal{M}{\pB{i}}{-\bideg{\ms}-\bideg{\ms_1}-\mbf{\Delta p}_\ms}_\abdegree\\
+ \sum_{(\ms, \ms_2) \in \Theta_{\pB{i}}(\ms_3, \abdegree) } \alpha_{\ms\ms_3\ms_2}\Delta_{\ms_3} \shiftideal{M}{\pB{i}}{-\bideg{\ms}-\bideg{\ms_3}}_\abdegree\\
+ \sum_{(\ms_4, \ms) \in \Theta_{\pB{i}}(\ms_5, \abdegree) } \alpha_{\ms_4\ms_5\ms}\Delta_{\ms_5} \shiftideal{M}{\pB{i}}{-\bideg{\ms}-\bideg{\ms_5}}_\abdegree\;.
\end{array}
\label{eq:ms_contribution}
\end{equation}
This is the contribution of $\msE{}$ toward the initial of $D_{\pB{i}}$ corresponding to the generators identified in Equation \eqref{eq:H0_denominator} and Lemmas \ref{lem:new_relations} and \ref{lem:new_relations_transversal}.
Then, we can use the above definition and Lemma \ref{lem:H0_idealComplex_initial} to provide an upper bound on the dimension of $H_0(\degBa{\idealComplex}{i})$ that improves upon the one presented in Proposition \ref{prop:H0_idealComplex_upperBound}.
\begin{corollary}\label{cor:H0_idealComplex_upperBound_new_relations}
	\begin{equation*}
	\begin{split}
	\dimwp{H_0(\degBa{\idealComplex}{i})}_\abdegree &\leq 
	\sum_{\ms \in \IMSA{\pB{i}}{}} 
	\dimwp{\shiftideal{M}{\pB{i}}{-\bideg{\ms}}}_\abdegree
	- \dimwp{\ideal[D]{}_{\pB{i},\abdegree}^{\ms}}\;.
	\end{split}
	\end{equation*}
\end{corollary}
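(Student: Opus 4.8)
The plan is to read off the bound from the presentation $H_0(\degBa{\idealComplex}{i}) \isomorphic M_{\pB{i}}/\initial{D_{\pB{i}}}$ of Lemma \ref{lem:H0_idealComplex_initial} after decomposing $\initial{D_{\pB{i}}}$ according to the maximal segments. First I would observe that $M_{\pB{i}} = \moplus_{\ms \in \IMSA{\pB{i}}{}} \msE{}\shiftideal{M}{\pB{i}}{-\bideg{\ms}}$ is graded by the finite set $\IMSA{\pB{i}}{}$ (with the ring $\pRingH$ in degree $0$), and that, by the very definition of the initial, each $\initial{p}$ with $p \in D_{\pB{i}}$ is supported on a single summand $\msE{}\shiftideal{M}{\pB{i}}{-\bideg{\ms}}$. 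Hence $\initial{D_{\pB{i}}}$ is a graded submodule and splits as $\moplus_{\ms}(\initial{D_{\pB{i}}})^{\ms}$ with $(\initial{D_{\pB{i}}})^{\ms} \subseteq \msE{}\shiftideal{M}{\pB{i}}{-\bideg{\ms}}$. Passing to the $\abdegree$-graded piece of Lemma \ref{lem:H0_idealComplex_initial} then gives
\begin{equation*}
	\dimwp{H_0(\degBa{\idealComplex}{i})}_\abdegree = \sum_{\ms \in \IMSA{\pB{i}}{}}\left(\dimwp{\shiftideal{M}{\pB{i}}{-\bideg{\ms}}}_\abdegree - \dimwp{(\initial{D_{\pB{i}}})^{\ms}}_\abdegree\right)\;,
\end{equation*}
so it suffices to prove $\ideal[D]{}_{\pB{i},\abdegree}^{\ms} \subseteq (\initial{D_{\pB{i}}})^{\ms}_{\abdegree}$ for each $\ms \in \IMSA{\pB{i}}{}$, which forces $\dimwp{\ideal[D]{}_{\pB{i},\abdegree}^{\ms}} \leq \dimwp{(\initial{D_{\pB{i}}})^{\ms}}_\abdegree$ and hence the stated inequality.

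For the inclusion I would go through the four families of submodules on the right of Equation \eqref{eq:ms_contribution} and exhibit, for each, an element of $D_{\pB{i}}$ whose initial lies in the $\ms$-summand and realizes it. The first family, indexed by $\ms' \in \Gamma_{\pB{i}}(\ms)$, is exactly what was extracted in the proof of Proposition \ref{prop:H0_idealComplex_upperBound}: for $\ms' \in \Gamma_{\pB{i}}(\ms)$ either $\ms > \ms'$, so the initial of the crossing-vertex relation $\elem{\ms}\Delta_{\ms'} - \elem{\ms'}\Delta_{\ms}$ (times a shift of $\ideal[M]{}_{\pB{i}}$) is its $\msE{}$-term, or $\ms'$ meets $\domainBnd$, in which case $\elem{\ms'}$ was killed in forming $M_{\pB{i}}$ (cf.\ the proof of Proposition \ref{prop:H0_idealComplex_ms}) and the relation is already supported on $\ms$; either way this contributes $\Delta_{\ms'}\shiftideal{M}{\pB{i}}{-\bideg{\ms}-\bideg{\ms'}}_\abdegree$. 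The second family, indexed by $(\cdot,\ms_1) \in \Upsilon_{\pB{i}}(\ms)$, is the set of new relations produced by Lemma \ref{lem:new_relations} (with $\ms_1$ in the role of the perpendicular segment there): that lemma puts $\mbf{u}^{\mbf{\Delta p}_\ms}\Delta_{\ms_1}\left(\msE{} - \elem{\ms_1}\hat{\Delta}\right)\shiftideal{M}{\pB{i}}{-\bideg{\ms}-\bideg{\ms_1}-\mbf{\Delta p}_\ms}$ inside $D_{\pB{i}}$, and since $\ms > \ms_1$ its initial is the $\msE{}$-term. The third and fourth families come directly from Lemma \ref{lem:new_relations_transversal}: taking $\ms$ in the role of $\ms_1$ in $\Theta_{\pB{i}}(\ms_3,\abdegree)$ gives the third line and taking $\ms$ in the role of $\ms_2$ in $\Theta_{\pB{i}}(\ms_5,\abdegree)$ gives the fourth, with the coefficients $\alpha$ as defined in that lemma, and that lemma already certifies these to lie in $\initial{D_{\pB{i},\abdegree}}$ on the $\ms$-summand. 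Since $\ideal[D]{}_{\pB{i},\abdegree}^{\ms}$ is by Equation \eqref{eq:ms_contribution} exactly the submodule generated by these four families in bi-degree $\abdegree$, the inclusion $\ideal[D]{}_{\pB{i},\abdegree}^{\ms} \subseteq (\initial{D_{\pB{i}}})^{\ms}_{\abdegree}$ follows; summing over $\ms$ and substituting into the displayed identity completes the argument.

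I expect the genuinely delicate point to be the bookkeeping in the previous paragraph: one has to check segment by segment that \emph{every} generator listed in $\ideal[D]{}_{\pB{i},\abdegree}^{\ms}$ truly has its leading term in the $\ms$-position, which is precisely where the strict inequalities $\ms > \ms'$ in the definitions of $\Gamma_{\pB{i}}$ and $\Upsilon_{\pB{i}}$, the chain $\ms_2 > \ms_1 > \ms_3$ built into $\Theta_{\pB{i}}$, and the vanishing of $\elem{\ms'}$ for boundary-touching segments are all used, and where one must be careful not to double-count a relation across two different $\ms$-summands or to use it with an inconsistent sign. Everything else reduces to the already-proven Lemmas \ref{lem:H0_idealComplex_initial}, \ref{lem:new_relations} and \ref{lem:new_relations_transversal}.
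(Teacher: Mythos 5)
Your proof is correct and follows exactly the route the paper intends: the paper states this corollary without a separate proof because it is the argument of Proposition \ref{prop:H0_idealComplex_upperBound} (via Lemma \ref{lem:H0_idealComplex_initial}) rerun with the enlarged set of generators collected in Equation \eqref{eq:ms_contribution}, whose membership in $\initial{D_{\pB{i},\abdegree}}$ on the $\ms$-summand is certified by Lemmas \ref{lem:new_relations} and \ref{lem:new_relations_transversal}. Your decomposition of $\initial{D_{\pB{i}}}$ into its $\ms$-components and the segment-by-segment verification of the four families is precisely the bookkeeping the paper leaves implicit.
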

%
\section{Configurations with stable dimension}\label{sec:stable_dimension}
In this section, we outline sufficient conditions that guarantee that the dimension of the spline space $\splSpaceH^\bsmooth_{\bdegree, \abdegree}$ can be computed exactly.
We will work in a setting where Configuration \ref{config:practical_smoothness} is true, i.e., where the bounds from Theorems \ref{thm:lower_bound_special} and \ref{thm:upper_bound_general} hold.
Note that supplementary Macaulay2 scripts accompanying some of the examples presented in this section and the next can be downloaded from \cite{M2Scripts}.
\begin{assumption}
	For the rest of this section, we assume that Configuration \ref{config:practical_smoothness} holds, i.e., the lower and upper bounds from Corollary \ref{thm:lower_bound_special} and Theorem \ref{thm:upper_bound_general} hold.
\end{assumption}
\begin{theorem}\label{thm:homo_dim_zero}
	For all $i$, if $\dimwp{H_0\left(\degBa{\idealComplex}{i}\right)}_\abdegree = \dimwp{H_0\left(\degBa{\constantComplex}{i}\right)}_\abdegree$, then $\homoDim^\bsmooth_{\bdegree, \abdegree} = 0$, i.e.,
	\begin{equation*}
		\dimwp{\splSpaceH^\bsmooth_{\bdegree}}_{\abdegree} = \euler{\quotientComplex}_\abdegree\;.
	\end{equation*}
\end{theorem}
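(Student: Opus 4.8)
The plan is to sandwich $\dimwp{\splSpaceH^\bsmooth_\bdegree}_\abdegree$ between the lower bound of Theorem~\ref{thm:lower_bound_special} and the upper bound of Theorem~\ref{thm:upper_bound_general}, both of which are available since Configuration~\ref{config:practical_smoothness} is assumed throughout this section. Theorem~\ref{thm:lower_bound_special} already gives $\dimwp{\splSpaceH^\bsmooth_\bdegree}_\abdegree \geq \euler{\quotientComplex}_\abdegree$, so the whole problem reduces to establishing the matching inequality $\dimwp{\splSpaceH^\bsmooth_\bdegree}_\abdegree \leq \euler{\quotientComplex}_\abdegree$, after which Definition~\ref{def:homological_contribution} immediately yields $\homoDim^\bsmooth_{\bdegree, \abdegree} = 0$.

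For the upper inequality I would start from Theorem~\ref{thm:upper_bound_general} and rewrite its correction term using Proposition~\ref{prop:H0_constantComplex}, which gives $\nComponent{\pB{i}}\dimwp{\ideal[M]{}_{\pB{i}}}_\abdegree = \dimwp{H_0(\degBa{\constantComplex}{i})}_\abdegree$ for every $i$ (in particular for $i = \nlevels+1$ both sides vanish, since $\domain$ being connected with non-empty boundary forces $\nComponent{\pB{\nlevels+1}} = 0$). The upper bound then reads
\[
	\dimwp{\splSpaceH^\bsmooth_\bdegree}_\abdegree \;\leq\; \euler{\quotientComplex}_\abdegree + \sum_{i=1}^{\nlevels+1}\left(\dimwp{H_0(\degBa{\idealComplex}{i})}_\abdegree - \dimwp{H_0(\degBa{\constantComplex}{i})}_\abdegree\right),
\]
and by the hypothesis every summand on the right is zero, so $\dimwp{\splSpaceH^\bsmooth_\bdegree}_\abdegree \leq \euler{\quotientComplex}_\abdegree$; combining with the lower bound gives the equality.

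A more structural way to see the same cancellation, which I would use to double-check the bookkeeping, is to work directly from Proposition~\ref{prop:homo_dim}: under Configuration~\ref{config:practical_smoothness} the morphism $H_0(\degBa{\idealComplex}{i})_\abdegree \to H_0(\degBa{\constantComplex}{i})_\abdegree$ in the four-term exact sequence appearing in the proof of Proposition~\ref{prop:homo_dim} is surjective (by Proposition~\ref{prop:H0_idealComplex_lower_bound}(a) for $1 \leq i \leq \nlevels$, and trivially for $i = \nlevels+1$ where the target vanishes); combined with the dimension hypothesis this forces it to be an isomorphism, so its kernel $H_1(\degBa{\quotientComplex}{i})_\abdegree$ vanishes, and since $\imwp{\hat{\boundary}_i}$ embeds into $H_1(\degBa{\quotientComplex}{i})$ via the long exact sequence~\eqref{eq:long_ex_seq_homology}, the term $\dimwp{\imwp{\hat{\boundary}_i}}_\abdegree$ is $0$; then all three contributions in the $i$-th summand of Proposition~\ref{prop:homo_dim} cancel. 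I do not anticipate a genuine obstacle here, since the real work is already done in Theorems~\ref{thm:lower_bound_special} and~\ref{thm:upper_bound_general}; the only point needing a little care is the degenerate index $i = \nlevels+1$, where every relevant quantity collapses to $0$.
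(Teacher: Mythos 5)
Your proposal is correct and follows exactly the route the paper intends (the proof is left implicit there as an immediate consequence of Theorems \ref{thm:lower_bound_special} and \ref{thm:upper_bound_general} together with Proposition \ref{prop:H0_constantComplex}): the lower bound gives $\geq$, and substituting $\nComponent{\pB{i}}\dimwp{\ideal[M]{}_{\pB{i}}}_\abdegree = \dimwp{H_0(\degBa{\constantComplex}{i})}_\abdegree$ into the upper bound makes the correction term vanish under the hypothesis. Your secondary check via Proposition \ref{prop:homo_dim}, including the handling of the degenerate index $i=\nlevels+1$, is also sound.
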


As per Theorem \ref{thm:homo_dim_zero}, if the upper bound in Corollary \ref{cor:H0_idealComplex_upperBound_new_relations} equals $\dimwp{H_0\left(\degBa{\constantComplex}{i}\right)}_\abdegree$ then the dimension of the spline space can be exactly determined.
Before stating sufficient conditions on $\tmesh, \bdegree, \bsmooth, \abdegree$ that make this possible, let us first consider an example where we explicitly use the results from the previous section to compute the spline space dimension.

\begin{figure}[t]
	\centering
	\tikzsetnextfilename{./tikz/images/tmesh_type11}%
	\begin{tikzpicture}
	 \begin{scope}
		\node (v0) at (0.0000000000000000,0.0000000000000000) {$\vertex_{0}$};
		\node (v1) at (4.0000000000000000,0.0000000000000000) {$\vertex_{1}$};
		\node (v2) at (4.0000000000000000,4.0000000000000000) {$\vertex_{2}$};
		\node (v3) at (0.0000000000000000,4.0000000000000000) {$\vertex_{3}$};
		\node (v4) at (1.3333333333333333,0.0000000000000000) {$\vertex_{4}$};
		\node (v5) at (1.3333333333333333,4.0000000000000000) {$\vertex_{5}$};
		\node (v6) at (1.3333333333333333,2.6666666666666665) {$\vertex_{6}$};
		\node (v7) at (4.0000000000000000,2.6666666666666665) {$\vertex_{7}$};
		\node (v8) at (2.6666666666666665,0.0000000000000000) {$\vertex_{8}$};
		\node (v9) at (2.6666666666666665,2.6666666666666665) {$\vertex_{9}$};
		\node (v10) at (0.0000000000000000,1.3333333333333333) {$\vertex_{10}$};
		\node (v11) at (1.3333333333333333,1.3333333333333333) {$\vertex_{11}$};
		\node (v12) at (2.6666666666666665,1.3333333333333333) {$\vertex_{12}$};

		 \draw[step=1,eThickness] (v4.center) -- (v11.center) -- (v10.center) -- (v0.center) -- cycle;
		 \draw[step=1,eThickness] (v8.center) -- (v12.center) -- (v11.center) -- (v4.center) -- cycle;
		 \draw[step=1,eThickness] (v9.center) -- (v7.center) -- (v2.center) -- (v5.center) -- (v6.center) -- cycle;
		 \draw[step=1,eThickness] (v1.center) -- (v7.center) -- (v9.center) -- (v12.center) -- (v8.center) -- cycle;
		 \draw[step=1,eThickness] (v11.center) -- (v6.center) -- (v5.center) -- (v3.center) -- (v10.center) -- cycle;
		 \draw[step=1,eThickness] (v12.center) -- (v9.center) -- (v6.center) -- (v11.center) -- cycle;
	 \end{scope}
\end{tikzpicture}%

	\caption{The T-mesh used in Example \ref{ex:using_new_relations} to explicitly demonstrate how results from Section 3 can be used to find the dimension of $\splSpaceH^\bsmooth_{\bdegree, \abdegree}$.}
	\label{fig:using_new_relations}
\end{figure}
\begin{example}\label{ex:using_new_relations}
	Consider the T-mesh shown in Figure \ref{fig:using_new_relations} and let $\bsmooth(\edge) = 1$ for all interior edges. Let us consider two different degree deficit distributions on this mesh and find the dimension of the resulting spline space. In all of the following cases,
	the bi-smoothness for each maximal segment is simply $\bideg{\ms} = (2, 0)$ or $(0, 2)$. We will also use the following fact that is implied by Proposition \ref{prop:space_sum_dim} for real numbers $a_1 \neq a_2$ and $\abdegree \geq (3, 3)$,
	\begin{equation*}
		\begin{split}
			\dimwp{\shiftideal{M}{\pB{i}}{0, -2}}_\abdegree &= \dimwp{(s+a_1u)^2\shiftideal{M}{\pB{i}}{ -2, -2} + (s+a_2u)^2\shiftideal{M}{\pB{i}}{ -2, -2}}_\abdegree\;,\\
			\dimwp{\shiftideal{M}{\pB{i}}{-2, 0}}_\abdegree &= \dimwp{(t+a_1v)^2\shiftideal{M}{\pB{i}}{ -2, -2} + (t+a_2v)^2\shiftideal{M}{\pB{i}}{ -2, -2}}_\abdegree\;.
		\end{split}
		\tag*{($\star$)}
	\end{equation*}
	\begin{enumerate}[label=(\alph*)]
		\item Let $\bdegree(\face) = (1, 1)$ for all faces $\face \in \tmesh$ except for the face bounded by vertices $\vertex_4,\vertex_8,\vertex_{12},\vertex_{11}$; on the latter face the degree deficit is chosen to be $(0,0)$. Let us choose the associated degree-deficit sequence as $\bdegree_0 = (0,0) < \bdegree_1 = (1, 1)$ so that $\nlevels = 1$. We will choose $\abdegree = (3, 3)$. The following results follow on the different active meshes $\tmeshComponent{\pB{i}}{}$, $1 \leq i \leq \nlevels+1 = 2$.
		\begin{itemize}
			\item $[i = 2]$: From Proposition \ref{prop:H0_constantComplex}, $\dimwp{H_0\left(\degBa{\constantComplex}{i}\right)}_\abdegree = 0$. Furthermore, from Proposition \ref{prop:H0_idealComplex_ms}, there are no interior maximal segments therefore $\dimwp{H_0\left(\degBa{\idealComplex}{i}\right)}_\abdegree = 0$.
			\item $[i = 1]$: From Proposition \ref{prop:H0_constantComplex}, $\dimwp{H_0\left(\degBa{\constantComplex}{i}\right)}_\abdegree = 0$. Furthermore, there is a single active interior maximal segment $\ms = \vertex_{11}\vertex_{12}$, and the non-interior maximal segments $\ms_1 = \vertex_4\vertex_{11}$ and $\ms_2 = \vertex_8\vertex_{12}$ intersect it.
			Therefore, $\ms_1, \ms_2 \in \Gamma_{\pB{i}}(\ms)$ and from Proposition \ref{prop:H0_idealComplex_upperBound} and Equation $(\star)$ above, we get
			\begin{equation*}
				\dimwp{H_0\left(\degBa{\idealComplex}{i}\right)}_\abdegree \leq 
				\dimwp{\shiftideal{M}{\pB{i}}{0, -2}}_\abdegree - \dimwp{\sum_{\ms' \in \{\ms_1, \ms_2\}} \Delta_{\ms'}\shiftideal{M}{\pB{i}}{ -2, -2}}_\abdegree = 0\;.
			\end{equation*}
		\end{itemize}
		From the above we can see that Theorem \ref{thm:homo_dim_zero} applies. Thus, the dimension of $\splSpaceH^\bsmooth_{\bdegree, \abdegree}$ can be determined to be exactly $17$.
		It can be observed using the Macaulay2 script \cite[ex0a.m2]{M2Scripts} that $\abdegree = (3, 3)$ is also the smallest degree for which we get an increase in the dimension because of the non-uniformity in polynomial degrees.
		
		\item Let $\bdegree(\face) = (1, 1)$ for all faces $\face \in \tmesh$ except for the face bounded by vertices $\vertex_{6},\vertex_9,\vertex_{12},\vertex_{11}$; on the latter face the degree deficit is chosen to be $(0,0)$. Let us choose the associated degree-deficit sequence as $\bdegree_0 = (0,0) < \bdegree_1 = (1, 1)$ so that $\nlevels = 1$. We will choose $\abdegree = (4, 4)$. The following results follow on the different active meshes $\tmeshComponent{\pB{i}}{}$, $1 \leq i \leq \nlevels+1 = 2$.
		\begin{itemize}
			\item $[i = 2]$: From Proposition \ref{prop:H0_constantComplex}, $\dimwp{H_0\left(\degBa{\constantComplex}{i}\right)}_\abdegree = 0$. Furthermore, from Proposition \ref{prop:H0_idealComplex_ms}, there are no interior maximal segments therefore $\dimwp{H_0\left(\degBa{\idealComplex}{i}\right)}_\abdegree = 0$.
			
			\item $[i = 1]$: From Proposition \ref{prop:H0_constantComplex}, $\dimwp{H_0\left(\degBa{\constantComplex}{i}\right)}_\abdegree = 9$. In this case, there are $4$ active maximal segments $\ms_1 = \vertex_{11}\vertex_{12}$, $\ms_2 = \vertex_{12}\vertex_{9}$, $\ms_3 = \vertex_{9}\vertex_{6}$, $\ms_1 = \vertex_{6}\vertex_{11}$.
			Let us order these maximal segments as $\ms_3 > \ms_2 > \ms_4 > \ms_1$.
			Therefore,
			\begin{equation*}
				\Gamma_{\pB{i}}(\ms_1) = \emptyset\;,\quad
				\Gamma_{\pB{i}}(\ms_2) = \Gamma_{\pB{i}}(\ms_4) = \{\ms_1\}\;,\quad
				\Gamma_{\pB{i}}(\ms_3) = \{\ms_2, \ms_4\}\;.
			\end{equation*}
			Once again, from Proposition \ref{prop:H0_idealComplex_upperBound} and Equation $(\star)$, there is no contribution to $\dimwp{H_0\left(\degBa{\idealComplex}{i}\right)}_\abdegree$ from $\ms_3$.
			Furthermore, from Lemma \ref{lem:new_relations} and Corollary \ref{cor:new_relations_weight}, we can also verify that $\mbf{\Delta p}_{\ms_3} = (1, 0)$ and $\Upsilon_{\pB{i}}(\ms_3, \abdegree) = \{ \{(\ms_1, \ms_2), (\ms_1,\ms_4) \} \}$. Therefore, from Lemma \ref{lem:new_relations_transversal}, we can state that,
			\begin{equation*}
				\msE{2}\Delta_{\ms_3} \shiftideal{M}{\pB{i}}{-2, -2}_\abdegree \subset \initial{D_{\pB{i}}}\;.
			\end{equation*}
			However, since $\ms_1 \in \Gamma_{\pB{i}}(\ms_2)$, we also have the containment
			\begin{equation*}
				\msE{2}\Delta_{\ms_1} \shiftideal{M}{\pB{i}}{-2, -2}_\abdegree \subset \initial{D_{\pB{i}}}\;.
			\end{equation*}
			Therefore, again from Equation $(\star)$, there is no contribution to $\dimwp{H_0\left(\degBa{\idealComplex}{i}\right)}_\abdegree$ from $\ms_2$.
			Thus, the only contributions to the upper bound in Proposition \ref{prop:H0_idealComplex_upperBound} come from $\ms_1$ and $\ms_4$,
			\begin{equation*}
			\begin{split}
				\dimwp{H_0\left(\degBa{\idealComplex}{i}\right)}_\abdegree &\leq
				\dimwp{\msE{1}\shiftideal{M}{\pB{i}}{0, -2}}_\abdegree + \dimwp{\msE{4}\shiftideal{M}{\pB{i}}{-2, 0}}_\abdegree\\
				&\qquad\qquad\qquad - \dimwp{\msE{4}\Delta_{\ms_1}\shiftideal{M}{\pB{i}}{-2, -2}}_\abdegree\\
				& = 7 + 7 - 5 = 9 = \dimwp{H_0\left(\degBa{\constantComplex}{i}\right)}_\abdegree\;.
			\end{split}
			\end{equation*}
			Thus, $\dimwp{H_0\left(\degBa{\idealComplex}{i}\right)}_\abdegree = \dimwp{H_0\left(\degBa{\constantComplex}{i}\right)}_\abdegree$.
		\end{itemize}
		From the above we can see that Theorem \ref{thm:homo_dim_zero} applies. Thus, the dimension of $\splSpaceH^\bsmooth_{\bdegree, \abdegree}$ can be found to be $41$.
		As in part (a), it can be observed using the Macaulay2 script \cite[ex0b.m2]{M2Scripts} that $\abdegree = (4, 4)$ is again the smallest degree for which we get an increase in the dimension because of the non-uniformity in polynomial degrees.
	\end{enumerate}
\end{example}

Following Equation \eqref{eq:ms_contribution}, let us define the set $\Lambda_{\pB{i}}(\ms, \abdegree)$ to be
\begin{equation}
\begin{split}
	\Lambda_{\pB{i}}(\ms, \abdegree) := \bigg\{ \ms'~:~ &\ms' \in \Gamma_{\pB{i}}(\ms)\;, \text{~or~}
	(\cdot, \ms) \in \Theta_{\pB{i}}(\ms', \abdegree)\;, \text{~or~}\\
	&(\ms, \ms'') \in \Theta_{\pB{i}}(\ms', \abdegree)\text{~and~}\alpha_{\ms\ms'\ms''} = 1
	\bigg\}\;.
\end{split}
\label{eq:full_transversal_contribution}
\end{equation}

\begin{definition}[Maximal-segment weights]\label{def:max_seg_wt}
	For $\ms \in \IMSA{\pB{i}}{}$ and $\abdegree = (m, m') \in \ZZP^2$, the weight of $\ms$ is denoted by $\weight{i}{\ms}{\abdegree}$, and it is defined to be
	\begin{equation*}
	\weight{i}{\ms}{\abdegree} := 
	\begin{dcases}
		\sum_{\ms' \in \Lambda_{[i]} (\ms,\abdegree)} (m - \degree_{(i-1)1}-\bsmooth(\ms'))_+\;, & \ms \in \MSH{\pB{i}}{}\;,\\
		\sum_{\ms' \in \Lambda_{[i]} (\ms,\abdegree)} (m' - \degree_{(i-1)2}-\bsmooth(\ms'))_+\;, & \ms \in \MSV{\pB{i}}{}\;.
	\end{dcases}
	\end{equation*}
\end{definition}

\begin{lemma}\label{lem:suff_ms_wts}
	For $\abdegree = (m, m') \in \ZZP^2$,
	\begin{equation*}
		\weight{i}{\ms}{\abdegree} \geq
		\begin{dcases}
			m - \degree_{(i-1)1}+1\;, & \ms \in \MSH{\pB{i}}{}\\
			m' - \degree_{(i-1)2}+1\;, & \ms \in \MSV{\pB{i}}{}
		\end{dcases}
		\Rightarrow
		\ideal[D]{}_{\pB{i},\abdegree}^{\ms} =
		\ideal[M]{}_{\pB{i}}(-\bideg{\ms})_{\abdegree}
		\;.
	\end{equation*}
\end{lemma}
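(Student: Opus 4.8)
The plan is to prove the two inclusions $\ideal[D]{}_{\pB{i},\abdegree}^{\ms}\subseteq\shiftideal{M}{\pB{i}}{-\bideg{\ms}}_\abdegree$ (which is automatic) and $\shiftideal{M}{\pB{i}}{-\bideg{\ms}}_\abdegree\subseteq\ideal[D]{}_{\pB{i},\abdegree}^{\ms}$ (which is where the weight hypothesis is used), the second by a dimension count. Assume without loss of generality that $\ms$ is horizontal, so $\bideg{\ms}=(0,\bsmooth(\ms)+1)$ and the hypothesis reads $\weight{i}{\ms}{\abdegree}\geq m-\degree_{(i-1)1}+1$; the vertical case is symmetric. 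The inclusion ``$\subseteq$'' is immediate from Equation~\eqref{eq:ms_contribution}: every listed generator is the image of an element of a shifted copy $\shiftideal{M}{\pB{i}}{-\bideg{\ms}-\mbf{c}}$ under multiplication by a bi-homogeneous polynomial of bi-degree $\mbf{c}$ (one of $\Delta_{\ms'}$, $\mbf{u}^{\mbf{\Delta p}_\ms}\Delta_{\ms'}$, $\Delta_{\ms_3}$, $\Delta_{\ms_5}$), which lands inside $\shiftideal{M}{\pB{i}}{-\bideg{\ms}}$.

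For the reverse inclusion it suffices to exhibit a submodule $G\subseteq\ideal[D]{}_{\pB{i},\abdegree}^{\ms}$ with $\dimwp{G}=\dimwp{\shiftideal{M}{\pB{i}}{-\bideg{\ms}}}_\abdegree$. I would take
\[
G:=\sum_{\ms'\in\Lambda_{\pB{i}}(\ms,\abdegree)}\Delta_{\ms'}\,\shiftideal{M}{\pB{i}}{-\bideg{\ms}-\bideg{\ms'}}_\abdegree\,.
\]
The first step is to check $G\subseteq\ideal[D]{}_{\pB{i},\abdegree}^{\ms}$: unwinding the definition of $\Lambda_{\pB{i}}(\ms,\abdegree)$ in Equation~\eqref{eq:full_transversal_contribution}, each $\ms'\in\Lambda_{\pB{i}}(\ms,\abdegree)$ contributes its full term $\Delta_{\ms'}\shiftideal{M}{\pB{i}}{-\bideg{\ms}-\bideg{\ms'}}_\abdegree$ to one of the three \emph{prefactor-free} summands of Equation~\eqref{eq:ms_contribution} --- the $\Gamma_{\pB{i}}(\ms)$-summand when $\ms'\in\Gamma_{\pB{i}}(\ms)$, and one of the two $\Theta_{\pB{i}}(\cdot,\abdegree)$-summands with $\alpha$-coefficient equal to $1$ otherwise, these being exactly the relations manufactured in Lemmas~\ref{lem:new_relations} and~\ref{lem:new_relations_transversal}. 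A further observation needed later: the elements of $\Lambda_{\pB{i}}(\ms,\abdegree)$ are vertical maximal segments all crossing the horizontal segment $\ms$, hence lie on pairwise distinct vertical lines, so we may write $\Delta_{\ms'}=(s-a_{\ms'}u)^{\bsmooth(\ms')+1}$ with the $a_{\ms'}\in\RR$ pairwise distinct.

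The second step is the dimension count for $G$, which is a bundled application of Proposition~\ref{prop:space_sum_dim} (used in its symmetric, $s-au$ form) together with Proposition~\ref{prop:vec_space_dim}. Writing $d_{\ms'}:=\bsmooth(\ms')+1$, so that $\bideg{\ms'}=(d_{\ms'},0)$, part~(c) of Proposition~\ref{prop:space_sum_dim} with $\mbf{b}=\bideg{\ms}$ expresses $\dimwp{G}_\abdegree$ as the difference of $\dimwp{\sum_{\ms'}(s-a_{\ms'}u)^{d_{\ms'}}\shiftideal{L}{\pB{i-1}}{-\bideg{\ms}-\bideg{\ms'}}}_\abdegree$ and an $\shiftideal{L}{\pB{i}}{\cdot}$-term which, since $(s-a_{\ms'}u)^{d_{\ms'}}\shiftideal{L}{\pB{i}}{-\bideg{\ms}-\bideg{\ms'}}\subseteq\shiftideal{L}{\pB{i}}{-\bideg{\ms}}$, is at most $\dimwp{\shiftideal{L}{\pB{i}}{-\bideg{\ms}}}_\abdegree$. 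For the leading term, part~(b) turns it into $\dimwp{\sum_{\ms'}(s-a_{\ms'}u)^{d_{\ms'}}\pRingH(-d_{\ms'},0)}_{\abdegree-\bdegree_{i-1}-\bideg{\ms}}$, which part~(a) evaluates to $(m'-\degree_{(i-1)2}-\bsmooth(\ms))_+\cdot\min\!\big((m-\degree_{(i-1)1}+1)_+,\ \sum_{\ms'\in\Lambda_{\pB{i}}(\ms,\abdegree)}(m-\degree_{(i-1)1}-\bsmooth(\ms'))_+\big)$. The inner sum is precisely $\weight{i}{\ms}{\abdegree}$ by Definition~\ref{def:max_seg_wt}, so the weight hypothesis makes the minimum collapse to $m-\degree_{(i-1)1}+1$, and the expression becomes $\dimwp{\pRingH(-\bideg{\ms})}_{\abdegree-\bdegree_{i-1}}=\dimwp{\shiftideal{L}{\pB{i-1}}{-\bideg{\ms}}}_\abdegree$ by Proposition~\ref{prop:vec_space_dim}(a),(b). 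Hence $\dimwp{G}_\abdegree\geq\dimwp{\shiftideal{L}{\pB{i-1}}{-\bideg{\ms}}}_\abdegree-\dimwp{\shiftideal{L}{\pB{i}}{-\bideg{\ms}}}_\abdegree=\dimwp{\shiftideal{M}{\pB{i}}{-\bideg{\ms}}}_\abdegree$ by Proposition~\ref{prop:vec_space_dim}(c); combined with $G\subseteq\ideal[D]{}_{\pB{i},\abdegree}^{\ms}\subseteq\shiftideal{M}{\pB{i}}{-\bideg{\ms}}_\abdegree$, the three finite-dimensional spaces must coincide, which is the claim. (For $i=\nlevels+1$ one has $\shiftideal{L}{\pB{i}}{-\bideg{\ms}}=0$ and $\Lambda_{\pB{i}}(\ms,\abdegree)=\Gamma_{\pB{i}}(\ms)$, so the argument reduces to a direct application of Proposition~\ref{prop:space_sum_dim}(a),(b) to the lone summand of Equation~\eqref{eq:ms_contribution}.)

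I expect the main obstacle to be the bookkeeping in the first step: one must carefully track which maximal segment plays the role of $\ms$, $\ms_1$, $\ms_2$, \dots in the definitions of $\Gamma_{\pB{i}}$, $\Upsilon_{\pB{i}}$, $\Theta_{\pB{i}}$ and in the $\alpha$-coefficients of Lemma~\ref{lem:new_relations_transversal}, and in particular confirm that the $\Theta$-derived generators contribute the full module $\Delta_{\ms'}\shiftideal{M}{\pB{i}}{-\bideg{\ms}-\bideg{\ms'}}$ rather than a proper subspace carrying an extra factor of $u$. Once that matching is pinned down, the analytic heart of the argument --- the third paragraph --- is routine, being nothing more than Proposition~\ref{prop:space_sum_dim} applied to the sum of $|\Lambda_{\pB{i}}(\ms,\abdegree)|$ terms appearing in $G$.
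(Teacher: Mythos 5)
The paper states this lemma without proof, so there is no argument of its own to compare against; your proof is correct and is clearly the intended one, since Definition \ref{def:max_seg_wt} packages exactly the quantity that Proposition \ref{prop:space_sum_dim}(a) produces (in degree $\abdegree-\bdegree_{i-1}-\bideg{\ms}$, read with the implicit positive parts, consistently with the $(\cdot)_+$ in the weight) for the submodule $G=\sum_{\ms'\in\Lambda_{\pB{i}}(\ms,\abdegree)}\Delta_{\ms'}\shiftideal{M}{\pB{i}}{-\bideg{\ms}-\bideg{\ms'}}_\abdegree$. The two verifications you flag are indeed the only points of substance: every $\ms'\in\Lambda_{\pB{i}}(\ms,\abdegree)$ contributes its full, prefactor-free summand to $\ideal[D]{}_{\pB{i},\abdegree}^{\ms}$ --- the $\alpha=1$ clause in the third case of Equation \eqref{eq:full_transversal_contribution} is there precisely to exclude the vanishing $\Theta$-generators, while the second case puts $\ms$ in the second slot of a $\Theta$-pair, to which Lemma \ref{lem:new_relations_transversal} always assigns coefficient $1$ --- and the members of $\Lambda_{\pB{i}}(\ms,\abdegree)$ are perpendicular to $\ms$ and pairwise non-collinear (two parallel maximal segments meeting the same transversal cannot meet each other without violating maximality), so the linear forms $s-a_{\ms'}u$ are distinct as Proposition \ref{prop:space_sum_dim} requires. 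The sandwich $\dimwp{G}_\abdegree\geq\dimwp{\shiftideal{M}{\pB{i}}{-\bideg{\ms}}}_\abdegree$ together with $G\subseteq\ideal[D]{}_{\pB{i},\abdegree}^{\ms}\subseteq\shiftideal{M}{\pB{i}}{-\bideg{\ms}}_\abdegree$ then closes the argument, so your write-up supplies the proof the paper omits.
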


\begin{example}\label{ex:using_new_relations_revisit}
	Let us revisit Example \ref{ex:using_new_relations}(b).
	For $i = 2$ there are no interior maximal segments; so, let us look at the case of $i = 1$. We have
	\begin{equation*}
	\begin{split}
		&\Gamma_{\pB{i}}(\ms_3) = \{\ms_2, \ms_4\}\;,\quad
		\Gamma_{\pB{i}}(\ms_2) = \{\ms_1\}\;,\quad
		\Gamma_{\pB{i}}(\ms_4) = \{\ms_1\}\;,\quad
		\Theta_{\pB{i}}(\ms_3,\abdegree) = \{(\ms_2, \ms_4)\}\;.
	\end{split}
	\end{equation*}
	Thus, from Equation \ref{eq:full_transversal_contribution}, we have
	\begin{equation*}
	\begin{split}
		&\Lambda_{\pB{i}}(\ms_1,\abdegree) = \emptyset\;,\quad
		\Lambda_{\pB{i}}(\ms_2,\abdegree) = \{ \ms_1, \ms_3 \}\;,\quad
		\Lambda_{\pB{i}}(\ms_3,\abdegree) = \{ \ms_2, \ms_4 \}\;,\quad
		\Lambda_{\pB{i}}(\ms_4,\abdegree) = \{ \ms_1 \}\;,
	\end{split}
	\end{equation*}
	and from Definition \ref{def:max_seg_wt}, for $\ms \in \{\ms_2, \ms_3\}$ we obtain
	\begin{equation*}
		\weight{i}{\ms}{\abdegree} = 2 \times (4-1-1) = 4
		\;.
	\end{equation*}
	Then, from Lemma \ref{lem:suff_ms_wts}, we see that $\ms_2$ and $\ms_3$ do not contribute to $\dimwp{H_0\left(\degBa{\idealComplex}{i}\right)}_\abdegree$.
\end{example}

\section{Examples}\label{sec:examples}
In this section, we provide three examples that illustrate how the theory developed in this document can be used to compute the spline space dimension in the presence of non-uniform degrees.
The first two examples show configurations where Theorem \ref{thm:homo_dim_zero} applies, i.e., where the dimension can be computed exactly.
The last example serves to counter the expectation that Theorem \ref{thm:homo_dim_zero} will apply in all circumstances.

\begin{figure}[t!]
	\centering
	\tikzsetnextfilename{./tikz/images/tmesh_type16a}%
	\begin{tikzpicture}[scale=2]
\begin{scope}
\node (v0) at (0.0000000000000000,0.0000000000000000) {};
\node (v1) at (4.0000000000000000,0.0000000000000000) {};
\node (v2) at (4.0000000000000000,4.0000000000000000) {};
\node (v3) at (0.0000000000000000,4.0000000000000000) {};
\node (v4) at (1.0000000000000000,0.0000000000000000) {};
\node (v5) at (1.0000000000000000,4.0000000000000000) {};
\node (v6) at (2.0000000000000000,0.0000000000000000) {};
\node (v7) at (2.0000000000000000,4.0000000000000000) {};
\node (v8) at (3.0000000000000000,0.0000000000000000) {};
\node (v9) at (3.0000000000000000,4.0000000000000000) {};
\node (v10) at (0.0000000000000000,2.0000000000000000) {};
\node (v11) at (1.0000000000000000,2.0000000000000000) {};
\node (v12) at (2.0000000000000000,2.0000000000000000) {};
\node (v13) at (3.0000000000000000,2.0000000000000000) {};
\node (v14) at (4.0000000000000000,2.0000000000000000) {};
\node (v15) at (0.0000000000000000,3.0000000000000000) {};
\node (v16) at (1.0000000000000000,3.0000000000000000) {};
\node (v17) at (2.0000000000000000,3.0000000000000000) {};
\node (v18) at (3.0000000000000000,3.0000000000000000) {};
\node (v19) at (4.0000000000000000,3.0000000000000000) {};
\node (v20) at (1.0000000000000000,1.5000000000000000) {};
\node (v21) at (2.0000000000000000,1.5000000000000000) {};
\node (v22) at (3.0000000000000000,1.5000000000000000) {};
\node (v23) at (1.0000000000000000,2.5000000000000000) {};
\node (v24) at (2.0000000000000000,2.5000000000000000) {};
\node (v25) at (3.0000000000000000,2.5000000000000000) {};
\node (v26) at (1.5000000000000000,2.0000000000000000) {};
\node (v27) at (1.5000000000000000,2.5000000000000000) {};
\node (v28) at (1.5000000000000000,3.0000000000000000) {};
\node (v29) at (2.5000000000000000,1.5000000000000000) {};
\node (v30) at (2.5000000000000000,2.0000000000000000) {};
\node (v31) at (2.5000000000000000,2.5000000000000000) {};

\draw[eThickness] (v4.center) -- (v20.center) -- (v11.center) -- (v10.center) -- (v0.center) -- cycle;
\draw[eThickness] (v6.center) -- (v21.center) -- (v20.center) -- (v4.center) -- cycle;
\draw[eThickness] (v8.center) -- (v22.center) -- (v29.center) -- (v21.center) -- (v6.center) -- cycle;
\draw[eThickness] (v1.center) -- (v14.center) -- (v13.center) -- (v22.center) -- (v8.center) -- cycle;
\draw[eThickness] (v11.center) -- (v23.center) -- (v16.center) -- (v15.center) -- (v10.center) -- cycle;
\draw[eThickness, fill=myBlue!20] (v26.center) -- (v27.center) -- (v23.center) -- (v11.center) -- cycle;
\draw[eThickness, fill=myBlue!20] (v30.center) -- (v31.center) -- (v24.center) -- (v12.center) -- cycle;
\draw[eThickness] (v14.center) -- (v19.center) -- (v18.center) -- (v25.center) -- (v13.center) -- cycle;
\draw[eThickness] (v16.center) -- (v5.center) -- (v3.center) -- (v15.center) -- cycle;
\draw[eThickness] (v28.center) -- (v17.center) -- (v7.center) -- (v5.center) -- (v16.center) -- cycle;
\draw[eThickness] (v18.center) -- (v9.center) -- (v7.center) -- (v17.center) -- cycle;
\draw[eThickness] (v19.center) -- (v2.center) -- (v9.center) -- (v18.center) -- cycle;
\draw[eThickness] (v21.center) -- (v12.center) -- (v26.center) -- (v11.center) -- (v20.center) -- cycle;
\draw[eThickness] (v29.center) -- (v30.center) -- (v12.center) -- (v21.center) -- cycle;
\draw[eThickness] (v27.center) -- (v28.center) -- (v16.center) -- (v23.center) -- cycle;
\draw[eThickness, fill=myBlue!20] (v31.center) -- (v25.center) -- (v18.center) -- (v17.center) -- (v24.center) -- cycle;
\draw[eThickness, fill=myBlue!20] (v12.center) -- (v24.center) -- (v27.center) -- (v26.center) -- cycle;
\draw[eThickness, fill=myBlue!20] (v24.center) -- (v17.center) -- (v28.center) -- (v27.center) -- cycle;
\draw[eThickness, fill=myBlue!20] (v22.center) -- (v13.center) -- (v30.center) -- (v29.center) -- cycle;
\draw[eThickness, fill=myBlue!20] (v13.center) -- (v25.center) -- (v31.center) -- (v30.center) -- cycle;

\node[] at (v0) {$\vertex_{0}$};
\node[] at (v1) {$\vertex_{1}$};
\node[] at (v2) {$\vertex_{2}$};
\node[] at (v3) {$\vertex_{3}$};
\node[] at (v4) {$\vertex_{4}$};
\node[] at (v5) {$\vertex_{5}$};
\node[] at (v6) {$\vertex_{6}$};
\node[] at (v7) {$\vertex_{7}$};
\node[] at (v8) {$\vertex_{8}$};
\node[] at (v9) {$\vertex_{9}$};
\node[] at (v10) {$\vertex_{10}$};
\node[] at (v11) {$\vertex_{11}$};
\node[] at (v12) {$\vertex_{12}$};
\node[] at (v13) {$\vertex_{13}$};
\node[] at (v14) {$\vertex_{14}$};
\node[] at (v15) {$\vertex_{15}$};
\node[] at (v16) {$\vertex_{16}$};
\node[] at (v17) {$\vertex_{17}$};
\node[] at (v18) {$\vertex_{18}$};
\node[] at (v19) {$\vertex_{19}$};
\node[] at (v20) {$\vertex_{20}$};
\node[] at (v21) {$\vertex_{21}$};
\node[] at (v22) {$\vertex_{22}$};
\node[] at (v23) {$\vertex_{23}$};
\node[] at (v24) {$\vertex_{24}$};
\node[] at (v25) {$\vertex_{25}$};
\node[] at (v26) {$\vertex_{26}$};
\node[] at (v27) {$\vertex_{27}$};
\node[] at (v28) {$\vertex_{28}$};
\node[] at (v29) {$\vertex_{29}$};
\node[] at (v30) {$\vertex_{30}$};
\node[] at (v31) {$\vertex_{31}$};

\end{scope}
\end{tikzpicture}%

	\caption{A non-uniform degree $C^1$ spline space consisting of quadratic and cubic polynomial pieces is built on the above mesh. Example \ref{ex:test_1} shows that the dimension of the spline space can be computed using Theorem \ref{thm:homo_dim_zero}.}
	\label{fig:test_1}
\end{figure}
\begin{example}\label{ex:test_1}
	Consider the T-mesh shown in Figure \ref{fig:test_1}. Let us build a $C^1$ spline space on this mesh, i.e., $\bsmooth(\edge) = 1$ for all interior edges $\edge$.
	The degree deficit on the shaded faces is chosen to be $(0,0)$ and on the white faces it is chosen to be $(1, 1)$.
	We choose $\bdegree_0 = (0,0)$ and $\bdegree_1 = (1, 1)$, i.e., $\nlevels = 1$, and choose $\abdegree = (3, 3)$.
	Let us examine the active T-meshes $\tmeshComponent{\pB{i}}{}$ in the following for $i = 1, 2$.
	\begin{itemize}
		\item $[i = 2]$: The only interior maximal segments in $\tmeshComponent{\pB{i}}{}$ are
		\begin{equation*}
			\ms_1 = \vertex_{20}\vertex_{22}\;,\;
			\ms_2 = \vertex_{23}\vertex_{25}\;,\;
			\ms_3 = \vertex_{26}\vertex_{28}\;,\;
			\ms_4 = \vertex_{29}\vertex_{31}\;.
		\end{equation*}
		Let us order these interior maximal segments as $\ms_1 < \ms_2 < \ms_3 < \ms_4$.
		Then notice that the cardinality of $\Gamma_{\pB{i}}(\ms_j)$ is $3$ for each $\ms_j$.
		Therefore, we can compute the weight of each interior maximal segment to be $\weight{i}{\ms_j}{\abdegree} \geq 3$ for all $j$.
		Then, using Lemma \ref{lem:suff_ms_wts}, we see that
		\begin{equation*}
			\dimwp{H_0\left(\degBa{\idealComplex}{i}\right)}_\abdegree = \dimwp{H_0\left(\degBa{\constantComplex}{i}\right)}_\abdegree = 0\;.
		\end{equation*}
		
		\item $[i = 1]$: All maximal segments in $\tmeshComponent{\pB{i}}{}$ are interior maximal segments, let us denote them as below,
		\begin{equation*}
		\begin{split}
			&\ms_1 = \vertex_{11}\vertex_{13}\;,\;
			\ms_2 = \vertex_{22}\vertex_{18}\;,\;
			\ms_3 = \vertex_{29}\vertex_{31}\;,\;
			\ms_4 = \vertex_{12}\vertex_{17}\;,\;
			\ms_5 = \vertex_{26}\vertex_{28}\;,\\
			&\ms_6 = \vertex_{11}\vertex_{23}\;,\;
			\ms_7 = \vertex_{29}\vertex_{22}\;,\;
			\ms_8 = \vertex_{23}\vertex_{25}\;,\;
			\ms_9 = \vertex_{28}\vertex_{18}\;.
		\end{split}
		\end{equation*}
		Let us order these interior maximal segments so that $\ms_j < \ms_k$ when $j < k$.
		The sets $\Gamma_{\pB{i}}(\ms_j)$ can be seen to be
		\begin{equation*}
		\begin{split}
			&\Gamma_{\pB{i}}(\ms_1) = \emptyset\;,\;
			\Gamma_{\pB{i}}(\ms_2) = 
			\Gamma_{\pB{i}}(\ms_3) = 
			\Gamma_{\pB{i}}(\ms_4) = 
			\Gamma_{\pB{i}}(\ms_5) = 
			\Gamma_{\pB{i}}(\ms_6) = \{ \ms_1 \}\;,\\
			&\Gamma_{\pB{i}}(\ms_7) = \{ \ms_2, \ms_3 \}\;,\;
			\Gamma_{\pB{i}}(\ms_8) = \{ \ms_2, \ms_3, \ms_4, \ms_5, \ms_6 \}\;,\;
			\Gamma_{\pB{i}}(\ms_9) = \{ \ms_2, \ms_4, \ms_5 \}\;.
		\end{split}
		\end{equation*}
		Next, it can be seen from Corollary \ref{cor:new_relations_weight} that
		\begin{equation*}
			\Upsilon_{\pB{i}}(\ms_8, \abdegree) \ni \Upsilon := \{ (\ms_1, \ms_2),  (\ms_1, \ms_3), (\ms_1, \ms_4), (\ms_1, \ms_5), (\ms_1, \ms_6)\}\;.
		\end{equation*}
		Then, from Lemma \ref{lem:new_relations_transversal}, we can build the sets $\Lambda_{\pB{i}}(\ms_j)$ for $j = 3, \dots, 6$ such that
		\begin{equation*}
			\begin{split}
				&\{ \ms_1, \ms_8 \} \subseteq 
				\Lambda_{\pB{i}}(\ms_3)\;,
				\Lambda_{\pB{i}}(\ms_4)\;,
				\Lambda_{\pB{i}}(\ms_5)\;,
				\Lambda_{\pB{i}}(\ms_6)\;.
			\end{split}
		\end{equation*}
		Then, we see that the weight $\weight{i}{\ms_j}{\abdegree} \geq 4$ for all $j = 3, \dots, 9$, and thus the upper bound on the dimension of $H_0\left(\degBa{\idealComplex}{i}\right)$ can be computed to be the following,
		\begin{equation*}
		\begin{split}
			\dimwp{H_0\left(\degBa{\idealComplex}{i}\right)}_\abdegree &\leq
			\dimwp{\msE{1}\shiftideal{M}{\pB{i}}{0, -2}}_\abdegree + \dimwp{\msE{2}\shiftideal{M}{\pB{i}}{-2, 0}}_\abdegree\\
		&\qquad\qquad - \dimwp{\msE{2}\Delta_{\ms_1}\shiftideal{M}{\pB{i}}{-2, -2}}_\abdegree\\
		& = 5 + 5 - 3 = 7 = \dimwp{H_0\left(\degBa{\constantComplex}{i}\right)}_\abdegree\;.
		\end{split}
		\end{equation*}
		Therefore, $\dimwp{H_0\left(\degBa{\idealComplex}{i}\right)}_\abdegree = \dimwp{H_0\left(\degBa{\constantComplex}{i}\right)}_\abdegree$.
	\end{itemize}
	From the above we can see that Theorem \ref{thm:homo_dim_zero} applies and
	\begin{equation*}
		\dimwp{\splSpaceH^\bsmooth_{\bdegree}}_{\abdegree} = \euler{\quotientComplex}_\abdegree = 37\;.
	\end{equation*}
	In particular, there are $30$ splines on $\tmeshComponent{\pB{2}}{}$ and $7$ on $\tmeshComponent{\pB{1}}{}$.
	The reader can use the accompanying Macaulay2 script \cite[ex1.m2]{M2Scripts} to confirm that $\abdegree = (3, 3)$ is the smallest bi-degree for which non-uniformity in degrees leads to an increase in the dimension.
\end{example}

\begin{figure}[t!]
	\centering
	\tikzsetnextfilename{./tikz/images/tmesh_type17a}%
	\begin{tikzpicture}[scale=2]
	 \begin{scope}
		\node (v0) at (0.0000000000000000,0.0000000000000000) {};
		\node (v1) at (4.0000000000000000,0.0000000000000000) {};
		\node (v2) at (4.0000000000000000,4.0000000000000000) {};
		\node (v3) at (0.0000000000000000,4.0000000000000000) {};
		\node (v4) at (1.0000000000000000,0.0000000000000000) {};
		\node (v5) at (1.0000000000000000,4.0000000000000000) {};
		\node (v6) at (2.0000000000000000,0.0000000000000000) {};
		\node (v7) at (2.0000000000000000,4.0000000000000000) {};
		\node (v8) at (3.0000000000000000,0.0000000000000000) {};
		\node (v9) at (3.0000000000000000,4.0000000000000000) {};
		\node (v10) at (0.0000000000000000,1.0000000000000000) {};
		\node (v11) at (1.0000000000000000,1.0000000000000000) {};
		\node (v12) at (2.0000000000000000,1.0000000000000000) {};
		\node (v13) at (3.0000000000000000,1.0000000000000000) {};
		\node (v14) at (4.0000000000000000,1.0000000000000000) {};
		\node (v15) at (0.0000000000000000,1.5000000000000000) {};
		\node (v16) at (1.0000000000000000,1.5000000000000000) {};
		\node (v17) at (2.0000000000000000,1.5000000000000000) {};
		\node (v18) at (3.0000000000000000,1.5000000000000000) {};
		\node (v19) at (4.0000000000000000,1.5000000000000000) {};
		\node (v20) at (0.0000000000000000,2.0000000000000000) {};
		\node (v21) at (1.0000000000000000,2.0000000000000000) {};
		\node (v22) at (2.0000000000000000,2.0000000000000000) {};
		\node (v23) at (3.0000000000000000,2.0000000000000000) {};
		\node (v24) at (4.0000000000000000,2.0000000000000000) {};
		\node (v25) at (0.0000000000000000,2.5000000000000000) {};
		\node (v26) at (1.0000000000000000,2.5000000000000000) {};
		\node (v27) at (2.0000000000000000,2.5000000000000000) {};
		\node (v28) at (3.0000000000000000,2.5000000000000000) {};
		\node (v29) at (4.0000000000000000,2.5000000000000000) {};
		\node (v30) at (0.0000000000000000,3.0000000000000000) {};
		\node (v31) at (1.0000000000000000,3.0000000000000000) {};
		\node (v32) at (2.0000000000000000,3.0000000000000000) {};
		\node (v33) at (3.0000000000000000,3.0000000000000000) {};
		\node (v34) at (4.0000000000000000,3.0000000000000000) {};
		\node (v35) at (1.5000000000000000,1.0000000000000000) {};
		\node (v36) at (1.5000000000000000,1.5000000000000000) {};
		\node (v37) at (1.5000000000000000,2.0000000000000000) {};
		\node (v38) at (1.5000000000000000,2.5000000000000000) {};
		\node (v39) at (1.5000000000000000,3.0000000000000000) {};
		\node (v40) at (2.5000000000000000,1.0000000000000000) {};
		\node (v41) at (2.5000000000000000,1.5000000000000000) {};
		\node (v42) at (2.5000000000000000,2.0000000000000000) {};
		\node (v43) at (2.5000000000000000,2.5000000000000000) {};
		\node (v44) at (2.5000000000000000,3.0000000000000000) {};
		\node (v45) at (3.5000000000000000,1.0000000000000000) {};
		\node (v46) at (3.5000000000000000,1.5000000000000000) {};
		\node (v47) at (3.5000000000000000,2.0000000000000000) {};
		\node (v48) at (3.5000000000000000,2.5000000000000000) {};
		\node (v49) at (3.5000000000000000,3.0000000000000000) {};

		\draw[eThickness, fill=myBlue!20] (v39.center) -- (v36.center) -- (v17.center) -- (v12.center) -- (v13.center) -- (v28.center) -- (v43.center) -- (v44.center) -- cycle;
		 \draw[eThickness] (v4.center) -- (v11.center) -- (v10.center) -- (v0.center) -- cycle;
		 \draw[eThickness] (v6.center) -- (v12.center) -- (v35.center) -- (v11.center) -- (v4.center) -- cycle;
		 \draw[eThickness] (v8.center) -- (v13.center) -- (v40.center) -- (v12.center) -- (v6.center) -- cycle;
		 \draw[eThickness] (v1.center) -- (v14.center) -- (v45.center) -- (v13.center) -- (v8.center) -- cycle;
		 \draw[eThickness] (v11.center) -- (v16.center) -- (v15.center) -- (v10.center) -- cycle;
		 \draw[eThickness] (v35.center) -- (v36.center) -- (v16.center) -- (v11.center) -- cycle;
		 \draw[eThickness] (v40.center) -- (v41.center) -- (v17.center) -- (v12.center) -- cycle;
		 \draw[eThickness] (v45.center) -- (v46.center) -- (v18.center) -- (v13.center) -- cycle;
		 \draw[eThickness] (v16.center) -- (v21.center) -- (v20.center) -- (v15.center) -- cycle;
		 \draw[eThickness] (v36.center) -- (v37.center) -- (v21.center) -- (v16.center) -- cycle;
		 \draw[eThickness] (v41.center) -- (v42.center) -- (v22.center) -- (v17.center) -- cycle;
		 \draw[eThickness] (v46.center) -- (v47.center) -- (v23.center) -- (v18.center) -- cycle;
		 \draw[eThickness] (v21.center) -- (v26.center) -- (v25.center) -- (v20.center) -- cycle;
		 \draw[eThickness] (v37.center) -- (v38.center) -- (v26.center) -- (v21.center) -- cycle;
		 \draw[eThickness] (v42.center) -- (v43.center) -- (v27.center) -- (v22.center) -- cycle;
		 \draw[eThickness] (v47.center) -- (v48.center) -- (v28.center) -- (v23.center) -- cycle;
		 \draw[eThickness] (v26.center) -- (v31.center) -- (v30.center) -- (v25.center) -- cycle;
		 \draw[eThickness] (v38.center) -- (v39.center) -- (v31.center) -- (v26.center) -- cycle;
		 \draw[eThickness] (v43.center) -- (v44.center) -- (v32.center) -- (v27.center) -- cycle;
		 \draw[eThickness] (v48.center) -- (v49.center) -- (v33.center) -- (v28.center) -- cycle;
		 \draw[eThickness] (v31.center) -- (v5.center) -- (v3.center) -- (v30.center) -- cycle;
		 \draw[eThickness] (v39.center) -- (v32.center) -- (v7.center) -- (v5.center) -- (v31.center) -- cycle;
		 \draw[eThickness] (v44.center) -- (v33.center) -- (v9.center) -- (v7.center) -- (v32.center) -- cycle;
		 \draw[eThickness] (v49.center) -- (v34.center) -- (v2.center) -- (v9.center) -- (v33.center) -- cycle;
		 \draw[eThickness] (v12.center) -- (v17.center) -- (v36.center) -- (v35.center) -- cycle;
		 \draw[eThickness] (v17.center) -- (v22.center) -- (v37.center) -- (v36.center) -- cycle;
		 \draw[eThickness] (v22.center) -- (v27.center) -- (v38.center) -- (v37.center) -- cycle;
		 \draw[eThickness] (v27.center) -- (v32.center) -- (v39.center) -- (v38.center) -- cycle;
		 \draw[eThickness] (v13.center) -- (v18.center) -- (v41.center) -- (v40.center) -- cycle;
		 \draw[eThickness] (v18.center) -- (v23.center) -- (v42.center) -- (v41.center) -- cycle;
		 \draw[eThickness] (v23.center) -- (v28.center) -- (v43.center) -- (v42.center) -- cycle;
		 \draw[eThickness] (v28.center) -- (v33.center) -- (v44.center) -- (v43.center) -- cycle;
		 \draw[eThickness] (v14.center) -- (v19.center) -- (v46.center) -- (v45.center) -- cycle;
		 \draw[eThickness] (v19.center) -- (v24.center) -- (v47.center) -- (v46.center) -- cycle;
		 \draw[eThickness] (v24.center) -- (v29.center) -- (v48.center) -- (v47.center) -- cycle;
		 \draw[eThickness] (v29.center) -- (v34.center) -- (v49.center) -- (v48.center) -- cycle;
		
		\node[inner sep=0] at (v0) {$\vertex_{0}$};
		\node[inner sep=0] at (v1) {$\vertex_{1}$};
		\node[inner sep=0] at (v2) {$\vertex_{2}$};
		\node[inner sep=0] at (v3) {$\vertex_{3}$};
		\node[inner sep=0] at (v4) {$\vertex_{4}$};
		\node[inner sep=0] at (v5) {$\vertex_{5}$};
		\node[inner sep=0] at (v6) {$\vertex_{6}$};
		\node[inner sep=0] at (v7) {$\vertex_{7}$};
		\node[inner sep=0] at (v8) {$\vertex_{8}$};
		\node[inner sep=0] at (v9) {$\vertex_{9}$};
		\node[inner sep=0] at (v10) {$\vertex_{10}$};
		\node[inner sep=0] at (v11) {$\vertex_{11}$};
		\node[inner sep=0] at (v12) {$\vertex_{12}$};
		\node[inner sep=0] at (v13) {$\vertex_{13}$};
		\node[inner sep=0] at (v14) {$\vertex_{14}$};
		\node[inner sep=0] at (v15) {$\vertex_{15}$};
		\node[inner sep=0] at (v16) {$\vertex_{16}$};
		\node[inner sep=0] at (v17) {$\vertex_{17}$};
		\node[inner sep=0] at (v18) {$\vertex_{18}$};
		\node[inner sep=0] at (v19) {$\vertex_{19}$};
		\node[inner sep=0] at (v20) {$\vertex_{20}$};
		\node[inner sep=0] at (v21) {$\vertex_{21}$};
		\node[inner sep=0] at (v22) {$\vertex_{22}$};
		\node[inner sep=0] at (v23) {$\vertex_{23}$};
		\node[inner sep=0] at (v24) {$\vertex_{24}$};
		\node[inner sep=0] at (v25) {$\vertex_{25}$};
		\node[inner sep=0] at (v26) {$\vertex_{26}$};
		\node[inner sep=0] at (v27) {$\vertex_{27}$};
		\node[inner sep=0] at (v28) {$\vertex_{28}$};
		\node[inner sep=0] at (v29) {$\vertex_{29}$};
		\node[inner sep=0] at (v30) {$\vertex_{30}$};
		\node[inner sep=0] at (v31) {$\vertex_{31}$};
		\node[inner sep=0] at (v32) {$\vertex_{32}$};
		\node[inner sep=0] at (v33) {$\vertex_{33}$};
		\node[inner sep=0] at (v34) {$\vertex_{34}$};
		\node[inner sep=0] at (v35) {$\vertex_{35}$};
		\node[inner sep=0] at (v36) {$\vertex_{36}$};
		\node[inner sep=0] at (v37) {$\vertex_{37}$};
		\node[inner sep=0] at (v38) {$\vertex_{38}$};
		\node[inner sep=0] at (v39) {$\vertex_{39}$};
		\node[inner sep=0] at (v40) {$\vertex_{40}$};
		\node[inner sep=0] at (v41) {$\vertex_{41}$};
		\node[inner sep=0] at (v42) {$\vertex_{42}$};
		\node[inner sep=0] at (v43) {$\vertex_{43}$};
		\node[inner sep=0] at (v44) {$\vertex_{44}$};
		\node[inner sep=0] at (v45) {$\vertex_{45}$};
		\node[inner sep=0] at (v46) {$\vertex_{46}$};
		\node[inner sep=0] at (v47) {$\vertex_{47}$};
		\node[inner sep=0] at (v48) {$\vertex_{48}$};
		\node[inner sep=0] at (v49) {$\vertex_{49}$};

	 \end{scope}
\end{tikzpicture}%

	\caption{A non-uniform degree $C^2$ spline space consisting of cubic and quartic polynomial pieces is built on the above mesh. Example \ref{ex:test_2} shows that the dimension of the spline space can be computed using Theorem \ref{thm:homo_dim_zero}.}
	\label{fig:test_2}
\end{figure}
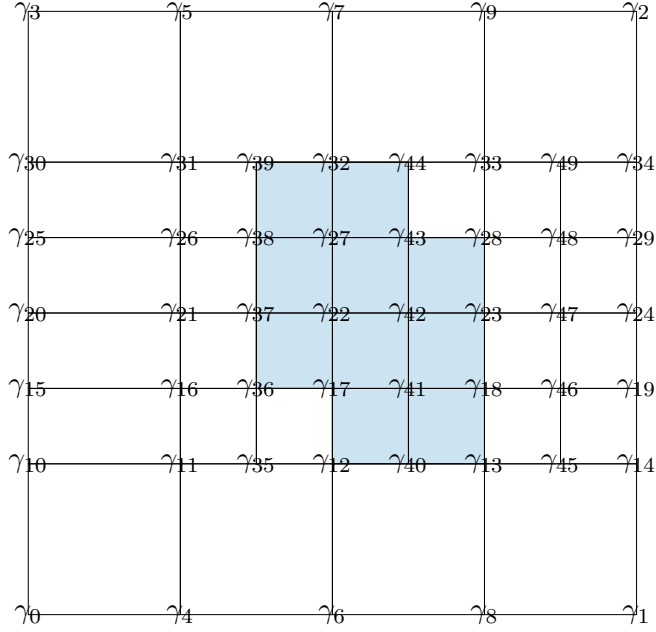
\begin{example}\label{ex:test_2}
	Consider the T-mesh shown in Figure \ref{fig:test_1}. Let us build a $C^2$ spline space on this mesh, i.e., $\bsmooth(\edge) = 2$ for all interior edges $\edge$.
	The degree deficit on the shaded faces is chosen to be $(0,0)$ and on the white faces it is chosen to be $(1, 1)$.
	We choose $\bdegree_0 = (0,0)$ and $\bdegree_1 = (1, 1)$, i.e., $\nlevels = 1$, and choose $\abdegree = (4, 4)$.
	Let us examine the active T-meshes $\tmeshComponent{\pB{i}}{}$ in the following for $i = 1, 2$.
	\begin{itemize}
		\item $[i = 2]$: The only interior maximal segments in $\tmeshComponent{\pB{i}}{}$ are
		\begin{equation*}
			\ms_1 = \vertex_{35}\vertex_{39}\;,\;
			\ms_2 = \vertex_{40}\vertex_{44}\;,\;
			\ms_3 = \vertex_{45}\vertex_{49}\;.
		\end{equation*}
		We can order them in any manner with respect to each other since they don't intersect each other.
		The weight of each interior maximal segment can be computed to be $\weight{i}{\ms_j}{\abdegree} = 5$ for all $j$.
		Then, using Lemma \ref{lem:suff_ms_wts}, we see that
		\begin{equation*}
			\dimwp{H_0\left(\degBa{\idealComplex}{i}\right)}_\abdegree = \dimwp{H_0\left(\degBa{\constantComplex}{i}\right)}_\abdegree = 0\;.
		\end{equation*}
		
		\item $[i = 1]$: All maximal segments in $\tmeshComponent{\pB{i}}{}$ are interior maximal segments, let us denote them as below,
		\begin{equation*}
		\begin{split}
			&\ms_1 = \vertex_{12}\vertex_{32}\;,\;
			\ms_2 = \vertex_{36}\vertex_{18}\;,\;
			\ms_3 = \vertex_{12}\vertex_{13}\;,\;
			\ms_4 = \vertex_{37}\vertex_{23}\;,\;
			\ms_5 = \vertex_{38}\vertex_{28}\;,\\
			&
			\ms_6 = \vertex_{39}\vertex_{44}\;,\;
			\ms_7 = \vertex_{36}\vertex_{39}\;,\;
			\ms_8 = \vertex_{40}\vertex_{44}\;,\;
			\ms_9 = \vertex_{13}\vertex_{28}\;.
		\end{split}
		\end{equation*}
		Let us order these interior maximal segments so that $\ms_j < \ms_k$ when $j < k$.
		The sets $\Gamma_{\pB{i}}(\ms_j)$ can be seen to be
		\begin{equation*}
		\begin{split}
		&\Gamma_{\pB{i}}(\ms_1) = \emptyset\;,\;
		\Gamma_{\pB{i}}(\ms_2) = 
		\Gamma_{\pB{i}}(\ms_3) = 
		\Gamma_{\pB{i}}(\ms_4) = 
		\Gamma_{\pB{i}}(\ms_5) = 
		\Gamma_{\pB{i}}(\ms_6) = \{ \ms_1 \}\;,\\
		&\Gamma_{\pB{i}}(\ms_7) = \{ \ms_2, \ms_4 , \ms_5, \ms_6\}\;,\;
		\Gamma_{\pB{i}}(\ms_8) = \{ \ms_2, \ms_3, \ms_4 , \ms_5, \ms_6\}\;,\;
		\Gamma_{\pB{i}}(\ms_9) = \{ \ms_2, \ms_3, \ms_4 , \ms_5\}\;.
		\end{split}
		\end{equation*}
		Next, it can be seen from Corollary \ref{cor:new_relations_weight} that
		\begin{equation*}
		\begin{split}
			&\Upsilon_{\pB{i}}(\ms_7, \abdegree) \ni \{ (\ms_1, \ms_2),  (\ms_1, \ms_4), (\ms_1, \ms_5), (\ms_1, \ms_6)\}\;,\\
			&\Upsilon_{\pB{i}}(\ms_8, \abdegree) \ni \{ (\ms_1, \ms_2),  (\ms_1, \ms_3), (\ms_1, \ms_4), (\ms_1, \ms_5), (\ms_1, \ms_6)\}\;,\\
			&\Upsilon_{\pB{i}}(\ms_9, \abdegree) \ni \{ (\ms_1, \ms_2),  (\ms_1, \ms_3), (\ms_1, \ms_4), (\ms_1, \ms_5)\}\;.
		\end{split}
		\end{equation*}
		Then, from Lemma \ref{lem:new_relations_transversal}, we can build the sets $\Lambda_{\pB{i}}(\ms_j)$ for $j = 3, \dots, 6$ such that
		\begin{equation*}
		\begin{split}
		&\{ \ms_1, \ms_8, \ms_9 \} \subseteq 
		\Lambda_{\pB{i}}(\ms_3)\;,\;
		\{ \ms_1, \ms_7, \ms_8 \} \subseteq 
		\Lambda_{\pB{i}}(\ms_4)\;,
		\Lambda_{\pB{i}}(\ms_5)\;,
		\Lambda_{\pB{i}}(\ms_6)\;.
		\end{split}
		\end{equation*}
		Then, we see that the weight $\weight{i}{\ms_j}{\abdegree} > 5$ for all $j = 3, \dots, 9$, and thus the upper bound on the dimension of $H_0\left(\degBa{\idealComplex}{i}\right)$ can be computed to be the following,
		\begin{equation*}
		\begin{split}
		\dimwp{H_0\left(\degBa{\idealComplex}{i}\right)}_\abdegree &\leq
		\dimwp{\msE{1}\shiftideal{M}{\pB{i}}{0, -3}}_\abdegree + \dimwp{\msE{2}\shiftideal{M}{\pB{i}}{-3, 0}}_\abdegree\\
		&\qquad\qquad- \dimwp{\msE{2}\Delta_{\ms_1}\shiftideal{M}{\pB{i}}{-3, -3}}_\abdegree\\
		& = 6 + 6 - 3 = 9 = \dimwp{H_0\left(\degBa{\constantComplex}{i}\right)}_\abdegree\;.
		\end{split}
		\end{equation*}
		Therefore, $\dimwp{H_0\left(\degBa{\idealComplex}{i}\right)}_\abdegree = \dimwp{H_0\left(\degBa{\constantComplex}{i}\right)}_\abdegree$.
	\end{itemize}
	From the above we can see that Theorem \ref{thm:homo_dim_zero} applies and
	\begin{equation*}
		\dimwp{\splSpaceH^\bsmooth_{\bdegree}}_{\abdegree} = \euler{\quotientComplex}_\abdegree = 75\;.
	\end{equation*}
	In particular, there are $66$ splines on $\tmeshComponent{\pB{2}}{}$ and $9$ on $\tmeshComponent{\pB{1}}{}$.
	The reader can use the accompanying Macaulay2 script \cite[ex2.m2]{M2Scripts} can be used to confirm that $\abdegree = (4, 4)$ is the smallest bi-degree for which non-uniformity in degrees leads to an increase in the dimension.
\end{example}

\begin{figure}[t!]
	\centering
	\tikzsetnextfilename{./tikz/images/tmesh_type18}%
	\begin{tikzpicture}[scale=2]
	 \begin{scope}
		\node (v0) at (0.0000000000000000,0.0000000000000000) {};
		\node (v1) at (4.0000000000000000,0.0000000000000000) {};
		\node (v2) at (4.0000000000000000,4.0000000000000000) {};
		\node (v3) at (0.0000000000000000,4.0000000000000000) {};
		\node (v4) at (1.3333333333333333,0.0000000000000000) {};
		\node (v5) at (1.3333333333333333,4.0000000000000000) {};
		\node (v6) at (2.6666666666666665,0.0000000000000000) {};
		\node (v7) at (2.6666666666666665,4.0000000000000000) {};
		\node (v8) at (0.0000000000000000,1.3333333333333333) {};
		\node (v9) at (1.3333333333333333,1.3333333333333333) {};
		\node (v10) at (2.6666666666666665,1.3333333333333333) {};
		\node (v11) at (4.0000000000000000,1.3333333333333333) {};
		\node (v12) at (0.0000000000000000,2.6666666666666665) {};
		\node (v13) at (1.3333333333333333,2.6666666666666665) {};
		\node (v14) at (2.6666666666666665,2.6666666666666665) {};
		\node (v15) at (4.0000000000000000,2.6666666666666665) {};
		\node (v16) at (2.0000000000000000,1.3333333333333333) {};
		\node (v17) at (2.0000000000000000,2.6666666666666665) {};
		\node (v18) at (1.3333333333333333,1.7777777777777777) {};
		\node (v19) at (2.0000000000000000,1.7777777777777777) {};
		\node (v20) at (1.3333333333333333,2.2222222222222223) {};
		\node (v21) at (2.0000000000000000,2.2222222222222223) {};
		
		\draw[eThickness, fill=myBlue!20] (v18.center) -- (v19.center) -- (v16.center) -- (v10.center) -- (v14.center) -- (v17.center) -- (v21.center) -- (v20.center) -- cycle;

		 \draw[step=1,eThickness] (v4.center) -- (v9.center) -- (v8.center) -- (v0.center) -- cycle;
		 \draw[step=1,eThickness] (v6.center) -- (v10.center) -- (v16.center) -- (v9.center) -- (v4.center) -- cycle;
		 \draw[step=1,eThickness] (v1.center) -- (v11.center) -- (v10.center) -- (v6.center) -- cycle;
		 \draw[step=1,eThickness] (v9.center) -- (v18.center) -- (v20.center) -- (v13.center) -- (v12.center) -- (v8.center) -- cycle;
		 \draw[step=1,eThickness] (v16.center) -- (v19.center) -- (v18.center) -- (v9.center) -- cycle;
		 \draw[step=1,eThickness] (v11.center) -- (v15.center) -- (v14.center) -- (v10.center) -- cycle;
		 \draw[step=1,eThickness] (v13.center) -- (v5.center) -- (v3.center) -- (v12.center) -- cycle;
		 \draw[step=1,eThickness] (v17.center) -- (v14.center) -- (v7.center) -- (v5.center) -- (v13.center) -- cycle;
		 \draw[step=1,eThickness] (v15.center) -- (v2.center) -- (v7.center) -- (v14.center) -- cycle;
		 \draw[step=1,eThickness] (v10.center) -- (v14.center) -- (v17.center) -- (v21.center) -- (v19.center) -- (v16.center) -- cycle;
		 \draw[step=1,eThickness] (v19.center) -- (v21.center) -- (v20.center) -- (v18.center) -- cycle;
		 \draw[step=1,eThickness] (v21.center) -- (v17.center) -- (v13.center) -- (v20.center) -- cycle;
		\node[inner sep=0] at (v0) {$\vertex_{0}$};
		\node[inner sep=0] at (v1) {$\vertex_{1}$};
		\node[inner sep=0] at (v2) {$\vertex_{2}$};
		\node[inner sep=0] at (v3) {$\vertex_{3}$};
		\node[inner sep=0] at (v4) {$\vertex_{4}$};
		\node[inner sep=0] at (v5) {$\vertex_{5}$};
		\node[inner sep=0] at (v6) {$\vertex_{6}$};
		\node[inner sep=0] at (v7) {$\vertex_{7}$};
		\node[inner sep=0] at (v8) {$\vertex_{8}$};
		\node[inner sep=0] at (v9) {$\vertex_{9}$};
		\node[inner sep=0] at (v10) {$\vertex_{10}$};
		\node[inner sep=0] at (v11) {$\vertex_{11}$};
		\node[inner sep=0] at (v12) {$\vertex_{12}$};
		\node[inner sep=0] at (v13) {$\vertex_{13}$};
		\node[inner sep=0] at (v14) {$\vertex_{14}$};
		\node[inner sep=0] at (v15) {$\vertex_{15}$};
		\node[inner sep=0] at (v16) {$\vertex_{16}$};
		\node[inner sep=0] at (v17) {$\vertex_{17}$};
		\node[inner sep=0] at (v18) {$\vertex_{18}$};
		\node[inner sep=0] at (v19) {$\vertex_{19}$};
		\node[inner sep=0] at (v20) {$\vertex_{20}$};
		\node[inner sep=0] at (v21) {$\vertex_{21}$};

	 \end{scope}
\end{tikzpicture}%

	\caption{A non-uniform degree $C^2$ spline space consisting of cubic and quartic polynomial pieces is built on the above mesh. Example \ref{ex:test_3} shows that the dimension of the spline space coincides with the upper bound implied by Theorem \ref{thm:homo_dim_zero}.}
	\label{fig:test_3}
\end{figure}
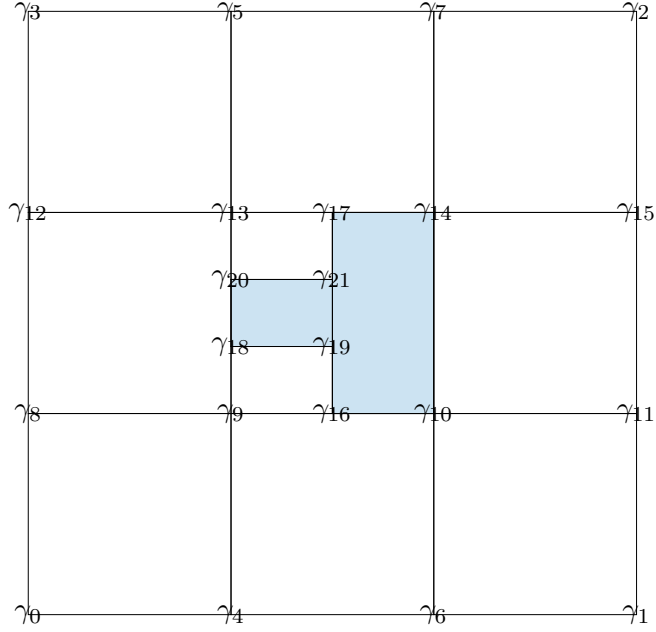
\begin{example}\label{ex:test_3}
	Consider the T-mesh shown in Figure \ref{fig:test_3}.
	Let us build a $C^2$ spline space on this mesh, i.e., $\bsmooth(\edge) = 2$ for all interior edges $\edge$.
	The degree deficit on the shaded faces is chosen to be $(0,0)$ and on the white faces it is chosen to be $(1, 1)$.
	We choose $\bdegree_0 = (0,0)$ and $\bdegree_1 = (1, 1)$, i.e., $\nlevels = 1$, and choose $\abdegree = (6, 6)$.
	Let us examine the active T-meshes $\tmeshComponent{\pB{i}}{}$ in the following for $i = 1, 2$.
	\begin{itemize}
		\item $[i = 2]$: The only interior maximal segments in $\tmeshComponent{\pB{i}}{}$ are
		\begin{equation*}
		\ms_1 = \vertex_{16}\vertex_{17}\;,\;
		\ms_2 = \vertex_{20}\vertex_{21}\;,\;
		\ms_3 = \vertex_{18}\vertex_{19}\;.
		\end{equation*}
		Let us order them as $\ms_1 < \ms_2 < \ms_3$.
		Then, for $\abdegree = (6, 6)$, it can be computed that $\weight{i}{\ms_j}{\abdegree} \geq 6$.
		Then, using Lemma \ref{lem:suff_ms_wts}, we see that
		\begin{equation*}
			\dimwp{H_0\left(\degBa{\idealComplex}{i}\right)}_\abdegree = \dimwp{H_0\left(\degBa{\constantComplex}{i}\right)}_\abdegree = 0\;.
		\end{equation*}
		Note that $\abdegree = (6, 6)$ is the smallest bi-degree in which $H_0\left(\degBa{\idealComplex}{i}\right)$ vanishes.
		
		\item $[i = 1]$: All maximal segments in $\tmeshComponent{\pB{i}}{}$ are interior maximal segments, let us denote them as below,
		\begin{equation*}
		\begin{split}
		&\ms_1 = \vertex_{16}\vertex_{17}\;,\;
		\ms_2 = \vertex_{16}\vertex_{10}\;,\;
		\ms_3 = \vertex_{17}\vertex_{14}\;,\;
		\ms_4 = \vertex_{10}\vertex_{14}\;,\\
		&
		\ms_5 = \vertex_{18}\vertex_{19}\;,\;
		\ms_6 = \vertex_{20}\vertex_{21}\;,\;
		\ms_7 = \vertex_{18}\vertex_{20}\;.
		\end{split}
		\end{equation*}
		Let us order these interior maximal segments so that $\ms_j < \ms_k$ when $j < k$.
		The sets $\Gamma_{\pB{i}}(\ms_j)$ can be seen to be
		\begin{equation*}
		\begin{split}
			&\Gamma_{\pB{i}}(\ms_1) = \emptyset\;,\;
			\Gamma_{\pB{i}}(\ms_2) = 
			\Gamma_{\pB{i}}(\ms_3) = 
			\Gamma_{\pB{i}}(\ms_5) = 
			\Gamma_{\pB{i}}(\ms_6) = \{ \ms_1 \}\;,\\
			&\Gamma_{\pB{i}}(\ms_4) = \{ \ms_2, \ms_3\}\;,\;
			\Gamma_{\pB{i}}(\ms_7) = \{ \ms_5, \ms_6\}\;.
		\end{split}
		\end{equation*}
		Next, it can be seen from Corollary \ref{cor:new_relations_weight} that
		\begin{equation*}
		\begin{split}
		&\Upsilon_{\pB{i}}(\ms_4, \abdegree) \ni \{ (\ms_1, \ms_2),  (\ms_1, \ms_3)\}\;,\\
		&\Upsilon_{\pB{i}}(\ms_7, \abdegree) \ni \{ (\ms_1, \ms_5),  (\ms_1, \ms_6)\}\;.
		\end{split}
		\end{equation*}
		Then, from Lemma \ref{lem:new_relations_transversal}, we can build the sets $\Lambda_{\pB{i}}(\ms_3)$ and $\Lambda_{\pB{i}}(\ms_6)$ such that
		\begin{equation*}
		\begin{split}
			&\Lambda_{\pB{i}}(\ms_3) = \{ \ms_1, \ms_4\}\;,\;
			\Lambda_{\pB{i}}(\ms_6) = \{ \ms_1, \ms_7\} \;.
		\end{split}
		\end{equation*}
		Then, we see that the weight $\weight{i}{\ms_j}{\abdegree} \geq 8$ for $j = 3, 4, 6, 7$.
		Thus the upper bound on the dimension of $H_0\left(\degBa{\idealComplex}{i}\right)$ can be computed to be the following,
		\begin{equation*}
		\begin{split}
		\dimwp{H_0\left(\degBa{\idealComplex}{i}\right)}_\abdegree &\leq
		\dimwp{\msE{1}\shiftideal{M}{\pB{i}}{0, -3}}_\abdegree +
		\dimwp{\msE{2}\shiftideal{M}{\pB{i}}{-3, 0}}_\abdegree\\ 
		&\qquad+ \dimwp{\msE{5}\shiftideal{M}{\pB{i}}{0, -3}}_\abdegree
		- \dimwp{\msE{2}\Delta_{\ms_1}\shiftideal{M}{\pB{i}}{-3, -3}}_\abdegree\\
		&\qquad- \dimwp{\msE{5}\Delta_{\ms_1}\shiftideal{M}{\pB{i}}{-3, -3}}_\abdegree\\
		& = 10 + 10 + 10 - 7 - 7 = 16\;.
		\end{split}
		\end{equation*}
		On the other hand, using Proposition \ref{prop:H0_constantComplex}, we can compute that
		\begin{equation*}
			\dimwp{H_0\left(\degBa{\constantComplex}{i}\right)}_\abdegree = 13\;.
		\end{equation*}
		Therefore, $\dimwp{H_0\left(\degBa{\idealComplex}{i}\right)}_\abdegree - \dimwp{H_0\left(\degBa{\constantComplex}{i}\right)}_\abdegree  \leq 3$.
	\end{itemize}
	From the above we see that Theorem \ref{thm:homo_dim_zero} does not apply.
	Therefore, let us use Theorems \ref{thm:lower_bound_special} and \ref{thm:upper_bound_general} to bound the spline space dimension from below and above.
	Computing the Euler characteristic of $\quotientComplex$ to be
	$\euler{\quotientComplex}_\abdegree = 143$, we use those theorems to obtain the following,
	\begin{equation*}
		143 \leq \dimwp{\splSpaceH^\bsmooth_{\bdegree}}_{\abdegree} \leq 146\;.
	\end{equation*}
	The reader can use the accompanying Macaulay2 script \cite[ex3.m2]{M2Scripts} to confirm that the dimension of the spline space is exactly $146$ for this configuration and thus coincides with the computed upper bound.
	This example serves to show that there exist configurations where a maximal segment ordering that allows us to use Theorem \ref{thm:homo_dim_zero} does not exist.
	Indeed, the accompanying script can be used to verify that $\dimwp{H_0\left(\degBa{\idealComplex}{i}\right)}_\abdegree - \dimwp{H_0\left(\degBa{\constantComplex}{i}\right)}_\abdegree = 3$ for all bi-degrees greater than or equal to $(6, 6)$.
\end{example}

\section{Conclusions}
Splines have been used for geometric modeling for several decades, and they are now rapidly becoming indispensable tools for performing approximation.
In order to efficiently alter the local resolution offered by a spline space, it is important to be able to perform local adaptivity.
While local mesh adaptivity on quadrilateral meshes has been widely studied since the introduction of T-splines \cite{sederberg2003t}, theoretical studies focused on splines that allow local degree adaptivity have been missing heretofore from the bivariate spline literature.
Since the possibility of using non-uniform bi-degree splines on T-meshes would enable powerful new paradigms of local refinement, we take a first step in this direction by analyzing the the dimension of such spline spaces.
In particular, using tools from homological algebra, we provide combinatorial bounds on the dimension.
We also outline sufficient conditions that guarantee that the spline space dimension is \emph{stable}, i.e., the dimension of the space is independent of the geometry of the T-mesh for a fixed topology.
Several examples are provided to show applicability of the theory developed here.

The results presented in this paper can be used for classifying spline spaces with stable dimension.
This is important for avoiding geometry-based linear dependency issues that may arise when performing approximation.
The ability to combinatorially compute the spline space dimension is also important because it can be used to determine when a given set of linearly independent splines spans the full spline space.
Conversely, given a constructive approach that aims to produce linearly independent splines over T-meshes using only local data, computation of the associated spline space dimension can help identify cases where the splines produced by the approach cannot be linearly independent.
This is crucial for devising constructive approaches that can be robustly employed for performing isogeometric analysis.

Current research on this topic is progressing along several lines of inquiry.
A follow up paper will discuss local refinement algorithms for both mesh sizes and polynomial degrees that ensure stability of the spline space dimension.
The construction of a suitable basis that possesses B-spline-like properties remains an open and essential question and will be a part of future efforts focused on formulation of constructive approaches.
A practical construction of this nature has been successfully devised for univariate non-uniform degree splines \cite{toshniwal2017multi,speleers_computation_2018,toshniwal_multi-degree_2018}.
A generalization of this univariate approach to the bivariate setting has been recently conjectured \cite{thomas_usplines_2018}.

\section*{References}


\end{document}